\newcommand\Acal{\mathcal{A}}
\newcommand\Ccal{\mathcal{C}}
\newcommand\Jcal{\mathcal{J}}
\newcommand\Mcal{\mathcal{M}}
\newcommand\Ocal{\mathcal{O}}
\newcommand\Rcal{\mathcal{R}}
\newcommand\Wcal{\mathcal{W}}
\newcommand\B{\mathbb{B}}
\newcommand\C{\mathbb{C}}
\newcommand\CP{\mathbb{CP}}
\renewcommand\D{\mathbb D}
\newcommand\N{\mathbb{N}}
\renewcommand\P{\mathbb{P}}
\newcommand\R{\mathbb{R}}
\newcommand\Z{\mathbb{Z}}
\newcommand\igot{\mathfrak{i}}
\renewcommand\igot{\mathfrak{i}}
\newcommand\Rgot{\mathfrak{R}}
\renewcommand\imath{\igot}
\newcommand\hra{\hookrightarrow}
\newcommand\wt{\widetilde}
\newcommand\wh{\widehat}
\newcommand\di{\partial}
\newcommand\dibar{\overline\partial}
\newcommand\dist{\mathrm{dist}}
\newcommand\supp{\mathrm{supp}}
\newcommand\Ocalc{\overline{\mathcal{O}}}
\newcommand\Ocalcl{\Ocalc_{\mathrm{loc}}}
\begin{document}

\title*{Holomorphic approximation: the legacy of Weierstrass, Runge, Oka-Weil, and Mergelyan}
\titlerunning{Holomorphic approximation}
\author{John Erik Forn{\ae}ss, Franc Forstneri{\v c}, and Erlend F.\ Wold}
\authorrunning{J.E.~Forn{\ae}ss, F.~Forstneri{\v c}, and E.F.~Wold}
\institute{
J.\ E.\ Forn{\ae}ss \at Department of Mathematical Sciences, NTNU, 7491 Trondheim, Norway \\
\email{john.fornass@ntnu.no}\\ \\
F.~Forstneri{\v c} \at Faculty of Mathematics and Physics, University of Ljubljana, 
and Institute of Mathematics, Physics and Mechanics, Jadranska 19, SI--1000 Ljubljana, Slovenia \\ 
\email{franc.forstneric@fmf.uni-lj.si} \\ \\
E.\ F.\ Wold \at Department of Mathematics, University of Oslo, Postboks 1053 Blindern, 
NO-0316 Oslo, Norway\\
\email{erlendfw@math.uio.no}
}
\maketitle

\abstract{In this paper we survey the theory of holomorphic approximation, from the classical 19th century results
of Runge and Weierstrass, continuing with the 20th century work of Oka and Weil, Mergelyan, 
Vitushkin and others, to the most recent ones on higher dimensional manifolds.   
The paper includes some new results and applications of this theory, especially to
manifold-valued maps.}

\setcounter{tocdepth}{2}
\tableofcontents


%
%

\section{Introduction}\label{sec:Intro}

%
%

The aim of this paper is to provide a review and synthesis of holomorphic approximation theory from
classical to modern. The emphasis is on recent results and applications to manifold-valued maps.

Approximation theory plays a fundamental role in complex analysis, holomorphic dynamics,
the theory of minimal surfaces in Euclidean spaces, and in many other related fields of Mathematics
and its applications. It provides an indispensable tool 
in constructions of holomorphic maps with desired
properties between complex manifolds. Applications of this theory are too numerous
to be presented properly in a short space, but we mention several of them at
appropriate places and provide references that the reader might pursue.
We are hoping that the paper will bring a new stimulus for future developments in this important area of analysis. 

Although this is largely a survey, it includes some new results, 
especially those concerning Mergelyan approximation in higher dimension (see Sect.\ \ref{sec:Mergelyan}),
and applications of these techniques to manifold-valued maps (see Sect.\ \ref{sec:manifold}).
We also mention open problems and indicate promising directions.
Proofs are outlined where possible, especially of those result which introduce major new ideas. 
More advanced results are only mentioned with references to the original sources. 
Of course we included proofs of the new results.

There exist a number of surveys on holomorphic approximation theory; see e.g.\  \cite{BoivinGauthier2001,Fuchs1968,Gaier1987,Gamelin1971,Gamelin1984,GauthierParamonov2018,Gauthier1994,GauthierHengartner1982, GauthierSharifi2017,Levenberg2006,Zalcman1968}, among others.
However, ours seems the first attempt at a unified picture, from the highlights of the classical theory to 
results in several variables and for manifold-valued maps. On the other hand, several of the surveys mentioned above
include discussions of certain finer topics of approximation theory that we do not cover here, 
also for solutions of more general elliptic partial differential equations.
It is needless to say that the higher dimensional approximation theory is much less developed and the problems
tend to be considerably more complex. It is also clear that further progress in many areas of complex analysis
and its applications hinges upon developing new and more powerful approximation techniques for 
holomorphic mappings. 

%
%
{\em Organization of the paper.}
In Sects.\ \ref{sec:Runge}--\ref{sec:Mergelyan-smooth} we review the main achievements of the classical 
approximation theory for functions on the complex plane $\C$ and on Riemann surfaces. 
Our main goal is to identify those key ideas and principles which may serve as guidelines when 
considering approximation problems in several variables and for manifold-valued maps.
We begin in Sect.\ \ref{sec:Runge} with theorems of K.\ Weierstrass,
C.\ Runge, S.\ N.\ Mergelyan, and A.\ G.\ Vitushkin. In Sect.\ \ref{sec:unbounded} 
we discuss approximation on closed 
unbounded subsets  of $\C$ and of Riemann surfaces. There are two main lines in the literature,
one following the work of T.\ Carleman on approximation in the fine topology, and another  
the work of  N.\ U.\ Arakelian  on uniform approximation. In Sect.\ \ref{sec:Mergelyan-smooth} we survey
results on $\Ccal^k$ Mergelyan approximation of smooth functions on Riemann surfaces.
The remainder of the paper is devoted to the higher dimensional theory.
In Sect.\ \ref{sec:OkaWeil} we recall the Oka-Weil approximation theorem 
on Stein manifolds and some generalizations; these are higher dimensional analogues of Runge's theorem. 
In Sect.\ \ref{sec:Mergelyan} we discuss Mergelyan and Carleman 
approximation of functions and closed forms on $\C^n$ and on Stein manifolds. 
In Sect.\ \ref{sec:manifold} we look at applications of these and other techniques
to local and global approximation problems of Runge, Mergelyan, Carleman, and Arakelian type 
for maps from Stein manifolds to more general complex manifolds;
these  are especially interesting when the target is an {\em Oka manifold}.
Subsect.\ \ref{ss:Mergelyan-manifold} contains very recent results
on Mergelyan approximation of manifold-valued maps.
In Sect.\ \ref{sec:weights} we mention some recent progress on weighted approximation in $L^2$ spaces.

%
%
{\em Notation and terminology.} 
We denote by  $\N=\{1,2,3,\ldots\}$ the natural numbers, by $\Z$ the ring of integers,
$\Z_+=\{0,1,2,\ldots\}$, and by $\R$ and $\C$ the fields of real and complex numbers, respectively.
For any $n\in \N$ we denote by $\R^n$ the $n$-dimensional real Euclidean space,  and by $\C^n$ the 
$n$-dimensional complex Euclidean space with complex coordinates $z=(z_1,\ldots,z_n)$, 
where $z_i=x_i+\imath y_i$ with $x_i,y_i\in\R$ and $\imath=\sqrt{-1}$.  We denote the
Euclidean norm by $|z|^2=\sum_{i=1}^n |z_i|^2$. Given $a\in\C$ and $r>0$, we set
$\D(a,r)=\{z\in\C: |z-a|<r\}$ and $\D=\D(0,1)$. Similarly, $\B^n$ denotes the unit ball in $\C^n$ 
and $\B^n(a,r)$ the ball centered at $a\in\C^n$ of radius $r$. The corresponding balls
in $\R^n$ are denoted $\B^n_\R$ and $\B^n_\R(a,r)$.

Let $X$ be a complex manifold. We denote by $\Ccal(X)$ and $\Ocal(X)$ the Fr{\'e}chet algebras of all 
continuous and holomorphic functions on $X$, respectively, endowed with 
the compact-open topology. Given a compact set $K$ in $X$, we denote by $\Ccal(K)$ the Banach algebra 
of all continuous complex valued functions on $K$ with the supremum norm, 
by $\Ocal(K)$ the set of all functions that are holomorphic in a neighborhood of $K$
(depending on the function), and by $\Ocalc(K)$ the uniform closure of $\{f|_K: f\in \Ocal(K)\}$
in $\Ccal(K)$. By $\Acal(K)=\Ccal(K)\cap \Ocal(\mathring K)$ we denote the set of all continuous
functions $K\to\C$ which are holomorphic in the interior $\mathring K$ of $K$.
If $r\in \Z_+\cup\{\infty\}$ we let $\Ccal^r(K)$ denote the space of all functions on $K$
which extend to $r$-times continuously differentiable functions on $X$, 
and $\Acal^r(K)=\Ccal^r(K)\cap \Ocal(\mathring K)$.
Given a complex manifold $Y$, we use the analogous notation $\Ocal(X,Y)$, $\Ocal(K,Y)$,
$\Acal^r(K,Y)$, etc., for the corresponding classes of maps into $Y$. We have the inclusions
\begin{equation}\label{eq:spaces}
	\Ocal(K,Y) \subset \Ocalc(K,Y) \subset \Acal(K,Y)\subset \Ccal (K,Y).
\end{equation}
A compact set $K$ in a complex manifold $X$ is said to be {\em $\Ocal(X)$-convex} if 
\begin{equation}\label{eq:hull}
	K=\wh K_{\Ocal(X)}:= \{p\in X: |f(p)|\le \max_{x\in K} |f(x)|\ \ \forall f\in\Ocal(X)\}.
\end{equation}
A compact $\Ocal(\C^n)$-convex set $K$ in $\C^n$ is said to be {\em polynomially convex}.
A compact set $K$ in a complex manifold $X$ is said to be a {\em Stein compact} if it admits a basis of 
open Stein neighborhoods in $X$.

%
%

\section{From Weierstrass and Runge to Mergelyan}\label{sec:Runge}

In this and the following two sections we survey the main achievements of the classical
holomorphic approximation theory. More comprehensive surveys of this subject are available in
\cite{BoivinGauthier2001,Fuchs1968,Gaier1987,Gamelin1971,Gamelin1984,GauthierParamonov2018,Gauthier1994,GauthierHengartner1982,Zalcman1968}, among other sources. 

The approximation theory for holomorphic functions has its origin in two classical theorems
 from 1885. The first one, due to K.\ Weierstrass \cite{Weierstrass1985}, concerns the approximation 
of continuous functions on compact intervals in $\R$ by polynomials.

%
%
\begin{theorem}[Weierstrass (1885), \cite{Weierstrass1985}]
\label{th:Weierstrass}
Suppose $f$ is a continuous function on a closed bounded interval $[a,b]\subset\R$. 
For every $\epsilon>0$ there exists a polynomial $p$ such that for all $x\in [a,b]$ we have 
$|f(x)-p(x)|<\epsilon$.
\end{theorem}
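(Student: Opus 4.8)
The plan is to prove the Weierstrass approximation theorem via a constructive approximate-identity argument, specifically using the Landau kernels, which keeps everything elementary and polynomial. First I would reduce to a convenient normalization: after an affine change of variable we may assume $[a,b]=[0,1]$, and by subtracting the linear function $f(0)+(f(1)-f(0))x$ (which is itself a polynomial, so approximating the difference suffices) we may further assume $f(0)=f(1)=0$. This lets me extend $f$ by zero to all of $\R$, obtaining a uniformly continuous function with compact support, which is the technically cleanest setting for a convolution argument.

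Next I would introduce the Landau kernels $Q_n(t)=c_n(1-t^2)^n$ on $[-1,1]$, where the constants $c_n$ are chosen so that $\int_{-1}^1 Q_n(t)\,dt=1$. The two properties I would establish are that $\{Q_n\}$ concentrates near $t=0$ as $n\to\infty$, and that the convolution
\begin{equation*}
	p_n(x)=\int_{-1}^1 f(x+t)\,Q_n(t)\,dt
\end{equation*}
is a polynomial in $x$ of degree at most $2n$ on $[0,1]$. The polynomiality is the key structural point: after the substitution $u=x+t$ the integrand becomes $f(u)\,c_n\bigl(1-(u-x)^2\bigr)^n$, and expanding the binomial in powers of $x$ shows that $p_n(x)$ is a finite sum of the form $\sum_k x^k \int f(u)\,g_k(u)\,du$ with polynomial coefficients in $x$. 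To make the limits of integration independent of $x$ I would use that $f$ is supported in $[0,1]$, so I may integrate $u$ over a fixed interval.

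Then I would prove uniform convergence $p_n\to f$ on $[0,1]$ in the standard way for approximate identities. Using $\int Q_n=1$ I write $p_n(x)-f(x)=\int_{-1}^1\bigl(f(x+t)-f(x)\bigr)Q_n(t)\,dt$. Given $\epsilon>0$, uniform continuity of $f$ supplies a $\delta>0$ with $|f(x+t)-f(x)|<\epsilon/2$ for $|t|<\delta$; the contribution from $|t|<\delta$ is then at most $\epsilon/2$ since $Q_n\geq 0$ integrates to $1$. For the contribution from $\delta\le|t|\le 1$ I would bound $|f(x+t)-f(x)|\le 2\|f\|_\infty$ and show that $\int_{\delta\le|t|\le1}Q_n(t)\,dt\to 0$; this requires the quantitative estimate $c_n\le\sqrt{n}$ (obtained from $\int_{-1}^1(1-t^2)^n\,dt\ge\int_{-1/\sqrt n}^{1/\sqrt n}(1-t^2)^n\,dt$ and a simple lower bound), which forces the tail mass to zero.

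The main obstacle I anticipate is not any single hard estimate but rather keeping the two roles of the kernel cleanly separated: the algebraic fact that convolving against $(1-t^2)^n$ produces a genuine polynomial in $x$ requires the substitution and the compact support of $f$ to fix the domain of integration, whereas the analytic fact of concentration requires the normalizing constants $c_n$ to grow no faster than $\sqrt{n}$. Reconciling these—ensuring the variable change does not spoil polynomiality while the tail bound does its work—is the delicate bookkeeping step. An alternative route, which I would mention as a remark, is Bernstein's probabilistic proof using the polynomials $B_n(f)(x)=\sum_{k=0}^n f(k/n)\binom{n}{k}x^k(1-x)^{n-k}$, where convergence follows from the weak law of large numbers (Chebyshev's inequality applied to a binomial random variable); this has the advantage of giving explicit polynomials with no change of variables, at the cost of a slightly more involved variance estimate.
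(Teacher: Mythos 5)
Your proof is correct: it is the classical Landau-kernel argument, and all the key steps check out (the affine normalization plus subtraction of a linear interpolant to get $f(0)=f(1)=0$, the change of variables $u=x+t$ using the compact support of $f$ to make polynomiality transparent, and the tail estimate via $c_n\le\sqrt n$, which follows from $\int_{-1}^1(1-t^2)^n\,dt\ge\int_{-1/\sqrt n}^{1/\sqrt n}(1-nt^2)\,dt>1/\sqrt n$). However, your route differs from the paper's in a meaningful way. The paper also uses convolution with an approximate identity, but with the \emph{Gaussian} kernel: it extends $f$ to a compactly supported continuous function on $\R$, forms the entire functions $f_\epsilon(z)=\frac{1}{\epsilon\sqrt\pi}\int_\R f(x)e^{-(x-z)^2/\epsilon^2}\,dx$, shows $f_\epsilon\to f$ uniformly on $\R$, and then truncates the Taylor series of $f_\epsilon$ to get polynomials. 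Your kernel is itself a polynomial, so polynomials appear directly with no detour through entire functions and no Taylor truncation step; the price is the bookkeeping you describe (support normalization so the integration limits do not depend on $x$). What the Gaussian approach buys — and the reason the paper chooses it — is robustness and generality: differentiation can be thrown onto $f$ by the substitution $u=x-z$, giving $\Ccal^k$ convergence for $f\in\Ccal^k$ with no extra work, and, more importantly for this survey, the same Gaussian convolution is the engine of the higher-dimensional results (Proposition \ref{prop:local} and Theorem \ref{th:admissible}), where it yields approximation on totally real submanifolds of $\C^n$ and of Stein manifolds, a setting in which a polynomial kernel of Landau type has no analogue. Your concluding remark on Bernstein polynomials is accurate as a third, also elementary, alternative.
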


\begin{proof}
We use convolution with the Gaussian kernel. After extending
$f$ to a continuous function on $\R$ with compact support, we consider the 
family of entire functions
\begin{equation}\label{eq:Gauss}
       f_\epsilon(z) = \frac{1}{\epsilon \sqrt\pi} \int_{\R} f(x)e^{-(x-z)^2/{\epsilon^2}}dx,\qquad z\in\C,\ \epsilon>0.
\end{equation}
As $\epsilon \to 0$, we have that $f_\epsilon\to f$ uniformly on $\R$. Hence, the Taylor polynomials 
of $f_\epsilon$ approximate $f$ uniformly on compact intervals in $\R$. If furthermore $f$ is 
of class $\Ccal^k$, then by a change of variable $u=x-z$ and placing the derivatives on $f$
it follows that we get convergence also in the $\Ccal^k$ norm.
\qed\end{proof}

The paper by A.\ Pinkus \cite{Pinkus2000} (2000) contains a more complete survey of Weierstrass's results 
and of his impact on the theory of holomorphic approximation.
As we shall see in Subsect.\ \ref{ss:submanifolds}, the idea of using convolutions 
with the Gaussian kernel gives major approximation results 
also on certain classes of real submanifolds in complex Euclidean space $\C^n$ and, more generally, in Stein manifolds.

One line of generalizations of Weierstrass's theorem was discovered by 
M.\ Stone in 1937, \cite{Stone1937,Stone1948}. The {\em Stone-Weiestrass theorem} says that,
if $X$ is a compact Hausdorff space and $A$ is a subalgebra 
of the Banach algebra $\Ccal(X,\R)$ which contains a nonzero constant function,
then $A$ is dense in $\Ccal(X,\R)$ if and only if it separates points. 
It follows in particular that any complex valued continuous function on a compact set $K\subset\C$ 
can be uniformly approximated by polynomials in $z$ and $\bar z$. 
Stone's theorem opened a major direction of research in Banach algebras.

Another line of generalizations concerns approximation of continuous functions on curves in the
complex plane by holomorphic polynomials and rational functions. This led to
Mergelyan and Carleman theorems discussed in the sequel.

However, we must first return to the year 1885. The second of the two classical approximation 
theorems proved that year is due to C.\ Runge \cite{Runge1885}. 

%
%
\begin{theorem}[Runge (1885), \cite{Runge1885}] 
\label{th:Runge}
Every holomorphic function on an open neighborhood of a compact set 
$K$ in $\C$ can be approximated uniformly on $K$ by rational functions without poles in $K$,
and by holomorphic polynomials if $\C\setminus K$ is connected.
\end{theorem}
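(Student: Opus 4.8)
The plan is to prove Runge's theorem in two stages: first establish approximation by rational functions with poles off $K$, then show that when $\C\setminus K$ is connected the poles can be pushed to infinity, yielding polynomial approximation. Let $f$ be holomorphic on an open neighborhood $U$ of $K$. The starting point is the Cauchy integral representation. I would choose a cycle $\Gamma$ consisting of finitely many piecewise-linear (or smooth) closed curves contained in $U\setminus K$ that winds once around every point of $K$ and zero times around every point of $\C\setminus U$; such a cycle exists by covering $K$ with a fine grid of squares whose closures lie in $U$ and taking $\Gamma$ to be the boundary of the union of those squares meeting $K$. Then for all $z\in K$ one has
\begin{equation}\label{eq:Cauchy-Runge}
	f(z)=\frac{1}{2\pi\imath}\int_{\Gamma}\frac{f(\zeta)}{\zeta-z}\,d\zeta.
\end{equation}

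The first key step is to replace the integral \eqref{eq:Cauchy-Runge} by a finite Riemann sum. Since $\zeta$ ranges over the compact set $\Gamma$ which is disjoint from $K$, the integrand $(\zeta,z)\mapsto f(\zeta)/(\zeta-z)$ is uniformly continuous on $\Gamma\times K$, so partitioning $\Gamma$ finely and sampling gives an approximating function of the form $\sum_j c_j/(\zeta_j-z)$, uniformly close to $f$ on $K$. Each summand is a rational function of $z$ whose only pole $\zeta_j$ lies on $\Gamma\subset\C\setminus K$. This already proves the rational approximation statement.

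The second step is the \emph{pole-pushing} argument, which I expect to be the main obstacle since it is where the connectivity hypothesis enters. Each rational function has a pole at some point $a\in\C\setminus K$, and I must show $1/(z-a)$, together with its powers, can be uniformly approximated on $K$ by functions with poles at a prescribed nearby point, and ultimately by polynomials. The mechanism is that the set of points $a\in\C\setminus K$ such that every function with a pole at $a$ is uniformly approximable on $K$ by polynomials is both open and closed in the connected set $\C\setminus K$: openness follows from expanding $1/(z-a')$ in a geometric (Laurent/Taylor) series in powers of $(a'-a)$ that converges uniformly on $K$ when $|a'-a|<\dist(a,K)$, allowing a pole to be moved a definite distance; closedness follows similarly. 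When $\C\setminus K$ is connected it is unbounded, so it contains points of arbitrarily large modulus, and for $|a|$ large the expansion
\begin{equation}\label{eq:geom}
	\frac{1}{z-a}=-\frac{1}{a}\sum_{k=0}^{\infty}\Bigl(\frac{z}{a}\Bigr)^{k}
\end{equation}
converges uniformly on $K$; its partial sums are polynomials. Thus polynomials approximate any function with a pole at such an $a$, and by the connectedness chain this propagates to every pole location, completing the proof. If $\C\setminus K$ has several components one applies the pole-pushing within each component, moving each pole to a single chosen representative point and obtaining rational approximation with poles off $K$.
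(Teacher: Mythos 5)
Your proposal is correct and follows essentially the same route as the paper's first proof: a Cauchy integral representation over a cycle in $U\setminus K$, Riemann-sum approximation yielding rational functions with simple poles off $K$, and then pole-pushing via the geometric series $\frac{1}{z-a'}=\sum_{k\ge 0}\frac{(a'-a)^k}{(z-a)^{k+1}}$, ending with the expansion at a point outside a large disc to obtain polynomials. Your open-and-closed connectedness formulation of the pole-pushing step is just a repackaging of the paper's finite chain of points $a=a_0,a_1,\ldots,a_N=b$ along a path in $\C\setminus K$, and your remark on the disconnected case matches the paper's closing comment.
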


The maximum principle shows that the condition that $K$ does not separate
the plane is necessary for polynomial approximation  on $K$.

\begin{proof}
The simplest proof of Runge's theorem, and the one given in most textbooks on the subject
(see e.g.\ \cite[p.\ 270]{Rudin1987}), goes as follows.
Assume that $f$ is a holomorphic function on an open set $U\subset \C$ 
containing $K$. Choose a smoothly bounded domain $D$ with $K\subset D$ and $\overline D\subset U$.
By the Cauchy integral formula we have that
\[
	f(z) = \frac{1}{2\pi\imath} \int_{bD} \frac{f(\zeta)}{\zeta-z} \, d\zeta,\qquad z\in D.
\]
Approximating the integral by Riemann sums provides uniform approximation of $f$ on $K$
by linear combinations of functions $\frac{1}{a-z}$ with poles $a\in\C\setminus K$.
Assuming that $\C\setminus K$ is connected, we can  push the poles to infinity as follows.
Pick a disc $\Delta\subset \C$ containing $K$. Since $\C\setminus K$ is connected, 
there is a path $\lambda:[0,1]\to\C\setminus K$ connecting $a=\lambda(0)$ to a point 
$b=\lambda(1)\in \C\setminus \overline\Delta$.  Let $\delta=\inf \{\dist(\lambda(t),K):t\in [0,1]\}>0$. 
Choose points $a=a_0,a_1,\ldots,a_N=b \in \lambda([0,1])$
such that $|a_j-a_{j+1}|<\delta$ for $j=0,\ldots,N-1$. For $z\in K$ 
and $j=0,1,\ldots,N-1$ we then have that
\[
	\frac{1}{a_j-z}= \frac{1}{(a_{j+1}-z)-(a_{j+1}-a_j)}=
	\sum_{k=0}^\infty \frac{(a_{j+1}-a_j)^k}{(a_{j+1}-z)^{k+1}},
\]
where the geometric series converges uniformly on $K$.
It follows by a finite induction that $\frac{1}{a-z}$ is a uniform limit on $K$ of polynomials in 
$\frac{1}{b-z}$. Since $b\in \C\setminus \overline \Delta$, the function $\frac{1}{b-z}$ is a uniform limit on 
$\Delta$ of holomorphic polynomials in $z$ and the proof is complete.
If $\C\setminus K$ is not connected, a modification of this argument 
gives uniform approximations of $f$ by rational functions with poles in a given set 
$\Lambda\subset \C\setminus K$ containing a  point in 
every bounded connected component of $\C\setminus K$.

Another proof uses the {\em Cauchy-Green formula},
also called the {\em Pompeiu formula} for compactly supported function $f\in \Ccal^1_0(\C)$:
\begin{equation}\label{eq:Pompeiu}
	f(z) = \frac{1}{\pi} \int_\C \frac{\dibar f(\zeta)}{z-\zeta}\, du\, dv,  \qquad z\in \C,\ \zeta=u+\imath v.
\end{equation}
Here, $\dibar f(\zeta)= (\di f/\di \bar\zeta)(\zeta)$.
If $f$ is holomorphic in an open set $U\subset \C$ containing a compact set $K$, we choose a smooth function
$\chi\colon \C\to [0,1]$ which equals $1$ on a smaller neighborhood $V$ of $K$ and satisfies $\supp(\chi)\subset U$. 
Then, 
\[
	f(z) = \frac{1}{\pi} \int_\C \frac{\dibar \chi(\zeta) \, f(\zeta)}{z-\zeta}\, du\, dv,\qquad z\in V.
\]
Since the integrand is supported on $\supp(\dibar\chi)$ which is disjoint from $K$, approximating the 
integral  by Riemann sums shows that $f$ can be approximated uniformly on $K$ by rational functions 
with poles in $\C\setminus K$, and the proof is concluded as before.
\qed\end{proof}

We now digress for a moment to recall the main properties of the Cauchy-Green operator 
in \eqref{eq:Pompeiu} which is used in many approximation results discussed in the sequel. 

Given a compact set $K\subset \C$ and an integrable function $g$ on $K$, we set
\begin{equation}\label{eq:TK}
	T_K(g)(z) = \frac{1}{\pi} \int_K \frac{g(\zeta)}{z-\zeta}\, dudv,\qquad \zeta =u+\imath v.
\end{equation}
It is well known (see e.g.\ L.\ Ahlfors \cite[Lemma 1, p.\ 51]{Ahlfors2006} or A.\ Boivin and P.\ Gauthier
\cite[Lemma 1.5]{BoivinGauthier2001}) that for any $g\in L^p(K)$, $p>2$, $T_K(g)$ is a bounded 
continuous function on $\C$ that vanishes at infinity and satisfies the uniform H\"older condition 
with exponent $\alpha=1-2/p$; moreover, $T_K\colon L^p(K)\to \Ccal^\alpha(\C)$ is a continuous
linear operator. (A closely related operator is actually bounded from $L^p(\C)$ to $\Ccal^{1-2/p}(\C)$
without any support condition.) The key property of $T_K$ is that it solves the non\-homogeneous
Cauchy-Riemann equation, that is,
\[
	\dibar \,T_K(g)=g
\] 
holds in the sense of distributions, and in the classical sense on any open subset 
on which $g$ is of class $\Ccal^1$. In particular, $T_K(g)$ is holomorphic on $\C\setminus K$.
The optimal sup-norm estimate of $T_K(g)$ for $g\in L^\infty(K)$ 
is obtained from Mergelyan's estimate
\begin{equation}\label{eq:Mest}   
	 \int_{\zeta\in K} \frac{du dv}{|z-\zeta|} \,\, \le\,\, \sqrt{4\pi \, \mathrm{Area}(K)},
	 \qquad z\in \C,
\end{equation}
which is sharp when $K$ is the union of a closed disc centered at $z$ and a compact set of measure zero.
(See S.\ N.\ Mergelyan \cite{Mergelyan1952,Mergelyan1954} or 
A.\ Browder \cite[Lemma 3.1.1]{BrowderA1969}.)
The related Ahlfors-Beurling estimate which is also sharp is that 
\[
	|T_K(1)(z)| =  \left|\frac{1}{\pi}\int_{\zeta\in K} \frac{du dv}{z-\zeta}\right|  
	\,\,\le\,\,  \sqrt{\frac{\mathrm{Area}(K)}{\pi}},\qquad z\in\C.
\]
Another excellent source for this topic is the book of 
K.\ Astala, T.\ Iwaniec and G.\ Martin \cite{AstalaIwaniecMartin2009};
see in particular Sect.\ 4.3 therein.

Coming back to the topic of approximation, the situation becomes considerably more delicate 
when the function $f$ to be approximated 
is only continuous on $K$ and holomorphic in the interior $\mathring K$; that is,
$f\in\Acal(K)$. The corresponding approximation problem for compact sets in $\C$ 
with connected complement was solved by S.\ N.\ Mergelyan in 1951. 

%
%

\begin{theorem}[Mergelyan (1951), \cite{Mergelyan1951,Mergelyan1952,Mergelyan1954}] 
\label{th:Mergelyan1951}
If $K$ is a compact set in $\C$ with connected complement, then every function 
in $\Acal(K)$ can be approximated uniformly on $K$ by holomorphic polynomials.
\end{theorem}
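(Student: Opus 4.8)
The plan is to reduce everything, via Runge's theorem (Theorem~\ref{th:Runge}) and the hypothesis that $\C\setminus K$ is connected, to the problem of approximating $f$ uniformly on $K$ by functions holomorphic in a neighborhood of $K$: once such an approximant is produced, Runge turns it into a holomorphic polynomial. To set up the analysis, I would first use Tietze's theorem to extend $f$ to a continuous function of compact support on $\C$, still written $f$, and record its modulus of continuity $\omega(\delta)=\sup\{|f(z)-f(w)|:|z-w|\le\delta\}$, which satisfies $\omega(\delta)\to 0$ as $\delta\downarrow 0$.

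Next I would regularize. Fix a radial function $A_\delta\ge 0$ of class $\Ccal^1$, supported in $\D(0,\delta)$, with $\int_\C A_\delta\,du\,dv=1$ and $\|\dibar A_\delta\|_{L^1}\le C/\delta$, and set $\Phi_\delta=f\ast A_\delta\in\Ccal^1_0(\C)$. Three elementary estimates then hold: $\|\Phi_\delta-f\|_\infty\le\omega(\delta)$; since $\int_\C\dibar A_\delta=0$ one gets $|\dibar\Phi_\delta|\le C\omega(\delta)/\delta$ on all of $\C$; and, because convolution commutes with $\dibar$ and $f$ is holomorphic on $\mathring K$, the function $\dibar\Phi_\delta$ vanishes at every point whose $\delta$-neighborhood is contained in $\mathring K$. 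As $\partial K\subset\overline{\C\setminus K}$, it follows that $\supp(\dibar\Phi_\delta)$ lies in the $\delta$-neighborhood of $\C\setminus K$. Applying the Cauchy--Green formula \eqref{eq:Pompeiu} to $\Phi_\delta$ represents it as the Cauchy transform
\[
\Phi_\delta(z)=\frac{1}{\pi}\int_\C\frac{\dibar\Phi_\delta(\zeta)}{z-\zeta}\,du\,dv,\qquad z\in\C.
\]

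The decisive step is to replace the Cauchy kernel $1/(z-\zeta)$ by a kernel holomorphic in $z$ across $K$, and this is where connectivity enters. Cover $\supp(\dibar\Phi_\delta)$ by discs $D_j=\D(z_j,2\delta)$ centered at points $z_j\in\supp(\dibar\Phi_\delta)$, of uniformly bounded overlap. Each $z_j$ lies within $\delta$ of the connected, unbounded set $\C\setminus K$, so $D_j$ meets $\C\setminus K$; joining a point of $D_j\cap(\C\setminus K)$ to $\infty$ inside $\C\setminus K$ and truncating at the first exit from $\D(z_j,3\delta)$ yields a continuum $E_j\subset\C\setminus K$ with $\diam E_j\ge\delta$. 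For such a continuum I would invoke the key lemma of Mergelyan: there is a function $g_j$, holomorphic off $E_j$ and hence on a neighborhood of $K$, with $|g_j|\le C/\delta$, approximating $1/(z-b_j)$ for a suitable $b_j\in D_j$ well enough that
\[
Q_j(\zeta,z):=g_j(z)+(\zeta-b_j)\,g_j(z)^2
\]
matches the Cauchy kernel to \emph{second order}, namely $\bigl|(z-\zeta)^{-1}-Q_j(\zeta,z)\bigr|\le C\delta^2/|z-\zeta|^3$ for $z\notin E_j$ and $\zeta\in D_j$. Splitting $\dibar\Phi_\delta=\sum_j\chi_j\,\dibar\Phi_\delta$ by a partition of unity subordinate to $\{D_j\}$ and substituting $Q_j$ for $1/(z-\zeta)$ on the $j$-th piece produces a function $F_\delta$ holomorphic in a neighborhood of $K$.

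It remains to estimate $\|F_\delta-\Phi_\delta\|_K$, and here the \textbf{main obstacle} — and the whole reason the second-order kernel is needed — becomes visible. For fixed $z\in K$ I would split the $\zeta$-integral of the error into the near region $|z-\zeta|\le C\delta$, met by only boundedly many $D_j$, and the far region $|z-\zeta|>C\delta$. On the near region the bound $|Q_j|\le C/\delta$ together with Mergelyan's area estimate \eqref{eq:Mest} for the Cauchy kernel give a contribution $O(\omega(\delta))$; on the far region the second-order error $C\delta^2/|z-\zeta|^3$ is integrable against $|\dibar\Phi_\delta|\le C\omega(\delta)/\delta$ and its integral over $\{|z-\zeta|>C\delta\}$ is $O(\delta)$, so this part is also $O(\omega(\delta))$. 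A naive first-order replacement would instead produce $O(\delta^{-1})$ many disc contributions and fail; it is precisely the second-order matching carried by the continua $E_j$ that defeats this. Thus $\|F_\delta-f\|_K\le\|F_\delta-\Phi_\delta\|_K+\|\Phi_\delta-f\|_K\le C\,\omega(\delta)\to 0$ as $\delta\downarrow 0$, and a final application of Runge's theorem replaces $F_\delta$ by a holomorphic polynomial. The one genuinely hard ingredient is the kernel lemma producing $g_j$ with the bound $|g_j|\le C/\delta$ and the second-order approximation; this rests on a conformal/capacity estimate for the exterior of $E_j$, whose logarithmic capacity is comparable to $\diam E_j\ge\delta$, and it is the step that truly uses, and cannot be separated from, the connectedness of $\C\setminus K$.
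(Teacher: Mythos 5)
Your proposal is correct and follows essentially the same route as the paper's sketch: Tietze extension and convolution to get a $\Ccal^1$ regularization with controlled $\dibar$, the Cauchy--Green representation, Mergelyan's kernel lemma producing $P_j(z,\zeta)=g_j(z)+(\zeta-b_j)g_j(z)^2$ with the bounds \eqref{eq:estP} on continua $E_j\subset\C\setminus K$ supplied by connectedness of the complement, and the resulting $O(\omega(\delta))$ error estimate followed by Runge's theorem. The only differences are cosmetic (a partition of unity subordinate to $\{D_j\}$ in place of the paper's disjoint pieces $X_j$, and your more explicit near/far splitting of the error integral, which the paper leaves implicit in the constant $6000\,\omega(\delta)$).
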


Mergelyan's theorem generalizes  both Runge's and Weierstrass's theorem. 
It also contains as special cases the theorems of J.\ L.\ Walsh \cite{Walsch1926} (1926) in which 
$K$ is the closure of a Jordan domain, F.\ Hartogs and A.\ Rosenthal 
\cite{HartogsRosenthal1931} (1931) in which $K$ has Lebesgue measure zero,
M.\ Lavrentieff \cite{Lavrentieff1936} (1936) in which $K$ is nowhere dense, and  
M.\ V.\ Keldysh \cite{Keldysh1945} (1945) in which $K$ is the closure of its interior. 

In light of Runge's theorem, the main new point in Mergelyan's theorem 
is to approximate functions in $\Acal(K)$  by functions holomorphic in open neighborhoods of $K$, that is,
to show that 
\[ 
	\Acal(K) = \Ocalc(K).
\] 
If this holds, we say that $K$ (or $\Acal(K)$) enjoys the {\em Mergelyan property}. 
Hence, Mergelyan's theorem is essentially of local nature,
where {\em local} now pertains to {\em neighborhoods of $K$}. This aspect
is emphasized further by Bishop's localization theorem,  Theorem \ref{th:Bishop},
and its converse, Theorem \ref{th:localization-converse}.

Some generalizations of Mergelyan's theorem can be found in his papers
\cite{Mergelyan1952,Mergelyan1954}.  Subsequently 
to Mergelyan, another proof was given by E.\ Bishop in 1960, \cite{Bishop1960}, and 
yet another by L. Carleson in 1964, \cite{Carleson1964}.  Expositions are available in 
many sources; see for instance D.\ Gaier \cite[p.\ 97]{Gaier1987}, T.\ W.\ Gamelin \cite{Gamelin1984},
and W.\ Rudin \cite{Rudin1987}. We outline the proof and refer to the cited sources for the details. 

%
%
\smallskip
\noindent{\em Sketch of proof of Theorem \ref{th:Mergelyan1951}.}
By Tietze's extension theorem, every$f\in \Acal(K)$ extends to a continuous function 
with compact support on $\C$. Fix a number $\delta>0$. Let $\omega(\delta)$ denote the modulus of continuity of $f$.
By convolving $f$ with the function $A_\delta\colon\C\to\R_+$ 
defined by $A_\delta(z)=0$ for $|z|>\delta$ and
\[
	A_\delta(z) = \frac{3}{\pi \delta^2}\left(1-\frac{|z|^2}{\delta^2}\right)^2, \qquad 0\le |z|\le \delta,
\] 
we obtain a function $f_\delta\in \Ccal^1_0(\C)$ with compact support such that 
\[
	|f(z)-f_\delta(z)|<\omega(\delta)\quad \text{and} \quad
	\big| \frac{\di f_\delta}{\di \bar z}(z)\big|< \frac{2\omega(\delta)}{\delta},\quad z\in\C,
\]
and $f_\delta=f$ on $K_\delta = \{z\in K: \dist(z,\C\setminus K)>\delta\}$. By the Cauchy-Green formula
\eqref{eq:Pompeiu},
\[
	f_\delta(z) = \frac{1}{\pi} \int_\C \frac{\dibar f_\delta(\zeta)}{z-\zeta}\, du\, dv,  \qquad z\in \C.
\]
Next, we cover the compact set $X=\supp(\dibar f_\delta)$ 
by finitely many open discs $D_j=\D(z_j,2\delta)$ $(j=1,\ldots,n)$ with centers $z_j \in \C\setminus K$ such that 
each $D_j$ contains a compact Jordan arc $E_j\subset D_j\setminus K$ of diameter at least $2\delta$.
(Such discs $D_j$ and arcs $E_j$ exist because $\C\setminus K$ is connected.)
The main point now is to approximate the Cauchy kernel $\frac{1}{z-\zeta}$ for $z\in \C\setminus E_j$ 
and $\zeta\in D_j$ sufficiently well by a function of the form
\[
	P_j(z,\zeta) = g_j(z) + (\zeta -b_j)g_j(z)^2,
\]
where $g_j\in \Ocal(\C\setminus E_j)$ and $b_j\in \C$. This is accomplished by 
Mergelyan's lemma which says that $g_j$ and $b_j$ can be chosen such that the inequalities
\begin{equation}\label{eq:estP}
	|P_j(z,\zeta)|<\frac{50}{\delta}\quad \text{and}\quad 
	\bigg|P_j(z,\zeta)-\frac{1}{z-\zeta}\bigg|< \frac{4000\, \delta^2}{|z-\zeta|^3}
\end{equation}
hold for all $z\in \C\setminus E_j$ and $\zeta\in D_j$. 
(See also \cite[p.\ 104]{Gaier1987} or \cite[Lemma 20.2]{Rudin1987}.)  Set
\[
	X_1=X\cap \overline D_1, \qquad 
	X_j= X\cap \overline D_j\setminus (X_1\cup\ldots \cup X_{j-1})\ \ \text{for}\ \  j=2,\ldots,n.
\]
The open set $\Omega=\C\setminus\bigcup_{j=1}^n E_j$ clearly contains $K$. The function 
\[
	F_\delta(z)= \sum_{j=1}^n \frac{1}{\pi} \int_{X_j} \frac{\di f_\delta}{\di \bar \zeta}(\zeta) P_j(z,\zeta)\, du\, dv
\]
is holomorphic in $\Omega$ (since every function $P_j(z,\zeta)$ is holomorphic for $z\in \Omega$), 
and it follows from \eqref{eq:estP} that $|F_\delta(z) -  f_\delta(z)| < 6000 \omega(\delta)$ for all 
$z\in\Omega$. As $\delta\to 0$, we have that $\omega(\delta)\to 0$ and hence $F_\delta\to f$ 
uniformly on $K$.
\qed\smallskip

We now consider approximation problems on Riemann surface.
Fundamental discoveries concerning function theory on {\em open Riemann surfaces} 
were made by H.\ Behnke and K.\ Stein \cite{BehnkeStein1949} in 1949. They proved
the following extension of Runge's theorem to open Riemann surfaces 
(see \cite[Theorem 6]{BehnkeStein1949}); the case for $X$ compact 
was pointed out by H.\ L.\ Royden in 1967, \cite[Theorem 10]{Royden1967JAM}, and 
again by H.\ K{\"o}ditz and S.\ Timmann in 1975 \cite[Satz 1]{KoditzTimmann1975}.

%
%

\begin{theorem} [Runge's theorem on Riemann surfaces; 
\cite{BehnkeStein1949,Royden1967JAM,KoditzTimmann1975}] 
\label{th:Runge2}
If $K$ is a compact set in a Riemann surface $X$, then every holomorphic function $f$
on a neighborhood of $K$ can be approximated uniformly on $K$ by meromorphic functions $F$ on $X$
without poles in $K$, and by holomorphic functions on $X$ if $X\setminus K$ has no relatively compact 
connected components. 
\end{theorem}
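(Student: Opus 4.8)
The plan is to reduce the statement to the planar Runge theorem (Theorem \ref{th:Runge}) and the Cauchy--Green operator of \eqref{eq:TK}, by reading the classical pole-pushing argument inside coordinate charts and then globalizing along an exhaustion of $X$.

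The engine is an elementary \emph{bumping} step localized in a single chart. Suppose a compact set $A\subset X$ is enlarged to a slightly larger compact set $B=A\cup C$, where the ``cap'' $C$ is contained in one coordinate disc $V\cong\D$ and is attached to $A$ in the standard way. Working in the coordinate on $V$, I would cut off a function $f$ holomorphic near $A$ by a smooth $\chi\equiv 1$ near $A$, represent $\chi f$ by the Cauchy--Green formula \eqref{eq:Pompeiu} and the operator in \eqref{eq:TK}, Riemann-sum the Cauchy kernel, and invoke Theorem \ref{th:Runge} to push the resulting poles off $B$ within $V$. This yields uniform approximation on $B$ of $f$ by functions holomorphic on a neighborhood of $B$; it is exactly the planar mechanism, now read in the chart.

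To globalize, exhaust $X$ by relatively compact open sets $K\subset X_1\Subset X_2\Subset\cdots$ with $\bigcup_j X_j=X$. One passes from $\overline{X_j}$ to $\overline{X_{j+1}}$ by finitely many bumps of the above type, each supported in a single chart, obtaining the single-shell approximation lemma: every function holomorphic near $\overline{X_j}$ is approximable uniformly on $\overline{X_j}$ by functions holomorphic near $\overline{X_{j+1}}$. A Mittag--Leffler telescoping then finishes: starting from $f$ near $K\subset X_1$, I would inductively build $f_j$ holomorphic near $\overline{X_{j+1}}$ with $\|f_{j+1}-f_j\|_{\overline{X_j}}<2^{-j}\epsilon$, so that the series converges to a global $F\in\Ocal(X)$ with $\|F-f\|_K<\epsilon$.

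The dichotomy in the statement is governed by where the poles produced at each bump are allowed to go. At every step the rational approximants in a chart carry poles that must be expelled from the set being approximated; one expels each pole by moving it, through a finite chain of charts and geometric-series expansions as in the proof of Theorem \ref{th:Runge}, along a path in $X\setminus K$ toward the ends of $X$. If $X\setminus K$ has no relatively compact connected component, every pole can be driven out of every $X_j$, yielding genuinely holomorphic approximants on $X$; if such a component is present, a pole trapped inside it cannot reach an end---precisely the obstruction familiar from the maximum principle when $K$ separates $\C$---and one retains only meromorphic approximants without poles on $K$. I expect the \emph{main obstacle} to be the single-shell lemma together with the construction of an exhaustion whose shells inherit the topological hypothesis of the theorem: this is where the real work sits, since the bumping step is the only genuinely analytic ingredient, while the telescoping and the pole bookkeeping are formal.
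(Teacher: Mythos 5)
Your skeleton (exhaustion, telescoping, single-shell bumping, pole bookkeeping) is the right organizational frame, and it matches how both the planar proof and Behnke--Stein's proof are organized. But there is a genuine gap at the heart of the proposal: the objects your bumping step produces are not functions on $X$. When you Riemann-sum the Cauchy--Green formula \eqref{eq:Pompeiu} inside a chart $V\cong\D$, the approximants are finite linear combinations of $\frac{1}{z-a}$ \emph{in the chart coordinate}; such a function is defined only on $V$ and has no reason to extend meromorphically (or at all) across $X\setminus V$. The theorem demands approximation by meromorphic functions on all of $X$, so at some point chart-local rational functions must be replaced by globally defined meromorphic functions with a pole at the prescribed point --- and the existence of such functions is precisely the hard, non-formal content of the Behnke--Stein construction of elementary differentials, i.e., of the global Cauchy kernel $\omega(z,\zeta)$ in \eqref{eq:CauchykernelX} (see \cite[Theorem 3]{BehnkeStein1949} and Remark \ref{rem:CauchykernelX}, where later constructions require a Gunning--Narasimhan immersion plus the solution of a Cousin problem). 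This cannot be conjured out of the planar Theorem \ref{th:Runge} read in charts: it is a global existence theorem on the surface. The same objection defeats the pole-pushing ``through a finite chain of charts'': expanding $\frac{1}{a-z}$ in powers of $\frac{1}{b-z}$ is legitimate only once both symbols denote restrictions of globally defined meromorphic functions, which is exactly what is missing.

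A concrete way to see that the missing ingredient is substantial: the theorem also covers compact $X$ (the K\"oditz--Timmann case), and on a compact surface of genus $\ge 1$ there is \emph{no} meromorphic function with a single simple pole (such a function would be a degree-one map to $\CP^1$). So the building blocks your argument manipulates --- ``a function with one pole that I can move'' --- may simply fail to exist; what exists instead is the kernel $\omega(z,\zeta)$, a $1$-form in $\zeta$ depending meromorphically on both variables, and the argument must be run through it. (Your exhaustion-and-ends picture also silently assumes $X$ noncompact.) The paper's proof resolves all of this in one stroke: it first invokes the global kernel \eqref{eq:CauchykernelX}, writes the resulting Cauchy--Green formula on the surface, Riemann-sums \emph{that} kernel to approximate $f$ by globally meromorphic functions with poles off $K$, and only then pushes poles, now applied to global objects, just as in Theorem \ref{th:Runge}. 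If one wants to avoid kernels entirely, the known alternative is not chart-by-chart bumping but Malgrange's functional-analytic proof via Weyl's lemma \cite{Malgrange1955}.
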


In the papers of Royden \cite{Royden1967JAM} and K{\"o}ditz and Timmann \cite{KoditzTimmann1975}
the function $f$ is assumed to be meromorphic on a neighbourhood of $K$ 
(with at most finitely many poles on $K$), the approximating meromorphic function 
$F$ on $X$ has no poles on $K$ except those of $f$,
and its poles in $X\setminus K$ are located in a set $E$ having one point 
in each connected component of $X\setminus K$. Furthermore, Royden showed that
$F$ can be chosen to agree with $f$ to a given finite order at a given finite set of points 
in $K$. 

A relatively compact connected component of $X\setminus K$ is called a {\em hole} of $K$.
A compact set without holes in an open Riemann surface $X$ is also called a {\em Runge compact} in $X$.
The following is a corollary to Theorem \ref{th:Runge2} and the maximum principle. 

%
%
\begin{corollary}\label{cor:AB} 
Let $X$ be an open Riemann surface.
\begin{enumerate}
\item[\rm (a)]  Holomorphic functions on $X$ separate points, that is, for any pair of distinct points
$p,q\in X$ there exists $f\in\Ocal(X)$ such that $f(p)\ne f(q)$.
\item[\rm (b)] For every compact set $K$ in $X$, its $\Ocal(X)$-convex
hull $\wh K_{\Ocal(X)}$ (see \eqref{eq:hull}) is the union of $K$ and all holes of $K$ in $X$;
in particular, $\wh K_{\Ocal(X)}$ is compact.
\end{enumerate}
\end{corollary}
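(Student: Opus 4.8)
The plan is to derive both statements from Runge's theorem on Riemann surfaces (Theorem \ref{th:Runge2}) and the maximum principle, proving (b) first and obtaining (a) as the special case $K=\{p\}$. Throughout I write $H$ for the union of the holes of $K$ and set $K_1=K\cup H$.

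For (b) I would first show $K_1\subseteq \wh K_{\Ocal(X)}$. Every point of $K$ lies in the hull by definition, while if $W$ is a hole of $K$ then $\partial W\subseteq K$, so for each $f\in\Ocal(X)$ the maximum principle applied to $f$ on the compact set $\overline W$ gives $\max_{\overline W}|f|\le\max_K|f|$; hence $W\subseteq\wh K_{\Ocal(X)}$. This is the only place the maximum principle enters. For the opposite inclusion I fix $p\in X\setminus K_1$ and seek $f\in\Ocal(X)$ with $|f(p)|>\max_K|f|$. Such a $p$ lies in a non-relatively-compact component $W_0$ of $X\setminus K$; I choose a small closed coordinate disc $\overline D\subseteq W_0$ about $p$, disjoint from $K_1$, and put $L=K_1\cup\overline D$. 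Granting that $L$ has no holes, Theorem \ref{th:Runge2} lets me approximate the locally constant function $g$ (equal to $0$ near $K_1$ and $1$ near $\overline D$, holomorphic near $L$ since the two pieces are disjoint) by some $f\in\Ocal(X)$ with $\max_L|f-g|<1/2$; then $\max_K|f|<1/2<|f(p)|$, so $p\notin\wh K_{\Ocal(X)}$. Together the two inclusions give $\wh K_{\Ocal(X)}=K_1$.

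For compactness of the hull I would fix a Runge exhaustion $X=\bigcup_n\Omega_n$ and an index $n$ with $K\subseteq\Omega_n$; since a hole of $K$ cannot meet the non-relatively-compact set $X\setminus\overline{\Omega_n}$, every hole is contained in $\overline{\Omega_n}$, whence $K_1\subseteq\overline{\Omega_n}$ is compact. Finally (a) is the case $K=\{p\}$: as $X$ is connected and noncompact, $X\setminus\{p\}$ is connected and not relatively compact, so $\{p\}$ has no holes and $\wh{\{p\}}_{\Ocal(X)}=\{p\}$ by (b); hence any $q\ne p$ lies outside this hull, which produces $f\in\Ocal(X)$ with $|f(q)|>|f(p)|$ and in particular $f(p)\ne f(q)$.

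The main obstacle is topological rather than analytic: one must verify that filling the holes of $K$ leaves $X\setminus K_1$ with only non-relatively-compact components, and that deleting a small coordinate disc from such a component $W_0$ neither splits off a relatively compact piece nor disturbs the other components, so that $L$ is again hole-free and Theorem \ref{th:Runge2} applies. Once these point-set facts are in place, the maximum-principle inclusion and the Runge approximation step are entirely routine.
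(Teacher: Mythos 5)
Your proposal is correct and takes exactly the route the paper intends: the paper derives Corollary \ref{cor:AB} from Theorem \ref{th:Runge2} and the maximum principle without spelling out details, and your argument (maximum principle to get $K\cup H\subseteq \wh K_{\Ocal(X)}$, Runge approximation of a locally constant function on $K_1\cup\overline D$ for the reverse inclusion, and part (a) as the special case $K=\{p\}$) is the standard implementation of that one-line proof. The point-set facts you flag do fill in correctly: filling the holes of $K$ yields a compact set whose complementary components are exactly the non-relatively-compact components of $X\setminus K$, and removing a closed coordinate disc from such a component leaves it connected (any component of $W_0\setminus\overline D$ avoiding the collar annulus around $\overline D$ would be clopen in $W_0$) and non-relatively-compact, so Theorem \ref{th:Runge2} applies to $L$.
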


Conditions (a) and (b) in Corollary \ref{cor:AB} were used in 1951 by K.\ Stein \cite{Stein1951}
to introduce the class of {\em Stein manifolds} of any dimension.
(The third of Stein's axioms is a consequence of these two.)
Thus, open Riemann surfaces are the same thing as $1$-dimensional Stein manifolds. 
Theorem \ref{th:Runge2} is a special case of the 
{\em Oka-Weil theorem} on Stein manifolds; see Sect.\ \ref{sec:OkaWeil}. 

The  proof of Runge's theorem in the plane is based on 
Cauchy's integral formula.  To prove Runge's theorem on open Riemann surfaces, 
Behnke and Stein constructed Cauchy type kernels, 
the so called \emph{elementary differentials}; see \cite[Theorem 3]{BehnkeStein1949}
and Remark \ref{rem:CauchykernelX} below where additional references are given.
More precisely, on any open Riemann surface $X$ there is a 
meromorphic 1-form $\omega$ on $X_z\times X_\zeta$ which is holomorphic off the 
diagonal and which in any pair of local coordinates has an expression 
\begin{equation}\label{eq:CauchykernelX}
	\omega(z,\zeta) = \left(\frac{1}{\zeta - z} + h(z,\zeta)\right) d\zeta,
\end{equation}
with $h$ a holomorphic function. (Note that $\omega$ is a form only in the second variable
$\zeta$, but its coefficient is a meromorphic function of both variables $(z,\zeta)$.)
In particular, $\omega$ has simple poles with residues one along the diagonal of $X\times X$. 
For any $\Ccal^1$-smooth domain $\Omega\Subset X$ and  $f\in\Ccal^1(\overline\Omega)$ 
one then obtains the Cauchy-Green formula
\begin{equation}
	f(z) = \frac{1}{2\pi \imath}\int_{\partial\Omega} f(\zeta)\omega(z,\zeta) 
	- \frac{1}{2\pi \imath} \int_{\Omega}\overline\partial f(\zeta)\wedge\omega(z,\zeta).
\end{equation}
By using this formula when $f$ is holomorphic on an open neighborhood of the set 
$K$ in Theorem \ref{th:Runge2}, one can approximate $f$ by meromorphic functions with poles
on $X\setminus K$, and the rest of the argument (pushing the poles) is similar to the one 
in Theorem \ref{th:Runge}.

Note that, just as in the complex plane, if we consider $(0,1)$-forms $\alpha$ with compact support 
in $\Omega$, we get that the mapping $\alpha\mapsto T(\alpha)$, given by
\begin{equation}\label{operatorbs}
	T(\alpha)(z) = -\frac{1}{2\pi \imath} \int_{\Omega}\alpha(\zeta) \wedge\omega(z,\zeta),
\end{equation}
is a bounded linear operator satisfying $\overline\partial(T(\alpha))=\alpha$.  This will be used below where we 
give a simple proof of Bishop's localization theorem.  

A functional analytic proof of Theorem \ref{th:Runge2} using Weyl's lemma  was given by 
B.\ Malgrange \cite{Malgrange1955} in 1955; see also O.\ Forster's monograph 
\cite[Sect.\ 25]{Forster1991}.

%
%
\begin{remark}\label{rem:CauchykernelX}
H.\ Behnke and K.\ Stein constructed Cauchy type kernels on relatively compact domains in any open Riemann surface  
\cite[Theorem 3]{BehnkeStein1949}; see also H.\ Behnke and F.\ Sommer \cite[p.\ 584]{BehnkeSommer1962}. 
The existence of globally defined Cauchy kernels (\ref{eq:CauchykernelX})
was shown by S.\ Scheinberg \cite{Scheinberg1978} and 
P.\ M.\ Gauthier \cite{Gauthier1979} in 1978-79. Their proof uses the theorem 
of R.\ C.\ Gunning and R.\ Narasimhan \cite{GunningNarasimhan1967} (1967) which says that
every open Riemann surface $X$ admits a holomorphic immersion $g:X\to \C$. The pull-back by $g$ 
of the Cauchy kernel  on $\C$ is a Cauchy kernel on $X$ with the correct behavior along the 
diagonal $D=\{(z,z):z\in X\}$ (see \eqref{eq:CauchykernelX}), but with additional poles if $g$ is 
not injective. Since the diagonal $D$ has a basis of Stein neighborhoods in $X\times X$
and its complement $X\times X\setminus D$ is also Stein, one can remove
the extra poles by solving a Cousin problem. Furthermore, Gauthier and Scheinberg
found Cauchy kernels satisfying the symmetry condition $F(p,q)=-F(q,p)$.
\qed\end{remark}

Theorem \ref{th:Runge2} implies the analogous approximation result 
for meromorphic functions. Indeed, we may write a meromorphic function $f$ 
on an open neighborhood $U\subsetneq X$ 
of the compact set $K$ as the quotient $f=g/h$ of two holomorphic functions
(this follows from the Weierstrass interpolation theorem on open Riemann surfaces; 
see \cite{Florack1948,Weierstrass1886}) and apply the same result separately to $g$ and $h$.
Since meromorphic functions are precisely holomorphic maps to the Riemann sphere $\CP^1=\C\cup\{\infty\}$,
this extension of Theorem \ref{th:Runge2} has the following corollary.

\begin{corollary}\label{cor:Rsphere}
Let $K$ be a compact set in an arbitrary Riemann surface $X$. Then, every holomorphic map
from a neighborhood of $K$ to $\CP^1$ may be approximated uniformly on $K$ by holomorphic maps $X\to\CP^1$. 
\end{corollary}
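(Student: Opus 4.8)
The plan is to reduce the statement to the meromorphic Runge theorem (Theorem \ref{th:Runge2}) by passing to homogeneous coordinates on $\CP^1$. First I would fix the round (spherical) metric on $\CP^1=\C\cup\{\infty\}$, whose chordal distance is given in homogeneous coordinates by
\[
	\rho([a_1:b_1],[a_2:b_2]) = \frac{|a_1 b_2 - a_2 b_1|}{\sqrt{|a_1|^2+|b_1|^2}\,\sqrt{|a_2|^2+|b_2|^2}},
\]
and interpret uniform approximation of $\CP^1$-valued maps with respect to this metric. Since a holomorphic map into $\CP^1$ is precisely a meromorphic function (the poles being the preimage of $\infty$), the task becomes to approximate a meromorphic function $f$ defined near $K$ by meromorphic functions on all of $X$, uniformly in $\rho$ on $K$.

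Next I would set up a factorization of $f$. If $K=X$ (possible only when $X$ is compact) then $f$ is already a map $X\to\CP^1$ and there is nothing to prove, so I may assume $K\neq X$ and shrink the neighborhood $U\supset K$ so that $U\subsetneq X$. As $X$ is connected, a proper open subset cannot be compact, so $U$ is an open Riemann surface; by the Weierstrass factorization theorem on open Riemann surfaces (cf.\ \cite{Florack1948,Weierstrass1886}) I can write $f=g/h$ with $g,h\in\Ocal(U)$ having no common zeros. Because $f$ attains a well-defined value in $\CP^1$ at each point of $K$, the pair $(g,h)$ never vanishes simultaneously on $K$, so by compactness there is a constant $c>0$ with $|g(p)|^2+|h(p)|^2\geq c$ for all $p\in K$; of course $(g,h)$ is also bounded above on $K$.

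I would then invoke Theorem \ref{th:Runge2} twice: given $\eta>0$ it produces meromorphic functions $G,H$ on $X$, with no poles in $K$, such that $g$ and $h$ are approximated to within $\eta$ uniformly on $K$. Setting $F=G/H=[G:H]$, a holomorphic map $X\to\CP^1$, and writing $f=[g:h]$, the chordal estimate gives at each $p\in K$
\[
	\rho\big(F(p),f(p)\big) \leq \frac{|h|\,|G-g| + |g|\,|H-h|}{\sqrt{|G|^2+|H|^2}\,\sqrt{|g|^2+|h|^2}}.
\]
Once $\eta$ is small relative to $c$ the denominator stays bounded below (as $|G|^2+|H|^2\geq c/2$ on $K$) while the numerator is $O(\eta)$, so the right-hand side tends to $0$ uniformly on $K$ as $\eta\to0$, which is the desired approximation.

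The only genuine obstacle is this final passage from the additive approximations $G\approx g$ and $H\approx h$ to projective closeness of $F=G/H$ to $f=g/h$ in the spherical metric, in particular at the poles of $f$ where $f=\infty$. Working with the homogeneous-coordinate form $[g:h]$ rather than with the quotient makes the argument automatic and symmetric in the value $\infty$: the no-common-zero condition confines both $(g,h)$ and $(G,H)$ to a fixed compact subset of $\C^2\setminus\{0\}$ over $K$, on which the tautological projection to $\CP^1$ is uniformly continuous, and the displayed inequality is the quantitative form of this continuity.
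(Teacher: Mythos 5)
Your proof is correct and follows essentially the same route as the paper: write $f=g/h$ via the Weierstrass factorization theorem on the open neighborhood $U\subsetneq X$ (with the same citations), apply Theorem \ref{th:Runge2} separately to $g$ and $h$, and recombine the approximants into a map to $\CP^1$. The chordal-metric estimate you work out is precisely the detail the paper leaves implicit in the phrase ``apply the same result separately to $g$ and $h$,'' so your write-up is simply a more explicit version of the paper's argument.
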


In 1958, E.\ Bishop \cite{Bishop1958PJM} proved the following extension of Mergelyan's theorem.

%
%

\begin{theorem}[Bishop-Mergelyan theorem; Bishop (1958), \cite{Bishop1958PJM}] 
\label{th:Mergelyan2}
If $K$ is a compact set without holes in an open Riemann surface $X$, 
then every function in $\Acal(K)$ can be approximated uniformly on $K$ by functions in $\Ocal(X)$.

More generally, if $X$ is an arbitrary Riemann surface, $\rho$ is a metric on $X$, and there is a $c>0$ 
such that every hole of a compact subset $K\subset X$ has $\rho$-diameter at least $c$,
then every function in $\Acal(K)$ is a uniform limit of meromorphic functions on $X$ with poles off $K$.
This holds in particular if $K$ has at most finitely many holes.
\end{theorem}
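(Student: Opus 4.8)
The plan is to split the proof into a \emph{local} Mergelyan step and a \emph{global} Runge step. Observe first that everything reduces to the Mergelyan property $\Acal(K)=\Ocalc(K)$, i.e.\ to showing that every $f\in\Acal(K)$ is a uniform limit on $K$ of functions holomorphic in neighborhoods of $K$. Granting this, the theorem follows at once from Runge's theorem on Riemann surfaces (Theorem \ref{th:Runge2}): in the first statement $K$ has no holes in the open surface $X$, so $\wh K_{\Ocal(X)}=K$ by Corollary \ref{cor:AB}(b) and the neighborhood-holomorphic approximants are themselves approximable by functions in $\Ocal(X)$; in the general statement, where holes are present, Theorem \ref{th:Runge2} only yields approximation by meromorphic functions on $X$ with poles off $K$, the poles being pushed into the holes. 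Thus the entire content lies in establishing $\Acal(K)=\Ocalc(K)$, and the hypotheses on the holes will enter only through the local step.

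For the Mergelyan property I would prove and then apply a localization principle in the spirit of Bishop and Vitu{\v s}kin. After extending $f\in\Acal(K)$ to a compactly supported continuous function on $X$ (Tietze), cover $K$ by finitely many coordinate discs $D_j$ of small $\rho$-diameter $\delta$ and choose a smooth partition of unity $\chi_j$ subordinate to this cover with $\sum_j\chi_j\equiv 1$ near $K$. Using the Cauchy--Green kernel $\omega(z,\zeta)$ of \eqref{eq:CauchykernelX} and the associated solution operator $T$ of \eqref{operatorbs}, I would form the Vitu{\v s}kin-type localizations $f_j$ of $f$ attached to each $\chi_j$; these decompose $f$ near $K$ into pieces each holomorphic off $\supp\chi_j$ and on $\mathring K$, each inheriting the modulus of continuity of $f$. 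The point is that $f_j|_{K\cap\overline{D_j}}\in\Acal(K\cap\overline{D_j})$, so it suffices to approximate every $f_j$ by a function holomorphic near the local piece $K\cap\overline{D_j}$, then sum the local approximants and correct the result by a single application of $T$ to render it holomorphic near $K$.

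The local pieces are handled by reduction to the plane. Working in the chart containing $D_j$, I claim that $K\cap\overline{D_j}$ has connected complement as soon as $D_j$ has $\rho$-diameter $<c$. Indeed, any bounded component $W$ of the complement of $K\cap\overline{D_j}$ cannot meet the connected unbounded exterior of $\overline{D_j}$, so $W\subset D_j$; its frontier then lies in $K$, whence $W$ is a relatively compact component of $X\setminus K$, that is, a hole of $K$. Such a hole has $\rho$-diameter $\ge c$, which is impossible for a subset of a disc of $\rho$-diameter $<c$. Hence no such $W$ exists, and the planar Mergelyan theorem (Theorem \ref{th:Mergelyan1951}) applies to $K\cap\overline{D_j}$, producing the required local approximants (indeed by polynomials, which are entire in the chart). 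This is precisely where the hypothesis is used: absence of holes means no bounded complementary component at any scale, while the lower bound $c$ on hole diameters lets us take the covering discs fine enough to destroy all local holes.

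The main obstacle is the gluing estimate in the localization step, because differentiating the cut-offs costs a factor $1/\delta$ in $\|\dibar\chi_j\|_\infty$, which at first sight ruins the smallness of the error. This is exactly where Mergelyan's sharp estimate \eqref{eq:Mest} is decisive: the Vitu{\v s}kin correction carries the difference $f(\zeta)-f(z)$, of size $\omega(\delta)$, against the kernel $1/(z-\zeta)$, and since $\int_{D_j}|z-\zeta|^{-1}\,du\,dv\le 2\pi\delta$ the powers of $\delta$ cancel and the correction is $O(\omega(\delta))$ uniformly in $z$. One must also verify that the holomorphic correction $h$ in the Riemann-surface kernel \eqref{eq:CauchykernelX} is harmless, which is routine as it is locally bounded, and that the number of mutually overlapping discs stays bounded so the errors accumulate controllably. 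Letting $\delta\to0$ then gives functions holomorphic near $K$ converging to $f$ uniformly on $K$, which establishes the Mergelyan property and, with the Runge step above, the theorem.
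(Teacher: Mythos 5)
Your proposal is correct and follows the same architecture as the paper's proof: reduce everything to the Mergelyan property $\Acal(K)=\Ocalc(K)$, kill all local holes by covering $K$ with coordinate discs of $\rho$-diameter $<c$ (your argument that a bounded complementary component of $K\cap\overline{D_j}$ would be a hole of $K$ of diameter $<c$ is exactly the paper's), apply the planar Theorem \ref{th:Mergelyan1951} on each piece, glue via a partition of unity and a $\dibar$-correction with the operator $T$ of \eqref{operatorbs}, and finish with Theorem \ref{th:Runge2}. The difference lies in how the gluing is organized. The paper cites Bishop's localization theorem (Theorem \ref{th:Bishop}) and proves it by Sakai's argument: the cover is \emph{fixed}, the local approximants $h_j$ are taken $\epsilon$-close to $f$ on $K\cap D_j$ (planar Mergelyan gives arbitrary accuracy), so $\dibar\bigl(\sum_j\chi_j h_j\bigr)=\sum_j\dibar\chi_j\,(h_j-f)=O(\epsilon)$ with $\|\dibar\chi_j\|_\infty$ a harmless fixed constant, and a single $T$-correction finishes. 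You instead organize the gluing \`a la Vitu{\v s}kin, with discs of size $\delta\to0$ and errors measured by $\omega(\delta)$, and you call the cancellation of $\|\dibar\chi_j\|_\infty\sim 1/\delta$ against \eqref{eq:Mest} ``decisive''. It is not needed here: that balance is essential in Mergelyan's original planar proof, where the local approximants (the kernels $P_j$ in \eqref{eq:estP}) have accuracy \emph{tied to} $\delta$, but your local approximants come from Theorem \ref{th:Mergelyan1951} with accuracy entirely at your disposal, so you may fix the cover once and for all and send only the local accuracy to zero --- the ``main obstacle'' you describe then never arises. One genuine imprecision to repair: ``sum the local approximants'' does not literally parse, since each $h_j$ is defined only in its chart $U_j$; before summing you must globalize each one by gluing it to $f$ (or to the localization $f_j$) with the cutoffs, as in Sakai's $\sum_j\chi_j h_j$ or in the Garnett--Hoffman scheme of Remark \ref{rem:Hoffman}, and only then apply the $T$-correction. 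With that fix, your proof works, and note that $T$ is only needed on a relatively compact open neighborhood of $K$, so the Behnke--Stein kernel suffices even when $X$ is compact.
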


Bishop's proof depends on investigation of measures on $K$ annihilating 
the algebra $\Acal(K)$. This approach was further developed by L.\ K.\ Kodama \cite{Kodama1965} in 1965.
In 1968, J.\ Garnett observed \cite[p.\ 463]{Garnett1968} that Theorem \ref{th:Mergelyan2} 
can be reduced to Merge\-lyan's theorem on polynomial approximation (see Theorem \ref{th:Mergelyan1951})
by means of the following localization theorem due to Bishop \cite{Bishop1958PJM}
(see also \cite[Theorem 5]{Kodama1965}).

%
%

\begin{theorem}[Bishop's localization theorem; (1958), \cite{Bishop1958PJM}] \label{th:Bishop}
Let $K$ be a compact set in a Riemann surface $X$ and $f\in\Ccal(K)$.  
If every point $x\in K$ has a compact neighborhood $D_x \subset X$ such that
$f|_{K\cap D_x}\in \Ocalc(K\cap D_x)$, then $f\in \Ocalc(K)$.
\end{theorem}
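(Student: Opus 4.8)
The plan is to prove the statement by duality, converting the local hypothesis into a localization of \emph{annihilating measures} and using the Cauchy--Green operator $T$ of \eqref{operatorbs} as the only analytic input. Since $\Ocalc(K)$ is, by definition, a closed linear subspace of the Banach space $\Ccal(K)$, the Hahn--Banach and Riesz representation theorems show that $f\in\Ocalc(K)$ if and only if $\int_K f\,d\mu=0$ for every complex Borel measure $\mu$ on $K$ satisfying $\int_K h\,d\mu=0$ for all $h\in\Ocal(K)$; here we used that annihilating $\Ocalc(K)$ is the same as annihilating the restrictions to $K$ of functions holomorphic near $K$. So I fix one such measure $\mu$ and aim to prove $\int_K f\,d\mu=0$. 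Throughout I work inside a relatively compact domain $\Omega$ with $K\subset\Omega$, on which the Behnke--Stein kernel $\omega$ and the operator $T$ with $\dibar\,T(\alpha)=\alpha$ are available.

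First I would localize $\mu$. By compactness, choose finitely many points $x_1,\dots,x_m\in K$ whose neighborhoods $D_j:=D_{x_j}$ have interiors covering $K$, together with a smooth partition of unity $\{\chi_j\}$ satisfying $\supp\chi_j\subset\mathring D_j$ and $\sum_j\chi_j\equiv1$ on a neighborhood of $K$. To each index I attach the Vitushkin localization operator $V_{\chi_j}\psi:=\chi_j\psi-T\big((\dibar\chi_j)\,\psi\big)$, whose defining feature is that $\dibar(V_{\chi_j}\psi)=\chi_j\,\dibar\psi$, so that $V_{\chi_j}$ confines the failure of holomorphy of $\psi$ to $\supp\chi_j$. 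I then define localized measures by the adjoint relation $\langle\mu_j,\psi\rangle:=\langle\mu,V_{\chi_j}\psi\rangle$ for $\psi\in\Ccal(\overline\Omega)$.

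The heart of the argument is the single observation that whenever a $(0,1)$-form $\beta$ is supported off $K$, the function $T(\beta)$ is holomorphic on a neighborhood of $K$ (since $\dibar\,T(\beta)=\beta$ vanishes there), and is therefore annihilated by $\mu$. From this I would read off everything. Choosing $\psi$ supported off $\supp\chi_j$, resp.\ off $K$, makes both terms of $\langle\mu_j,\psi\rangle$ vanish, which gives $\supp\mu_j\subset K\cap\supp\chi_j\subset K\cap D_j$. Summing over $j$ and using $\sum_j\chi_j\equiv1$ and $\sum_j\dibar\chi_j\equiv0$ near $K$ yields $\sum_j\mu_j=\mu$, because the remainder $T\big((\dibar\sum_j\chi_j)\psi\big)$ is again $T$ of a form supported off $K$. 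Finally, for $h$ holomorphic near $K\cap D_j$, extend it to a smooth $\wt h$ on $\Omega$ that still equals $h$ near $K\cap D_j$; then $\dibar(V_{\chi_j}\wt h)=\chi_j\,\dibar\wt h$ vanishes near $K$, so $V_{\chi_j}\wt h\in\Ocal(K)$ and $\langle\mu_j,h\rangle=\langle\mu,V_{\chi_j}\wt h\rangle=0$. Thus each $\mu_j$ annihilates $\Ocal(K\cap D_j)$, hence $\Ocalc(K\cap D_j)$.

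With these in hand the proof closes at once: since $\supp\mu_j\subset K\cap D_j$ and $f|_{K\cap D_j}\in\Ocalc(K\cap D_j)$ by hypothesis, each $\int f\,d\mu_j=0$, and summing gives $\int_K f\,d\mu=\sum_j\int f\,d\mu_j=0$. I expect the main obstacle to be bookkeeping rather than hard analysis: the function $f$ and the local approximants are given only on $K$ or near $K\cap D_j$, so one must pass consistently to global extensions and verify that every quantity eventually paired with $\mu_j$ depends only on values on $K\cap D_j$, where $\mu_j$ is supported — this is exactly what renders the choice of extension harmless. The one genuinely analytic ingredient, and the linchpin of the scheme, is the highlighted fact that $T$ sends forms supported off $K$ to functions holomorphic near $K$; everything else is the formal calculus of the operators $V_{\chi_j}$ and their adjoints.
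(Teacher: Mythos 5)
Your proof is correct, but it takes a genuinely different route from the one in the paper. The paper follows Sakai's argument: glue the local approximants $f_j$ directly with a partition of unity to form $g=\sum_j\chi_j f_j$, observe that $\|g-f\|=O(\epsilon)$ and $\dibar g=\sum_j(\f_j-f)\dibar\chi_j=O(\epsilon)$ near $K$ (using $\sum_j\dibar\chi_j=0$ there), and then correct by subtracting $T(\chi\cdot\dibar g)$ — a constructive, quantitative argument producing an explicit holomorphic approximant with error $O(\epsilon)$. You instead work on the dual side: by Hahn--Banach and Riesz it suffices to show every measure $\mu$ on $K$ annihilating $\Ocal(K)$ also annihilates $f$, and you achieve this by localizing $\mu$ through the adjoint of the Vitu\v skin-type operator $V_{\chi_j}\psi=\chi_j\psi-T\bigl((\dibar\chi_j)\psi\bigr)$, verifying that $\supp\mu_j\subset K\cap D_j$, that $\sum_j\mu_j=\mu$, and that each $\mu_j$ annihilates $\Ocal(K\cap D_j)$ and hence $\Ocalc(K\cap D_j)$ — all three facts flowing from the single observation that $T$ maps forms supported off $K$ to elements of $\Ocal(K)$. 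This is in spirit closer to Bishop's original measure-theoretic proof (which the paper mentions but does not reproduce) and is precisely the measure-side dual of the Hoffman--Garnett function-side localization sketched in Remark \ref{rem:Hoffman}. Both proofs consume the same analytic input, the Cauchy--Green operator $T$ of \eqref{operatorbs}; what Sakai's argument buys is constructiveness and an explicit error bound, while your duality scheme is non-quantitative but conceptually clean and is the framework that generalizes to the finer rational-approximation questions ($\Rcal(K)$ versus $\Acal(K)$) of Vitu\v skin theory. Two small points of bookkeeping you should make explicit: the operator $V_{\chi_j}$ is bounded on $\Ccal(\overline\Omega)$ in sup-norm (this uses the stated boundedness of $T$ and is what guarantees that the adjoint functionals $\mu_j$ are again represented by finite measures), and the compact neighborhoods $D_x$ may be shrunk so as to lie in $\Omega$, which is harmless since the local hypothesis $f|_{K\cap D_x}\in\Ocalc(K\cap D_x)$ passes to smaller compact neighborhoods. (Also note the typo $\f_j$ above should read $f_j$; the identity is $\dibar g=\sum_j(f_j-f)\,\dibar\chi_j$.)
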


Let us first indicate how Theorems \ref{th:Mergelyan1951}, \ref{th:Runge2}, and \ref{th:Bishop} 
imply Theorem \ref{th:Mergelyan2}. We cover $K$ by  open coordinate discs $U_1,\ldots, U_N$
of diameter at most $c$ (the number in the second part of Theorem \ref{th:Mergelyan2};
no condition is needed for the first part). Choose closed discs $D_j\subset U_j$
for $j=1,\ldots,N$ whose interiors still cover $K$. Then, $U_j\setminus (K\cap D_j)$ 
is connected. (Indeed, every relatively compact connected component of $U_j\setminus (K\cap D_j)$                                                                                        is also a connected component of $X\setminus D_j$ of diameter $<c$, contradicting the assumption.) Since $U_j$ is 
a planar set, Theorem \ref{th:Mergelyan1951}  
implies $\Acal(K\cap D_j)=\Ocalc(K\cap D_j)$. Thus, the hypothesis of Theorem \ref{th:Bishop}  is satisfied, 
and hence $\Acal(K)=\Ocalc(K)$. Theorem \ref{th:Mergelyan2} then follows from Runge's theorem 
(see Theorem \ref{th:Runge2}).

\smallskip
\noindent{\em Proof of  Theorem \ref{th:Bishop}.}
The following simple proof, based on solving the $\dibar$-equation, was given by A.\ Sakai  \cite{Sakai1972} in 1972.    
We may assume that $f$ is continuous in a neighborhood of $K$. Cover $K$ by finitely many neighborhoods $D_j$
as in the theorem, such that the family of open sets $\mathring D_j$ is an open cover of $K$.
Let $\chi_j$ be a partition of unity with respect to this cover.    
Now by the assumption we obtain for any $\epsilon>0$ functions $f_j\in\Ccal(D_j)\cap\Ocal(K\cap D_j)$
such that $\|f_j-f\|_{\Ccal(K\cap D_j)}<\epsilon$.  Set $g:=\sum_{j=1}^m\chi_j f_j$.  
Then on some open neighborhood $U$ of $K$ we have that $\|g-f\|_{\Ccal(U)}=O(\epsilon)$ and 
\[ 
	\overline\partial g = \sum_{j=1}^m \overline\partial\chi_j\cdot f_j = 
	\sum_{j=1}^m \overline\partial\chi_j\cdot (f_j-f) = O(\epsilon).
\] 
(We have used that $\sum_{j=1}^m \overline\partial\chi_j=0$ in a neighborhood of $K$.)
Let $\chi\in\Ccal^\infty_0(U)$ be a cut-off function with $0\leq\chi \leq 1$ and $\chi\equiv 1$ near $K$.  
Then we have $\|\chi\cdot\overline\partial g\|_{\Ccal(\overline U)}=O(\epsilon)$, and so 
$T(\chi\cdot\overline\partial g)=O(\epsilon)$, where $T$ is the Cauchy-Green operator \eqref{operatorbs}.  
Hence, the function $g-T(\chi\cdot\overline\partial g)$ is holomorphic on some open neighborhood of $K$
and it approximates $f$ to a precision of order $\epsilon$ on $K$.   
\qed

%
%
\begin{remark}\label{rem:Sakai}
Sakai's proof also applies to a compact set $K$ in a higher dimensional complex manifold,
provided $K$ admits a basis of Stein neighborhoods on which one can 
solve the $\dibar$-equation with uniform estimates with a constant independent of the neighborhood.
This holds for instance when $K$ is the closure of a strongly pseudoconvex domain;
see Theorem \ref{th:MergelyanWo} on p.\ \pageref{page:MergelyanWo}.
\qed \end{remark}

%
%
\begin{remark}\label{rem:Hoffman}
It was observed by K.\ Hoffman and explained  by J.\ Garnett \cite{Garnett1968} 
in 1968 that Bishop's localization theorem 
in the plane is a simple consequence of the properties of the Cauchy transform.  
Given a function $\phi\in\Ccal^\infty_0(\C)$ with compact support and a 
bounded continuous function $f$ on $\C$, we consider the {\em Vitushkin localization operator}:
\begin{equation}\label{eq:locop}
\begin{array}{ll}
	T_\phi(f)(z) &= \displaystyle{\frac{1}{2\pi\imath}\int_\C \frac{f(\zeta)-f(z)}{\zeta-z} \, \dibar\phi(\zeta)\wedge d\zeta} \\
		&=   \displaystyle{ f(z)\phi(z) + \frac{1}{\pi}\int_\C \frac{f(\zeta)}{\zeta-z} \frac{\di\phi}{\di\bar\zeta}(\zeta) \, dudv}.
\end{array}
\end{equation} 
(We used the Cauchy-Green formula \eqref{eq:Pompeiu}.)
From properties of the operator $T_K$ \eqref{eq:TK} we see that $T_\phi(f)$ is a bounded continuous function 
on $\C$ vanishing at $\infty$, it is holomorphic where $f$ is holomorphic and in $\C\setminus \supp(\phi)$, 
and $f-T_\phi(f)$ is holomorphic in the interior of the level set $\{\phi=1\}$.
If $f$ has compact support and $\{\phi_j\}_{j=1}^N$ is a partition of unity on $\supp(f)$,
then $f=\sum_{j=1}^N T_{\phi_j}(f)$.  Finally, it follows from \eqref{eq:Mest} that
\begin{equation}\label{eq:estTphi}
	\|T_\phi(f)\|_\infty \le c_0 \delta \omega_f(\delta) \|\di \phi/\di \bar\zeta\|_\infty,
\end{equation}
where $\delta>0$ is the radius of a disc containing the support of $\phi$, $\omega_f(\delta)$
is the $\delta$-modulus of continuity of $f$, and $c_0>0$ is a universal constant.
(See T.\ Gamelin \cite[Lemma II.1.7]{Gamelin1984} or D.\ Gaier \cite[p.\ 114]{Gaier1987} for the details.)

Suppose now that $f\colon K\to\C$ satisfies the hypothesis of Theorem  \ref{th:Bishop}. 
By Tietze's theorem we may extend $f$ to a continuous function with compact support on $\C$.
Let $U_1,\ldots,U_N\subset \C$ be a finite covering of $\supp(f)$ by bounded open sets such that, setting
$K_j=K\cap \overline U_j$, we have  $f|_{K_j}\in \Ocalc(K_j)$ for each $j$. 
Let $\phi_j\in \Ccal^\infty_0(\C)$ be a smooth partition 
of unity on $\supp(f)$ with $\supp(\phi_j)\subset U_j$. By the hypothesis, given $\epsilon>0$ 
there is a holomorphic function $h_j\in \Ocal(W_j)$ on an open neighborhood of $K_j$ which is uniformly 
$\epsilon$-close to $f$ on $K_j$. Shrinking $W_j$ around $K_j$, we may assume that 
$h_j$ is $2\epsilon$-close to $f$ on $W_j$. 
Choose a smooth function $\chi_j\colon \C\to [0,1]$ which equals
one on a neighborhood $V_j\subset W_j$ of $K_j$ and has $\supp(\chi_j)\subset W_j$.
The function $\tilde h_j=\chi_j h_j +(1-\chi_j)f$ then equals $h_j$ on $V_j$ (hence is holomorphic there),
it equals $f$ on $\C\setminus W_j$, and is uniformly $2\epsilon$-close to $f$ on $\C$. The function 
$g_j=T_{\phi_j}(\tilde h_j)\in \Ccal(\C)$ is holomorphic on $V_j$ (since $g_j$ is holomorphic there) and 
on $\C\setminus \supp(\phi_j)$. Since the union of the latter two sets contains 
$K$, $g_j$ is holomorphic in a neighborhood of $K$. Furthermore, 
$g_j$ approximates $f_j=T_{\phi_j}(f)$ in view of \eqref{eq:estTphi}.
The sum $\sum_{j=1}^N g_j$ is then holomorphic in a neighborhood of $K$ 
and uniformly close to $\sum_{j=1}^N f_j=f$ on $\C$. (Further details can be found 
in Gaier \cite[pp.\ 114--118]{Gaier1987}.)

By using the Cauchy type kernels in Remark \ref{rem:CauchykernelX}, 
P.\ Gauthier \cite{Gauthier1979} and S.\ Scheinberg \cite{Scheinberg1978} adapted this approach
to extend Bishop's localization theorem to  closed (not necessarily compact) sets 
of essentially finite genus in any Riemann surface. See also Sect.\ \ref{sec:unbounded}
and in particular Theorem \ref{cor:BJTh2}.

Another proof of Mergelyan's theorem on Riemann surfaces (Theorem \ref{th:Mergelyan2})
can be found in \cite[Chapter 1.11]{JarnickiPflug2000}. It is based on a proof of Bishop's localization theorem
(Theorem \ref{th:Bishop}) which avoids the use of Cauchy type kernels on Riemann surfaces,
such as those given by Behnke and Stein in \cite{BehnkeStein1949}.
%
%
\qed\end{remark}

%
%

After Mergelyan proved his theorem on polynomial approximation 
and Bishop extended it to open Riemann surfaces (Theorem \ref{th:Mergelyan2}),
a major challenging problem was to characterize the class of compact sets $K$ in $\C$, or 
in a Riemann surface $X$, which enjoy the Mergelyan property $\Acal(K) = \Ocalc(K)$. 
In view of Runge's theorem (Theorem \ref{th:Runge2}), this is equivalent to 
approximation of functions in  $\Acal(K)$ by meromorphic functions on $X$ with poles off $K$,
and by rational functions if $X=\C$:
\begin{equation}\label{eq:AR}
	\Acal(K) \ \stackrel{?}{=} \ \Rcal(K).
\end{equation}
The study of this question led to powerful new methods in approximation theory. 
There are examples of compact sets of {\em Swiss cheese} 
type (with a sequence of holes of $K$ clustering on $K$)
for which $\Rcal(K)\subsetneq\Acal(K)$; see D.\ Gaier \cite[p.\ 110]{Gaier1987}. 
An early positive result is the theorem of F.\ Hartogs and A.\ Rosenthal 
\cite{HartogsRosenthal1931} from 1931 which states that if $K$ is a compact set in $\C$ 
with Lebesgue measure zero, then $\Ccal(K)=\Rcal(K)$.
After partial results by S.\ N.\ Mergelyan \cite{Mergelyan1952,Mergelyan1954}, 
E.\ Bishop \cite{Bishop1958PJM,Bishop1960}  and others, the problem 
was completely solved by  A.\ G.\ Vitushkin in 1966, \cite{Vitushkin1966,Vitushkin1967}. To state his theorem, 
we recall the notion of continuous capacity. Let $M$ be a subset of $\C$. Denote by $\Rgot(M)$ the set of 
all continuous functions $f$ on $\C$ with $\|f\|_\infty\le 1$ which are holomorphic outside some compact 
subset $K$ of $M$ and whose Laurent expansion at infinity is
$f(z)= \frac{c_1(f)}{z} + O\bigl(\frac{1}{z^2}\bigr)$.
The {\em continuous capacity} of $M$ is defined by
\[
	\alpha(M) = \sup\bigl\{|c_1(f)| : f\in\Rgot(M)\bigr\}.
\]

%
%

\begin{theorem}[Vitushkin (1966/1967), \cite{Vitushkin1966,Vitushkin1967}] \label{th:Vitushkin}
Let $K$ be a compact set in $\C$. Then, $\Rcal(K)=\Acal(K)$ if and only if
$\alpha(D\setminus K)=\alpha(D\setminus \mathring K)$ for every open disc $D$ in $\C$.
\end{theorem}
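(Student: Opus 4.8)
The plan is to run Vitu\v skin's localization scheme, using the operator $T_\phi$ of \eqref{eq:locop} to reduce the global problem $\Rcal(K)=\Acal(K)$ to a local approximation question on small discs, where the continuous capacity $\alpha$ measures exactly the obstruction. The governing principle is that, for a disc $D$, the number $\alpha(D\setminus K)$ controls the best leading Laurent coefficient available to functions holomorphic off a compact subset of $D\setminus K$ (precisely those approximable by rationals with poles off $K$), while $\alpha(D\setminus\mathring K)$ does the same for functions holomorphic off a compact subset of $D\setminus\mathring K$, which is the class to which the localized pieces of an arbitrary $f\in\Acal(K)$ belong. The stated equality of the two capacities is thus the exact bridge between the two function classes.

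For sufficiency, assume $\alpha(D\setminus K)=\alpha(D\setminus\mathring K)$ for every disc $D$. Given $f\in\Acal(K)$, extend it to a compactly supported continuous function and fix $\delta>0$. Choose a smooth partition of unity $\{\phi_j\}$ subordinate to a covering of $\supp(f)$ by discs $D_j$ of radius $\sim\delta$, with $\|\di\phi_j/\di\bar\zeta\|_\infty\lesssim 1/\delta$, and write $f=\sum_j T_{\phi_j}(f)$. Each piece $g_j:=T_{\phi_j}(f)$ is continuous, is $O(1/z)$ at infinity, is holomorphic off the compact set $\supp(\phi_j)\setminus\mathring K\subset D_j\setminus\mathring K$, and by \eqref{eq:estTphi} satisfies $\|g_j\|_\infty\lesssim\omega_f(\delta)$; in particular $g_j/\|g_j\|_\infty\in\Rgot(D_j\setminus\mathring K)$. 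The heart of the argument is a local lemma: because $\alpha(D_j\setminus\mathring K)=\alpha(D_j\setminus K)$, each $g_j$ can be approximated by a function $r_j$ holomorphic off a compact subset of $D_j\setminus K$ (hence a uniform limit of rationals with poles off $K$) whose leading coefficient matches that of $g_j$, so that $g_j-r_j=O(1/z^2)$ at infinity with controlled sup norm. Summing over $j$ and letting $\delta\to0$ then gives $f\in\Rcal(K)$.

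For necessity, suppose the condition fails for some disc $D$, so $\alpha(D\setminus K)<\alpha(D\setminus\mathring K)$ (the reverse inequality is automatic from monotonicity of $\alpha$, since $D\setminus K\subseteq D\setminus\mathring K$). Choose $g\in\Rgot(D\setminus\mathring K)$ with $|c_1(g)|>\alpha(D\setminus K)$; its singular set $E$ is a compact subset of $D\setminus\mathring K$, so $g$ is holomorphic on $\mathring K$ and $g|_K\in\Acal(K)$. If $\Rcal(K)=\Acal(K)$, then $g$ is a uniform limit on $K$ of rationals $r$ with poles off $K$. Localizing with a cutoff $\phi\in\Ccal^\infty_0(D)$ that equals $1$ near $E$, one checks $g=T_\phi(g)$ (by Liouville, since $g-T_\phi(g)$ extends holomorphically across $E$ and vanishes at infinity), while $T_\phi(r)$ is, after normalization, a member of $\Rgot(D\setminus K)$ and hence has $|c_1(T_\phi(r))|\le M\,\alpha(D\setminus K)$. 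Since $T_\phi$ preserves the leading coefficient up to an error that tends to $0$ with $\|g-r\|$, letting $r\to g$ forces $|c_1(g)|\le\alpha(D\setminus K)$, a contradiction.

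I expect the main obstacle to be the local approximation lemma in the sufficiency direction, together with the bookkeeping of the sum $\sum_j(g_j-r_j)$: although each localized error is comparable to $\omega_f(\delta)$ and the number of relevant discs grows like $\delta^{-2}$, the matched leading coefficients give $g_j-r_j=O(1/z^2)$, and it is the sharp estimate \eqref{eq:Mest} for the Cauchy--Green kernel, combined with the bounded overlap of the $D_j$, that keeps the accumulated error of order $\omega_f(\delta)$. A parallel delicate point appears in necessity, where the localized error must be dominated on the transition annulus $\supp(\di\phi/\di\bar\zeta)$ and not merely on $K$. Making both the local lemma and these error estimates rigorous is exactly where Vitu\v skin's constructive capacity theory --- extremal functions for $\alpha$ and the relation between $\alpha$ and Cauchy transforms of measures annihilating $\Rcal(K)$ --- is indispensable; I would refer to \cite{Vitushkin1967} and to the expositions in \cite{Gamelin1984,Gaier1987} for these details.
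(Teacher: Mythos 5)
First, a point of calibration: the paper does not prove Theorem \ref{th:Vitushkin} at all --- it only states it, remarks that Vitu{\v s}kin's proof relies on the localization operators \eqref{eq:locop}, and refers to \cite{Vitushkin1967} and the expositions in \cite{Gamelin1984,Gaier1987}. So your proposal can only be measured against the known proof, whose overall architecture (localization via $T_{\phi_j}$, with the capacity equality as the bridge between $\Acal(K)$ and $\Rcal(K)$) you have identified correctly. Within that architecture, however, there is a genuine quantitative gap at the decisive step of the sufficiency direction. With only the leading Laurent coefficient matched, the local errors satisfy $|g_j(z)-r_j(z)|\le C\omega_f(\delta)\,\delta^2/|z-z_j|^2$ away from $D_j$ (Schwarz lemma at infinity), and summing over the bounded-overlap cover --- about $k$ discs at distance $k\delta$ from a fixed point $z$ --- gives
\[
	\sum_j |g_j(z)-r_j(z)| \,\lesssim\, \omega_f(\delta)\sum_{k\ge 1} k\cdot\frac{\delta^2}{(k\delta)^2}
	\,\approx\, \omega_f(\delta)\,\log(1/\delta),
\]
which does \emph{not} tend to $0$ for general continuous $f$, since $\omega_f(\delta)$ may decay arbitrarily slowly. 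Bounded overlap of the $D_j$ does not help, because the tails of $g_j-r_j$ are global, and \eqref{eq:Mest} only yields the sup-norm bound \eqref{eq:estTphi} for each piece, not summable decay; so your claim that these ingredients keep the accumulated error of order $\omega_f(\delta)$ is false as stated. This logarithmic divergence is precisely why Vitu{\v s}kin's scheme must match the \emph{second} coefficient as well, producing errors $O(\delta^3/|z-z_j|^3)$ whose sum converges. Compare the paper's sketch of Mergelyan's Theorem \ref{th:Mergelyan1951}: the correction term $(\zeta-b_j)g_j(z)^2$ in $P_j$ exists exactly to upgrade quadratic to cubic decay in \eqref{eq:estP}. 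Achieving this in the capacitary setting --- matching two coefficients while keeping the approximant bounded by $C\omega_f(\delta)$, using near-extremal functions for $\alpha(D_j\setminus K)$ and their squares, together with estimates of the type $|c_2|\lesssim\delta\,\alpha$ --- is the actual core of Vitu{\v s}kin's theorem, and it is absent from the proposal.

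The necessity direction also has a concrete flaw: $T_\phi(r)$ inherits the poles of $r$ lying in $\supp(\phi)$, so it is neither continuous nor bounded on $\C$, and hence is not, after normalization, an element of $\Rgot(D\setminus K)$ as defined in the paper. Moreover, comparing $c_1(T_\phi g)$ with $c_1(T_\phi r)$ amounts to estimating $\frac{1}{\pi}\int (g-r)\,\dibar\phi\, dudv$, which requires $|g-r|$ to be small on $\supp(\dibar\phi)$; this set meets $\C\setminus K$, where the hypothesis $\Rcal(K)=\Acal(K)$ gives no control. You flag this ``transition annulus'' issue yourself but do not resolve it, and it cannot be resolved by a single cutoff: the known proofs of necessity first convert uniform approximation on $K$ into a global approximation with norm control (again via the full constructive scheme, or via duality with measures annihilating $\Rcal(K)$) before any capacities can be compared. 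In summary, the proposal reproduces the correct strategy, but both implications rest on machinery that is asserted rather than supplied, and in the sufficiency direction the specific error-accounting claim is incorrect.
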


Vitushkin's proof relies on the localization operators (\ref{eq:locop})
which he introduced (see \cite[Ch. 2, \S 3]{Vitushkin1967}). 
Theorem \ref{th:Vitushkin} is a corollary of Vitushkin's main result
in \cite{Vitushkin1967} which provides a criterium for rational approximation of individual 
functions in $\Acal(K)$. The most advanced form of Vitushkin-type results 
is due to Paramonov \cite{Paramonov1995}.
Major results on the behavior of the (continuous) capacity and estimates of 
Cauchy integrals over curves were obtained by M.\ Mel'nikov \cite{Melnikov1969,Melnikov1995}, 
X.\ Tolsa \cite{Tolsa2003,Tolsa2005}, and Mel'nikov and Tolsa \cite{MelnikovTolsa2005}.

%
%

\section{Approximation on unbounded sets in Riemann surfaces}
\label{sec:unbounded}

It seems that the first result concerning the approximation of functions on unbounded closed
subsets of $\C$ by entire functions is the following generalization of Weierstrass's Theorem \ref{th:Weierstrass}, 
due to T.\ Carleman \cite{Carleman1927}.

%
%
\begin{theorem}[Carleman (1927), \cite{Carleman1927}] \label{th:Carleman}
Given  continuous functions $f\colon\R\to\C$ and $\epsilon\colon \R\to (0,+\infty)$,
there exists an entire function $F\in\Ocal(\C)$ such that
\begin{equation}\label{eq:Carleman}
	|F(x)-f(x)|<\epsilon(x)\quad \text{for all}\ \ x\in\R.
\end{equation}
\end{theorem}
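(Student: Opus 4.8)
The plan is to build $F$ as an infinite series $F=\sum_{n\ge 1} g_n$ of holomorphic polynomials whose partial sums $s_n=\sum_{k\le n} g_k$ approximate $f$ ever further out along $\R$, while simultaneously staying small off the current approximation region so that the series converges to an entire function. Concretely, I would exhaust $\C$ by the closed discs $\overline{\D(0,n)}$ and $\R$ by the intervals $[-n,n]$, and run an induction in which the $n$-th step improves the approximation on the newly reached part of the real axis without spoiling what was already achieved. The only analytic input is Runge's theorem (Theorem \ref{th:Runge}), or equivalently Mergelyan's theorem (Theorem \ref{th:Mergelyan1951}); everything else is a careful choice of tolerances.

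For the inductive step, suppose $s_{n-1}$ already approximates $f$ well on $[-n,n]$. Consider the compact set $K_n=\overline{\D(0,n-1)}\cup[-(n+1),n+1]$, a disc with two real ``whiskers'' sticking out; since these whiskers are slits, $\C\setminus K_n$ is connected. On $K_n$ I would approximate the continuous function $h_n$ defined to be $0$ on the disc and $\chi_n\cdot(f-s_{n-1})$ on the whiskers, where $\chi_n$ is a continuous cut-off equal to $0$ at the attachment points $\pm(n-1)$ and equal to $1$ once $|x|\ge n-\tfrac12$. Two features make this work: first, $h_n\in\Acal(K_n)$ (it is holomorphic, indeed $\equiv 0$, on the disc interior and continuous across the junctions because $\chi_n$ vanishes there), so Theorem \ref{th:Mergelyan1951} yields a polynomial $g_n$ with $\|g_n-h_n\|_{K_n}<\eta_n$; second, the transition region $\{n-1\le|x|\le n-\tfrac12\}$ lies inside $[-n,n]$, where $s_{n-1}$ is already close to $f$, so the error introduced by the cut-off there is controlled by the \emph{previous} outer tolerance rather than by $|f-s_{n-1}|$ at large $|x|$.

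Setting $s_n=s_{n-1}+g_n$, the estimate $\|g_n-h_n\|_{K_n}<\eta_n$ gives $|g_n|<\eta_n$ on the disc $\overline{\D(0,n-1)}$ and $|s_n-f|<\eta_n$ on the outer whisker, with an intermediate bound on the transition region. The disc bounds force $\sum_n g_n$ to converge uniformly on every compact subset of $\C$ once $\eta_n$ is summable, so $F=\lim s_n$ is entire. The decisive point is that the pointwise error at a fixed $x\in\R$ does \emph{not} accumulate from stage $1$: tracking where $x$ sits at each stage shows that $x$ lies in the freshly approximated outer region at the single stage $N\approx|x|$, after which every later correction $g_m$ contributes at most $\eta_m$ on the disc containing $x$; hence $|F(x)-f(x)|$ is bounded by a couple of tolerances indexed near $|x|$ plus a convergent tail $\sum_{m>N}\eta_m$.

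The main obstacle is the quantitative bookkeeping forced by the fact that $\epsilon(x)$ may tend to $0$ as $|x|\to\infty$: unlike in Weierstrass's theorem (Theorem \ref{th:Weierstrass}), there is no uniform error budget. One must choose the tolerances $\eta_n>0$ so small that, on each annulus $\{n-1\le|x|\le n\}$, the locking-stage error together with the entire tail $\sum_{m>n}\eta_m$ stays below $\inf\epsilon$ over that annulus (a positive number, by continuity and positivity of $\epsilon$). This is arranged by picking $\eta_m$ recursively, smaller than a fixed fraction of $\min\{\mu_1,\dots,\mu_{m-1}\}\cdot 2^{-m}$, where $\mu_k$ denotes the error budget already prescribed on the $k$-th annulus; this makes each tail $\sum_{m>n}\eta_m$ dominated by the budget $\mu_{n+1}$. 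Reconciling this vanishing budget with the competing need to keep the corrections effective on the growing real intervals, yet small on the discs so that the limit is entire, is the real content of the proof.
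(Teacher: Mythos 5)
Your construction is, modulo notation, the paper's own proof written additively: the paper's $f_j$ equals a Mergelyan polynomial on a growing disc and is glued to $f$ along $\R$ by a cutoff, which is exactly your $s_n$ (your correction $g_n$ approximates a cutoff multiple of $f-s_{n-1}$ vanishing on the disc). The geometry (disc-plus-slit compacts with connected complement), the use of Theorem \ref{th:Mergelyan1951}, and your lock-in-plus-tail error picture all match the paper. One side remark: your parenthetical claim that Runge's theorem (Theorem \ref{th:Runge}) would suffice ``or equivalently'' Mergelyan's is not right — your $h_n$ is merely continuous on the whiskers, not holomorphic near them, so Mergelyan's theorem is genuinely needed; your actual inductive step does invoke it correctly.

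There is, however, a genuine flaw in the quantitative bookkeeping, which you yourself identify as the real content of the proof. You require $\eta_m$ to be smaller than a fixed fraction of $\min\{\mu_1,\dots,\mu_{m-1}\}\,2^{-m}$ and claim this makes the tail $\sum_{m>n}\eta_m$ dominated by $\mu_{n+1}$. It does not: take $\epsilon(x)=e^{-x^2}$, so $\mu_k=e^{-k^2}$ on the annulus $A_k=\{k-1\le |x|\le k\}$. Your constraint allows $\eta_{n+1}$ as large as $c\,e^{-n^2}2^{-(n+1)}$, which exceeds $\mu_{n+1}=e^{-n^2}e^{-2n-1}$ by the unbounded factor $c\,(e^2/2)^n(e/2)$; no fixed fraction $c$ rescues this. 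The same mis-indexing hits the lock-in error: the fresh error $\eta_{n-1}$ is committed on the outer whisker $\{n-\tfrac32\le|x|\le n\}$, which lies partly in $A_n$ with budget $\mu_n$, yet your constraint on $\eta_{n-1}$ involves only $\mu_1,\dots,\mu_{n-2}$. The diagnosis is that stage $m$ touches annuli up to $A_{m+1}$ (its whiskers reach $|x|=m+1$), so the minimum must run up to $\mu_{m+1}$, not $\mu_{m-1}$. The fix is immediate and involves no circularity, since the budgets $\mu_k=\inf_{A_k}\epsilon$ are determined by $\epsilon$ alone and are all available at the outset: requiring $\eta_m<c\min\{\mu_1,\dots,\mu_{m+1}\}2^{-m}$ gives $\eta_{n-1}+\eta_n+\sum_{m>n}\eta_m\le 4c\,\mu_n2^{-n}<\mu_n\le\epsilon(x)$ on $A_n$ once $c\le\tfrac14$. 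This is precisely how the paper sidesteps the issue: its stage-$j$ tolerance is $2^{-j-1}\epsilon_{j-1}$ with $\epsilon_{j-1}=\min\{\epsilon(x):|x|\le j+1\}$, a minimum over the \emph{entire} interval modified at that stage rather than over an annulus selected with a lagging index.
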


This says that continuous functions on $\R$ can be approximated 
in the fine $\Ccal^0$ topology by restriction to $\R$ of entire functions on $\C$. 
The proof amounts to inductively applying Mergelyan's theorem on polynomial approximation 
(Theorem \ref{th:Mergelyan1951}). 

\begin{proof}
Recall that $\overline{\mathbb D}=\{z\in\C: |z|\le 1\}$. For $j\in\Z_+=\{0,1,\ldots\}$ set
\[
	K_j=j\overline{\mathbb D} \cup [-j-2,j+2],\qquad \epsilon_j = \min\{\epsilon(x): |x|\le j+2\}.
\]
Note that $\epsilon_j\ge \epsilon_{j+1}>0$ for all $j\in\Z_+$. 
We construct a sequence of continuous functions $f_j: (j+1/3)\D \cup \R\to \C$ satisfying 
the following conditions for all  $j\in \N$: 
\begin{enumerate}[\rm (a$_j$)]
\item $f_j$ is holomorphic on $(j+1/3)\D$, 
\item $f_j(x)=f(x)$ for $x\in\R$ with $|x|\ge j+2/3$, and 
\item $|f_{j}-f_{j-1}| < 2^{-j-1}\epsilon_{j-1}$ on $K_{j-1}$.
\end{enumerate}
To construct $f_0$, we pick a smooth function $\chi\colon \R\to[0,1]$ such that $\chi(x)=1$ for $|x|\le1/3$ 
and $\chi(x)=0$ for $|x|\ge 2/3$. Mergelyan's theorem (see Theorem \ref{th:Mergelyan1951}) gives
a holomorphic polynomial $h$ such that, if we define $f_0$ to equal 
$h$ on $(1/3)\D$ and set $f_0(x)=\chi h(x) + (1-\chi)f(x)$ for $|x|\geq 1/3$, then 
$f_0$ satisfies conditions $(a_0)$ and $(b_0)$, while condition $(c_0)$ is vacuous.

The inductive step $(j-1)\to j$ is as follows.
Mergelyan's theorem (see Theorem \ref{th:Mergelyan1951}) gives a holomorphic polynomial $h$ satisfying 
$|h-f_{j-1}|< 2^{-j-1}\epsilon_{j-1}$ on $K_{j-1}$.
Pick a smooth function $\chi\colon \R\to[0,1]$ such that $\chi(x)=1$ for $|x|\le j+1/3$ 
and $\chi(x)=0$ for $|x|\ge j+2/3$.  Set $f_j=h$ on $(j+1/3)\D$ 
and $f_j=\chi h+(1-\chi)f_{j-1}$ on $\R$. It is easily verified that the sequence
$f_j$ satisfies conditions (a$_j$), (b$_j$), and (c$_j$). 
In view of (b$_j$) we have $f_0=f_1=\ldots= f_{k-1}$ on $\{|x|\ge k\}$ for any $k\in\N$. 
From this and (c$_j$) it follows that the sequence $f_j$ converges to an entire function 
$F\in\Ocal(\C)$ such that for every $k\in \Z_+$ the following inequality holds on $\{x\in\R:k\le |x| \le k+1\}$:
\[ 
	|F(x)-f(x)| \le \sum_{j=0}^\infty |f_{j+1}(x)-f_j(x)|  < \sum_{j=k-1}^\infty 2^{-j-2}\epsilon_j \le 
	\epsilon_{k-1} \le \epsilon(x).
\]
This proves Theorem \ref{th:Carleman}.  
\qed\end{proof}

The above proof is easily adapted to show that every function 
$f\in\Ccal^r(\R)$ for $r\in\N$ can be approximated in the fine $\Ccal^r(\R)$ topology by restrictions to $\R$ of
entire functions, i.e., \eqref{eq:Carleman} is replaced by the stronger condition on the derivatives:
\[
	|F^{(k)}(x)-f^{(k)}(x)|<\epsilon(x)\quad \text{for all}\ x\in\R\ \text{and}\ k=0,1,\ldots,r.
\]
In 1973, L.\ Hoischen \cite{Hoischen1973} proved a similar result on $\Ccal^r$-Carleman approximation on
more general curves in the complex plane. 

When trying to adapt the proof of Carleman's theorem to more general closed sets $E\subset \C$ without holes,
a complication appears in the induction step since the union of $E$ with a closed disc may contain holes. 
Consider the following notion.

%
%
\begin{definition}\label{def:Carlemanset}
Let $D$ be a domain in $\C$. A closed subset $E$ of $D$ is a {\em Carleman set}  if each function in
$\Acal(E)$ can be approximated in the fine $\Ccal^0$ topology on $E$ by functions in $\Ocal(D)$. 
(More precisely, given $f\in \Acal(E)$ and a continuous function $\epsilon\colon E\to (0,+\infty)$,
there exists $F\in\Ocal(D)$ such that $|F(z)-f(z)|<\epsilon(z)$ for all $z\in E$.)
\end{definition}

The following characterization of Carleman sets was given by A.\ A.\ Nersesyan in 1971, 
\cite{Nersesyan1971,Nersesyan1972}.
Given a domain $D\subsetneq \CP^1$, let $V_\epsilon(bD)$ denote the set of all points having 
chordal (spherical) distance less than $\epsilon$ from the boundary $bD$.

\begin{theorem}[Nersesyan (1971/1972), \cite{Nersesyan1971,Nersesyan1972}]\label{th:Nersesyan}
A closed set $E$ in a domain $D\subsetneq \CP^1$ is a Carleman set if and only if it satisfies 
the following two conditions.
\begin{enumerate}[\rm (a)]
\item For each $\epsilon>0$ there exists a $\delta$, with $0<\delta <\epsilon$, 
such that none of the components of 
$\mathring E$ intersects both $V_\delta(bD)$ and $D\setminus V_\epsilon(bD)$. 
\item For each $\epsilon>0$ there is a $\delta>0$ such that each point of the set 
$(D\setminus E)\cup V_\delta(bD)$ 
can be connected to $bD$ by an arc lying in $(D\setminus E)\cup V_\epsilon(bD)$. 
\end{enumerate}
\end{theorem}

We now look at the related  problem of
{\em uniform approximation} of functions in the space $\Acal(E)$ by holomorphic functions on $D$.
This type of approximation was considered by N.\ U.\ Arakelian \cite{Arakelian1964,Arakelian1968,Arakelian1971}
who proved the following result characterizing {\em Arakelian sets}.

%
%
\begin{theorem}[Arakelian (1964), \cite{Arakelian1964,Arakelian1968,Arakelian1971}]\label{th:Arakelian}
Let $E$ be a closed set in a domain $D\subset \C$. The following two conditions are equivalent.
\begin{enumerate}[\rm (a)]
\item Every function in $\Acal(E)$ is a uniform limit of functions in $\Ocal(D)$. 
\item The complement $D^*\setminus E$ of $E$ in the one point compactification $D^*=D\cup\{*\}$ of $D$
is connected and locally connected.
\end{enumerate}
\end{theorem}

When $E$ is compact, condition (b) simply says that $D\setminus E$ is connected,
and in this case, (a) is Mergelyan's theorem. Note that local connectivity of $D^*\setminus E$ is a nontrivial condition 
only at the point $\{*\}=D^*\setminus D$.  This condition has a more convenient interpretation.
For simplicity, we consider the case $D=\C$. Given a closed set $F$ in $\C$, we denote by 
$H_F$ the union of all holes of $F$, 
an open set in $\C$. (Recall that a hole of $F$ is a bounded connected components of $\C\setminus F$.)

%
%
\begin{definition} [Bounded exhaustion hulls property] \label{def:BEH}
A closed set $E$ in $\C$ with connected complement has the {\em bounded exhaustion hulls property} (BEH) 
if the set $H_{E\cup \Delta}$ is bounded (relatively compact) for every closed disc $\Delta$ in $\C$. 
\end{definition}

It is well known and easily seen that the BEH property of a closed subset $E\subset \C$ is equivalent to 
$\CP^1\setminus E$ being connected and locally connected at $\{\infty\}=\CP^1\setminus \C$.
Furthermore, this property may be tested on any sequence of closed discs
(or more general compact simply connected domains) exhausting $\C$. 
For the corresponding condition in higher dimensions, see Definition \ref{def:BEHn}
on p.\ \pageref{page:def:BEHn}.

We now present a simple proof of sufficiency of condition (b) for the case $D=\C$ in Arakelian's theorem, 
due to J.-P.\ Rosay and W.\ Rudin (1989), \cite{RosayRudin1989}. 

%
%
\smallskip
\noindent {\em Proof of (b)$\Rightarrow$(a) in Theorem \ref{th:Arakelian}.}
Since the set $E\subset\C$ has the BEH property (see Def.\ \ref{def:BEH}), we can find a sequence of closed discs
$\Delta_1\subset \Delta_2\subset \cdots \subset \bigcup_{i=1}^\infty \Delta_i=\C$ such that, setting
$H_i=H_{E\cup \Delta_i}$ (the union of holes of $E\cup \Delta_i$), we have that
\[
	\Delta_i\cup \overline H_i \subset \mathring \Delta_{i+1},\quad i=1,2,\ldots.
\]
Set $E_0=E$ and $E_i=E\cup\Delta_i\cup H_i$ for $i\in \N$. Note that 
$E_i$ is a closed set with connected complement in $\C$, $E_i\subset E_{i+1}$, 
$\bigcup_{i=0}^\infty E_i=\C$, and  $E\setminus \Delta_{i+1}=E_i\setminus \Delta_{i+1}$. 

Choose a function $f=f_0\in \Acal(E)$ and a number $\epsilon>0$. We shall inductively construct a 
sequence $f_i\in \Acal(E_i)$ for $i=1,2,\ldots$ such that $|f_i-f_{i-1}|<2^{-i}\epsilon$ on $E_{i-1}$;
since the sets $E_i$ exhaust $\C$, it follows that $F=\lim_{i\to \infty}f_i$ is an entire function 
satisfying $|F-f|<\epsilon$ on $E=E_0$. Let us explain the induction step $(i-1)\to i$.
Assume that $f_{i-1}\in\Acal(E_{i-1})$. Pick a closed disc $\Delta$ such that 
$\Delta_i\cup \overline H_i\subset \Delta \subset \mathring\Delta_{i+1}$, 
and a smooth function $\chi\colon \C\to [0,1]$ satisfying $\chi=1$ on 
$\Delta$ and $\supp(\chi)\subset \Delta_{i+1}$.
Note that $E_i\cup \Delta=E_{i-1}\cup \Delta$. Since the compact set $E_{i-1} \cap \Delta_{i+1}$ has no holes,
Mergelyan's Theorem \ref{th:Mergelyan1951} furnishes a holomorphic polynomial $h$ on $\C$ satisfying  
\[
	|f_{i-1}-h|  < 2^{-i-1}\epsilon\quad \text{on\ \ $E_{i-1}\cap \Delta_{i+1}$}, 
\]
and
\begin{equation}\label{eq:estimateRR}
	\frac{1}{\pi} \int_{\zeta\in E_{i-1}} |f_{i-1}(\zeta) -h(\zeta)| \cdotp |\dibar \chi(\zeta)| \frac{dudv}{|z-\zeta|} < 
	2^{-i-1}\epsilon, \quad z\in\C.
\end{equation}
Note that the integrand is supported on $E_{i-1}\cap (\Delta_{i+1}\setminus \Delta)$, 
and hence the integral is bounded uniformly on $\C$ by the supremum of the integrand (which may be as small as desired 
by the choice of $h$) and the diameter of $\Delta_{i+1}$. Let
\[
	g(z) = \frac{1}{\pi} \int_{\zeta\in E_{i-1}} (f_{i-1}(\zeta) - h(\zeta)) \cdotp \dibar \chi(\zeta) \frac{dudv}{z-\zeta},
	\quad z\in\C,
\]
and define the next function $f_i\colon E_{i}\cup \Delta \to\C$ by setting
\begin{equation}\label{eq:fi}
	f_i=\chi h + (1-\chi)f_{i-1} + g. 
\end{equation}
Note that $g$ is continuous on $E_i\cup \Delta$, smooth on $\mathring E_i\cup\Delta$, it satisfies $\dibar g= (f_{i-1}-h)\dibar\chi$
on $\mathring E_{i-1}\cup \mathring \Delta$, and $|g|<2^{-i-1}\epsilon$ in view of \eqref{eq:estimateRR}.
Since $E_i\cup \Delta=E_{i-1}\cup \Delta$, it follows that $f_i$ is continuous on $E_i$ and  
$\dibar f_i=(h-f_{i-1})\dibar\chi + \dibar g=0$ on $\mathring E_i$. Furthermore, on $E_{i-1}$ we have 
$f_i= f_{i-1} + \chi(h-f_{i-1})+g$ and hence
\[
	|f_i-f_{i-1}| \le |\chi|\cdotp |h-f_{i-1}| + |g| < 2^{-i}\epsilon.
\]
This completes the induction step and hence proves (b)$\Rightarrow$(a) in Theorem \ref{th:Arakelian}.
\qed \smallskip

Comparing with the proof of Theorem \ref{th:Carleman}, we see that it was now necessary to solve a
$\dibar$-equation since the set $E_{i-1}\cap (\Delta_{i+1}\setminus \Delta)$, on which we glued
the approximating polynomial $h$ with $f_{i-1}$, might have nonempty interior.
This prevents us from obtaining Carleman approximation in the setting of  Theorem \ref{th:Arakelian}
without additional hypotheses on $E$ (compare with Nersesyan's Theorem \ref{th:Nersesyan}).
On the other hand, the same proof yields the following special case of Nersesyan's Theorem
on Carleman approximation which is of interest in many applications.

%
%
\begin{corollary}[On Carleman approximation]\label{cor:Nersesyan}
Assume that $E\subset \C$ is a closed set with connected complement satisfying the BEH property
(see Definition \ref{def:BEH}). If there is a disc $\Delta\subset\C$ such that $E\setminus \Delta$ has empty interior,
then every function in $\Acal(E)$ can be approximated in the fine $\Ccal^0$ topology by entire functions. 
\end{corollary}

To prove Corollary \ref{cor:Nersesyan} one follows the proof of Theorem \ref{th:Arakelian},
choosing the first disc $\Delta_1$ big enough such that $E\setminus \Delta_1$ has empty interior.
This allows us to define each function $f_i$ \eqref{eq:fi} in the sequence 
without the correction term $g$ (i.e., $g=0$).

The definition of the BEH property (see Definition \ref{def:BEH}) 
extends naturally to closed sets $E$ in an arbitrary domain 
$\Omega \subset \C$. For such sets, an obvious modification of proof of 
Theorem \ref{th:Arakelian} and Corollary \ref{cor:Nersesyan} provide approximation of functions in 
$\Acal(E)$ by functions in $\Ocal(\Omega)$ in the uniform and fine topology on $E$, respectively.

%
%
In 1976, A.\ Roth \cite{Roth1976} proved several results on uniform and Carleman approximation
of functions in $\Acal(E)$, where $E$ is a closed set in a domain $\Omega\subset\C$, 
by meromorphic functions on $\Omega$ without poles on $E$. Her results are based on the technique of 
{\em fusing rational functions}, given by the following lemma.

%
%
\begin{lemma}[Roth (1976), \cite{Roth1976}]\label{lem:fusing}
Let $K_1$, $K_2$ and $K$ be compact sets in $\CP^1$ with 
$K_1\cap K_2=\varnothing$. Then there is a constant $a=a(K_1,K_2)>0$ such that for any 
pair of rational functions $r_1,r_2$ with $|r_1(z)-r_2(z)|<\epsilon$ $(z\in K)$ 
there  is a rational function $r$ such that $|r(z)-r_j(z)|<a\epsilon$ for $z\in K\cup K_j$ for $j=1,2$.
\end{lemma}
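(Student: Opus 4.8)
The plan is to realize $r$ by the classical cut-off-and-correct scheme, arranging the correction through the Cauchy--Green operator \eqref{eq:TK} so that the spurious $\dibar$ produced by a naive gluing is cancelled and $r$ comes out genuinely rational, with poles confined to those of $r_1$ and $r_2$. Throughout I assume, as the inequalities implicitly require, that $r_1$ is pole-free on a neighborhood of $K\cup K_1$ and $r_2$ on a neighborhood of $K\cup K_2$.

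First I would fix, depending only on $K_1,K_2$, a smooth cut-off $\phi\colon\CP^1\to[0,1]$ equal to $1$ near $K_1$ and to $0$ near $K_2$, and write $L=\supp(\dibar\phi)$, a compact set disjoint from $K_1\cup K_2$. With $\delta=r_1-r_2$, the naive fusion $\phi r_1+(1-\phi)r_2=r_2+\phi\delta$ already meets the desired inequalities on $K\cup K_1\cup K_2$ with constant close to $1$: one has $|\phi\delta|\le\epsilon$ and $|(1-\phi)\delta|\le\epsilon$ on $K$, while $\phi\delta$ vanishes near $K_2$ and $(1-\phi)\delta$ near $K_1$. Its only defect is that $\dibar(\phi\delta)=\delta\,\dibar\phi$ is supported on $L$, so it is not holomorphic.

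To repair holomorphy I would set $\Phi=T(\delta\,\dibar\phi)$, so that $\dibar\Phi=\delta\,\dibar\phi$, and define
\[
	r=r_2+\phi\delta-\Phi.
\]
Off the finite pole set of $r_1,r_2$ the two $\dibar$'s cancel, so $\phi\delta-\Phi$ is holomorphic there; hence $r$ is holomorphic on $\CP^1$ minus that finite set and meromorphic at each of those points, and is therefore rational with poles among those of $r_1$ and $r_2$. (A pole of $r_1$ lying in $\{\phi=0\}$, or of $r_2$ in $\{\phi=1\}$, is automatically absent from $r$ there, which keeps the pole bookkeeping consistent.) Writing out the differences gives $r-r_1=-(1-\phi)\delta-\Phi$ and $r-r_2=\phi\delta-\Phi$; on $K_1$ the former reduces to $-\Phi$, on $K_2$ the latter reduces to $-\Phi$, and on $K$ both are $-\Phi$ up to a term of modulus $\le\epsilon$. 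Thus the whole estimate reduces to bounding $|\Phi|$ on $K\cup K_1\cup K_2$.

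I expect this last bound to be the entire difficulty. One must prove $\|\Phi\|_{K\cup K_1\cup K_2}\le a_0(K_1,K_2)\,\epsilon$, that is, with a constant blind to the size of $\delta$ on the transition region $L$, where $\delta$ may be enormous. The crude bound from \eqref{eq:Mest} only yields $\|\Phi\|\lesssim\|\delta\|_{L}$, which is useless as it stands, so the point is to feed into the transform only the part of $\delta$ controlled by its values on $K$. When $K$ separates $K_1$ from $K_2$ this is clean: route $L$ through $K$, and the maximum principle applied to the holomorphic $\delta$ forces $\|\delta\|_{L}\lesssim\|\delta\|_{K}\le\epsilon$. In general I would replace the raw transform by the Vitu\v{s}kin localization operator \eqref{eq:locop} and invoke its estimate \eqref{eq:estTphi}, which controls $T_\phi$ through the modulus of continuity of the data rather than its sup-norm, after first subtracting the principal parts of the poles of $r_1,r_2$ near $L$ (rational terms absorbable into $r$ without enlarging its pole set). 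Decoupling the constant from $r_1,r_2$ while keeping the poles of $r$ among those of $r_1$ and $r_2$ is the crux; the cut-off construction and the verification of rationality are routine by comparison.
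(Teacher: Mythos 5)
Your skeleton---glue with a cut-off and repair the $\dibar$-defect via the Cauchy--Green transform---is the natural ``Cousin-I with bounds'' scheme, and your algebra is correct: everything reduces to the bound $\|T(\delta\,\dibar\phi)\|_{K\cup K_1\cup K_2}\le a_0(K_1,K_2)\,\epsilon$. But this bound is not merely ``the entire difficulty'' left open; it is false, and in fact your structural claim that $r$ can be taken with poles confined to those of $r_1,r_2$ already contradicts the lemma. Take $K_1=\overline\D$, $K_2=\{|z|\ge 4\}\cup\{\infty\}$, $K=\{5\}$, $r_2=0$, and $r_1(z)=M(z-5)$ with $M>0$ arbitrarily large; then $|r_1-r_2|=0<\epsilon$ on $K$, and your standing pole-freeness assumptions hold. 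Any rational $r$ whose poles lie among those of $r_1,r_2$ is a polynomial; the requirement $|r-r_2|<a\epsilon$ on $K_2$ forces $r$ to be a constant of modulus $<a\epsilon$, whence $|r-r_1|\ge 4M-a\epsilon$ on $K_1$, violating the conclusion once $M$ is large. Concretely, your own formula gives $r=\phi r_1-T(r_1\dibar\phi)$, which is entire and tends to $0$ at infinity, hence $r\equiv 0$ by Liouville; equivalently $\Phi=T(r_1\dibar\phi)=\phi r_1$ exactly, so $|\Phi|\ge 4M$ on $K_1$. Thus the correction is genuinely of size $M$, not $O(\epsilon)$, and any correct proof must allow $r$ to acquire \emph{new} poles in $\CP^1\setminus(K\cup K_1\cup K_2)$ (here: in the annulus $1<|z|<4$), which your construction forbids by design.

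The same example defeats both of your proposed repairs. The maximum-principle route needs $K$ to separate $K_1$ from $K_2$ and $\delta$ to be pole-free in the enclosed region, neither of which holds here and neither of which can be arranged in general. The Vitu\v skin-operator route fails because the estimate \eqref{eq:estTphi} involves $\omega_\delta$, the modulus of continuity of the input on the scale of $\supp\phi$; in the example $\delta=M(z-5)$ is a polynomial, so ``subtracting principal parts near $L$'' removes nothing, and $\omega_\delta\sim M$ on the transition region, so the resulting constant depends on $r_1,r_2$---exactly what the lemma forbids. (There is also a lesser well-definedness point: $T(\delta\,\dibar\phi)$ need not even converge when $\delta$ has poles of order $\ge 2$ on $\supp\dibar\phi$; that one is fixable by routing the transition region around the finitely many poles.) What your argument does establish is the special case in which the transition region can be chosen where $\delta$ is small, e.g.\ inside $K$ when $K$ is an annulus separating $K_1$ from $K_2$; this is precisely the ``Cousin-I problem with bounds'' that the paper's remark refers to. Note that the paper itself gives no proof of the lemma---it cites Roth \cite{Roth1976}---and Roth's argument is engineered exactly so that the final estimate never sees $\sup_L|\delta|$ and so that the fused function may take its poles in the gap between $K_1$ and $K_2$; recovering such a mechanism is the genuine missing content in your proposal.
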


The proof of this lemma is fairly elementary. In the special case of holomorphic functions, 
this amounts to the solution of a Cousin-I problem with bounds.
As an application, A.\ Roth proved the following result \cite[Theorem 1]{Roth1976} 
on approximation of functions in $\Acal(E)$ by meromorphic functions without poles on $E$.

%
%
\begin{theorem}[Roth (1976), \cite{Roth1976}] \label{th:Roth1}
Let $\Omega$ be open in $\C$, and let $E\subsetneq \Omega$ be a closed subset of $\Omega$.  
A function $f\in \Acal(E)$ may be uniformly approximated on $E$ by functions in
$\Mcal(\Omega)$ without poles on $E$ if and only if $f|_K\in\Rcal(K)$ 
for every compact $K\subset E$. 
\end{theorem}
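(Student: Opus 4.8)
The plan is to establish the two implications separately. The forward implication (necessity of the local condition) is routine, while the converse rests on an exhaustion argument driven by Roth's fusion lemma (Lemma~\ref{lem:fusing}).

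For necessity, suppose $f$ is a uniform limit on $E$ of a sequence $g_j\in\Mcal(\Omega)$ having no poles on $E$, and fix a compact set $K\subset E$. The poles of each $g_j$ form a discrete subset of $\Omega$ disjoint from $E\supset K$, so $g_j$ is holomorphic on an open neighborhood of $K$; Runge's theorem (Theorem~\ref{th:Runge}) then gives $g_j|_K\in\Rcal(K)$. Since $\Rcal(K)$ is a uniformly closed subspace of $\Ccal(K)$ and $g_j|_K\to f|_K$ uniformly, we conclude $f|_K\in\Rcal(K)$.

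For sufficiency, fix $\epsilon>0$ and exhaust $\Omega$ by compacts $L_0\subset L_1\subset\cdots$ with $L_n\Subset\mathring L_{n+1}$ and $\bigcup_n L_n=\Omega$, writing $K_n=E\cap L_n$, a compact subset of $E$. I would build inductively a sequence $g_n\in\Mcal(\Omega)$ whose poles lie in $\Omega\setminus E$, with
\[
	\|g_n-f\|_{K_n}<\epsilon\bigl(1-2^{-n}\bigr)\qquad\text{and}\qquad \|g_n-g_{n-1}\|_{K_{n-1}}<2^{-n}\epsilon .
\]
At the inductive step the hypothesis $f|_{K_n}\in\Rcal(K_n)$ supplies a rational function $r$ with poles off $K_n$ and $\|r-f\|_{K_n}$ as small as desired. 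Since $g_{n-1}$ and $r$ both approximate $f$ on $K_{n-1}$, they are close to each other there, and I would fuse them into $g_n$ by applying Lemma~\ref{lem:fusing} across a separating compact frame contained in the annular shell $L_n\setminus\mathring L_{n-1}$: the two disjoint compacta furnished to the lemma are an inner set containing $K_{n-1}$ and an outer set meeting the new portion of $K_n$, and the common set is the frame where $g_{n-1}$ and $r$ nearly agree. This yields $g_n$ close to $g_{n-1}$ near $L_{n-1}$ and close to $r$, hence to $f$, on $K_n$. Summing the geometric errors, the $g_n$ converge uniformly on $E$ and, off their poles, on compact subsets of $\Omega$, to a function $g\in\Mcal(\Omega)$ whose poles are discrete in $\Omega$ and lie in $\Omega\setminus E$, with $\|g-f\|_E\le\epsilon$.

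The main obstacle is the control of the poles and the attendant geometry of the fusion. The function produced by Lemma~\ref{lem:fusing} is merely rational, so its poles, together with those inherited from $r$ (which a priori may sit on $E\setminus K_n$), must be driven into $\Omega\setminus E$ and confined to the current shell $L_n\setminus\mathring L_{n-1}$, so that they neither accumulate in $\Omega$ nor re-enter a region where the approximation has already been fixed. Poles stranded on $E\setminus L_n$ cause no harm, since they are cleared at a later stage once that part of $E$ enters the exhaustion; but the poles locked inside $L_n$ must be placed off $E$ by an auxiliary Runge-type displacement within the appropriate components of $\CP^1\setminus K_n$. Carrying out these displacements while keeping every sup-norm estimate geometric—especially when $E$ separates $\Omega$ and no transition frame can avoid $E$ altogether—is the delicate heart of the argument.
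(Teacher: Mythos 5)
Your necessity argument is correct and complete, and you have reached for the right tool for sufficiency: the paper itself gives no proof of Theorem \ref{th:Roth1}, stating it only as an application of Roth's fusion lemma (Lemma \ref{lem:fusing}), which is indeed how Roth argues. But your application of that lemma is set up with the wrong geometry. In Lemma \ref{lem:fusing}, the set $K$ is where the two rational functions are \emph{assumed close}, and the only constraint on $K_1,K_2$ is $K_1\cap K_2=\varnothing$; there is no separation requirement involving $K$ at all. Since $g_{n-1}$ and $r$ are close to each other only where both approximate $f$, i.e.\ only on subsets of $E$, the common set must be taken \emph{inside} $E$ (say $K=E\cap L_{n-1}$ or $E\cap L_n$), not on a ``separating compact frame'' in the shell $L_n\setminus\mathring L_{n-1}$: on such a frame, off $E$, the two functions need not be close, so the hypothesis of the lemma is unavailable there. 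The natural choice is $K=E\cap L_n$, $K_1=L_{n-1}$, $K_2=\CP^1\setminus\mathring L_n$ (compact in $\CP^1$, containing $\infty$); with this choice the fused function is uniformly close on $E\cap L_n$ to functions finite there, hence automatically pole-free on $E\cap L_n$, so no auxiliary ``Runge-type displacement'' of poles is needed, and your closing worry about frames that cannot avoid $E$ dissolves, since $K$ is supposed to meet $E$.

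The second problem is fatal as written: your induction cannot close. The lemma only guarantees $\|g_n-g_{n-1}\|\le a_n\,\|g_{n-1}-r\|_K$ on $K\cup K_1$, where $a_n=a(K_1,K_2)$ is not under your control and satisfies $a_n\ge 1/2$ (apply the lemma to constants nearly realizing the bound on $K$ and use the triangle inequality). But $\|g_{n-1}-r\|_K$ is at least $\|g_{n-1}-f\|_K$ minus a negligible term, i.e.\ of size comparable to $\epsilon$ by your own inductive hypothesis, and no choice of $r$ at stage $n$ can shrink it. So the only bound fusion yields is roughly $a_n\epsilon\ge\epsilon/2$, nowhere near the required $2^{-n}\epsilon$: errors multiply by $a_n$ at each stage instead of summing geometrically. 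The repair — and the way Roth's proof actually runs — is telescoping: choose rational $r_n$ with $\|r_n-f\|_{E\cap L_{n+1}}<\delta_n$, where $\delta_n$ is chosen \emph{after} the constant $a_n=a(L_n,\CP^1\setminus\mathring L_{n+1})$ is known, so that $a_n(\delta_n+\delta_{n+1})<2^{-n}\epsilon$; fuse the pair $(r_n,r_{n+1})$, which are $(\delta_n+\delta_{n+1})$-close on $K=E\cap L_{n+1}$, into $\rho_n$; and set $m=r_1+\sum_n(\rho_n-r_n)$. Each $\rho_n-r_n$ is smaller than $2^{-n}\epsilon$ on $L_n\cup(E\cap L_{n+1})$, hence pole-free there, and each $\rho_n-r_{n+1}$ is smaller than $2^{-n}\epsilon$ on all of $E$. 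Rewriting the partial sums as $r_N+\sum_{n<N}(\rho_n-r_{n+1})$, one reads off $\|m-f\|_E\le 2\epsilon$, and on each $\mathring L_N$ the function $m$ differs from this rational function, whose poles there avoid $E$, by a uniform limit of functions holomorphic near $L_N$; hence $m\in\Mcal(\Omega)$ with poles off $E$. This additive structure also supplies exactly what your sketch lacks at the last step: smallness of the increments on full compacts $L_n$ of $\Omega$, not merely on $E\cap L_{n-1}$, which is what makes the limit meromorphic on all of $\Omega$.
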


The paper \cite{Roth1976} of A.\ Roth also contains results on tangential  and
Carleman approximation by meromorphic functions on closed subsets of planar domains.

The following result \cite[Theorem 2]{Roth1976} was proved by 
A.\ A.\ Nersesyan \cite{Nersesyan1972} for $\Omega=\C$;
this extends Vitushkin's theorem (Theorem \ref{th:Vitushkin}) to closed subsets of $\C$.

%
%
\begin{theorem}[Nersesyan (1972), \cite{Nersesyan1972}; Roth (1976), \cite{Roth1976}] \label{th:Roth2}
Let $E\subset \Omega$ be as in Theorem \ref{th:Roth1}.  
A necessary and sufficient condition that every function in $\Acal(E)$ can be approximated
uniformly on $E$ by meromorphic functions on $\Omega$ with poles off $E$
is that $\Rcal(E\cap K)=\Acal(E\cap K)$ holds for every closed disc $K\subset \Omega$. 
\end{theorem}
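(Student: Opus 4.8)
The plan is to deduce the theorem from Roth's Theorem~\ref{th:Roth1} together with Vitu\v{s}kin's Theorem~\ref{th:Vitushkin}, using the Vitu\v{s}kin localization operators \eqref{eq:locop} as the bridge between local and global rational approximation. By Theorem~\ref{th:Roth1}, for a given $f\in\Acal(E)$ the asserted global approximability by functions in $\Mcal(\Omega)$ with poles off $E$ is \emph{equivalent} to the requirement that $f|_L\in\Rcal(L)$ for every compact $L\subset E$. Hence the whole statement reduces to showing that this local condition, imposed for \emph{all} $f\in\Acal(E)$, is equivalent to the disc condition $\Acal(E\cap K)=\Rcal(E\cap K)$ for every closed disc $K\subset\Omega$. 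One implication is a global-to-local passage, the other a local-to-global one, and I treat them separately.

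\emph{Sufficiency (disc condition $\Longrightarrow$ global approximation).} Assume $\Acal(E\cap K)=\Rcal(E\cap K)$ for all discs $K\subset\Omega$. Given $f\in\Acal(E)$ and a compact $L\subset E$, cover $L$ by finitely many open discs $\mathring K_1,\dots,\mathring K_m$ whose closures are discs contained in $\Omega$. On each $E\cap K_j$ we have $f\in\Acal(E\cap K_j)=\Rcal(E\cap K_j)$, so $f$ is a uniform limit of rational functions with poles off $E\cap K_j$; restricting these to $L\cap K_j\subset E\cap K_j$ gives $f|_{L\cap K_j}\in\Rcal(L\cap K_j)$. After cutting $f$ off to a compactly supported continuous function near $L$, a partition of unity $\{\phi_j\}$ subordinate to $\{\mathring K_j\}$ and the decomposition $f=\sum_j T_{\phi_j}(f)$, controlled by \eqref{eq:estTphi}, glue these into a rational approximation of $f$ on $L$ (equivalently, Roth's fusing Lemma~\ref{lem:fusing} merges them inductively while keeping poles off $L$). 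Thus $f|_L\in\Rcal(L)$, and Theorem~\ref{th:Roth1} finishes this direction; it is routine once the localization estimate is at hand.

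\emph{Necessity, interior points.} Conversely, assume the local condition and fix a closed disc $K\subset\Omega$ and $g\in\Acal(E\cap K)$; I must show $g\in\Rcal(E\cap K)$. By the localization principle behind the operators \eqref{eq:locop} (the $\Rcal$-analogue of Bishop's Theorem~\ref{th:Bishop}) it suffices to verify that each point of $E\cap K$ has a small disc $\overline D$ with $g|_{E\cap K\cap\overline D}\in\Rcal(E\cap K\cap\overline D)$. For an interior point $p\in\mathring K$ choose $\overline D\subset\mathring K$, so that $E\cap K\cap\overline D=E\cap\overline D$. Extend $g$ to $\tilde g\in\Ccal_0(\C)$ by Tietze and pick $\chi\in\Ccal^\infty_0(\mathring K)$ with $\chi\equiv1$ near $\overline D$. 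Then $f:=T_\chi(\tilde g)$ is holomorphic on $\mathring E$—on $\mathring E\cap\mathring K$ because there $\tilde g=g$ is holomorphic, and off $\supp\chi\subset\mathring K$ by construction—so $f|_E\in\Acal(E)$, while $\tilde g-T_\chi(\tilde g)$ is holomorphic where $\chi\equiv1$, so $g-f$ coincides near $p$ with the restriction to $E$ of a function holomorphic in a neighborhood of $p$, hence lies in $\Rcal$ locally. Applying the local condition to $f$ gives $f|_{E\cap\overline D}\in\Rcal$, whence $g=f+(g-f)\in\Rcal$ on a slightly smaller disc.

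The hard part will be the boundary points $p\in bK$. There the realization above breaks down: if $\mathring E$ meets both sides of the circle $bK$ near $p$, no global $f\in\Acal(E)$ can restrict to $g$ on the lens $E\cap K\cap\overline D$, since such an $f$ would have to continue the holomorphic germ of $g$ across the analytic arc $bK$, which is impossible for a general $g\in\Acal(E\cap K)$. To overcome this I would argue directly on the lens: the inner interior $\mathring E\cap\mathring K$ is handled by the global realization as above, whereas the bounding arc $bK\cap\overline D$ has zero area, so by the sharp estimate \eqref{eq:Mest} for the Cauchy transform the $\dibar$-correction supported in a thin neighborhood of the arc is negligible (this is the Hartogs--Rosenthal mechanism for approximation on measure-zero sets). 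Patching the inner piece with the arc piece via Roth's fusing Lemma~\ref{lem:fusing} then yields $g|_{E\cap K\cap\overline D}\in\Rcal$. Once local rational approximability holds at every point of $E\cap K$, the localization principle delivers $g\in\Rcal(E\cap K)$, which establishes the disc condition and completes the proof.
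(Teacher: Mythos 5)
First, a point of comparison: the paper does not prove Theorem \ref{th:Roth2} at all; it states it with references to Nersesjan and Roth, and only remarks that the necessity implication is generalized to Riemann surfaces by the Boivin--Jiang result (Theorem \ref{th:localization-converse}), whose proof relies on the Vitu\v{s}kin operators \eqref{eq:locop}. So your proposal has to be judged on its own merits. Much of it is sound: the reduction of the whole theorem, via Theorem \ref{th:Roth1}, to the equivalence of the two local conditions is correct; the sufficiency direction is correct (it is the easy direction: restrict to $E\cap K_x$, use the disc hypothesis, and invoke Bishop's localization Theorem \ref{th:Bishop} together with Runge to get $f|_L\in\Rcal(L)$ for every compact $L\subset E$); and your treatment of \emph{interior} points in the necessity direction is a genuinely nice argument --- $T_\chi(\tilde g)$ with $\supp\chi\subset\mathring K$ restricts to an element of $\Acal(E)$, so the global hypothesis applies to it, while $\tilde g-T_\chi(\tilde g)$ is holomorphic on the interior of $\{\chi=1\}$.

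The gap is exactly where you flagged difficulty, at points $p\in bK$, and your proposed fix does not work. The obstruction is not the arc $bK\cap\overline D$ alone, and it is not an area phenomenon. After subtracting the piece handled by the hypothesis (any $T_\chi(\tilde g)$ with $\supp\chi\subset\mathring K$), the remainder fails to be holomorphic on the whole transition region $\{\chi<1\}\cap K$ and on $\overline D\setminus K$: these are two-dimensional sets whose area near $p$ stays comparable to $\mathrm{Area}(\overline D)$. Worse, even for the arc alone the Hartogs--Rosenthal mechanism fails quantitatively: the functions involved are merely continuous, so to speak of a $\dibar$-correction one must first mollify at some scale $\eta$, which produces $\dibar$-data of size $\omega(\eta)/\eta$ supported on an $\eta$-neighborhood of the arc, of area $\approx \eta$; the estimate \eqref{eq:Mest} then bounds the correction by a constant times $\omega(\eta)/\sqrt{\eta}$ (a sharper computation gives $\omega(\eta)\log(1/\eta)$), which does not tend to zero for a general modulus of continuity. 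This is precisely the reason Mergelyan's theorem cannot be proved by area estimates and requires his kernel lemma. Hartogs--Rosenthal applies to compact sets of zero area, where the area of the enclosing neighborhood can be shrunk \emph{independently} of the mollification scale; here the singular set sits on $E\cap K$ itself, so the two scales are coupled.

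What actually makes the boundary case work --- and this is the missing idea, present in Roth's, Nersesjan's and Boivin--Jiang's proofs --- is the \emph{thickness}, not the nullity, of the complement: near every point of $bK$, the set $\C\setminus(E\cap K)$ contains the exterior lune $\Delta\setminus K$, hence arcs of diameter comparable to the scale $\delta$ of any small disc $\Delta$. One decomposes the remainder with Vitu\v{s}kin operators $T_{\phi_j}$ on $\delta$-discs along $bK$ and replaces each piece by a function whose singularities are pushed onto such an arc in $\C\setminus K$, matching Laurent coefficients at infinity as in Mergelyan's lemma \eqref{eq:estP}; the resulting decay $O(\delta^2/|z-\zeta|^3)$ makes the errors summable over the covering, giving a total error $O(\omega(\delta))$ and hence membership in $\Rcal(E\cap K)$. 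Nothing in your sketch supplies this coefficient-matching step, and Roth's fusing Lemma \ref{lem:fusing} cannot replace it: its constant $a(K_1,K_2)$ depends on the separation of the two compacta and blows up as they approach each other, which is exactly the regime one is in when patching the inner piece against a piece supported along the arc.
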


%
%

The results presented above have been generalized to
open Riemann surfaces to a certain extent, although the theory does not seem complete.
In 1975, P.\ M.\ Gauthier and W.\ Hengartner \cite{GauthierHengartner1975}  gave the following
necessary condition for uniform approximation.
(As before, $X^*$ denotes the one point compactification of $X$.)

%
%
\begin{theorem}\label{th:GauthierHengartner1975}
Let $E$ be a closed subset of a Riemann surface $X$. If every function in $\Ocalc(E)$
is a uniform limit of functions in $\Ocal(X)$, then $X^*\setminus E$ is connected and 
locally connected, i.e., $E$ is an Arakelian set.
\end{theorem}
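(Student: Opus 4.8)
The plan is to prove the contrapositive at the level of the two topological conditions separately: I will show that the approximation hypothesis forces (A) $X^*\setminus E$ to be connected and (B) it to be locally connected at the added point $*$. Local connectivity at points of $X\setminus E$ is automatic, since $X\setminus E$ is an open subset of a manifold and hence locally connected there, so only the point $*$ matters. The common engine is that a function $g\in\Ocal(X)$ is holomorphic on \emph{all} of $X$, so on every relatively compact domain $D\Subset X$ it obeys the maximum principle and, for any holomorphic $1$-form $\theta$ defined near $\overline D$, the period condition $\int_{bD}g\,\theta=0$ (holomorphic $1$-forms on a Riemann surface are closed). Any such period functional is continuous for the sup-norm on $bD$; when $bD\subseteq E$, the approximation hypothesis propagates its vanishing from $\Ocal(X)$ to all of $\Ocalc(E)$. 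Throughout I take $X$ open, which is the only case in which the conclusion can hold nontrivially.

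For (A), suppose $X\setminus E$ has a relatively compact component $H$ (a hole). Pick $a\in H$ and a meromorphic function $m$ on $X$ with a single simple pole at $a$ and holomorphic on $X\setminus\{a\}$; such $m$ exists on any open Riemann surface, and since $a\in H\subseteq X\setminus E$ we have $m\in\Ocal(E)\subseteq\Ocalc(E)$. Let $\theta=dg_0$, where $g_0\colon X\to\C$ is a Gunning--Narasimhan immersion (see Remark \ref{rem:CauchykernelX}), so that $\theta$ is a nowhere vanishing holomorphic $1$-form on $X$ and $m\theta$ has a nonzero residue at $a$. Consider $\Lambda(u)=\int_{bH}u\,\theta$; since $bH\subseteq E$, $\Lambda$ is $\|\cdot\|_E$-continuous, and $\Lambda$ vanishes on $\Ocal(X)$ by Stokes' theorem. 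If $m$ were a uniform limit on $E$ of functions in $\Ocal(X)$, then $\Lambda(m)=0$; but $\Lambda(m)=2\pi\imath\,\mathrm{Res}_a(m\theta)\neq0$, a contradiction. (Regularity of $bH$ is immaterial: one may replace $bH$ by a rectifiable cycle enclosing $a$, or $\Lambda$ by an annihilating measure in the sense of Bishop.) Hence $E$ has no holes, and therefore $X^*\setminus E$ is connected.

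For (B), I use the topological fact that, once $X^*\setminus E$ is connected, it is locally connected at $*$ if and only if for every compact set $K\subseteq X$ the union of holes of $E\cup K$ (the relatively compact components of $X\setminus(E\cup K)$) is relatively compact; this is the Riemann-surface form of the bounded exhaustion hulls property (cf.\ Definition \ref{def:BEH}). Suppose it fails for some $K$. Then there are infinitely many distinct holes $H_j$ of $E\cup K$ and points $p_j\in H_j$ leaving every compact subset of $X$. Since $E$ has no holes by step (A), no $H_j$ can have $bH_j\subseteq E$ (else $H_j$ would be a relatively compact component of $X\setminus E$); hence each $\overline{H_j}$ meets $K$, so each $H_j$ is a ``channel'' joining the compact region near $K$ to the point $*$ at infinity. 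The plan is to prescribe incompatible data along the $bH_j$ — e.g.\ by placing poles in the $H_j$ — and to use the maximum principle, which controls every approximant $g\in\Ocal(X)$ on $\overline{H_j}$ by its values on $bH_j\subseteq E\cup K$, to manufacture a function in $\Ocalc(E)$ whose uniform approximation on $E$ by functions in $\Ocal(X)$ would entail a uniform bound incompatible with the prescribed singular behavior along infinitely many escaping channels.

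The main obstacle is precisely this last construction. In contrast with step (A), the boundary $bH_j$ now meets $K$, so the clean period functional is no longer $\|\cdot\|_E$-continuous and the residue argument does not apply verbatim; one must instead control the approximants simultaneously along infinitely many channels escaping to $*$ while keeping the constructed obstruction inside $\Ocalc(E)$. This is the technical heart of the Gauthier--Hengartner argument. Combining (A) and (B) shows that $X^*\setminus E$ is connected and locally connected, i.e.\ $E$ is an Arakeljan set (cf.\ Theorem \ref{th:Arakeljan}), completing the proof.
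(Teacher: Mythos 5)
Your proposal does not prove the theorem: part (B) is a plan, not a proof, and you say so yourself. (Note also that the survey states this result without proof, citing Gauthier--Hengartner, so there is nothing in the paper to fall back on; a complete argument has to be supplied from scratch.) Concretely, after correctly reducing local connectedness at $*$ to the bounded-exhaustion-hulls statement and correctly observing that each escaping hole $H_j$ of $E\cup K$ must have $\overline{H_j}\cap K\neq\varnothing$, you stop at exactly the point where the work begins: constructing a function in $\Ocalc(E)$ that cannot be uniformly approximated. The obstruction you name is real and is why no routine variant of step (A) closes it: the maximum principle only gives $\sup_{\overline{H_j}}|g|\le\max\bigl(\sup_E|g|,\,\sup_K|g|\bigr)$ for an approximant $g\in\Ocal(X)$, and $\sup_K|g|$ is not controlled by the hypothesis (it depends on $g$). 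Moreover, if you place poles $p_j$ in infinitely many channels, membership of the resulting function in $\Ocalc(E)$ forces the residues to shrink like $\dist(p_j,E)$, which may tend to $0$ as fast as the channels are thin, so the singular data degenerate exactly as fast as the obstruction they are supposed to create. Overcoming this requires genuinely new input --- e.g.\ a uniform-boundedness/normal-families argument producing approximants with a bound on $K$ independent of the target function, or harmonic-measure (two-constants) estimates along the channels --- none of which appears. As it stands, the theorem is unproved.

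Step (A) has the right idea but is also not correct as written, though it is repairable. The functional $\Lambda(u)=\int_{bH}u\,\theta$ is meaningless in general: $bH$ is merely the topological boundary of a component of $X\setminus E$ and need not carry any integration; and neither of your parenthetical repairs works as stated. A rectifiable cycle ``enclosing $a$'' must lie \emph{inside $E$} for $\Lambda$ to be $\|\cdot\|_E$-continuous, and $E$ (which may have empty interior) need not contain such a cycle; placing the cycle in $H$ loses exactly the continuity you need, and a Bishop-type annihilating measure supported on $bH$ has to be constructed (harmonic measure does it, but that construction is the content being waved away). The clean fix avoids integration entirely: let $g_0\colon X\to\C$ be the Gunning--Narasimhan immersion you already invoke and set $h=g_0-g_0(a)$, which has a simple zero at $a$, so $hm\in\Ocal(X)$ and $(hm)(a)\neq 0$. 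If $g_n\in\Ocal(X)$ and $g_n\to m$ uniformly on $E$, then $hg_n-hm\in\Ocal(X)$ tends to $0$ uniformly on $bH\subset E$, hence on $\overline H$ by the maximum principle (legitimate for the compact set $\overline H$ regardless of boundary regularity); evaluating at $a$ gives $|(hm)(a)|\le \sup_{bH}|h|\cdot\sup_E|g_n-m|\to 0$, a contradiction. With this repair (A) is sound, but since (B) is missing, the proposal establishes only half of the statement.
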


However, an example in \cite{GauthierHengartner1975} shows that the converse does not hold
in general. In particular,  {\em Arakelian's Theorem \ref{th:Arakelian} cannot be fully generalized 
to Riemann surfaces.} Further examples to this effect can be found in \cite[p.\ 120]{BoivinGauthier2001}. 

The situation is rather different for {\em harmonic} functions: if $E$ is a closed Arakelian set 
in an open Riemann surface $X$ then every continuous function on $E$ which is
harmonic in the interior $\mathring E$ can be approximated uniformly on $E$ 
by entire harmonic functions on $X$ 
(see T.\ Bagby and P.\ M.\ Gauthier \cite[Corollary 2.5.2]{BagbyGauthier1988}).

In 1986, A.\ Boivin \cite{Boivin1986} extended Nersesyan's Theorem \ref{th:Nersesyan} 
to a characterization of sets of holomorphic Carleman approximation in open Riemann surfaces, 
and he provided a sufficient condition on sets of meromorphic Carleman approximation.

For Carleman approximation of harmonic functions, we refer to T.\ Bagby and P.\ M.\ Gauthier 
\cite[Theorem 3.2.3]{BagbyGauthier1988}.
Furthermore, in \cite{BoivinGauthierParamonov2002}, A.\ Boivin, P.\ Gauthier and P.\ Paramonov 
established new Roth, Arakelian and Carleman type theorems for solutions of a large class of 
elliptic partial differential operators $L$ with constant complex coefficients. 

We return once more to Bishop's localization theorem  (see Theorem \ref{th:Bishop}).
We have already mentioned (cf.\ Remark \ref{rem:CauchykernelX}) that in the late 1970's, 
P.\ M.\ Gauthier \cite{Gauthier1979} and S.\ Scheinberg \cite{Scheinberg1978}   
constructed on any open Riemann surface $X$ a meromorphic kernel $F(p,q)$ such that 
$F(p,q)=-F(q,p)$ and the only singularities of $F$ are simple poles with residues $+1$ on the diagonal. 
With this kernel in hand, they extended Bishop's localization theorem to  closed sets 
of essentially finite genus in any Riemann surface. (See also \cite{Sakai1972,Boivin1987}.)
The most precise results in this direction were obtained  by S.\ Scheinberg \cite{Scheinberg1979} in 1979.
Under certain restrictions on the Riemann surface $X$ and the closed set $E\subset X$, 
he completely described those sets $P\subset X\setminus E$ such that every function in $\Acal(E)$ 
may be approximated uniformly on $E$ by functions meromorphic on $X$ whose poles lie in $P$. 
His theorems provide an elegant synthesis of all previously known results of this type
and 
a  summary of localization results.

The following converse to Bishop's localization theorem on an arbitrary Riemann surface
was proved by A.\ Boivin and B.\ Jiang \cite{BoivinJiang2004} in 2004. 
Recall that a {\em closed parametric disc} in a Riemann surface $X$ is the inverse image 
$D=\phi^{-1}(\Delta)$ of a closed disc $\Delta \subset \phi(U)\subset \C$, 
where $(U,\phi)$ is a holomorphic chart on $X$.

%
%
\begin{theorem}[Boivin and Jiang (2004), Theorem 1 in \cite{BoivinJiang2004}]\label{th:localization-converse}
Let $E$ be a closed subset of a Riemann surface $X$. If $\Acal(E)=\Ocalc(E)$, 
then $\Acal(E\cap D)=\Ocalc(E\cap D)$ holds for every closed parametric disc $D\subset X$.
\end{theorem}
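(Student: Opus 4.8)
The plan is to localize the hypothesis: from the data $f\in\Acal(E\cap D)$ I want to manufacture a genuine element of $\Acal(E)$, apply the global property $\Acal(E)=\Ocalc(E)$ to it, and restrict back to $E\cap D$. Since $\Ocalc(E\cap D)\subseteq\Acal(E\cap D)$ always holds (see \eqref{eq:spaces}), only the reverse inclusion is needed, and it suffices to prove the following claim: for every $f\in\Acal(E\cap D)$ and every $\eta>0$ there exist $\hat f\in\Acal(E)$ and a function $r$ holomorphic on a neighborhood of $D$ with $\sup_{E\cap D}\big|f-(\hat f+r)\big|<\eta$. Indeed, granting the claim, $\Acal(E)=\Ocalc(E)$ yields $G\in\Ocal(\Omega)$ on an open set $\Omega\supseteq E$ with $\sup_E|G-\hat f|<\eta$; then $G+r$ is holomorphic on a neighborhood of the compact set $E\cap D$ and $\sup_{E\cap D}|f-(G+r)|<2\eta$, whence $f\in\Ocalc(E\cap D)$.

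To establish the claim I would work in the chart $(U,\phi)$ with $D=\phi^{-1}(\Delta)$, fix a slightly larger concentric parametric disc $D\Subset D'\Subset U$, and pick a cutoff $\chi\in\Ccal^\infty_0(U)$ with $\chi\equiv1$ near $D$ and $\supp\chi\subset D'$. After extending $f$ to $\tilde f\in\Ccal_0(X)$ (Tietze) and regularizing at scale $\delta$ by convolution in the coordinate $\phi$, as in the proof of Theorem \ref{th:Mergelyan1951}, to get $\tilde f_\delta\in\Ccal^1$ with $\tilde f_\delta\to\tilde f$ uniformly and $|\dibar\tilde f_\delta|=O(\omega_f(\delta)/\delta)$, I apply the localization operator \eqref{eq:locop} to form $g=T_\chi(\tilde f_\delta)$. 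Its two relevant features are that $r:=\tilde f_\delta-g$ is holomorphic where $\chi\equiv1$, hence near $D$, and that $\dibar g=\chi\,\dibar\tilde f_\delta$; in particular $\dibar g$ vanishes on $\mathring E\cap\mathring D$ (where $\tilde f_\delta$ is holomorphic for $\delta$ small) and off $\supp\chi$, so its restriction to $\mathring E$ is supported in the thin annulus $\mathring E\cap(\overline{D'}\setminus\mathring D)$. I then solve $\dibar w=\chi\,\dibar\tilde f_\delta$ \emph{only on $\mathring E$} — i.e.\ taking as source the form $\chi\,\dibar\tilde f_\delta$ cut off to $\mathring E$, using the globally defined Cauchy kernel of Remark \ref{rem:CauchykernelX} and the operator $T$ of \eqref{operatorbs} — and set $\hat f:=g-w$. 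Then $\hat f$ is continuous on $E$ with $\dibar\hat f=0$ on $\mathring E$, so $\hat f\in\Acal(E)$, while $\hat f+r=\tilde f_\delta-w$, which on $E\cap D$ differs from $f$ by $(\tilde f_\delta-\tilde f)-w=O(\omega_f(\delta))-w$. It is crucial here that the $\dibar$-correction is imposed only on $\mathring E$: this keeps the source confined to the small annulus (rather than all of $D\setminus E$, which could carry large area), at the price of leaving $\hat f$ merely in $\Acal(E)$ — precisely the object the hypothesis is designed to handle.

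The hard part is the uniform estimate $\sup_{E\cap D}|w|<\eta$. The source has magnitude blowing up like $\omega_f(\delta)/\delta$, and a crude area bound through \eqref{eq:Mest} over the width-$\delta$ annulus yields only $O(\omega_f(\delta)/\sqrt{\delta})$, which need not tend to $0$ for a merely continuous $f$. Obtaining the sharp bound $\|w\|=O(\omega_f(\delta))\to0$ is exactly the quantitative core of Mergelyan's theorem: one must realize $w$ through the localization operators at scale $\delta$ and invoke Mergelyan's approximation of the Cauchy kernel, so that \eqref{eq:estTphi} (resting on \eqref{eq:Mest}) bounds the total correction by a constant independent of the number of cells. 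I expect this estimate, rather than the formal bookkeeping above, to be the main obstacle; the complementary ingredient — solving $\dibar$ within $\Acal(E)$ with a uniformly small solution — is available on any open Riemann surface via the global Cauchy kernel of Remark \ref{rem:CauchykernelX}, reducing in the chart to the planar operator $T_K$ of \eqref{eq:TK}.
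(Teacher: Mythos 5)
Your overall skeleton is sound bookkeeping: if for each $f\in\Acal(E\cap D)$ and $\eta>0$ you could really produce $\hat f\in\Acal(E)$ and $r$ holomorphic near $D$ with $\sup_{E\cap D}|f-(\hat f+r)|<\eta$, the theorem would follow exactly as you argue. The gap lies in the construction of this pair, in two places, and the second is not a technical loose end but the entire content of the theorem.

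First, your support claim is false. The Tietze extension $\tilde f$ is holomorphic only on $\mathring E\cap\mathring D$, so the mollification $\tilde f_\delta$ is holomorphic only at points whose $\delta$-disc (in the chart) is contained in $\mathring E\cap\mathring D$. Hence $\dibar g|_{\mathring E}=\chi\,\dibar\tilde f_\delta|_{\mathring E}$ is supported in the full $\delta$-collar of the boundary of $\mathring E\cap\mathring D$, intersected with $\mathring E\cap\supp\chi$; this includes the collar of $\di E$ \emph{inside} $D'$, not just the annulus around $bD$. The area $A(\delta)$ of that collar does tend to $0$, but at no definite rate (take $\mathring E\cap\mathring D$ to be a union of infinitely many small discs whose radii decrease slowly: then all but finitely many of them lie entirely within $\delta$ of the boundary). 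So the estimate from \eqref{eq:Mest} is not the $O(\omega_f(\delta)/\sqrt{\delta})$ you computed but $(\omega_f(\delta)/\delta)\sqrt{A(\delta)}$, which in general is unbounded. If instead you skip the mollification and use the source $\mathbf{1}_{\mathring E}\,\dibar(\chi\tilde f)$, the support \emph{is} confined to the annulus, but the source is then only a distribution and \eqref{eq:Mest} is unusable; this tension is exactly what your scheme cannot resolve.

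Second, the proposed repair cannot close the estimate. Mergelyan's kernel lemma \eqref{eq:estP} requires, in each $\delta$-cell meeting the source, a Jordan arc of diameter $\geq 2\delta$ lying off the set where the output must be holomorphic; in Theorem \ref{th:Mergelyan1951} such arcs exist because $\C\setminus K$ is connected. Here your correction $w$ must satisfy $\dibar w=\,$source \emph{exactly} on $\mathring E$ (that is what puts $\hat f=g-w$ in $\Acal(E)$), and no geometric hypothesis on $X\setminus E$ is available near the fine structure of $E$ inside $D$ — Swiss-cheese-type sets are precisely the case of interest. Replacing the Cauchy kernel by the $P_j$'s destroys the exact equation (the output is then holomorphic off the arcs, not a solution of the $\dibar$-problem on $\mathring E$), while keeping the exact equation forfeits the kernel estimate. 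Since your argument invokes the hypothesis $\Acal(E)=\Ocalc(E)$ only once, qualitatively, at the very end, nothing in it can supply the missing quantitative control: the "main obstacle" you defer is not an importable ingredient of Mergelyan's proof but is equivalent in difficulty to the theorem itself. This is also why the actual proof — which this survey does not reproduce, citing Boivin and Jiang — is organized differently: it runs Vitu\v skin's localization scheme \eqref{eq:locop} over the cells, adapted to Riemann surfaces via the Gauthier–Scheinberg kernels of Remark \ref{rem:CauchykernelX}, and exploits the hypothesis cell by cell through capacity-type estimates, in the spirit of Theorems \ref{th:Vitushkin} and \ref{th:Roth2}, rather than through a single global $\dibar$-correction.
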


Their proof relies on Vitushkin localization operators \eqref{eq:locop}, adapted to Riemann surfaces by 
P.\ Gauthier \cite{Gauthier1979} and S.\ Scheinberg \cite{Scheinberg1979} 
by using the Cauchy kernels mentioned above. (See also Remark \ref{rem:CauchykernelX}.)

Note that Theorem \ref{th:localization-converse} generalizes one of the implications in Theorem \ref{th:Roth2}
to Riemann surfaces. A result of this kind does not seem available for compact sets in higher dimensional
complex manifolds. We shall discuss this question again in connection with the Mergelyan approximation
problem for manifold-valued maps (see Subsect.\ \ref{ss:Mergelyan-manifold},
in particular Definition \ref{def:SLMP} and Remark \ref{rem:SLMP}).

The following is an immediate corollary to Theorem \ref{th:localization-converse} and  
Bishop's localization theorem for closed sets in Riemann surfaces \cite{Gauthier1979,Scheinberg1979}. It provides 
an optimal version of Vitushkin's approximation theorem (see Theorem \ref{th:Vitushkin})
on Riemann surfaces.

%
%
\begin{theorem}[Boivin and Jiang (2004), Theorem 2 in \cite{BoivinJiang2004}] \label{cor:BJTh2}
Let $E$ be a closed subset of a Riemann surface $X$, and assume either that $E$ is weakly
of infinite genus (this holds in particular if $E$ is compact) or $\mathring E=\varnothing$. 
Then, the following are equivalent:
\begin{enumerate}
\item Every function in $\Acal(E)$ is a uniform limit of meromorphic functions on $X$ with poles off $E$.
\item For every closed parametric disc $D\subset X$ we have $\Acal(E\cap D)=\Ocalc(E\cap D)$.
\item For every point $x\in X$ there exists a closed parametric disc $D_x$ centred at $x$ such that
$\Acal(E\cap D_x)=\Ocalc(E\cap D_x)$.
\end{enumerate}
\end{theorem}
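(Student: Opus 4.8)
The plan is to establish the cyclic chain $(1)\Rightarrow(2)\Rightarrow(3)\Rightarrow(1)$, drawing $(1)\Rightarrow(2)$ from Theorem \ref{th:localization-converse} and $(3)\Rightarrow(1)$ from the closed-set version of Bishop's localization theorem (Gauthier \cite{Gauthier1979}, Scheinberg \cite{Scheinberg1979}). Before entering the cycle I would record the reformulation of condition (1) in terms of the classes $\Acal(E)$ and $\Ocalc(E)$. The inclusion $\Ocalc(E)\subseteq\Acal(E)$ is automatic, since $\Acal(E)$ is closed under uniform limits on $E$ and contains the restriction of every function holomorphic near $E$. A meromorphic function on $X$ with poles off the closed set $E$ is holomorphic on a neighborhood of $E$, so (1) says precisely $\Acal(E)\subseteq\Ocalc(E)$; conversely, once $f\in\Acal(E)$ is approximated on $E$ by functions holomorphic near $E$, the globally defined meromorphic Cauchy kernel of Gauthier and Scheinberg (see Remark \ref{rem:CauchykernelX}) lets one push the resulting poles off $E$, so each neighborhood approximant can itself be replaced by a meromorphic function on $X$ with poles off $E$. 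Hence (1) is equivalent to the single localization equality $\Acal(E)=\Ocalc(E)$, and it is this reformulation I would use throughout.

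With this in hand, $(1)\Rightarrow(2)$ is exactly Theorem \ref{th:localization-converse}: the equality $\Acal(E)=\Ocalc(E)$ forces $\Acal(E\cap D)=\Ocalc(E\cap D)$ for every closed parametric disc $D\subset X$, with no hypothesis on the genus of $E$. The implication $(2)\Rightarrow(3)$ is immediate, since for each point $x\in X$ one may take $D_x$ to be any closed parametric disc centred at $x$, and condition (2) then supplies the required equality $\Acal(E\cap D_x)=\Ocalc(E\cap D_x)$.

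The remaining implication $(3)\Rightarrow(1)$ is where the genus hypothesis enters and is the heart of the argument. I would fix $f\in\Acal(E)$ and verify the hypothesis of Bishop's localization theorem for closed sets. For each $x\in X$, condition (3) provides a closed parametric disc $D_x$ with $\Acal(E\cap D_x)=\Ocalc(E\cap D_x)$; since $\mathring{(E\cap D_x)}\subseteq\mathring E$, the restriction $f|_{E\cap D_x}$ lies in $\Acal(E\cap D_x)$ and hence in $\Ocalc(E\cap D_x)$. Thus $f$ is locally approximable by functions holomorphic near the corresponding pieces of $E$, which is precisely the input of the localization theorem, whose conclusion gives $f\in\Ocalc(E)$, i.e. $\Acal(E)=\Ocalc(E)$; by the reformulation above this is condition (1). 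The genuine difficulty, already resolved in the cited work, is the localization theorem itself: patching the local approximants into a single function holomorphic near all of $E$ requires a globally defined Cauchy kernel together with the Vitu\v skin localization operators \eqref{eq:locop} transported to $X$, and it is exactly the assumption that $E$ be weakly of infinite genus (or have empty interior) that makes this globalization valid. In the corollary, then, the only thing to check is that this hypothesis is in force and that (3) feeds its local equalities into the theorem correctly; granting the cited localization theorem, the cycle closes and the three conditions are equivalent.
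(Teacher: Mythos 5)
Your proposal follows the same route the paper intends: the paper presents this result as an immediate corollary of Theorem \ref{th:localization-converse} (which gives $(1)\Rightarrow(2)$) and of the Gauthier--Scheinberg extension of Bishop's localization theorem to closed sets in Riemann surfaces (which gives $(3)\Rightarrow(1)$ and is where the genus/empty-interior hypothesis enters), with $(2)\Rightarrow(3)$ being trivial. Your cyclic chain, the attribution of each implication, and your identification of where the hypothesis on $E$ is actually needed all match this.

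The one step you should repair is the preliminary ``reformulation,'' which you assert unconditionally: that (1) is equivalent to $\Acal(E)=\Ocalc(E)$ for every closed set $E$ in every Riemann surface. The direction needed for $(1)\Rightarrow(2)$ is fine: a meromorphic function on $X$ with poles off the closed set $E$ is holomorphic on a neighborhood of $E$, so (1) gives $\Acal(E)\subseteq\Ocalc(E)$, and the reverse inclusion is automatic. But the converse direction --- that every function holomorphic in a neighborhood of a \emph{noncompact} closed set $E$ can be approximated uniformly on $E$ by meromorphic functions on $X$ with poles off $E$ --- is not a routine ``pole-pushing'' consequence of the existence of the global Cauchy kernel. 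For compact $E$ this is Behnke--Stein (Theorem \ref{th:Runge2}); for closed noncompact $E$ the Cauchy-integral/Riemann-sum argument gives no uniform control along $E$, and one needs Roth-type fusion. On Riemann surfaces the resulting statement is itself a theorem of Gauthier and Scheinberg valid only under genus-type restrictions on $E$ --- it is exactly the kind of globalization that can fail on surfaces where handles accumulate, and it is part of what the hypothesis of the present theorem is protecting against. So the unconditional equivalence, as you justify it, is a gap. Fortunately it does not damage the cyclic argument: the nontrivial direction is invoked only inside $(3)\Rightarrow(1)$, where the hypothesis on $E$ is in force, and there the cited closed-set localization theorem already concludes approximation by meromorphic functions on $X$ with poles off $E$; quoting it in that form lets you dispense with the detour through $\Ocalc(E)$ altogether.
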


%
%

\section{Mergelyan's theorem for $\Ccal^r$ functions on Riemann surfaces}
\label{sec:Mergelyan-smooth}

In applications, one is often faced with the approximation problem for functions
of class $\Ccal^r$ $(r\in\N)$ on compact or closed sets in a Riemann surface. 
Such problems arise not only in complex analysis (for instance, in constructions of closed complex 
curves in complex manifolds, see \cite{DrinovecForstneric2007DMJ}, or in constructions 
of proper holomorphic embeddings of open Riemann surfaces into $\C^2$, see
\cite{ForstnericWold2009,ForstnericWold2013} and \cite[Chap.\ 9]{Forstneric2017E}), 
but also in  related areas such as the theory of minimal surfaces in Euclidean spaces $\R^n$ 
(see the recent survey \cite{AlarconForstneric2019JAMS}), the theory of
holomorphic Legendrian curves in complex contact manifolds 
(see \cite{AlarconForstneric2019IMRN,AlarconForstnericLopez2017CM}), and others.
In most geometric constructions it suffices to consider compact sets of the following type.

%
%
\begin{definition}[Admissible sets in Riemann surfaces] \label{def:admissible}
A compact set $S$ in a Riemann surface $X$ is {\em admissible} if  
it is of the form $S=K\cup M$, where $K$ is a finite union of pairwise disjoint compact domains 
with piecewise $\Ccal^1$ boundaries in $X$ and $M= S \setminus\mathring  K$ is a union 
of finitely many pairwise disjoint smooth Jordan arcs and closed Jordan curves meeting $K$ only in their endpoints 
(or not at all) and such that their intersections with the boundary $bK$ of $K$ are transverse.
\end{definition} 

Clearly, the complement $X\setminus S$ of an admissible set has at most finitely many 
connected components, and hence Theorem \ref{th:Mergelyan2} applies. 

A function $f\colon S=K\cup M\to\C$ on an admissibe set is said to be of class
$\Ccal^r(S)$ if $f|_K\in\Ccal^r(K)$ (this means that it is of class $\Ccal^r(\mathring K)$ and 
all its partial derivatives of order $\le r$ extend continuously to $K$) and $f|_M\in \Ccal^r(M)$.
Whitney's jet-extension theorem (see Theorem \ref{th:Whitney}) shows that any
$f\in \Acal^r(S)$ extends to a function $f\in \Ccal^r(X)$ which is {\em $\dibar$-flat} 
to order $r$ on $S$, meaning that 
\begin{equation}\label{eq:dibarflat}
	\lim_{x \to S}\, D^{r-1}(\dibar f)(x) = 0.	
\end{equation}
Here, $D^k$ denotes the total derivative of order $k$ (the collection of all partial derivatives
of order $\le k$). We define the $\Ccal^r(S)$ norm of $f$ as the maximum of derivatives of $f$ 
up to order $r$ at points $z\in S$, where for points $z\in M\setminus K$ we consider only the tangential derivatives. 
(This equals the $r$-jet norm on $S$ of a $\dibar$-flat extension  of $f$.)

We have the following approximation result for functions of class $\Acal^r$
on admissible sets in Riemann surfaces. 
Corollary \ref{cor:admissibleXY} in Subsection \ref{ss:Mergelyan-manifold}
gives an analogous result for manifold-valued maps.

%
%

\begin{theorem}[$\Ccal^r$ approximation on admissible sets in Riemann surfaces]  \label{th:admissibleRS}
If $S$ is an admissible set in a Riemann surface $X$, then every function $f\in \Acal^r(S)$ $(r\in\N)$ 
can be approximated in the $\Ccal^r(S)$-norm by meromorphic functions on $X$,
and by holomorphic functions if $S$ has no holes. 
\end{theorem}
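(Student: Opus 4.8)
The plan is to reduce the $\Ccal^r$ approximation problem to the already-established $\Ccal^0$ Mergelyan theorem on Riemann surfaces (Theorem \ref{th:Mergelyan2}) by differentiating and then integrating back. First I would use the $\dibar$-flat extension guaranteed by Whitney's jet-extension theorem to regard $f\in\Acal^r(S)$ as a function $f\in\Ccal^r(X)$ satisfying \eqref{eq:dibarflat}, so that $\dibar f$ vanishes to order $r-1$ along $S$. The key idea is to treat the $(0,1)$-form $\alpha:=\dibar f$, which is supported near $S$ and vanishes to high order on $S$, via the Cauchy-Green operator $T$ from \eqref{operatorbs}: since $\overline\partial(T(\alpha))=\alpha$, the function $g:=f-T(\chi\alpha)$ (with $\chi$ a cutoff equal to $1$ near $S$) is holomorphic on a neighborhood of $S$, and the flatness of $\alpha$ forces $T(\chi\alpha)$ to be small in $\Ccal^{r-1}$, even $\Ccal^r$, near $S$. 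This is the same mechanism as in Sakai's proof of Bishop localization, upgraded to track derivatives.

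Second, for the holomorphic function $g\in\Ocal(S)$ thus obtained, I would invoke Runge's theorem on Riemann surfaces (Theorem \ref{th:Runge2}) together with the $\Ccal^0$ Bishop-Mergelyan theorem to approximate $g$ uniformly by holomorphic (or meromorphic) functions on $X$. The crucial point is that uniform approximation of a \emph{holomorphic} function automatically yields $\Ccal^r$ approximation on a slightly smaller set, by the Cauchy estimates: if $h_n\to g$ uniformly on a neighborhood of $S$, then all derivatives $h_n^{(k)}\to g^{(k)}$ uniformly on $S$ for $k\le r$. Thus the only genuinely new analytic work is controlling the correction term $T(\chi\alpha)$ in the $\Ccal^r$-norm on $S$, and handling the arcs $M$ where there is no interior and the notion of derivative is tangential.

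The main obstacle, as I see it, is the presence of the one-dimensional arcs $M=S\setminus\mathring K$, along which $f$ need only be $\Ccal^r$ in the tangential sense and where the interior-based Cauchy-Green argument does not directly apply. On the two-dimensional part $K$ the scheme above works cleanly because $\dibar f$ is genuinely $\Ccal^{r-1}$-flat there; but near an arc $E_j\subset M$ I expect to need a separate local argument. Here I would straighten the arc by a local holomorphic coordinate and approximate $f$ along it first in the $\Ccal^r$ sense by a polynomial or holomorphic germ (using the one-variable Whitney/Weierstrass machinery from Theorem \ref{th:Weierstrass}), then glue using the fusion and partition-of-unity technique so that the approximations on the pieces $K$ and on the arcs $M$ are made mutually compatible across the transverse intersection points $M\cap bK$. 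Keeping the $\Ccal^r$ estimates uniform through this gluing, and ensuring that the cutoff errors stay $\dibar$-flat so that the correction term remains $\Ccal^r$-small, is where the bulk of the technical care will lie.

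Finally, I would assemble the global approximant by a standard exhaustion-and-iteration scheme as in the proofs of Theorems \ref{th:Carleman} and \ref{th:Arakeljan}: solve the problem locally on each admissible piece, fuse via a partition of unity subordinate to a finite cover of $S$, absorb the resulting $\dibar$-error through the bounded operator $T$, and conclude that the limit is holomorphic on $X$ (meromorphic if $S$ has holes) and $\Ccal^r(S)$-close to $f$. The connectedness/hole hypothesis enters exactly as in Theorem \ref{th:Runge2}, permitting the poles to be pushed to infinity when $S$ has no holes.
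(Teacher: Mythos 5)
Your overall strategy---extend $f$ to a $\dibar$-flat function via Whitney's theorem, correct it to a function holomorphic near $S$ with the Cauchy--Green operator, then finish with Runge's theorem and Cauchy estimates---is genuinely different from the paper's proof; it is in the spirit of Verdera's theorem (Theorem \ref{th:Verdera}) and of the alternative method the paper alludes to (that of Theorems \ref{th:MergelyanWo} and \ref{th:MergelyanW}). However, as written it has a genuine gap at its central step: the assertion that flatness of $\alpha=\dibar f$ along $S$ ``forces $T(\chi\alpha)$ to be small in $\Ccal^{r-1}$, even $\Ccal^r$, near $S$.'' First, $T(\chi\alpha)$ is a fixed function, not a small one; smallness can only come from shrinking the support of the cutoff to a thin neighborhood $U_\delta$ of $S$ and playing the quantitative decay $|D^k(\dibar f)(x)|=o\bigl(\dist(x,S)^{r-1-k}\bigr)$ against the factors $\delta^{-j}$ produced by derivatives of $\chi_\delta$; you never invoke this. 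Second, and more seriously, even granting $\|\chi_\delta\, \dibar f\|_{\Ccal^{r-1}}\to 0$, this does not yield $\Ccal^r$-smallness of $T(\chi_\delta\,\dibar f)$: while $\dibar T(\beta)=\beta$, the other derivative $\di\, T(\beta)$ is the Beurling transform of $\beta$, which is \emph{not} bounded on $\Ccal^0$ (only on H\"older spaces), so the top-order derivative of the correction term is not controlled by the $\Ccal^{r-1}$-norm of the data. Closing this loss of one derivative is precisely the hard analytic content of Verdera's theorem, which the paper describes as involving considerable work, especially for $r=1$, where flatness gives no rate of decay at all. (Your separate local-straightening and fusion treatment of the arcs $M$ is, by contrast, largely unnecessary: the Whitney extension already produces a $\dibar$-flat extension across all of $S$, arcs included.)

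The paper sidesteps this difficulty entirely by a more elementary inductive scheme worth comparing with yours: it sets $f'=df/\theta\in\Acal^{r-1}(S)$ for a nowhere vanishing holomorphic $1$-form $\theta$, approximates $f'$ in $\Ccal^{r-1}(S)$ by induction (the base case $r=0$ being Theorem \ref{th:Mergelyan2}), corrects the approximant by adding a small combination of period-dominating holomorphic functions $g_j$ with $P_i(g_j)=\delta_{i,j}$ so that all periods over generators of $H_1(S;\Z)$ vanish, and then integrates back from a base point along paths in $S$. Integration is where the lost derivative is recovered for free: a $\Ccal^{r-1}$-small integrand with well-defined primitive gives a $\Ccal^r$-small primitive, with no singular-integral estimates whatsoever. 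To salvage your approach you would essentially have to prove a Riemann-surface version of Theorem \ref{th:Verdera}; the period-and-integration trick is the cheaper route.
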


\begin{proof}
We give a proof by induction on $r$, reducing it to $\Ccal^0$ approximation.
The result can also be proved by the method in the proof of Theorems \ref{th:MergelyanWo}
and \ref{th:MergelyanW} below. 

Pick an open neighborhood $\Omega\subsetneq X$ of $S$ such that there is a deformation 
retraction of $\Omega$ onto $S$. (It follows in particular that 
$S$ has no holes in $\Omega$.) It suffices to show that any function $f\in \Acal^r(S)$ 
can be approximated in $\Ccal^r(S)$ by functions holomorphic on $\Omega$; the conclusion then follows from 
Runge's theorem (Theorem \ref{th:Runge2}) and the Cauchy estimates. 
We may assume that $S$ (and hence $\Omega)$ is connected.
There are smooth closed oriented Jordan curves $C_1,\ldots,C_l\subset S$ generating
the first homology group $H_1(S,\Z)= H_1(\Omega,\Z) \cong \Z^l$ such that 
$C=\bigcup_{i=1}^l C_i$ is a compact Runge set in $\Omega$.
Let $\theta$ be a nowhere vanishing holomorphic $1$-form on $\Omega$. (Such $\theta$ exists 
by the Oka-Grauert principle, see \cite[Theorem 5.3.1]{Forstneric2017E}. 
Furthermore, by the Gunning-Narasimhan theorem \cite{GunningNarasimhan1967} there exists a 
holomorphic function $\xi\colon \Omega\to\C$ without critical points, and we may take $\theta=d\xi$.)
Consider the period map $P=(P_1,\ldots,P_l) \colon \Ccal(C)\to\C^l$ given by
\[
	P_i(h) = \int_{C_i}  h\theta,\qquad h\in \Ccal(C),\ \ i=1,\ldots, l.
\]
It is elementary to find continuous functions $h_1,\ldots,h_l\colon C\to\C$ such that $P_i(h_j)=\delta_{i,j}$ 
(Kronecker's delta). By Mergelyan's theorem (Theorem \ref{th:Mergelyan2}) we can approximate 
each $h_i$  uniformly on $C$ by a holomorphic function 
$g_i\in \Ocal(\Omega)$. Assuming that the approximations are close enough, the $l\times l$ matrix $A$ 
with the entries $P_i(g_j)$ is invertible. Replacing the vector $g=(g_1,\ldots,g_l)^t$ by $A^{-1}g$ we 
obtain $P_i(g_j)=\delta_{i,j}$. Fix an integer $r\in \Z_+$. 
Consider the function $\Phi \colon \Acal^r(S) \times S\times \C^l\to \C$ defined by
\[ 
	\Phi(h,x,t) = h(x) +\sum_{j=1}^l t_j g_j(x), 
\] 
where $h\in \Acal^r(S)$, $x\in S$, and $t=(t_1,\ldots,t_l) \in\C^l$. Then,
$
	P(\Phi(h,\cdotp,t)) = P(h)+\sum_{j=1}^l t_j P(g_j),
$ 
and hence
\[ 
	\frac{\di P_i(\Phi(h,\cdotp,t)}{\di t_j}\bigg|_{t=0} = P_i(g_j)=\delta_{i,j}, \qquad i,j=1,\ldots,l.
\] 
This period domination condition implies, in view of the implicit function theorem,  
that for every $h_0\in \Acal^r(S)$ the equation 
$
	P(\Phi(h,\cdotp,t))=P(h_0)
$ 
can be solved on $t=t(h)$ for all $h\in \Acal^r(S)$ near $h_0$, with $t(h_0)=P(h_0)$.

We can now prove the theorem by induction on $r\in\Z_+$. By Theorem \ref{th:Mergelyan2},
the result holds for $r=0$. Assume that $r\in\N$ and the theorem holds for $r-1$.
Pick $f\in \Acal^r(S)$. The function $f'(x):=df(x)/\theta(x)$ $(x\in S)$ then belongs to $\Acal^{r-1}(S)$. 
(At a point $x\in S\setminus K$ we understand $df(x)$ as the $\C$-linear extension to $T_x X$ of 
the differential of $f|_M$.)  Note that $P(f')=\bigl( \int_{C_j} df\bigr)_{\!\! j} = 0\in \C^l$. 
By the induction hypothesis, we can approximate $f'$  in $\Ccal^{r-1}(S)$ 
by holomorphic functions $h\in \Ocal(\Omega)$. If the approximation is close enough,
there is a $t=t(h)\in\C^l$ near $P(f')=0$  such that the holomorphic function
$\tilde h:=\Phi(h,\cdotp,t)$ on $\Omega$ satisfies $P(\tilde h)=0$. Fix a point $p_0\in S$ and define
$
	\tilde f(p)= \int_{p_0}^p \tilde h \theta 
$
for $p\in\Omega$. Since the holomorphic $1$-form $\tilde h \theta$ has vanishing period, the integral is
independent of the choice of a path of integration. If $p\in S$ then the path 
may be chosen to lie in $S$, and hence $\tilde f$ approximates $f$ in 
$\Ccal^r(S)$. This completes the induction step and therefore proves the theorem.
\qed\end{proof}

The following optimal approximation result for smooth functions on compact sets in $\C$
was proved by J.\ Verdera in 1986, \cite{Verdera1986PAMS}.

%
%

\begin{theorem}[Verdera (1986), \cite{Verdera1986PAMS}]\label{th:Verdera}
Let $K$ be a compact set in $\C$, and let $f$ be a compactly supported function in $\Ccal^r(\C)$, 
$r\in \N$, such that $\di f/\di \bar z$ vanishes on $K$ to order $r-1$ (see \eqref{eq:dibarflat}). 
Then, $f$ can be approximated in $\Ccal^r(\C)$ by functions which are 
holomorphic in neighborhoods of $K$.
\end{theorem}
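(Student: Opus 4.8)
The plan is to reduce the statement to a quantitative, local form of the Vitu\v skin localization scheme, exploiting that the flatness hypothesis makes $\dibar f$ genuinely small near $K$. First I would use the Cauchy--Green (Pompeiu) formula \eqref{eq:Pompeiu} to write $f=T(g)$ with $g:=\dibar f\in\Ccal^{r-1}_0(\C)$, where $T$ is the Cauchy transform of \eqref{eq:TK}; the task then becomes the approximation of the potential $T(g)$ in the $\Ccal^r(\C)$ norm by functions holomorphic near $K$. Fixing a scale $\delta>0$, I would take a grid partition of unity $\{\phi_j\}$ with $\supp\phi_j$ contained in discs $D_j$ of radius comparable to $\delta$, with $\|\di\phi_j/\di\bar z\|_\infty\lesssim\delta^{-1}$ and bounded overlap, and decompose $f=\sum_j f_j$ with $f_j:=T(\phi_j g)$; each $f_j$ is holomorphic off $D_j$. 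The discs with $\overline{D_j}\cap K=\varnothing$ are already holomorphic on a fixed neighborhood of $K$ and are kept unchanged, while the discs with $D_j\subset\mathring K$ contribute nothing, since $g=\dibar f$ vanishes on $\mathring K$ and hence $f_j\equiv 0$.

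It remains to treat the discs $D_j$ meeting the boundary $bK$. On such a $D_j$ the flatness condition \eqref{eq:dibarflat} together with Taylor's theorem for the $\Ccal^{r-1}$ function $g$ gives the decay $|D^k g|\lesssim\delta^{\,r-1-k}\,\omega(\delta)$ for $0\le k\le r-1$, where $\omega$ is the modulus of continuity of $D^{r-1}g$. For each such $j$ I would replace $f_j$ by a rational function $\tilde f_j$ with a single pole at a point $b_j\in\C\setminus K$ close to $D_j$, chosen to match the leading Laurent coefficients of $f_j$ up to the order dictated by $r$; this is the $\Ccal^r$ analogue of the classical single-pole correction in Vitu\v skin's scheme. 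The approximant is then $F=\sum_{\overline{D_j}\cap K=\varnothing} f_j+\sum_{D_j\cap bK\ne\varnothing}\tilde f_j$, which is holomorphic on a neighborhood of $K$, and the estimate to be proved is $\|f_j-\tilde f_j\|_{\Ccal^r}\lesssim\omega(\delta)\cdot(\text{local factor})$, summed via the bounded-overlap property so that the total error is $O(\omega(\delta))\to 0$ as $\delta\to0$.

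The main obstacle is precisely this $\Ccal^r$ estimate for the corrected pieces. Writing a mixed derivative $\di_z^a\di_{\bar z}^b$ and using $\dibar T(\phi_j g)=\phi_j g$, every derivative that involves at least one $\di_{\bar z}$ reduces to a derivative of $\phi_j g$ of order $\le r-1$, and the decay above shows these terms are $\lesssim\omega(\delta)$ and cause no trouble. The difficulty is concentrated in the purely analytic derivatives $\di_z^a$ with $1\le a\le r$, which bring in the Beurling transform and its iterates; these singular integrals are \emph{not} bounded on $\Ccal^0$, so the naive choice $\tilde f_j=0$ fails, and the pole-matching is needed to cancel the singular part. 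Making this cancellation quantitative is where the sharp flatness order $r-1$ enters, and where one uses that the relevant $\Ccal^r$ capacity is comparable to a metric content in a way that imposes \emph{no} topological restriction on $K$; in contrast with Vitu\v skin's $\Ccal^0$ theorem (Theorem \ref{th:Vitushkin}), this is exactly why no hypothesis on $\C\setminus K$ appears in the statement. I expect the uniform $\Ccal^r$ control of $f_j-\tilde f_j$, and its summation over the boundary discs, to be the technical heart of the argument.
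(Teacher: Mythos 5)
Your setup coincides with Verdera's scheme as the paper outlines it: the same grid of $\delta$-discs with bounded overlap, the same localizations (your $f_j=T(\phi_j\,\dibar f)$ agrees with the operator $T_{\phi_j}(f)$ of \eqref{eq:locop}, since both solve $\dibar u=\phi_j\dibar f$ and vanish at infinity), and the same trichotomy of discs. The divergence is at the decisive step, and there your proposal goes wrong. Verdera's proof keeps as approximant exactly $g=\sum'_j f_j$, the sum over discs disjoint from $K$, and the entire content of the proof is the estimate $\|h\|_{\Ccal^r(\C)}\to 0$ for the discarded sum $h=\sum''_j f_j$; there are no rational corrections at all. Your assertion that ``the naive choice $\tilde f_j=0$ fails'' is a non sequitur: the failure of the Beurling transform to be bounded on $\Ccal^0$ only defeats a term-by-term estimate of the individual $f_j$. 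What is actually estimated is the sum $h=T(G)$ with $G=\sum''_j\phi_j\,\dibar f$, and $G$ inherits the full hypothesis that $\dibar f$ vanishes to order $r-1$ \emph{on all of $K$} — a much stronger piece of information than the per-disc Taylor bound $|D^kg|\lesssim\delta^{r-1-k}\omega(\delta)$ you extract, and it is precisely this global flatness that makes the singular-integral estimates for $\di_z^r h$ close (the paper warns that these details are considerable, especially for $r=1$, but they concern the discarded sum, not a correction scheme).

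Independently of this, your correction scheme cannot yield the conclusion as stated. The theorem asks for approximation in $\Ccal^r(\C)$, so the approximant must be a globally defined $\Ccal^r$ function on $\C$ that happens to be holomorphic near $K$; any $\tilde f_j$ with a pole at $b_j\in\C\setminus K$ puts the approximant at infinite $\Ccal^r(\C)$ distance from $f$. (Vituškin's matching produces rational approximants because Theorem \ref{th:Vitushkin} concerns uniform approximation on $K$ only.) Moreover, matching Laurent coefficients is a far-field device: it improves the decay of $f_j-\tilde f_j$ away from $D_j$ so that tails sum, which is the obstruction in the $\Ccal^0$ theory; under your own flatness bounds the far field is already summable for $r\ge 1$, whereas the near-field estimate of $\di_z^r f_j$ on and around $D_j$ — the difficulty you correctly identify as the heart of the matter — is not touched by subtracting a function holomorphic near $D_j$. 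Finally, the existence of admissible poles $b_j$ with matching achievable at controlled cost is itself a capacity-type condition on $\C\setminus K$ near each boundary disc (exactly what fails for Swiss-cheese sets in the $\Ccal^0$ theory); your appeal to ``$\Ccal^r$ capacity comparable to metric content'' names the right phenomenon but leaves it unproved, and Verdera's route makes it unnecessary.
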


Theorem \ref{th:Verdera} shows that the obstacles to rational approximation of
functions in $\Acal(K)$ in Vitushkin's theorem (see Theorem \ref{th:Vitushkin}) 
are no longer present when considering rational approximation of $\Ccal^r$ 
functions which are $\overline\partial$-flat of order $r$ for $r>0$. Results in the same direction, concerning rational approximation 
on compact sets in $\C$ 
in {L}ipschitz and H\"older norms, were obtained by A.\ G.\ O'Farrell during 1977--79, 
\cite{O'Farrell1977TAMS,O'Farrell1977,O'Farrell1979}.

Verdera's proof of Theorem \ref{th:Verdera} is somewhat simpler for $r\ge 2$ than for $r=1$. 
In the case $r\ge 2$, he follows Vitushkin's scheme for rational approximation, using
in particular the localization operators \eqref{eq:locop}; here is the outline. 
Fix a number $\delta>0$. Choose a covering of $\C$ by a 
countable family of discs $\Delta_j$ of radius $\delta$ such that every point $z\in\C$ 
is contained in at most 21 discs. Also, let $\phi_j\in\Ccal^\infty_0(\C)$ be a smooth 
function with values in $[0,1]$, with compact support contained in $\Delta_j$, such that 
$\sum_j\phi_j=1$  and $|D^k \phi_j|\le C\delta^{-k}$ for some absolute constant $C>0$. Set 
\[
	f_j(z)=T_{\phi_j}(f)(z)
	= \frac{1}{\pi} \int_\C \frac{f(\zeta)-f(z)}{\zeta-z} \frac{\di\phi_j(\zeta)}{\di\bar\zeta} \, dudv,\quad z\in\C.
\]
Then, $f_j$ is holomorphic on $\C\setminus \supp(\phi_j)$, $f_j=0$ if $\supp(f)\cap \Delta_j=\varnothing$,
and $f=\sum_j f_j$ (a finite sum). Let $g=\sum'_j f_j$ where the sum is over those indices $j$ for which 
$\Delta_j\cap K=\varnothing$ and $h=f-g=\sum''_j f_j$ is the sum over the remaining $j$'s. Thus, $g$ is holomorphic
in a neighborhood of $K$, and Verdera shows that the $\Ccal^r(\C)$ norm of $h$ goes to zero
as $\delta\to 0$. The analytic details are considerable, especially for $r=1$.

In conclusion, we mention that many of the results on holomorphic approximation, 
presented in this and the previous two sections, have been generalized to 
solutions of more general elliptic differential equations in various Banach space norms;
see in particular J.\ Verdera \cite{Verdera1986TAMS}, 
P.\ Paramonov and J.\ Verdera  \cite{ParamonovVerdera1994},
A.\ Boivin, P.\ Gauthier and P.\ Paramonov \cite{BoivinGauthierParamonov2002},
and P.\ Gauthier and P.\ Paramonov \cite{GauthierParamonov2018}.

%
%

\section{The Oka-Weil theorem and its generalizations}
\label{sec:OkaWeil}

The analogue of Runge's theorem (see Theorems \ref{th:Runge} and \ref{th:Runge2}) 
on Stein manifolds and Stein spaces is the following theorem due to K.\ Oka \cite{Oka1936} and 
A.\ Weil \cite{Weil1935}. All complex spaces are assumed to be reduced.

\begin{theorem}[The Oka-Weil theorem] \label{th:Oka-Weil}
If $X$ is a Stein space and $K$ is a compact $\Ocal(X)$-convex subset
of $X$, then every holomorphic function in an open neighborhood of $K$
can be approximated uniformly on $K$ by functions in $\Ocal(X)$.
\end{theorem}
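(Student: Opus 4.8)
The plan is to reduce the theorem to the case where $X$ is a closed Stein submanifold (or subvariety) of some $\C^N$, and then to prove the statement in $\C^N$ using a polynomial polyhedron constructed from the defining functions of the $\Ocal(X)$-convex hull. The Oka-Weil theorem is essentially a several-variables analogue of Runge's theorem, and the core mechanism — Weil's integral representation on a polynomial polyhedron — plays the role that the Cauchy integral formula played in the proof of Theorem \ref{th:Runge}.

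\medskip

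\noindent First I would invoke the embedding theorem for Stein spaces: any Stein space $X$ admits a proper holomorphic embedding $\iota\colon X\hookrightarrow\C^N$ as a closed complex subvariety, with the property that the $\Ocal(X)$-convex hull of a compact set coincides (after embedding) with the trace on $\iota(X)$ of its polynomially convex hull in $\C^N$. Thus I may assume $X\subset\C^N$ is a closed subvariety and that $K$ is polynomially convex in $\C^N$. A function $f$ holomorphic near $K$ in $X$ extends, by the Cartan extension theorem for coherent sheaves on Stein spaces, to a function $\tilde f$ holomorphic in an open neighborhood $U\subset\C^N$ of $K$. It now suffices to approximate $\tilde f$ uniformly on $K$ by global holomorphic functions on $\C^N$, that is, by entire functions, since their restrictions lie in $\Ocal(X)$ and the approximation transfers. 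This is precisely the Oka-Weil theorem for polynomially convex compacts in $\C^N$.

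\medskip

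\noindent The heart of the matter is then the $\C^N$ case. Since $K$ is polynomially convex and $K\subset U$, for every point $p\in\partial U$ there is a polynomial $P$ with $|P(p)|>1\geq\max_K|P|$; by compactness of $\partial U$ (after intersecting with a large ball) finitely many polynomials $P_1,\dots,P_m$ suffice to separate $K$ from $\C^N\setminus U$, so that the \emph{polynomial polyhedron}
\[
	\Pi=\bigl\{z\in\C^N : |P_j(z)|\le 1,\ j=1,\dots,m\bigr\}
\]
satisfies $K\subset\mathring\Pi\subset\Pi\Subset U$. The map $z\mapsto(z,P_1(z),\dots,P_m(z))$ embeds a neighborhood of $\Pi$ into a product of $\C^N$ with polydiscs, realizing $\Pi$ as an analytic polyhedron. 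On such a polyhedron one has Weil's integral representation formula, which expresses $\tilde f$ on $\mathring\Pi$ as an integral over a distinguished boundary of a Cauchy-type kernel built from $z_1,\dots,z_N,P_1,\dots,P_m$. Approximating this integral by Riemann sums produces uniform approximants on $K$ that are rational functions whose denominators involve only the $z_i-a_i$ and $P_j-b_j$ with $|b_j|>1$; expanding each $1/(P_j-b_j)$ in the geometric series in $P_j/b_j$, which converges uniformly on $K$ since $|P_j|\le 1<|b_j|$ there, and expanding $1/(z_i-a_i)$ as in the one-variable Runge argument, yields approximation by entire functions — indeed by polynomials.

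\medskip

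\noindent \textbf{The main obstacle} is the construction and justification of the Weil representation formula on the polynomial polyhedron: unlike the single Cauchy kernel in one variable, here one must assemble a kernel from the $N+m$ functions cutting out $\Pi$, control the contributions of the various boundary faces, and handle the fact that the polyhedron's boundary is not smooth but stratified by the faces $\{|P_j|=1\}$. In the singular-space setting there is the additional technical point of extending $f$ off $X$ via Cartan's Theorem B and verifying that polynomial convexity is preserved under the embedding; these are standard but not entirely trivial. An alternative to the integral-formula approach, which avoids the combinatorics of the Weil kernel, is the functional-analytic method of solving a $\dibar$-equation with estimates on a strongly pseudoconvex neighborhood of $K$ (compare Remark \ref{rem:Sakai}), using Hörmander's $L^2$ theory; I would regard the integral-representation route as the more classical and the $\dibar$ route as the more flexible, and either could be carried to completion.
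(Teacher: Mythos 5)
Your strategy is sound and, in the $\C^N$ case, it reproduces Weil's original 1935 argument; but it is a genuinely different route from the paper's, and it has one real gap at the very first step. The paper's proof never embeds $X$ into Euclidean space at all: it chooses finitely many \emph{global} functions $h_1,\ldots,h_N\in\Ocal(X)$ cutting out a compact analytic polyhedron $P=\{|h_j|<1\}$ around $K$, notes that (after adding functions) $h=(h_1,\ldots,h_N)$ embeds $P$ onto a closed subvariety $A=h(P)$ of the unit polydisc $\D^N$, extends the pushed-forward function (the $g$ with $g\circ h=f$) from $A$ to the whole polydisc by the Oka-Cartan extension theorem, and then simply composes the Taylor polynomials of the extension with $h$. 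The polydisc makes the approximation step trivial (power series), so no integral representation is ever needed, and the approximants are polynomials in the $h_j$, hence automatically lie in $\Ocal(X)$. Your route instead puts all the analytic weight on the Weil integral formula and its Riemann-sum and geometric-series expansion; this can be carried to completion, but it is considerably heavier machinery than Cartan's Theorem B plus Taylor expansion. (Your suggested alternative of solving a $\dibar$-equation with weighted $L^2$ estimates is exactly the paper's second proof.)

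The gap: your reduction begins with the claim that any Stein space $X$ admits a proper holomorphic embedding into some $\C^N$. This is true for Stein manifolds (Remmert-Bishop-Narasimhan) and for Stein spaces of bounded local embedding dimension, but it is false for general Stein spaces: a reduced Stein space may have singular points whose local embedding dimensions are unbounded, and such a space embeds into no finite-dimensional $\C^N$. Since the theorem is stated for Stein spaces, your first step fails in general, and it cannot be repaired by embedding merely a Stein neighborhood $\Omega$ of $K$: polynomials in the ambient coordinates would then only restrict to elements of $\Ocal(\Omega)$, not of $\Ocal(X)$, which is circular. The correct repair is precisely the paper's localization: only the compact set $\overline P$ needs to be embedded, and finitely many global functions suffice for that, because the local embedding dimension is finite at each point and hence bounded on compacts; taking the embedding components to be global functions keeps the final approximants global. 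Two smaller points in your $\C^N$ argument also deserve care: you need the strict inequality $\max_K|P_j|<1$ (arranged by rescaling each $P_j$) so that the geometric series converge uniformly on $K$ against sample values $b_j$ taken on the faces, and the Weil representation requires the polyhedron's faces to be in general position, which is arranged by a small perturbation of the $P_j$.
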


\begin{proof}  
Two proofs of this result are available in the literature. The original one, due to K.\ Oka and A.\ Weil,
proceeds as follows. A compact $\Ocal(X)$-convex subset $K$ in a Stein space $X$ admits
a basis of open Stein neighborhoods of the form
\[
	P=\{x\in X: |h_1(x)|<1,\ldots, |h_N(x)|<1\}
\]
with $h_1,\ldots, h_N\in \Ocal(X)$. We may assume that the function $f\in \Ocal(K)$ 
to be approximated is holomorphic on $P$.  By adding more functions if necessary, we can ensure 
that the map $h=(h_1,\ldots,h_N)\colon X\to\C^N$ embeds $P$ onto a closed complex subvariety $A=h(P)$
of the unit polydisc $\D^N\subset \C^N$. Hence, there is a function $g\in \Ocal(A)$
such that $g\circ h=f$ on $P$. By the Oka-Cartan extension theorem
\cite[Corollary 2.6.3]{Forstneric2017E}, $g$ extends to a holomorphic function
$G$ on $\D^N$. Expanding $G$ into a power series and precomposing its Taylor polynomials
by $h$ gives a sequence of holomorphic functions on $X$ converging to $f$ uniformly on $K$.

Another approach uses the method of L.\ H{\"o}rmander for solving the $\dibar$-equation with 
$L^2$-estimates (see \cite{Hormander1965,Hormander1990}). We consider the case $X=\C^n$;
the general case reduces to this one by standard methods of Oka-Cartan theory.
Assume that  $f$ is a holomorphic function in a neighborhood $U\subset\C^n$ of $K$. 
Choose a pair of neighborhoods $W\Subset V\Subset U$
of $K$ and a smooth function $\chi\colon \C^n \to [0,1]$ such that $\chi=1$ on $\overline V$ and
$\supp(\chi) \subset U$. By choosing $W\supset K$ small enough, 
there is a nonnegative plurisubharmonic function $\rho\ge 0$ on $\C^n$ that 
vanishes on $W$ and satisfies $\rho\ge c>0$ on $U\setminus V$. Note that the smooth $(0,1)$-form 
\[
	\alpha=\dibar (\chi f)= f\,\dibar \chi = \sum_{i=1}^n \alpha_i \, d\bar z_i
\]
is supported in $U\setminus V$. H\"ormander's theory for the $\dibar$-complex 
(see \cite[Theorem 4.4.2]{Hormander1990}) 
furnishes for any $t>0$ a smooth function $h_t$ on $\C^n$ satisfying
\begin{equation}\label{eq:L2}
	\dibar h_t = \alpha \qquad  \text{and} \quad
	\int_{\C^n} \frac{ |h_{t}|^2}{(1+|z|^2)^2} \, e^{-t\rho} d\lambda
	\le \int_{\C^n} \sum_{i=1}^n |\alpha_i|^2 e^{-t\rho} d\lambda.
\end{equation}
(Here, $d\lambda$ denotes the Lebesgue measure on $\C^n$.)
As $t\to+\infty$, the right hand side approaches zero since $\rho\ge c>0$ on $\supp(\alpha) \subset U\setminus V$. 
Since $\rho|_W=0$, it follows that $\lim_{t\to 0} \|h_t\|_{L^2(W)}=0$. The interior elliptic estimates 
(see \cite[Lemma 3.2]{ForstnericLowOvrelid2001}) imply that $h_t|_K\to 0$ 
in $\Ccal^r(K)$ for every fixed $r\in\Z_+$. The functions 
\[
	f_t=\chi f-h_t : \C^n\longrightarrow \C
\] 
are then entire and converge to $f$ uniformly on $K$ as $t\to +\infty$.
\qed\end{proof}

We also have the following parametric version of the Cartan-Oka-Weil theorem 
which is useful in applications (see \cite[Theorem 2.8.4]{Forstneric2017E}).

%
%
\begin{theorem}[Cartan-Oka-Weil theorem with parameters] \label{th:COW} 
Let $X$ be a Stein space. Assume that $K$  is an $\Ocal(X)$-convex subset of $X$, 
$X'$ is a closed complex subvariety of $X$, and $P_0\subset P$ are compact Hausdorff spaces.
Let $f\colon P\times X\to \C$ be a continuous function such that
\begin{itemize}
\item[]
\begin{enumerate}
\item[\rm(a)] for every $p\in P$, $f(p,\cdotp)\colon X\to \C$ is holomorphic in a 
neighborhood of $K$ (independent of $p$) and $f(p,\cdotp)|_{X'}$ is holomorphic, and
\item[\rm(b)] $f(p,\cdotp)$ is holomorphic on $X$ for every $p\in P_0$.
\end{enumerate}
\end{itemize}
Then there exists for every $\epsilon>0$ a continuous function $F\colon P\times X \to \C$ satisfying  
the following conditions:
\begin{itemize}
\item[]
\begin{enumerate}
\item[\rm(i)]    $F_p=F(p,\cdotp)$ is holomorphic on $X$ for all $p\in P$,
\item[\rm(ii)]   $|F-f|<\epsilon$ on $P\times K$, and
\item[\rm(iii)]  $F=f$ on $(P_0\times X)\cup (P\times X')$.
\end{enumerate}
\end{itemize}
The same result holds for sections of any holomorphic vector bundle over $X$.
\end{theorem}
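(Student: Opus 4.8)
The plan is to split the problem into an exact interpolation over $A=(P_0\times X)\cup(P\times X')$ followed by an Oka--Weil approximation on $K$ of the remainder, carrying out every step with \emph{linear} operators so that continuous dependence on $p\in P$ is automatic and the output vanishes wherever the data does. By the standard reduction (embed $X$ properly in some $\C^N$, extend the data to an ambient neighbourhood of $K$ using $\Ocal(X)$-convexity and the Oka--Cartan extension theorem, and observe that $X'$ is then a closed subvariety of $\C^N$) I may assume $X=\C^n$, so that H\"ormander's $L^2$-estimates and global generators of the ideal sheaf $\mathcal{I}_{X'}$ are at my disposal. All instances of the Fr\'echet space $\Ocal(X)$ below carry the compact-open topology.

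\emph{Step 1: reduction to $P_0=\varnothing$.} First I would produce a continuous family $\tilde f\colon P\times X\to\C$ with each $\tilde f_p$ holomorphic on $X$ and $\tilde f=f$ on $A$, ignoring approximation. Let $E\colon\Ocal(X')\to\Ocal(X)$ be a continuous linear extension operator (these exist for closed Stein subvarieties), and put $e_p=E(f_p|_{X'})$; this is holomorphic, continuous in $p$, with $e_p|_{X'}=f_p|_{X'}$. For $p\in P_0$ the function $f_p-e_p$ is holomorphic and vanishes on $X'$, hence lies in the closed subspace $\mathcal{J}=\{g\in\Ocal(X):g|_{X'}=0\}$; by a Tietze-type extension theorem for continuous maps into the Fr\'echet space $\mathcal{J}$ the continuous assignment $P_0\ni p\mapsto f_p-e_p$ extends continuously to $p\mapsto\hat c_p$ on all of $P$. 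Then $\tilde f_p:=e_p+\hat c_p$ is holomorphic on $X$, continuous in $p$, restricts to $f_p$ on $X'$ for every $p$, and equals $f_p$ on all of $X$ for $p\in P_0$, i.e. $\tilde f=f$ on $A$.

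\emph{Step 2: approximating the remainder.} Set $h_p:=f_p-\tilde f_p$; it is continuous in $p$, holomorphic on the fixed neighbourhood $U\supset K$ of hypothesis (a), vanishes on $X'$ for every $p$, and vanishes identically for $p\in P_0$. It suffices to find $H_p\in\Ocal(X)$, continuous in $p$, with $H_p|_{X'}=0$ for all $p$, $H_p\equiv0$ for $p\in P_0$, and $|H_p-h_p|<\epsilon$ on $K$: then $F_p:=\tilde f_p+H_p$ meets all three conclusions. Fix $K\subset V\Subset U$ and a cut-off $\chi$ with $\chi=1$ near $K$ and $\supp\chi\subset U$, and form the smooth $\dibar$-closed $(0,1)$-form $\beta_p:=\dibar(\chi h_p)=h_p\,\dibar\chi$, supported in $U\setminus V$. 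Since $h_p\in\mathcal{I}_{X'}(U)$, I write $h_p=\sum_{i=1}^m g_i\,h_{i,p}$, with $g_1,\dots,g_m$ global generators of $\mathcal{I}_{X'}$ and $h_{i,p}\in\Ocal(U)$ depending linearly and continuously on $h_p$ through a bounded linear division operator. Solving $\dibar v_{i,p}=h_{i,p}\,\dibar\chi$ by H\"ormander's $L^2$-method with the weight $e^{-t\rho}$ of the proof of Theorem~\ref{th:Oka-Weil} yields $v_{i,p}$ that are linear and continuous in $h_p$, vanish when $h_p\equiv0$, and are arbitrarily small on $K$ as $t\to+\infty$. Putting $u_p:=\sum_i g_i\,v_{i,p}$ and $H_p:=\chi h_p-u_p$ gives $\dibar H_p=0$, while $H_p|_{X'}=0$ because every term of $u_p$ carries a factor $g_i$, and $H_p\equiv0$ for $p\in P_0$, and $|H_p-h_p|=|u_p|<\epsilon$ on $K$.

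\emph{The main obstacle.} The delicate point is to reconcile the \emph{global, exact} interpolation on $X'$ with the \emph{smallness} on $K$: a generic $\dibar$-solution that is small on $K$ need not vanish on $X'$, and correcting it by extending its restriction from all of $X'$ would reintroduce uncontrolled values on $K$. The device that breaks this tension is to solve inside the ideal sheaf, factoring $\beta_p$ through the generators $g_i$ and solving for the coefficients, so that $u_p=\sum_i g_i v_{i,p}$ vanishes on $X'$ by construction while its size on $K$ is governed by the (arbitrarily small) $v_{i,p}$. The parametric obstruction is then handled by linearity alone: since $h_p\equiv0$ for $p\in P_0$, every linear operator applied to it returns $0$ there. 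The stated vector-bundle version follows by the same argument, with $\mathcal{I}_{X'}$ and the scalar operators replaced by their bundle counterparts.
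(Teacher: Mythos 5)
Your overall architecture---build the interpolation into the $\dibar$-scheme via the ideal sheaf of $X'$, and arrange that every operator applied to the data is continuous in the data and sends $0$ to $0$, so that parametric continuity and the vanishing for $p\in P_0$ come for free---is exactly the route the paper sketches (it invokes the linearity of the solution operator to \eqref{eq:L2} from the proof of Theorem \ref{th:Oka-Weil} and says the interpolation condition must be incorporated into that scheme). However, Step 1 rests on a false statement: continuous \emph{linear} extension operators $E\colon\Ocal(X')\to\Ocal(X)$ do \emph{not} exist for general closed subvarieties of Stein spaces. Take $X=\C$ and let $X'$ be an infinite closed discrete set---a perfectly legitimate zero-dimensional subvariety, and in fact one of the central cases of this theorem (interpolation on a sequence). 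Then $\Ocal(X')\cong\C^{\N}$ with the product topology, a Fr\'echet space admitting no continuous norm (every continuous seminorm is dominated by finitely many coordinates, hence annihilates some unit vector), whereas $\Ocal(\C)$ carries the continuous norm $f\mapsto\sup_{|z|\le 1}|f(z)|$. A continuous linear right inverse $E$ of the restriction map would be injective, and $x\mapsto \sup_{|z|\le 1}|E(x)(z)|$ would then be a continuous norm on $\C^{\N}$ (if it vanishes, $E(x)$ is an entire function vanishing on a disc, hence $E(x)\equiv 0$, hence $x=0$); this is a contradiction. The same objection applies to the ``bounded linear division operator'' in Step 2: the division map $\Ocal(U)^m\to\mathcal{I}_{X'}(U)$, $(h_i)\mapsto\sum_i g_i h_i$, is a continuous linear surjection of Fr\'echet spaces, but such surjections need not admit continuous linear sections, and you give no argument that this one does.

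The gap is repairable, because your construction never truly needs linearity at these two places, only continuity together with $0\mapsto 0$. By the Michael--Bartle--Graves theorem, every continuous linear surjection of Fr\'echet spaces admits a continuous (generally nonlinear) right inverse $s$, and $\tilde s(x):=s(x)-s(0)$ is again a right inverse with $\tilde s(0)=0$ (since $s(0)$ lies in the kernel). Using $\tilde s$ in Step 1 keeps $p\mapsto e_p$ continuous, and using it for the division in Step 2 keeps $p\mapsto h_{i,p}$ continuous and still gives $h_{i,p}=0$ whenever $h_p\equiv 0$, i.e.\ for $p\in P_0$; the only operator that must genuinely be linear is H\"ormander's minimal $L^2$-solution, and it is. A second point you should not quote casually: the ``Tietze-type extension'' of $p\mapsto f_p-e_p$ from $P_0$ into the Fr\'echet space $\mathcal{J}$ is not covered by Dugundji's theorem, since $P$ is only compact Hausdorff, not metrizable. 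It is nevertheless true: the image of $P_0$ is compact, its closed convex hull $C\subset\mathcal{J}$ is compact, convex and metrizable, hence an absolute retract; embedding $C$ in a Hilbert cube, extending coordinatewise by Tietze, and retracting back onto $C$ yields the desired extension $P\to C$. With these repairs your two-step proof (exact interpolation first, then approximation of the remainder inside the ideal) goes through and is a faithful expansion of the proof the paper outlines and delegates to \cite[Theorem 2.8.4]{Forstneric2017E}.
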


The proof can be obtained by any of the two schemes outlined above. For the second
one, note that there is a linear solution operator to the $\dibar$-problem \eqref{eq:L2}, 
and hence continuous dependence on the parameter comes for free.
One needs to include the interpolation condition into the scheme to take care of the interpolation condition (iii). 
We refer to \cite[Theorem 2.8.4]{Forstneric2017E} for the details.

A similar approximation theorem holds for sections of coherent analytic sheaves
over Stein spaces (see e.g.\ H.\ Grauert and R.\ Remmert \cite[p.\ 170]{GrauertRemmert1979}).

The extension of the Oka-Weil theorem to maps $X\to Y$ from a Stein space $X$ 
to more general complex manifolds $Y$ is the subject of {\em Oka theory}.
A complex manifold $Y$ for which the analogue of Theorem \ref{th:COW} holds
in the absence of topological obstructions is called an {\em Oka manifold}.
We discuss this topic in Subsect.\ \ref{ss:Oka}.

%
%
\section{Mergelyan's theorem in higher dimensions} \label{sec:Mergelyan}

As we have seen in Sects.\ \ref{sec:Runge}--\ref{sec:Mergelyan-smooth}, 
the Mergelyan approximation theory in the complex plane and on Riemann surfaces
was a highly developed subject around mid 20th century.
Around the same time, it became clear that the situation is much more complicated in higher dimensions.
For example, in 1955 J. Wermer \cite{Wermer1955} constructed an arc in $\mathbb C^3$ 
which fails to have the Mergelyan property.  This suggests that, in several variables, one has to be much more 
restrictive about the sets on which one considers Mergelyan type approximation problems.   

There are two lines of investigations in the literature: approximation on submanifolds of $\mathbb C^n$ of 
various degrees of smoothness, and approximation on closures of bounded pseudoconvex domains.  
In neither category the problem is completely understood, and even with these restrictions, 
the situation is substantially more complicated than in dimension one.
For example, R. Basener (1973), \cite{Basener1973} (generalizing a result of B.\ Cole (1968), \cite{Cole1968})
showed that Bishop's peak point criterium does not suffice even for smooth polynomially convex submanifolds 
of $\mathbb C^n$.
Even more surprisingly,  it was shown by K.\ Diederich and J.\ E.\ Forn\ae ss in 1976 \cite{DiederichFornaess1976}
that there exist bounded pseudoconvex domains with smooth boundaries in $\C^2$ on which the Mergelyan property fails. 
The picture for curves is more complete; see G.\ Stolzenberg \cite{Stolzenberg1966AM}, 
H.\ Alexander \cite{Alexander1979}, and P. \ Gauthier and E. \ Zeron \cite{GauthierZeron2002}.

In this section we outline the developments starting around the 1960's, 
give proofs in some detail in the cases of totally real manifolds and 
strongly pseudoconvex domains, and provide some new results on combinations of such sets.

%
%
\begin{definition}\label{def:totallyrealmfd}
Let $(X,J)$ be a complex manifold, and let $M\subset X$ be a $\Ccal^1$ submanifold.   
\begin{enumerate}[\rm (a)]
\item $M$ is \emph{totally real} at a point $p\in M$ if $T_pM\cap JT_pM=\{0\}$.  
If $M$ is totally real at all points, we say that $M$ is a totally real submanifold of $X$. 
\item  $M$ is a {\em stratified totally real submanifold} of $X$ if $M=\bigcup_{i=1}^l M_i$, 
with $M_{i}\subset M_{i+1}$ locally closed sets, such that $M_1$ and 
$M_{i+1}\setminus M_i$ are totally real submanifolds.  
\end{enumerate}
\end{definition}

We now introduce suitable types of sets for Mergelyan approximation. 
The following notion is a generalization of the one for Riemann surfaces 
in Definition \ref{def:admissible}. Recall that a compact set $S$ in a complex manifold $X$
is a {\em Stein compact} if $S$ admits a basis of open Stein neighborhoods in $X$.

%
%

\begin{definition}[Admissible sets]\label{def:admissible2}  
Let $S$ be a compact set in a complex manifold $X$.
\begin{enumerate}[\rm (a)]
\item   $S$ is {\em admissible} if it is  of the form $S=K\cup M$,
where $S$ and $K$ are Stein compacts and $M=S\setminus K$ is a totally real submanifold of $X$
(possibly with boundary).
\item $S$ is {\em stratified admissible} 
if instead $M=\bigcup_{i=1}^l M_i$ is a stratified totally real submanifold 
such that $S_i=K\cup M_i$ is compact for every $i=1,\ldots,l$.
\item An admissible set $S=K\cup M$ is {\em strongly admissible} if, in addition to the conditions in (a),
$K$ is the closure of a strongly pseudoconvex Stein domain, not necessarily connected.
\end{enumerate}
\end{definition}

%
%
%
\begin{remark}\label{rem:admissible}
We emphasize that, in the definition of an admissible set, it is the decomposition of $S$ into the 
union $K\cup M$ that matters, so one might think of them as pairs $(K,M)$ with the indicated properties.
We will show (see Lemma \ref{lem:KSteincompact}) that if in (a) only the set $S$ is assumed to be a Stein compact
(and $M$ is totally real), then $K$ is nevertheless automatically a Stein compact. 
It follows that if $S=K\cup M$ is a stratified admissible set and 
$M=\bigcup_{i=1}^m M_i$ is a totally real stratification, then the set 
$S_i=K\cup M_i$ is a  stratified admissible set 
for every $i$ (see Corollary \ref{cor:stratadmissible}). 
 \qed\end{remark}

%
%

\begin{remark}\label{rem:HWadmissible}
We wish to compare the class of admissible sets with those considered by 
L.\ H\"ormander and J.\ Wermer \cite{HormanderWermer1968}
and F.\ Forstneri\v c \cite[Sect.\ 3]{Forstneric2004AIF}, \cite[Sects.\ 3.7--3.8]{Forstneric2017E}.
A compact set $S$ in a complex manifold $X$ is said to be {\em holomorphically convex} if it admits a 
Stein neighborhood $\Omega\subset X$ such that $S$ is $\Ocal(\Omega)$-convex. 
Clearly, such $S$ is a Stein compact,
but the converse is false in general. Let us call a compact set $S=K\cup M$ an {\em HW set} 
(for H\"ormander and Wermer) if $S$ is holomorphically convex and $M=S\setminus K$ is a totally real 
submanifold of $X$. In the cited works, approximation results similar to those presented here
are proved on HW sets. Clearly, every HW set is admissible. By combining the 
techniques in the proof of Proposition \ref{prop:local} and Theorem \ref{th:admissible} one can 
prove the following partial converse.

%
%

\begin{proposition}\label{prop:HW-admissible}
If $S=K\cup M$ is an admissible set in complex manifold $X$ and  $U\subset X$ is a neighborhood of $K$,
there exists a Stein neighborhood $\Omega$ of $S$ such that
\[
	h(S):=\overline{\widehat S_{\Ocal(\Omega)} \setminus S} \, \subset\, U.
\]
\end{proposition}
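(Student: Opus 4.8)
The plan is to exploit the standard fact that, for a Stein neighborhood $\Omega$ of $S$, the hull $\widehat S_{\Ocal(\Omega)}$ coincides with the plurisubharmonic hull $\{z\in\Omega: u(z)\le\sup_S u\ \text{for all}\ u\in\Psh(\Omega)\}$, and to produce a single plurisubharmonic barrier on a suitably thin $\Omega$ that confines the part of the hull sticking out of $S$ to the prescribed neighborhood $U$ of $K$. Concretely, I would first fix an open set $U_0$ with $K\subset U_0$ and $\overline{U_0}\subset U$, and then construct a Stein neighborhood $\Omega$ of $S$ together with $\phi\in\Psh(\Omega)\cap\Ccal(\Omega)$ such that $\phi\le 0$ on $S$ while $\phi>0$ on $\Omega\setminus(S\cup\overline{U_0})$. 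Granting this, every $z\in\widehat S_{\Ocal(\Omega)}$ satisfies $\phi(z)\le\sup_S\phi\le 0$ and hence lies in $S\cup\overline{U_0}$; thus $\widehat S_{\Ocal(\Omega)}\setminus S\subset\overline{U_0}$, and therefore $h(S)=\overline{\widehat S_{\Ocal(\Omega)}\setminus S}\subset\overline{U_0}\subset U$, as required.

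For the barrier itself I would build it from two local pieces, using the same ingredients that enter the proofs of Proposition~\ref{prop:local} and Theorem~\ref{th:admissible}. Away from $K$ the set $M$ is a genuine totally real submanifold, so the smoothed squared distance $h=\dist(\,\cdot\,,M)^2$ is nonnegative, vanishes to second order on $M$, and is strictly plurisubharmonic on a thin tube $T$ around the compact totally real piece $L:=\overline M\setminus U_0$. On $T$ this $h$ already does the job: it is $0$ on $M$ and strictly positive off $M$. Near $K$ I would instead use that $K$ is a Stein compact (Lemma~\ref{lem:KSteincompact}), which furnishes a Stein neighborhood of $K$ inside $U_0$ carrying a strictly plurisubharmonic function, supplying the positive complex Hessian needed to push the barrier down to $\le 0$ on $K$. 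Because $S$ itself is a Stein compact, the tube $T$ together with a neighborhood of $K$ can be shrunk to a genuine Stein neighborhood $\Omega$ of $S$ on which these two models are to be amalgamated into one global $\phi$.

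The main obstacle is precisely this amalgamation in the transition region, where the totally real sheet $M$ runs from the part governed by $h$ into the neighborhood of $K$. The difficulty is structural: plurisubharmonicity is stable under regularized maxima but not under the downward correction forced on us, since $h>0$ on $K$ whereas we need $\phi\le 0$ there, and along the totally real directions of $M$ the function $h$ carries no convexity to pay for lowering it. I expect to resolve this exactly as in the cited proofs: keep $\phi=h$ on the outer part of the tube, let $\phi$ descend to $\le 0$ on $S$ only inside $U_0$, where the strictly plurisubharmonic function attached to the Stein compact $K$ supplies enough positive Hessian to absorb the tangential correction, and choose the tube thin enough that $h$ and its gradient (of size $O(\dist(\cdot,M)^2)$ and $O(\dist(\cdot,M))$ near $M$) are small where the gluing occurs, so that a regularized maximum of $h$ with the $K$-adapted function is plurisubharmonic and still $\le 0$ on $S$. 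Once $\phi$ is in hand, the conclusion follows from the hull identity recorded in the first paragraph.
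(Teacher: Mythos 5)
Your opening reduction is fine (for Stein $\Omega$ the $\Ocal(\Omega)$-hull equals the $\Psh(\Omega)$-hull, so a continuous psh barrier with $\phi\le 0$ on $S$ and $\phi>0$ on $\Omega\setminus(S\cup\overline{U_0})$ would indeed finish the proof), and you correctly isolate the amalgamation of the tube function $h$ with something defined across $K$ as the whole difficulty. But the mechanism you propose for that amalgamation does not work, and this is a genuine gap, not a technicality. First, a strictly plurisubharmonic function on a Stein neighborhood of $K$ — which every Stein open set carries — has completely uncontrolled values on $S$: positivity of a complex Hessian lets one absorb a small $\Ccal^2$ perturbation while remaining psh, but it cannot ``push values down to $\le 0$ on $K$''; values and Hessians are independent data. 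What is actually needed near $K$ is a psh function vanishing on $\wh K_{\Ocal(\Omega_1)}$ (available because Lemma \ref{lem:KSteincompact} makes this hull $\Ocal(\Omega_1)$-convex and small), i.e.\ holomorphic convexity of the hull of $K$, not Steinness; your proposal never invokes this. Second, and more seriously, the regularized maximum cannot perform the hand-off. Write the outer function as $u=h-\epsilon\theta$ with $\theta$ a cutoff, $\theta\equiv 1$ near $K$ and $\supp\theta\subset U_0$; its tangential descent along $M$ is paid for by the strict plurisubharmonicity of $h$ itself (not by anything attached to $K$), and that part is fine. But for $\max(u,v)$ to be psh near $S$ and to extend across the region where $h$ is undefined, the $K$-side function $v$ must satisfy two seam conditions: $v\le u$ near the boundary of its own domain inside the tube (in particular $v\le-\epsilon$ at the points where $M$ crosses that seam), and $v\ge u$ along the inner boundary of the tube (in particular $v\ge R-\epsilon$ at its lateral points, where $R=\sup h>0$ there, and $v\ge h-\epsilon$ near the tube ends, which cut across $M$). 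A constant $v$ forces $R\le 0$, a contradiction \emph{no matter how thin the tube}; a nonconstant $v$ is forced to reproduce the quadratic growth of $h$ off $M$ near the tube ends, i.e.\ it must contain a copy of the very function one is trying to extend — the construction is circular. Resolving this corner/end incompatibility is precisely the nontrivial content of the Hörmander--Wermer construction (\cite{HormanderWermer1968}, \cite[Theorem 3.7.1]{Forstneric2017E}), and ``choose the tube thin enough'' does not do it.

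You should also note that the route the paper points to is different and avoids global barriers altogether: it reruns the scheme of Lemma \ref{lem:KSteincompact} with one extra ingredient from Proposition \ref{prop:local}. For a point $z_0\in\Omega\setminus(S\cup U)$ lying over $p\in M$, the Gaussian $e^{-(z-p)^2/\delta^2}$ has modulus $\le 1$ on the local Lipschitz-$\alpha$ graph $M$ $(\alpha<1)$, is exponentially small on the rest of $S$ within a sufficiently thin neighborhood of $S$, but has modulus $e^{\dist(z_0,M)^2/\delta^2}>1$ at $z_0$ (points in the $\imath$-normal directions lie outside the cones $\Gamma_w$). Cutting off and correcting by a small $\dibar$-solution — the globalization device in the proof of Theorem \ref{th:admissible} — produces $F\in\Ocal(\Omega)$ with $|F(z_0)|>\sup_S|F|$ once $\delta$ is small, so $z_0\notin\wh S_{\Ocal(\Omega)}$. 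This excludes every point of $\Omega\setminus(S\cup\overline{U_0})$ from the hull using only tools already established in the paper, which is why the authors can dispose of the proposition with the phrase ``combining the techniques in the proof of Proposition \ref{prop:local} and Theorem \ref{th:admissible}''.
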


Thus, taking $S'=\widehat S_{\Ocal(\Omega)}$, $K'=K\cup h(S)$ and $M'=M\setminus h(S)$,
we see that $S'=K'\cup M'$ is an HW set with $K'\subset U$. Thus, every
admissible set can be approximated from the outside by HW sets,  enlarging $K$ only slightly.

It was shown by L.\ H\"ormander and J.\ Wermer \cite{HormanderWermer1968} 
(see also \cite[Theorem 3.7.1]{Forstneric2017E}) that if $S=K\cup M$ is an HW set and 
$S'=K\cup M'$ is another compact set, with $M'$ a totally real submanifold,
such that $S\cap U=S'\cap U$ holds for some open neighborhood of $K$, then $S'$ is also
admissible (i.e., any such $S'$ is a Stein compact). In view of the above proposition, the same holds
true for admissible sets, i.e., this class is stable under changes of the totally real part which 
are fixed near $K$.
\qed\end{remark}

We will consider two types of approximations in higher dimensions.  On admissible sets $S=K\cup M$
we will consider Runge-Mergelyan approximation, \emph{i.e.}, we assume that the object we want to approximate 
(function, form, map, etc.) is holomorphic on a neighborhood of $K$ and continuous or smooth on $M$.  
On strongly admissible sets we will consider true Mergelyan approximation, assuming that the object 
to be approximated is of class $\Acal^r(S)$ for some $r\in\Z_+$.

%
%

\subsection{Approximation on totally real submanifolds and admissible sets}\label{ss:submanifolds}

In this section we present an optimal $\Ccal^k$-approximation result on totally 
real submanifolds.   With essentially no extra effort we get approximation results on 
stratified totally real manifolds and  on admissible sets 
(see Theorems \ref{th:admissible} and \ref{th:strat-admissible}).

There is a long history on approximation on totally real submanifolds, starting with J.\ Wermer \cite{Wermer1955} 
on curves and R.\ O.\ Wells \cite{Wells1969} on real analytic manifolds.  
The first general result on approximation on totally real  manifolds with various degrees of smoothness
is due to L.\ H{\"o}rmander and J.\ Wermer \cite{HormanderWermer1968}.  
Their work is based on $L^2$-methods for solving the $\overline\partial$-equation, and the passage from 
$L^2$ to $\Ccal^k$-estimates led to a gap between the order $m$ of smoothness
of the manifold $M$ on which the approximation takes place, and the order $k$ of the norm of 
the Banach space $\Ccal^k(M)$ in which the approximation takes place.  Subsequently, several 
authors worked on decreasing the gap between $m$ and $k$, introducing more precise integral kernel methods
for solving $\overline\partial$. The optimal result with $m=k$
was eventually obtained by M.\ Range and Y.-T.\ Siu \cite{RangeSiu1974}.  Subsequent
improvements were made by F.\ Forstneri\v{c}, E.\ L{\o}w and N.\ {\O}vrelid \cite{ForstnericLowOvrelid2001}
in 2001. They developed Henkin-type kernels  adapted to this situation and 
obtained optimal results on approximation of $\dibar$-flat functions in tubes around totally real manifolds
by holomorphic functions.  In 2009, B.\ Berndtsson \cite{Berndtsson2009} used $L^2$-theory to give a new approach 
to uniform approximation by holomorphic functions on compact zero sets of strongly plurisubharmonic 
functions.  A novel byproduct of his method is that, in the case of polynomial approximation, one gets 
a bound on the degree of the approximating polynomial in terms of the closeness of the approximation.  

We will not go into the details of the $L^2$ or the integral kernel approaches, but 
will instead present a method based on convolution with the Gaussian kernel which originates in the 
proof of Weierstrass's Theorem \ref{th:Weierstrass} on approximating continuous functions on $\R$ by 
holomorphic polynomials. This approach is perhaps the most elementary one and is particularly well suited 
for proving Runge-Mergelyan type approximation results with optimal regularity on
(strongly)  admissible sets. It seems that the first modern application 
of this method was made in 1981 by S.\ Baouendi and F.\ Treves 
\cite{BaouendiTreves1981} to obtain local approximation of Cauchy-Riemann (CR) functions
on CR submanifolds. The use of this method on totally real manifolds was developed 
further by P.\ Manne \cite{Manne1993} in 1993 to obtain 
Carleman approximation on totally real submanifolds (see also \cite{ManneWoldOvrelid2011}).

We define the bilinear form $\langle\cdotp,\cdotp\rangle$ on $\C^n$ by
\begin{equation}\label{eq:notation}
	\langle z,w\rangle = \sum_{i=1}^n z_i w_i,\qquad 
	z^2=\langle z,z\rangle = \sum_{i=1}^n z_i^2.
\end{equation}
We consider first the real subspace $\R^n$ of $\mathbb C^n$.  Recall that 
\[ 
	\int_{\R^n}e^{-x^2}dx =
	\int_{\R^n}e^{-\sum_{i=1}^n x_i^2 } dx_1\cdots dx_n  = \left(\int_\R e^{-t^2}dt\right)^n = \pi^{n/2}.
\] 
It follows that $\int_{\R^n}e^{-x^2/\epsilon^2}dx=\epsilon^n \pi^{n/2}$, so the family of functions
$\pi^{-n/2}\epsilon^{-n} e^{-x^2/\epsilon^2}$ is an approximate identity on $\R^n$.
Given $f\in\Ccal^k_0(\R^n)$, consider the entire functions
\begin{equation*}\label{eq:approximation}
	f_\epsilon(z) = \pi^{-n/2} \int_{\R^n}\frac{1}{\epsilon^n}f(x) e^{-(x-z)^2/ \epsilon^2} dx,
	\qquad z\in\C^n,\ \epsilon>0.
\end{equation*}
(See (\ref{eq:Gauss}) for $n=1$.)
It is straightforward to verify that $f_\epsilon\rightarrow f$ uniformly on $\R^n$, and 
by a change of variables $u=z-x$ we get convergence also in $\Ccal^k$ norm.   

It is remarkable that the same procedure gives local approximation in the $\Ccal^k$ norm on 
any totally real submanifold of class $\Ccal^k$. 
Recall that $\B^n_\R\subset\R^n$ is the unit ball and $\B^n_\R(\epsilon) = \epsilon \B^n_\R$
for any $\epsilon>0$.

%
%

\begin{proposition}\label{prop:local}
Let $\psi:\mathbb B_{\mathbb R}^n\rightarrow\mathbb R^n$ be a map of class $\Ccal^k$ $(k\in\N)$
with $\psi(0)=(d\psi)_0=0$, and set $\phi(x)=x+i\psi(x)\in\C^n$.  
Then there exists a number $0<\delta<1$ such that the following holds. 
Let $N\subset\mathbb B^n_{\mathbb R}$ be a closed set, and let 
$M=\phi(\mathbb B^n_{\mathbb R}(\delta)\cap N)\subset \C^n$ and 
$bM=\phi(b\mathbb B^n_{\mathbb R}(\delta)\cap N)\subset \C^n$.  
Given  $f\in\Ccal_0(M)$, there exists a family of entire functions 
$f_\epsilon\in\Ocal(\mathbb C^n)$, $\epsilon>0$, such that the following hold as $\epsilon\to 0$:
\begin{enumerate}[\rm (a)]
\item $f_\epsilon \rightarrow f$ uniformly on $M$, and
\item $f_\epsilon\rightarrow 0$ uniformly on 
$U=\{z\in\mathbb C^n : \mathrm{dist}(z,bM)<\eta\}$ for some $\eta>0$.
\end{enumerate}
Moreover, if $N$ is a $\Ccal^k$-smooth submanifold of $\B^n_\R$ and $f\in\Ccal^k_0(M)$, then
the approximation in (a) may be achieved in the $\Ccal^k$-norm on $M$.
\end{proposition}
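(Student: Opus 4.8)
The plan is to imitate the proof of the Weierstrass Theorem~\ref{th:Weierstrass}, approximating $f$ by Gaussian convolutions taken \emph{along} $M$. First I would normalize the data. After extending $\psi$ to a compactly supported map of class $\Ccal^k$ on $\R^n$ while keeping $\|d\psi\|\le 1/2$ (possible for $\delta$ small since $\psi(0)=(d\psi)_0=0$, using a cutoff and $\psi(x)=O(|x|^2)$), the map $\phi(x)=x+i\psi(x)$ is defined on all of $\R^n$ and equals the identity outside a compact set; in particular $\phi$ is injective, since $\mathrm{Re}\,\phi(x)=x$. Using the Tietze extension theorem (the Whitney theorem in the $\Ccal^k$ case) I extend $f\circ\phi$ to a function $g\in\Ccal_0(\R^n)$ (resp.\ $\Ccal^k_0(\R^n)$) whose support is a compact subset of the open ball $\B^n_\R(\delta)$, hence disjoint from $b\B^n_\R(\delta)$. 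I then set
\[
	f_\epsilon(z)=\pi^{-n/2}\epsilon^{-n}\int_{\R^n} g(x)\, e^{-(\phi(x)-z)^2/\epsilon^2}\,\det\phi'(x)\,dx, \qquad z\in\C^n,
\]
where $(\phi(x)-z)^2=\langle\phi(x)-z,\phi(x)-z\rangle$ is the bilinear form~\eqref{eq:notation}. Differentiating under the integral sign shows that each $f_\epsilon$ is entire, and the Jacobian factor $\det\phi'(x)$ is chosen precisely so that the integrand is the pullback $\phi^*\Omega_z$ of the closed holomorphic $n$-form $\Omega_z(w)=\pi^{-n/2}\epsilon^{-n}e^{-(w-z)^2/\epsilon^2}\,dw_1\wedge\cdots\wedge dw_n$ on $\C^n$.

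Two facts then drive everything. The first is a \emph{concentration estimate}: for $x,t\in\R^n$, writing $a=x-t$ and $b=\psi(x)-\psi(t)$, the bilinear form gives $\mathrm{Re}\,(\phi(x)-\phi(t))^2=|a|^2-|b|^2\ge\tfrac34|x-t|^2$, since $|b|\le\tfrac12|a|$; hence $\big|e^{-(\phi(x)-\phi(t))^2/\epsilon^2}\big|\le e^{-\frac34|x-t|^2/\epsilon^2}$ is a genuine approximate identity after the substitution $u=(x-t)/\epsilon$. The second is an \emph{exact normalization}: because $\Omega_z$ is a top-degree (hence closed) holomorphic form that decays along $\R^n$, and $\phi$ equals the identity off a compact set, Stokes' theorem gives $\int_{\R^n}\phi^*\Omega_z=\int_{\R^n}\Omega_z=1$ for every $z\in\C^n$; more generally, every moment $\int_{\R^n}\det\phi'(x)\,(\phi(x)-z)^\gamma e^{-(\phi(x)-z)^2/\epsilon^2}dx$ reduces by the same contour deformation to the corresponding flat Gaussian moment. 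Granting these, part (a) follows by writing $f_\epsilon(\phi(t))-g(t)=\int_{\R^n}[g(x)-g(t)]\,\phi^*\Omega_{\phi(t)}$ and estimating via concentration and the uniform continuity of $g$, uniformly for $t$ in the compact parameter set; since $g=f\circ\phi$ on $N$ and $\phi$ is injective, this is exactly $f_\epsilon\to f$ uniformly on $M$. For part (b), the set $\supp(g)$ lies in $\overline{\B^n_\R(\delta')}$ with $\delta'<\delta$, so for $x\in\supp(g)$ and $z$ within $\eta$ of a point $\phi(s)\in bM$ (with $s\in b\B^n_\R(\delta)\cap N$) one gets $\mathrm{Re}\,(\phi(x)-z)^2\ge\tfrac34(\delta-\delta')^2-C\eta\ge c>0$; hence $|f_\epsilon|\le C\epsilon^{-n}e^{-c/\epsilon^2}\to0$ uniformly on $U$ once $\eta$ is small.

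For the $\Ccal^k$ statement I would exploit that each $f_\epsilon$ is \emph{entire}, so the tangential $\Ccal^k(M)$-norm of $f_\epsilon|_M$ is controlled by the ambient complex derivatives $\partial_z^\beta f_\epsilon$, $|\beta|\le k$, composed with the fixed $\Ccal^k$ chart $\phi$. Differentiating under the integral and Taylor-expanding $g$ to order $k$ about $x=t$, the exact moment identities integrate each Taylor term explicitly, and a short linear-algebra computation (using $\det(\phi')\,\phi'^{-1}=\mathrm{adj}(\phi')$ and the second-moment identity) shows that $\partial_z^\beta f_\epsilon(\phi(t))$ tends to the value forcing the tangential derivatives of $f_\epsilon|_M$ to converge to those of $f$. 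The main obstacle is exactly the regularity bookkeeping needed to reach the \emph{optimal} range $m=k$: a naive integration by parts would throw a derivative onto $\det\phi'(x)$, which is only of class $\Ccal^{k-1}$, and cost one derivative. The device that avoids this loss is the exact normalization — since the moments are obtained by contour deformation rather than by differentiating the Jacobian, every derivative falls on $g\in\Ccal^k$ while $\det\phi'$ enters only through its continuous values. I expect the most laborious step to be checking that the Taylor remainders are $o(1)$ uniformly in $t$ and matching the leading term to the intrinsic derivative of $f$, but no idea beyond the concentration estimate and the exact moment identities is needed.
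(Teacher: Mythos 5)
Your parts (a) and (b) are correct and essentially reproduce the paper's proof: your $f_\epsilon$ is exactly the paper's formula \eqref{eq:M} after the substitution $w=\phi(x)$, the reduction via a Tietze/Whitney extension of $f\circ\phi$ is the paper's first line (extend from $N$ to $\B^n_\R$), your concentration estimate is the paper's Lipschitz-$\alpha$ bound with $\alpha=1/2$, and your quantitative estimate for (b) is the same use of the positive distance between $\supp f$ and $bM$ that underlies the paper's cone argument. The only real difference there is cosmetic: where the paper computes $\lim_{\epsilon\to 0}f_\epsilon(\phi(t))$ by dominated convergence plus H\"ormander's identity \eqref{eq:Hormander}, you normalize exactly by Stokes' theorem, $\int_{\R^n}\phi^*\Omega_z=1$; both are fine.

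The genuine gap is in the $\Ccal^k$ statement, for $k\ge 2$. Your exact moment identities hold only for weights that are holomorphic in $w$, i.e.\ for integrands containing $(\phi(x)-z)^\gamma$; but Taylor expansion of $g$ about $x=t$ produces the \emph{real} monomials $(x-t)^\alpha$, and on the graph $x$ is not a holomorphic function of $w$, so contour deformation does not apply to those terms. Only the $m=0$ term is killed exactly (it is $g(t)\,\partial_z^\beta(1)=0$), which is why your scheme does close for $k=1$. For $|\beta|\ge 2$, each intermediate term $1\le m=|\alpha|<|\beta|$ carries $\epsilon^{m-|\beta|}$ times an integral $\int u^\alpha H_\beta(\Phi_\epsilon(u))\,e^{-\Phi_\epsilon(u)^2}\det\phi'(t+\epsilon u)\,du$, where $\Phi_\epsilon(u)=(\phi(t+\epsilon u)-\phi(t))/\epsilon$ and $H_\beta$ is the Hermite factor. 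Its limit does vanish (deform the limiting contour $\phi'(t)\R^n$ to $\R^n$ and use Hermite orthogonality), but only with an error governed by the modulus of continuity of $\phi'$; the product is therefore $\epsilon^{m-|\beta|}\cdot o(1)$, which is indeterminate. Extracting a rate forces you to Taylor-expand $\phi'$, so derivatives fall on $\phi$ after all and the bookkeeping you claimed to avoid reappears. Two repairs: (i) expand $f$ along $M$ in the holomorphic jet polynomial $P_{k,t}(w)=\sum_{|\gamma|\le k}c_\gamma(t)(w-\phi(t))^\gamma$ matching the $k$-jet of $g$ at $t$ (it exists and is unique precisely because $M$ is totally real); your contour deformation then applies exactly to the polynomial part, while the remainder is $o(|x-t|^k)$ and is killed by scaling since $|\beta|\le k$; or (ii) use the route the paper cites (Manne): write $\partial_{z_j}$ of the kernel times $dw$ as an exact holomorphic $n$-form and apply Stokes to its pullback, so that one derivative lands on $g$ while $\mathrm{adj}(\phi')$ enters undifferentiated at that step; iterating puts at most $k$ derivatives on $g$ and at most $k-1$ on $\phi'$, exactly within the $\Ccal^k$ budget. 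Note that your reason for rejecting integration by parts — that it must differentiate $\det\phi'$ and lose a derivative — applies only to naive integration by parts in coordinates; the form-level version just described is what achieves the optimal regularity $m=k$.
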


\begin{remark}
This proposition is local.  However, 
Condition (b) and Cartan's Theorem B makes it very simple to globalize the approximation in the case that  
$M$ is a totally real piece of an admissible set (see Theorem \ref{th:admissible} below).  
\qed\end{remark}

\noindent \emph{Proof of Proposition \ref{prop:local}.}
Since functions on $N$ extend to $\mathbb B^n_{\mathbb R}$ in the appropriate classes, it is enough to prove the 
proposition in the case $N=\mathbb B^n_{\mathbb R}$.  

Note that $\phi'(x)=I+\imath \psi'(x)$.
We will need  (see H\"{o}rmander \cite[p.\ 85]{Hormander1976}) that 
if $A$ is a symmetric $n\times n$ matrix with positive definite real part, then    
\begin{equation}\label{eq:Hormander}
	\int_{\mathbb R^n} e^{-\langle Au,u\rangle} du = \pi^{n/2}(\det\, A)^{-1/2}.
\end{equation}
We shall use this with the matrix $A(x)=\phi'(x)^T\phi'(x)$ whose real part equals $\Re A(x) =  I-\psi'(x)^T \psi'(x)$. 
Since $\psi'(0)=0$, there is a number $0<\delta_0<1$ such that $\Re A(x)>0$ is positive
definite for all $x\in \B^n_\R(\delta_0)$, and $\psi$ is Lipschitz-$\alpha$ with $\alpha<1$ on $\mathbb B^n_{\mathbb R}(\delta_0)$.
By using a smooth cut-off function, we extend $\psi$ to $\R^n$ such that $\supp(\psi)\subset \B^n_\R$, 
without changing its values on $\B^n_\R(\delta_0)$. (This does not affect the lemma.)
We will show that the lemma holds for any number $\delta$ with $0<\delta<\delta_0$.

Set $M=\phi(\mathbb B^n_{\mathbb R}(\delta))$ and $M_0=\phi(\mathbb B^n_{\mathbb R}(\delta_0))$,
so $\overline M\subset M_0$. Given $f\in\Ccal^k_0(M)$, set
\begin{equation}\label{eq:M}
	f_\epsilon(z) = \frac{1}{\pi^{n/2} \epsilon^n} \int_{M} f(w) e^{-(w-z)^2/ \epsilon^2} dw, \quad z\in\C^n,
\end{equation}
where $dw=dw_1\ldots dw_n$.

We begin by showing that condition (b) holds by inspecting the integral kernel.
Writing $z=x+\imath y\in\C^n$ and $w=u+\imath v=\C^n$, we have that 
\[
	\big| e^{-(w-z)^2}\big| = e^{-\Re(w-z)^2} = e^{(y-v)^2-(x-u)^2}.
\]
For a fixed $w=u+\imath v$, let 
$	\Gamma_w=\{z=x+\imath y \in\C^n : (y-v)^2<(x-u)^2\}. $
On $\Gamma_w$, the function $e^{-(w-z)^2/\epsilon^2}$ clearly converges to zero as $\epsilon\to 0$.
Since the map $\psi$ is Lipschitz-$\alpha$ with $\alpha<1$ on $\mathbb B^n_{\mathbb R}(\delta_0)$, 
we see  that for every $w\in M_0$ we have that $M_0\setminus \{w\} \subset \Gamma_w$. Hence,
there exists and open neighborhood $U\subset\mathbb C^n$ of $bM$ with 
$U\subset\Gamma_w$ for all $w\in\supp (f)$. This establishes (b).

Let us now prove (a). Since the function $x\mapsto f(\phi(x))$ is supported in $\B^n_\R(\delta)$, we can 
extend the product of it with any other function on $\B^n_\R(\delta)$ to all of $\R^n$ by letting it vanish outside 
$\B^n_\R(\delta)$. Fix a point $z_0=\phi(x_0)\in M$ with $x_0\in \mathbb B^n_{\mathbb R}(\delta)$.  
Using the notation \eqref{eq:notation}, we have that
\begin{equation*}
  \begin{aligned}
       f_\epsilon(z_0) & = \pi^{-n/2} \int_{M} \frac{1}{\epsilon^n} f(w) e^{-(w-z_0)^2/\epsilon^2} \, dw \\ 
       & =  \pi^{-n/2} \int_{\R^n} \frac{1}{\epsilon^n} f(\phi(x)) \, e^{-(\phi(x)-\phi(x_0))^2/\epsilon^2} \det \phi'(x) \, dx  \\
       & =  \pi^{-n/2} \int_{\R^n}  f(\phi(x_0+\epsilon u)) \, e^{-\left(u + \imath(\psi(x_0+\epsilon u)-\psi(x_0))/\epsilon\right)^2}  
		\det \phi'(x_0 + \epsilon u) du. 
  \end{aligned}
\end{equation*}
(We applied the change of variable $x=x_0+\epsilon u$.) The Lipschitz condition on $\psi$ gives
\[
	\big|e^{-\left(u + \imath(\psi(x_0+\epsilon u)-\psi(x_0))/\epsilon\right)^2}\big|  \leq e^{-(1-\alpha) |u|^2} 
\]
for all $x_0\in \B^n_\R(\delta)$ and $0<\epsilon <\delta_0-\delta$. The dominated convergence theorem implies
\[
  \begin{aligned}
	\lim_{\epsilon\rightarrow 0} f_\epsilon(z_0) & =  
	\pi^{-n/2} \int_{\R^n}  f(\phi(x_0)) e^{- \langle \phi'(x_0)u,\phi'(x_0)u\rangle} \, \det\phi'(x_0)\, du \\
	 & =  \pi^{-n/2}\int_{\R^n} f(z_0) e^{-\langle \phi'(x_0)^T\phi'(x_0)u,u \rangle } \det\phi'(x_0)\, du  \\
	 & = f(z_0).
   \end{aligned}
\]
The last line follows from \eqref{eq:Hormander} applied with the matrix $A=\phi'(x_0)^T\phi'(x_0)$,
noting also that $\det A=\det\phi'(x_0)^2$. The estimates are clearly independent of 
$x_0\in \B^n_\R(\delta)$, and hence of $z_0\in M$, so the convergence $f_\epsilon\to f$ is uniform on $M$.

To get convergence in the $\Ccal^k$ norm, one replaces partial differentiation of the kernel 
in \eqref{eq:M} with respect to $z$ by partial differentiation of $f$ with respect to $w$ 
(see P.\ Manne \cite[p.\ 524]{Manne1993} for the details).
\qed 

\smallskip

We now globalize Proposition \ref{prop:local} to obtain the approximation of $\Ccal^k$ functions 
on totally real manifolds of class $\Ccal^k$ and on (stratified) admissible sets.

%
%

\begin{theorem}\label{th:admissible}
Let $S=K\cup M$ be an admissible set in a complex manifold $X$ (see Definition \ref{def:admissible2}), 
with $M$  a totally real submanifold (possibly with boundary) of class $\Ccal^k$.  
Then for any $f\in\Ccal^k(S)\cap\Ocal(K)$ there exists a sequence $f_j\in\Ocal(S)$ such that 
\begin{equation*}
	\lim_{j\rightarrow\infty}\|f_j - f\|_{\Ccal^k(S)} = 0.
\end{equation*}
\end{theorem}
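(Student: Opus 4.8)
The plan is to reduce Theorem \ref{th:admissible} to the local approximation result of Proposition \ref{prop:local} by means of a patching argument that uses the Stein compact hypothesis on $S$ together with Cartan's Theorem B. First I would fix a Stein neighborhood basis of $S$; since $S=K\cup M$ is admissible, $S$ is a Stein compact, so I may choose a Stein open neighborhood $\Omega$ of $S$ on which the given $f$, being holomorphic near $K$ and of class $\Ccal^k$ on $M$, extends to a $\dibar$-flat function (of order $k$) by Whitney's jet-extension theorem, in the sense of \eqref{eq:dibarflat}. The strategy is then: approximate $f$ locally on small totally real coordinate pieces of $M$ by entire functions using the Gaussian convolution of Proposition \ref{prop:local}, and glue these local approximants into a single global holomorphic approximant on a neighborhood of $S$, after which Runge approximation (via the Oka-Weil Theorem \ref{th:Oka-Weil}, which applies since $S$ is a Stein compact and we may pass to an $\Ocal(X)$-convex neighborhood) finishes the job.

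\textbf{The key steps, in order.} First, cover the totally real part $M$ by finitely many coordinate charts in which $M$ takes the graphical form $\phi(x)=x+\imath\psi(x)$ required by Proposition \ref{prop:local}, with the charts small enough that the proposition's $\delta$-condition holds and that on each chart we obtain a local entire approximant $g_\nu$ with $g_\nu\to f$ in $\Ccal^k$ on the central piece $M_\nu$ and $g_\nu\to 0$ near the boundary $bM_\nu$ (conclusions (a) and (b)). Near $K$ the function $f$ is already holomorphic, so we take $g_0=f$ there. Second — and this is the gluing heart of the argument — I would form the differences $g_\mu-g_\nu$ on overlaps; these are $\dibar$-flat to order $k$ and uniformly small in $\Ccal^k$, so I can solve a Cousin-type problem: write $g_\mu-g_\nu=h_\mu-h_\nu$ on overlaps with $h_\nu$ small in $\Ccal^k$, using Theorem B on the Stein neighborhood $\Omega$ (equivalently, solving $\dibar$ with the small right-hand side coming from a partition of unity applied to the $g_\nu$). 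Setting $F=g_\nu-h_\nu$ locally then patches to a globally defined function on a neighborhood of $S$ that is $\dibar$-flat of order $k$ and $\Ccal^k$-close to $f$ on $S$; solving one more $\dibar$-equation with this small $\dibar$-flat right-hand side (with $\Ccal^k$ estimates, available on the Stein compact $S$) produces a genuinely holomorphic approximant on a neighborhood of $S$. Third, apply Oka-Weil (Theorem \ref{th:Oka-Weil}) to approximate this neighborhood-holomorphic function uniformly — and, via Cauchy estimates, in $\Ccal^k$ on $S$ — by functions in $\Ocal(S)$, yielding the desired sequence $f_j$.

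\textbf{The main obstacle} I anticipate is maintaining $\Ccal^k$ control (not merely $\Ccal^0$) throughout the gluing, rather than any single conceptual difficulty. Proposition \ref{prop:local} delivers $\Ccal^k$ convergence only on the totally real manifold $M$ itself, so I must arrange that the Cousin data and the subsequent $\dibar$-solutions respect the $\Ccal^k(S)$-norm, which requires the right-hand sides to be $\dibar$-flat to order $k-1$ along $M$ so that the elliptic/kernel estimates for $\dibar$ return a correction that is $\Ccal^k$-small on $S$. Here the crucial input is the existence of a $\dibar$-solution operator on a fixed Stein neighborhood of the Stein compact $S$ with estimates in $\Ccal^k$ for $\dibar$-flat data — precisely the Henkin-type estimates of Forstneri\v{c}, L\"ow and \O vrelid \cite{ForstnericLowOvrelid2001} alluded to in the text, or the interior elliptic estimates used in the proof of Theorem \ref{th:Oka-Weil}. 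Condition (b) of Proposition \ref{prop:local}, giving vanishing of the local approximants near the boundaries of the totally real pieces, is what makes the partition-of-unity gluing produce a right-hand side that is both small and supported away from the central pieces, so that the corrections do not spoil the approximation; keeping track of this support structure while preserving $\dibar$-flatness is the delicate bookkeeping step.
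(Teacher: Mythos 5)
There is a genuine gap, and it sits exactly at the step you call the ``gluing heart'' of the argument. Your Cousin data is not small where you need it to be. Proposition \ref{prop:local}(a) gives $\Ccal^k$-smallness of $g_\nu-f$ \emph{only on the totally real piece $M_\nu$ itself}, and (b) gives smallness of $g_\nu$ (not of $g_\nu-f$) only on a neighborhood of $bM_\nu$. Consequently the differences $g_\mu-g_\nu$, and likewise the right-hand side $\sum_\nu(\dibar\chi_\nu)(g_\nu-f)$ coming from a partition of unity, are known to be small only \emph{on} $S$, not on any open overlap $\Omega_\mu\cap\Omega_\nu$ in $X$. Smallness of a holomorphic function on a totally real set does not propagate to a fixed neighborhood without a priori bounds, and the Gaussian convolutions have none: off the manifold the kernel $e^{-(w-z)^2/\epsilon^2}$ grows like $e^{c/\epsilon^2}$. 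So neither Cartan's Theorem B with the open mapping theorem nor any $\dibar$-solution operator applied to your right-hand side yields corrections $h_\nu$ that are $\Ccal^k$-small near $S$; the data simply is not small in the Fr\'echet topology of the open sets where the equation is solved. Your fallback --- ``$\dibar$-estimates in $\Ccal^k$ for flat data, available on the Stein compact $S$'' --- is not available off the shelf: uniform $\Ccal^k$ estimates on shrinking neighborhoods of a general Stein compact are exactly the hard analytic content of the H\"ormander--Wermer/Range--Siu/Forstneri\v c--L\o w--\O vrelid line of work \cite{HormanderWermer1968,RangeSiu1974,ForstnericLowOvrelid2001}, with the attendant loss-of-derivatives issues the paper recounts, and the paper's proof is designed precisely to avoid invoking them.

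The paper's fix is a reordering you should note: the partition of unity is applied to $f$ \emph{before} approximating, not to the approximants. Writing $f=\sum_j\chi_j f$ with $\chi_j$ supported in single charts reduces everything to a function compactly supported in one chart $M_1$; such a function vanishes near $bM_1$, so its single approximant $f_\epsilon$ is small on an honest \emph{open} set $U\supset bM_1$ by condition (b). Then only two open sets are glued, $B\supset M_1$ and $A\supset S\setminus M_1$ with $A\cap B\subset U$, and the Cousin datum $f_\epsilon|_{\Omega_A\cap\Omega_B}$ is genuinely small in the Fr\'echet topology; Theorem B plus the open mapping theorem splits it with small pieces, and $\Ccal^k$-control is free by Cauchy estimates because everything being corrected is holomorphic. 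Two further points: near $K$ you set $g_0=f$ and glue, which reintroduces the same open-overlap problem; the paper instead proves Lemma \ref{lem:KSteincompact} (using the already-established case $\supp f\cap K=\varnothing$) to get a Stein neighborhood $\Omega$ with $\wh K_{\Ocal(\Omega)}$ inside the region where $f$ is holomorphic, applies the Oka--Weil theorem there, and uses a cutoff to reduce to a function supported off $K$ --- without that lemma Oka--Weil cannot even be applied to $K$, since its hull could escape the domain of holomorphy of $f$. Finally, your last Oka--Weil step is superfluous: the theorem asks for $f_j\in\Ocal(S)$, i.e.\ holomorphic on neighborhoods of $S$, which the gluing already delivers.
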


\begin{proof}
We begin by considering the case when $\supp(f)$ is contained in the totally real manifold $M=S\setminus K$, 
that is, $\supp(f)\cap K=\varnothing$.
We cover the compact set $\supp(f)\subset M\setminus K$ by
finitely many open domains (coordinate balls)  $M_1,\ldots,M_m\subset M$ such that 
Proposition \ref{prop:local} holds for each $M_j$ and $\bigcup_{j=1}^m \overline M_j \subset M\setminus K$.
Let $\chi_j\in \Ccal^k_0(M_j)$ be a partition of unity on a neighborhood of $\supp(f)$,
so $f=\sum_j \chi_j f$. Clearly, it suffices to prove the theorem separately for each 
$\chi_j f$, so we assume without loss of generality that $f$ is compactly supported in $M_1$.  

Let $U\subset X$ be a neighborhood of $bM_1$ satisfying condition (b)
in Proposition \ref{prop:local}.   Let $B\subset X$ be an open set with $M_1\subset B$ and 
let $A\subset X$ be an open set containing $S\setminus M_1$, such that $A\cap B\subset U$.
Let $\Omega\subset X$ be a Stein neighborhood of $S$ with $\Omega\subset A\cup B$, 
and set $\Omega_A:=\Omega\cap A$ and $\Omega_B=\Omega\cap B$.
Consider the map $\Gamma:\Ocal(\Omega_A)\oplus\Ocal(\Omega_B)\rightarrow \Ocal(\Omega_A\cap\Omega_B)$
defined by $(f_A,f_B)\mapsto f_A-f_B$.  Then $\Gamma$ is surjective by Cartan's Theorem B, 
and so by the open mapping theorem, $\Gamma$ is an open mapping with respect to the Fr\'{e}chet 
topologies on the respective spaces.   Let now $f_\epsilon$ be a family as in Proposition \ref{prop:local}.
Then $f_\epsilon\rightarrow 0$ on $\Omega_A\cap\Omega_B$, so 
there is a sequence $F_\epsilon=(f_{A,\epsilon},f_{B,\epsilon})\in \Ocal(\Omega_A)\oplus\Ocal(\Omega_B)$
converging to zero in the Fr\'{e}chet topology, with $\Gamma(F_\epsilon)=f_\epsilon$.  Pick a sequence $\epsilon_j\rightarrow 0$, and 
set $f_j:= f_{\epsilon_j}+f_{B,\epsilon_j}$ on $\Omega_B$ and $f_j:=f_{A,\epsilon_j}$ on $\Omega_A$. 
The conclusion now follows by restricting $f_j$ to any domain $\Omega'$ with $S\subset\Omega'\Subset\Omega$.

It remains to consider the general case when the support of $f$ intersects $K$.
To this end, we note that what we have proved so far gives the following useful lemma.

%
%

\begin{lemma}\label{lem:KSteincompact}
If $S=K\cup M$ is a Stein compact in a complex manifold $X$, if $S\setminus K$ is totally real,
and $U\subset X$ is an open set containing $K$, then there exists a Stein neighborhood $\Omega\subset X$ of $S$ 
such that $\wh K_{\Ocal(\Omega)}\subset U$. In particular, $K$ is a Stein compact. 
\end{lemma}

\begin{proof}
For each point $p\in M\setminus K$ there is a disc $M_p\subset M\setminus K$ around $p$
on which Proposition \ref{prop:local} holds. 
As we have just shown, we may use Theorem \ref{th:admissible} to approximate a continuous 
function which is zero near $K$ and one at the point $p$, and so there exists a 
holomorphic function $f_p\in \Ocal(S)$ such that $|f_p|$ is as small as desired
on $K$ and $|f_p|>1/2$ on a neighborhood of $p$. By taking the sum $\rho=\sum_j |f_{p_j}|^2$ 
over finitely many such functions, we get a plurisubharmonic function $\rho\ge 0$ on a 
neighborhood $V$ of $S$ which is $>1$ on a neighborhood $W$ of the compact set 
$\overline{M\setminus U}$ and is close to $0$ on a neighborhood of $K$.
Note that $S\subset U\cup W$. By choosing a Stein neighborhood $\Omega$ of $S$
such that $\Omega \subset (U\cup W)\cap V$, it follows that $\wh K_{\Ocal(\Omega)}\subset U$.
\qed\end{proof}

We now conclude the proof of Theorem \ref{th:admissible}. Assume that the function $f\in\Ccal^k(S)$
to be approximated is holomorphic in an open set $U\supset K$. Choose a Stein neighborhood
$\Omega$ of $S$ as in Lemma \ref{lem:KSteincompact} such that $K_0:=\wh K_{\Ocal(\Omega)}\subset U$. 
Pick an $\Ocal(\Omega)$-convex compact set $K_1 \subset U$ 
containing $K_0$ in its interior. Choose  a smooth cut-off function $\chi$ supported
on $K_1$ such that $\chi=1$ on a  neighborhood $K_0$. 
By the Oka-Weil theorem (see Theorem \ref{th:Oka-Weil})
there is a sequence $g_j\in\mathcal O(\Omega)$ such that $g_j\rightarrow f$ 
uniformly on $K_1$ as $j\rightarrow\infty$.  Then,  we clearly have that
\[
	\tilde f_j:= \chi g_j + (1-\chi)f =
	g_j+ (1-\chi)(f-g_j) \rightarrow f \quad \text{as $j\rightarrow\infty$}
\]
in $\Ccal^k(S)$. As $g_j\in\Ocal(\Omega)$, it remains to approximate the functions $(1-\chi)(f-g_j)\in \Ccal^k(S)$
whose support does not intersect $K_0\supset K$, so we have our reduction.
\qed\end{proof}

For approximation on stratified admissible sets, we need the following.

\begin{corollary}\label{cor:stratadmissible}
Let $S=K\cup M$ be a stratified admissible set, with a totally real stratification 
$M=\bigcup_{i=1}^l M_i$.   Then $S_i:=K\cup M_i$ is a Stein compact (and hence
a stratified admissible set) for each $i=1,\ldots,l-1$. 
\end{corollary}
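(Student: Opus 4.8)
The plan is to argue by downward induction on $i$, peeling off the outermost totally real stratum at each step and invoking Lemma \ref{lem:KSteincompact}. The base case is $i=l$: since $S=S_l=K\cup M$ is stratified admissible, it is in particular a Stein compact. So suppose inductively that $S_{i+1}=K\cup M_{i+1}$ has already been shown to be a Stein compact, and aim to deduce the same for $S_i=K\cup M_i$.

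The key step is to present $S_{i+1}$ in the form to which Lemma \ref{lem:KSteincompact} applies, via the decomposition $S_{i+1}=S_i\cup(S_{i+1}\setminus S_i)$ together with the identity
\[
	S_{i+1}\setminus S_i = (K\cup M_{i+1})\setminus(K\cup M_i)=(M_{i+1}\setminus M_i)\setminus K.
\]
By Definition \ref{def:totallyrealmfd}(b) the set $M_{i+1}\setminus M_i$ is a totally real submanifold, and since $K$ is compact and hence closed, $(M_{i+1}\setminus M_i)\setminus K$ is an open subset of it, so it is again a totally real submanifold. Thus $S_{i+1}=S_i\cup(S_{i+1}\setminus S_i)$ is a Stein compact whose complementary part $S_{i+1}\setminus S_i$ is totally real, with $S_i$ compact by hypothesis. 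Applying Lemma \ref{lem:KSteincompact} to $S_{i+1}$, \emph{with $S_i$ in the role of $K$ and $S_{i+1}\setminus S_i$ in the role of $M$}, yields at once that $S_i$ is a Stein compact. This closes the induction and proves the Stein-compact assertion for every $i=1,\ldots,l-1$ (indeed for $i=l$ as well).

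To conclude that each $S_i$ is moreover stratified admissible in the sense of Definition \ref{def:admissible2}(b), I would observe that $M_i=\bigcup_{j=1}^i M_j$ is a stratified totally real submanifold (it is just the union of the first $i$ strata of $M$), that each $S_j=K\cup M_j$ with $j\le i$ is compact by the assumption that $S$ is stratified admissible, and that $S_i$ is a Stein compact by the induction just carried out; these are exactly the required conditions.

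I do not expect a genuine obstacle here: the whole argument is an application of Lemma \ref{lem:KSteincompact} arranged along a downward induction. The only point demanding care is the reassignment of roles in the decomposition $S_{i+1}=S_i\cup(S_{i+1}\setminus S_i)$, and in particular the verification that the complementary piece $S_{i+1}\setminus S_i=(M_{i+1}\setminus M_i)\setminus K$ remains a totally real submanifold after excising the closed set $K$ so that the hypotheses of the lemma are literally met; everything else is index bookkeeping.
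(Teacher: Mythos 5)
Your proof is correct and is essentially the paper's own argument: the paper likewise peels off the top stratum $\wt M:=M\setminus M_{l-1}$, notes that $S=S_{l-1}\cup\wt M$ is then admissible, applies Lemma \ref{lem:KSteincompact} with $S_{l-1}$ in the role of $K$, and finishes by finite downward induction. Your version only spells out the induction step and the (harmless, since $M=S\setminus K$ is disjoint from $K$) excision of $K$ from $M_{i+1}\setminus M_i$ in more detail.
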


\begin{proof}
Note that the top stratum $\wt M:=M\setminus M_{l-1}$ is a totally real submanifold
and $S=S_{l-1}\cup \wt M$ is an admissible set.
Lemma \ref{lem:KSteincompact} implies that $S_{l-1}$ is a Stein compact.
The result now follows by a finite downward induction on $l$.
\qed\end{proof}

%
%
\begin{theorem}\label{th:strat-admissible}
If $S=K\cup M$ is a stratified admissible set in a complex manifold $X$, then
for any $f\in\Ccal(S)\cap\Ocal(K)$ there exists a sequence $f_j\in\Ocal(S)$ such that 
\begin{equation*}
	\lim_{j\rightarrow\infty}\|f_j - f\|_{\Ccal(S)} = 0.
\end{equation*}
\end{theorem}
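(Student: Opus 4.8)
The plan is to argue by induction on the number $l$ of strata, reducing at each step to the single-stratum Theorem \ref{th:admissible} together with a gluing across the ``seam'' where the top stratum meets the lower skeleton. First I would record the purely $\Ccal^0$ version of Theorem \ref{th:admissible} that is actually needed here: if $S=K\cup M$ is admissible with $M$ totally real (of class $\Ccal^1$) and $f\in\Ccal(S)\cap\Ocal(K)$, then $f$ is a uniform limit of functions in $\Ocal(S)$. This is Theorem \ref{th:admissible} with the continuous statement of Proposition \ref{prop:local}(a) used in place of the $\Ccal^k$ one; the Cartan-Theorem-B gluing and the Oka--Weil reduction in that proof are unchanged. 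This already settles the base case $l=1$, in which $M=M_1$ is totally real and $S$ is plainly admissible.

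For the inductive step $l-1\to l$, set $S_{l-1}=K\cup M_{l-1}$ and let $\wt M=M\setminus M_{l-1}=S\setminus S_{l-1}$ be the top stratum. By Corollary \ref{cor:stratadmissible}, $S_{l-1}$ is a Stein compact and is stratified admissible with $l-1$ strata, so the inductive hypothesis applies to it; and since $\wt M$ is totally real while $S$ and $S_{l-1}$ are Stein compacts, $S=S_{l-1}\cup\wt M$ is itself admissible in the sense of Definition \ref{def:admissible2}(a), with inner Stein compact $S_{l-1}$. Fix $f\in\Ccal(S)\cap\Ocal(K)$ and $\delta>0$. The scheme has two steps: (1) replace $f$ by a function $\hat f\in\Ccal(S)\cap\Ocal(S_{l-1})$ that is uniformly close to $f$ on $S$; (2) apply the $\Ccal^0$ version of Theorem \ref{th:admissible} to the admissible set $S=S_{l-1}\cup\wt M$ to approximate $\hat f$ uniformly by functions in $\Ocal(S)$, and combine. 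For step (1) the inductive hypothesis furnishes $g\in\Ocal(S_{l-1})$, holomorphic on a neighborhood $W$ of $S_{l-1}$, with $\|g-f\|_{\Ccal(S_{l-1})}<\delta$; I would then pick a cutoff $\chi$ with $\chi=1$ near $S_{l-1}$ and $\supp\chi\subset W$ and set $\hat f=\chi g+(1-\chi)f$, which equals $g$ (hence is holomorphic) near $S_{l-1}$, equals $f$ off $W$, and lies in $\Ccal(S)\cap\Ocal(S_{l-1})$.

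The main obstacle is controlling $\hat f-f=\chi(g-f)$ on the part of the top stratum $\wt M$ lying near $S_{l-1}$: there I only know $|g-f|<\delta$ on $S_{l-1}$ itself, while the modulus of continuity of the holomorphic approximant $g$ is not uniform in $\delta$, so a cutoff supported in a neighborhood fixed in advance need not give control. The remedy is to choose $\supp\chi$ \emph{after} $g$: since $g-f$ is continuous on $S\cap W$ and lies strictly below $\delta$ on the compact set $S_{l-1}$, a finite-cover argument produces a neighborhood $V\Subset W$ of $S_{l-1}$ (depending on $g$, hence on $\delta$) with $|g-f|<2\delta$ on $S\cap V$; taking $\supp\chi\subset V$ then yields $\|\hat f-f\|_{\Ccal(S)}<2\delta$. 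Letting $\delta\to0$ and combining steps (1) and (2) completes the induction. I expect this $\delta$-dependent choice of the gluing region to be the only delicate point; the rest is a direct invocation of Theorem \ref{th:admissible} and Corollary \ref{cor:stratadmissible}.
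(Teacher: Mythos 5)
Your proposal is correct and takes essentially the same route as the paper: the paper's (very terse) proof consists precisely of applying Theorem \ref{th:admissible} successively across the strata, using Corollary \ref{cor:stratadmissible} to know each skeleton $K\cup M_i$ is a Stein compact, which is exactly what your induction unrolls to. Your explicit handling of the gluing step — choosing the cutoff's support \emph{after} the holomorphic approximant $g$, so that $|g-f|<2\delta$ on $S\cap V$ by compactness and continuity — fills in a detail the paper leaves implicit, and it is the right way to do it.
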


\begin{proof}
By assumption there is a stratification $M=\bigcup_{i=1}^l M_i$ with $M_1$ and $M_{i+1}\setminus M_i$
totally real manifolds for $i=1,...,l-1$.  Let  $M_0=\varnothing$.
It suffices to apply Theorem \ref{th:admissible} successively
with $K_i=K\cup M_{i}$ and $S_i=K_i\cup M_{i+1}$ $(i=0,...,l-1)$.
\qed\end{proof}

\begin{remark}
It is not possible in general to obtain $\Ccal^k$-approximation on stratified totally real manifolds $M$, 
even if $M$ is itself $\Ccal^k$-smooth.  Suppose for instance that $M\subset\mathbb C^n$
is a $\Ccal^1$-smooth submanifold which is a Stein compact,  and which is totally real except at a point $p\in M$.
Then, $M$ has an obvious stratification by totally real manifolds, but it is clearly impossible to achieve
$\Ccal^1$-approximation at the point $p$ due to the Cauchy-Riemann equations.  However, one 
sees immediately that one may achieve $\Ccal^k$-approximation on compact subsets of each 
$M_{i+1}\setminus M_i$.

Theorem \ref{th:strat-admissible} holds in the more general case when $S=K\cup M$
is a Stein compact with $M=\bigcup_{i=1}^l M_i$ a {\em stratified totally real set}, meaning
that $M_1$ and each difference $M_i\setminus M_{i-1}$ $(i=2,\ldots,l$) is a locally closed totally real set. 
We refer to P.\ Manne \cite{Manne1993PhD,Manne1993} and to
E.\ L{\o}w and E.\ F.\ Wold \cite{LowWold2009} for these extensions.
\qed\end{remark}

A further generalization of Theorem \ref{th:admissible} is provided by 
Theorem \ref{th:approximation-handlebody} in Sect.\ \ref{sec:manifold}; we state it there as it
concerns manifold-valued maps.

%
%

Although holomorphically convex smooth submanifolds of $\mathbb C^n$ do not in general admit 
Mergelyan approximation, E.\ L.\ Stout \cite{Stout2006PAMS} gave the following 
general result in the real analytic setting, also allowing for varieties. 

\begin{theorem}[E.\ L.\ Stout (2006), \cite{Stout2006PAMS}]\label{th:Stout2}
Let $X$ be a Stein space. If $M\subset X$ is a compact real analytic subvariety 
such that $M=\mathrm{spec}\,\Ocal(M)$, then $\Ccal(M)=\overline\Ocal(M)$.
\end{theorem}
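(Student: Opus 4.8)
The plan is to reduce the statement to the approximation theorem on stratified admissible sets, Theorem~\ref{th:strat-admissible}, by proving that the two hypotheses force $M$ to be a \emph{stratified totally real Stein compact}. Once this structural fact is established, one takes $K=\varnothing$ (or a single point of the lowest stratum) in Definition~\ref{def:admissible2}, which exhibits $M$ itself as a stratified admissible set; Theorem~\ref{th:strat-admissible} then gives immediately that every $f\in\Ccal(M)$ is a uniform limit of functions holomorphic in neighborhoods of $M$, that is, $\Ccal(M)=\Ocalc(M)$. Throughout I would fix a closed embedding of $X$ into some $\C^N$ with coordinates $z=(z_1,\dots,z_N)$, so that $\Ocal(M)$ contains the restrictions $z_j|_M$; the hypothesis $M=\mathrm{spec}\,\Ocal(M)$ then says exactly that these functions separate the points of $M$ and that the holomorphic hull of $M$ (the maximal ideal space of the germ algebra) reduces to $M$ itself.

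It is worth recording the mechanism behind the approximation, as it explains where \emph{real analyticity} is essential. Since $M$ is real analytic, each antiholomorphic coordinate $\bar z_j|_M$ is a real-analytic function on $M$ and therefore extends holomorphically along the complexification of $M$. Hence, on the totally real part of $M$, the functions $\bar z_j$ are locally restrictions of holomorphic functions, so that $\Ocalc(M)$ is self-adjoint there. Granting that $M$ is (stratified) totally real, this self-adjointness, together with the point separation furnished by the spectrum hypothesis and the Stone--Weierstrass theorem, already accounts for the equality $\Ocalc(M)=\Ccal(M)$. The entire content of the theorem thus lies in the passage from real analyticity to total reality, and this is precisely where the assumption $M=\mathrm{spec}\,\Ocal(M)$ is used.

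The heart of the argument is therefore the structural claim. I would begin with the canonical stratification $M=\bigcup_{i=1}^{l}M_i$ of the real-analytic set $M$ into real-analytic manifolds, with $M_1$ and each $M_{i+1}\setminus M_i$ smooth, and show that after refinement every stratum may be taken totally real. Suppose a stratum has a complex tangent line at a point $p$, or that $M$ contains near $p$ the germ of a positive-dimensional complex subvariety. Then nonconstant analytic structure is attached to $M$ --- Bishop discs at an elliptic complex tangency, or the complex germ itself --- and by the maximum principle this produces points of $\wh M_{\Ocal(\Omega)}\setminus M$ in any Stein neighborhood $\Omega$, contradicting $M=\mathrm{spec}\,\Ocal(M)$. (A stratum that were complex-tangent at all of its points would be a complex submanifold, whose germ again enlarges the hull, or, if it closed up to a compact complex subvariety of the Stein space $X$, would have to be finite.) Consequently the complex-tangency locus is a proper real-analytic subset of each stratum, which I would absorb into the next lower stratum; a finite downward induction on dimension then yields a stratification by totally real real-analytic submanifolds. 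Finally, total reality provides a strictly plurisubharmonic local defining function (the squared distance in adapted coordinates), so that $M$ admits a basis of Stein neighborhoods and is a Stein compact, by the theory of stratified totally real sets discussed after Theorem~\ref{th:strat-admissible} (the references to Manne and L\o w--Wold).

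The main obstacle is exactly this structural step: controlling the complex points and the possible complex-analytic substrata of $M$ and verifying that each of them would violate $M=\mathrm{spec}\,\Ocal(M)$. The totally real approximation machinery (Proposition~\ref{prop:local} and Theorems~\ref{th:admissible}--\ref{th:strat-admissible}) is robust enough to absorb the singularities of $M$ once the totally real stratification is in hand, so no further analytic difficulty remains. The real analyticity of $M$ is used twice: first to guarantee a locally finite stratification with real-analytic strata, and second to ensure that the obstructing analytic structure at non--totally-real points is genuinely present rather than an artifact of insufficient smoothness.
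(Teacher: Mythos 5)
Your overall plan --- stratify $M$ into real-analytic strata, refine the stratification until every stratum is totally real, and then invoke Theorem~\ref{th:strat-admissible} --- is exactly the paper's route (the paper, like you, proves the statement under the stronger hypothesis that $M$ is a Stein compact). But your key step, the one ruling out complex tangencies, contains a genuine gap. You argue that Bishop discs at a complex tangency, or a germ of a complex subvariety contained in $M$, ``produce points of $\wh M_{\Ocal(\Omega)}\setminus M$,'' contradicting $M=\mathrm{spec}\,\Ocal(M)$. This is unjustified: the analytic structure attached to $M$ may lie \emph{inside} $M$, in which case nothing is enlarged and the spectrum hypothesis is untouched. (A holomorphically convex compact set can perfectly well contain analytic discs --- the closed ball does --- so the mere presence of analytic structure contradicts neither holomorphic convexity nor the spectrum condition.) In particular, your parenthetical claim that a complex submanifold contained in $M$ ``again enlarges the hull'' is false: being a subset of $M$, it already lies in the hull. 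The paper uses holomorphic convexity for the \emph{opposite} purpose: the shrinking Bishop discs have boundaries in $M$, hence by the maximum principle they lie in the hull, which by convexity forces them eventually \emph{into} $M$; the contradiction then comes from the theorem of Diederich and Forn{\ae}ss \cite{DiederichFornaess1978} that a \emph{compact real-analytic} set contains no nontrivial analytic disc. That theorem is the essential ingredient your proposal is missing; without it, the case of discs lying inside $M$ (for instance a Levi-flat stratum) cannot be excluded.

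A secondary issue: at the end you claim that once the strata are totally real, $M$ is automatically a Stein compact. The sources you appeal to (the remark following Theorem~\ref{th:strat-admissible}, Manne, L{\o}w--Wold) all \emph{assume} that the stratified totally real set in question is a Stein compact; they do not derive it. In the paper this is not an issue, because Stein compactness is taken as a hypothesis --- that is precisely the ``stronger assumption'' under which the proof is given --- and it is needed both to apply Theorem~\ref{th:strat-admissible} and to run the disc-trapping argument above. As written, your proposal neither assumes this hypothesis nor proves it.
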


Recall that $M=\mathrm{spec}\,\Ocal(M)$ means that any continuous character on 
the algebra $\overline{\mathcal O}(M)$ may be represented by a unique point
evaluation on $M$. An example is if $M$ is a countable intersection of Stein domains.  
We will not give a proof of the full theorem here, but we will use Theorem \ref{th:strat-admissible}, together 
with some fundamental results due to K.\ Diederich and J.\ E.\ Forn{\ae}ss \cite{DiederichFornaess1978} and
E.\ Bishop \cite{Bishop1965}, to give a relatively short proof under the stronger assumption that 
$M$ is a Stein compact.

\smallskip
\noindent \emph{Proof of Theorem \ref{th:Stout2} under the assumption that $M$ is a Stein compact.}
Without loss of generality we may assume that $M\subset\mathbb C^n$.  It was 
proved by K.\ Diederich and J.-E.\ Forn\ae ss \cite{DiederichFornaess1978} that $M$ does not contain 
a nontrivial analytic disc.   Now, $M$ has a stratification $M=\bigcup_{i=1}^m M_i$ such 
that each difference $V_i=M_{i+1}\setminus M_i$ is a real analytic submanifold.   We claim 
that every $V_i$ is totally real outside a real analytic submanifold $\widetilde V_i$
of positive codimension.  If not, there is an open subset $U\subset V_i$
such that $U$ is a CR-manifold, and according to Bishop \cite{Bishop1965} one may attach 
families of holomorphic discs to $U$ shrinking towards any given point $p\in U$.
By the assumption about holomorphic convexity, 
the discs will eventually be contained in $U$, but this contradicts 
the result of Diederich and Forn{\ae}ss \cite{DiederichFornaess1978}.  This argument may be 
used repeatedly  to refine the initial stratification of $M$ into a stratification by totally real submanifolds,
and hence the result follows from Theorem \ref{th:strat-admissible}.
\qed

%
%

\subsection{Approximation on strongly pseudoconvex domains and on strongly admissible sets}
\label{ss:Mergelyan-domains}

As we have seen, proofs of the Mergelyan theorem in one complex variable depend heavily on 
integral representations of holomorphic or $\overline\partial$-flat functions.  The single most
important reason why the one-dimensional proofs work so well is that the Cauchy-Green kernel
\eqref{eq:Pompeiu} provides a solution to the inhomogeneous $\dibar$-equation which is uniformly bounded 
on all of $\C$ in terms of sup-norm of the data and the area of its support (see \eqref{eq:Mest}). 
This allows uniform approximation of functions in $\Acal(K)$ on any compact set $K\subset \C$
with not too rough boundary by functions in $\Ocal(K)$ (see  Vitushkin's Theorem \ref{th:Vitushkin}).
Nothing like that holds in several variables, and the question of uniform approximability is highly
sensitive to the shape of the boundary even for smoothly bounded domains. 

The idea of developing holomorphic integral kernels for domains in $\C^n$ with
comparable properties to those of the Cauchy kernel in one variable was promoted by
H.\ Grauert already around 1960; however, it took almost a decade to be
realized. The first such constructions were given in 1969 by G.\ M.\ Henkin \cite{Henkin1969}
and E.\ Ram{\'\i}rez de Arellano \cite{Arellano1969} for the class of strongly pseudoconvex domains.
These kernels provide solution operators for the $\overline\partial$-equation which are
bounded in the $\Ccal^k$ norms and  improve the regularity by $1/2$.
We state here a special case of their results for $(0,1)$-forms, 
but in a more precise form which can be found in 
the works by I.\ Lieb and M.\ Range \cite[Theorem 1]{LiebRange1980},
I.\ Lieb and J.\ Michel \cite{LiebMichel2002}, and \cite[Theorem 2.7.3]{Forstneric2017E}.
A brief historical review of the kernel method is given in \cite[pp.\ 392--393]{ForstnericLowOvrelid2001}.

Given a domain $\Omega\subset \C^n$, we denote by $\Ccal^k_{(0,1)}(\overline\Omega)$
the space of all differential $(0,1)$-forms of class $\Ccal^k$ on $\overline\Omega$.

\begin{theorem}\label{th:dibar}
If $\Omega$ is a bounded strongly pseudoconvex Stein domain with boundary 
of class $\mathcal C^k$ for  some $k\in\{2,3,...\}$ in a complex manifold $X$, 
there exists a bounded linear operator 
$T:\Ccal^0_{(0,1)}(\overline\Omega)\rightarrow\Ccal^{0}(\overline\Omega)$
satisfying the following properties:
\begin{enumerate}[\rm (i)]  
\item If $f\in\mathcal{C}^0_{0,1}(\overline\Omega)\cap\mathcal{C}^1_{0,1}(\Omega)$
and $\overline\partial f=0$, then $\overline\partial (Tf)=f$.
\item If $f\in\mathcal{C}^0_{0,1}(\overline\Omega)\cap\mathcal{C}^r_{0,1}(\Omega)$
for some $r\in\{1,...,k\}$ then 
\[
	\|Tf\|_{\mathcal C^{l,1/2}(\overline\Omega)}
	\leq C_{l,\Omega}\|f\|_{\mathcal C^l_{0,1}(\overline\Omega)}, \quad l=0,1,...,r.
\]
\end{enumerate}
Moreover, the constants $C_{l,\Omega}$ may be chosen uniformly for all domains 
sufficiently $\mathcal C^k$ close to $\overline\Omega$. 
\end{theorem}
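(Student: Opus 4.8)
The plan is to realize $T$ as an explicit integral operator built from a Henkin--Ram{\'\i}rez kernel adapted to $\Omega$, and then to extract the regularity estimates from the anisotropic geometry of the strongly pseudoconvex boundary. First I would produce a holomorphic support (Leray) section. Fix a $\Ccal^k$ strongly plurisubharmonic defining function $\rho$ with $\Omega=\{\rho<0\}$ and $d\rho\ne 0$ on $b\Omega$. Near each boundary point the Levi polynomial
\[
	F(z,\zeta)=\sum_i \frac{\di\rho}{\di\zeta_i}(\zeta)(\zeta_i-z_i)
	- \tfrac12\sum_{i,j}\frac{\di^2\rho}{\di\zeta_i\di\zeta_j}(\zeta)(\zeta_i-z_i)(\zeta_j-z_j)
\]
is holomorphic in $z$ and, by strong plurisubharmonicity and Taylor's formula, satisfies $2\,\Re F(z,\zeta)\ge \rho(\zeta)-\rho(z)+c|\zeta-z|^2$ for $\zeta,z$ close, so $F\ne 0$ when $\zeta\in b\Omega$, $z\in\overline\Omega$ are nearby. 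I would patch these local pieces with the globally defined Bochner--Martinelli section using a partition of unity in $\zeta$, solving the resulting (interior, nonsingular) $\dibar$ problems, to obtain a global section $w(z,\zeta)=(w_1,\dots,w_n)$, holomorphic in $z$, whose Cauchy--Fantappi{\`e} denominator $\Phi(z,\zeta):=\langle w(z,\zeta),\zeta-z\rangle$ is nonvanishing for $\zeta$ in a neighborhood of $b\Omega$ in $\overline\Omega$ and all $z\in\overline\Omega$. This is where one uses that $\Omega$ is Stein and strongly pseudoconvex.

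Next I would assemble the Cauchy--Fantappi{\`e}--Koppelman double form $K(z,\zeta)$ from $w$ and from the Bochner--Martinelli section $\overline{(\zeta-z)}/|\zeta-z|^2$, obtaining the homotopy formula
\[
	f(z)=\dibar_z\!\int_{\overline\Omega} f\wedge K_0(z,\cdot)
	+ \int_{\overline\Omega}\dibar f\wedge K_1(z,\cdot)
	+ \int_{b\Omega} f\wedge \widetilde K(z,\cdot)
\]
for a $(0,1)$-form $f$ of class $\Ccal^1$ on $\overline\Omega$. The boundary term is built only from the holomorphic section $w$, hence is holomorphic in $z$, and I would take $Tf:=\int_{\overline\Omega} f\wedge K_0(z,\cdot)$. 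When $\dibar f=0$ the interior term drops and the boundary term is holomorphic, so $\dibar(Tf)=f$, giving (i); since $K_0$ has a locally integrable singularity, $T$ maps $\Ccal^0_{(0,1)}(\overline\Omega)$ continuously into $\Ccal^0(\overline\Omega)$.

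The main work, and the main obstacle, is (ii): the gain of $1/2$ derivative. The singularity of $K_0$ is governed by $|\Phi(z,\zeta)|$, and the decisive estimate is
\[
	|\Phi(z,\zeta)|\gtrsim |\rho(z)|+|\rho(\zeta)|+|\Im \Phi|+|\zeta-z|^2
\]
near the boundary, which encodes the anisotropic ball structure (size $|\zeta-z|^2$ in complex-tangential and $|\zeta-z|$ in complex-normal directions). Introducing adapted coordinates in which these quantities become essentially independent variables, one reduces the Hölder estimates to bounding integrals of the type $\int_{\overline\Omega}|\zeta-z|^{a}\,|\Phi|^{-b}\,dV(\zeta)$; carrying this out carefully yields Hölder-$1/2$ continuity of $Tf$ for merely continuous $f$. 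For the higher norms $l=1,\dots,r$ I would differentiate under the integral, transfer tangential derivatives onto $f$ by integration by parts against the kernel, and control the additional boundary-derivative terms by the same anisotropic integrals, obtaining $\|Tf\|_{\Ccal^{l,1/2}(\overline\Omega)}\le C_{l,\Omega}\|f\|_{\Ccal^l_{0,1}(\overline\Omega)}$. Finally, uniformity over nearby domains is essentially built in: $\rho$, the Levi polynomial, the patched section $w$, and all the kernel bounds depend continuously on $\rho$ in $\Ccal^k$, while strong pseudoconvexity is an open $\Ccal^2$ condition, so the constants $C_{l,\Omega}$ may be chosen uniform on a $\Ccal^k$-neighborhood of $\overline\Omega$.
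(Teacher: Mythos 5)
Your proposal follows essentially the same route as the paper, which states this theorem without proof and cites precisely the Henkin--Ram{\'\i}rez kernel constructions (Henkin, Ram{\'\i}rez de Arellano, Lieb--Range, Lieb--Michel) that you reconstruct: Levi polynomial, globalized $z$-holomorphic Leray section via a $\dibar$-correction on the Stein domain, Koppelman homotopy formula, anisotropic lower bound on $|\Phi|$, and the $\Ccal^{l,1/2}$ estimates with constants depending continuously on the defining function. One small repair: for $(0,1)$-forms the boundary term built from the $z$-holomorphic section is not merely ``holomorphic in $z$'' --- it vanishes identically, since the Cauchy--Fantappi\`e kernel of a section holomorphic in $z$ carries no $d\bar z_j$ factors and hence has no component of bidegree $(0,1)$ in $z$; this vanishing (rather than holomorphicity) is what yields $\dibar(Tf)=f$ when $\dibar f=0$.
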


The kernel method led to a variety of applications to function theory on strongly pseudoconvex domains.
In particular, G. Henkin (1969)  \cite{Henkin1969}, N.\ Kerzman (1971) \cite{Kerzman1971}, and I.\ Lieb (1969) 
\cite{Lieb1969} proved the Mergelyan property for strongly pseudoconvex
domains with sufficiently smooth boundary, and  J.\ E.\ Forn\ae ss (1976) \cite{Fornaess1976AJM}
improved this to domains with $\Ccal^2$ boundary. Subsequently, 
J.\ E.\ Forn\ae ss and A.\ Nagel (1977) \cite{FornaessNagel1977}
showed that the Mergelyan property holds in the presence of transverse holomorphic vector fields near the 
set of weakly pseudoconvex boundary points (the so called {\em degeneracy set}); this holds  in particular for 
any bounded pseudoconvex domain with real analytic boundary in $\mathbb C^2$.
F.\ Beatrous and M.\ Range (1980) \cite{BeatrousRange1980} proved for holomorphically convex
domains $\Omega\Subset \C^n$ with $\Ccal^2$ boundaries that a function $f\in\Acal(\Omega)$ 
can be uniformly approximated by functions in $\Ocal(\overline\Omega)$ 
if it can be approximated on a  neighborhood of the degeneracy set.   This result 
appeared earlier in the thesis of F.\ Beatrous (1978).  

On the other hand, K.\ Diederich and J.\ E.\ Forn\ae ss (1976) \cite{DiederichFornaess1976} 
found an example of a $\Ccal^\infty$ smooth pseudoconvex domain $\Omega \subset \C^2$
for which the Mergelyan property fails.  Their example is based on the presence of
a Levi-flat hypersurface in $b\Omega$ having an annular leaf with infinitesimally
nontrivial holonomy. This phenomenon was further explored by D.\ Barrett \cite{Barrett1992}
who showed in 1992 that the Bergman projection on certain Diederich-Forn\ae ss 
worm domains does not preserve smoothness as measured 
by Sobolev norms. In 1996, M.\ Christ \cite{Christ1996} obtained a substantially stronger 
result to the effect that the Bergman projection on such domains $\Omega$ does not even preserve 
$\Ccal^\infty(\Omega)$; this provided the first example of smoothly bounded pseudoconvex domains
in $\C^n$ on which the Bell-Ligocka Condition R fails.

In 2008, F.\ Forstneri{\v c} and C.\ Laurent-Thiebaut proved the Mergelyan property 
for smoothly bounded pseudoconvex domains $\Omega\Subset\C^n$ whose degeneracy set
consisting of weakly pseudoconvex boundary points $A\subset b\Omega$   
is of the form $A=\{z\in M: \rho(z)\le 0\}$, where $\rho$ is a strongly plurisubharmonic 
function in a neighborhood of $A$, $M\subset \C^n$ is a Levi-flat hypersurface 
whose Levi foliation is defined by a closed $1$-form, and $A$ is the closure of its relative 
interior in $M$ (see \cite[Theorem 1.9]{ForstnericLaurent2008}). 
The paper \cite{ForstnericLaurent2008} provides several sufficient conditions for a 
foliation to be defined by a closed $1$-form. This condition implies in particular 
that every leaf of $M$ is topologically closed and has trivial holonomy map.
On the other hand, in the worm hypersurface of  Diederich and Forn{\ae}ss 
\cite{DiederichFornaess1976} the foliation contains a leaf with nontrivial holonomy
to which other leaves spirally approach. In \cite{ForstnericLaurent2008} it was shown 
under the same hypotheses  that the $\dibar$-Neumann operator on $\Omega$ is regular. 
See E.\ Straube and M.\ Sucheston \cite{StraubeSucheston2002,StraubeSucheston2003}
for related results.

We begin with the proof of the Mergelyan property on strongly pseudoconvex domains, taking for granted the 
existence of bounded solution operators for the $\overline\partial$-equation in Theorem \ref{th:dibar}.  
The proof we present here is similar to Sakai's proof \cite{Sakai1972} discussed already 
in the proof of Theorem \ref{th:Bishop} (see Remark \ref{rem:Sakai}).

%
%

\begin{theorem}\label{th:MergelyanWo}
Let $X$ be a Stein manifold, and let $\Omega\subset X$ be a relatively compact 
strongly pseudoconvex domain of class $\mathcal C^k$ for $k\in\{2,3,...\}$. Then for any $f\in \Ccal^k(\overline\Omega)\cap\mathcal O(\Omega)$
$(k\in\Z_+)$  there exists a sequence of functions $f_m\in\mathcal O(\overline\Omega)$ such that 
$\lim_{m\rightarrow\infty} \|f_m - f\|_{\Ccal^{k}(\overline\Omega)} = 0$.
\end{theorem}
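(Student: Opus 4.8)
The plan is to reduce the statement to the construction, for each small $t>0$, of a single function holomorphic on a neighborhood of $\overline\Omega$ that is $\Ccal^k(\overline\Omega)$-close to $f$. This is precisely the Sakai-type scheme of Remark \ref{rem:Sakai}: glue (here trivially, since $f$ is already holomorphic on all of $\Omega$) and then correct by solving a $\dibar$-equation with the bounded operator of Theorem \ref{th:dibar}. First I would fix a $\Ccal^k$ strongly plurisubharmonic defining function $\rho$ with $\Omega=\{\rho<0\}$ and $d\rho\ne 0$ on $b\Omega$, and for $t\in(0,t_0]$ set $\Omega^t=\{\rho<t\}$. For $t_0$ small these are relatively compact strongly pseudoconvex Stein domains of class $\Ccal^k$ whose boundaries are $\Ccal^k$-close to $b\Omega$ and which decrease to $\overline\Omega$ as $t\to 0$; hence Theorem \ref{th:dibar} furnishes solution operators $T_t$ for $\dibar$ on $\Omega^t$ whose norms are bounded uniformly in $t$.

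Next I would extend $f$ across $b\Omega$. Since $f\in\Ccal^k(\overline\Omega)\cap\Ocal(\Omega)$, Whitney's jet-extension theorem (Theorem \ref{th:Whitney}) produces $\tilde f\in\Ccal^k$ on a neighborhood of $\overline\Omega$ agreeing with $f$ on $\overline\Omega$; in fact any $\Ccal^k$ extension will do, because $\dibar\tilde f\in\Ccal^{k-1}$ vanishes identically on the dense subset $\Omega$ and therefore vanishes, together with all its derivatives up to order $k-1$, on $\overline\Omega$ (this is $\dibar$-flatness in the sense of \eqref{eq:dibarflat}). Thus $\dibar\tilde f$ is a $\dibar$-closed $(0,1)$-form of class $\Ccal^{k-1}$ that is nonzero only on the thin shell $\Omega^t\setminus\overline\Omega$, and the flatness forces $\|\dibar\tilde f\|_{\Ccal^{k-1}(\overline{\Omega^t})}\to 0$ as $t\to 0$, since every derivative entering this norm is continuous and vanishes on $\overline\Omega$ while the shell has width $O(t)$. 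I would then set $u_t=T_t(\dibar\tilde f)$ and $F_t=\tilde f-u_t$. By property (i) of Theorem \ref{th:dibar} we get $\dibar F_t=0$ on $\Omega^t$, so $F_t\in\Ocal(\Omega^t)\subset\Ocal(\overline\Omega)$, and since $\tilde f=f$ on $\overline\Omega$ the error there is simply $F_t-f=-u_t$.

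It remains to prove $\|u_t\|_{\Ccal^k(\overline\Omega)}\to 0$, and this is the crux. The estimate of Theorem \ref{th:dibar}(ii), applied with $r=k-1$, gives only $\|u_t\|_{\Ccal^{k-1,1/2}(\overline{\Omega^t})}\le C\,\|\dibar\tilde f\|_{\Ccal^{k-1}(\overline{\Omega^t})}$, which yields convergence in $\Ccal^{k-1}$ (with a Hölder-$1/2$ bonus on the top derivative) but is one order short of the asserted $\Ccal^k$ bound. Closing this gap is exactly the optimal \emph{no loss of smoothness} phenomenon of M.\ Range--Y.-T.\ Siu and of F.\ Forstneri\v c, E.\ L{\"o}w and N.\ {\O}vrelid: one must exploit that the data $\dibar\tilde f$ is $\dibar$-flat (vanishing to order $k-1$ along $b\Omega$) rather than merely of class $\Ccal^{k-1}$, so that the Henkin-type solution $u_t$ in fact lies in $\Ccal^k(\overline\Omega)$ with norm controlled by, and tending to $0$ with, the flat data. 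I expect this sharp boundary regularity bookkeeping for the kernel solution operator to be the entire analytic difficulty; the geometric and functional-analytic steps (the family $\Omega^t$ with uniform constants, the flat extension, and the identity $F_t-f=-u_t$) are routine. Choosing $t_m\to 0$ and setting $f_m=F_{t_m}$ then completes the proof, and if one additionally wants approximants in $\Ocal(X)$ one appends the Oka--Weil theorem (Theorem \ref{th:Oka-Weil}).
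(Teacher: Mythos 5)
Your overall scheme---enlarge the domain to $\Omega^t=\{\rho<t\}$, use solution operators with norms uniform in $t$ (Theorem \ref{th:dibar} does provide this uniformity), produce a nearly holomorphic extension of $f$, and correct it by solving $\dibar$---is the same skeleton as the paper's proof. But your argument has a genuine gap at precisely the point you flag as the crux. Because you build the extension by Whitney's theorem alone, the data $\dibar\tilde f$ is only of class $\Ccal^{k-1}$, so Theorem \ref{th:dibar}(ii) applies only with $l\le k-1$ and yields at best a $\Ccal^{k-1,1/2}$ bound on $u_t=T_t(\dibar\tilde f)$, one derivative short of the assertion. You propose to close this deficit by invoking the ``no loss of smoothness'' results of Range--Siu and Forstneri\v c--L{\o}w--{\O}vrelid for $\dibar$-flat data, but those results (as used in this paper) concern approximation on and near totally real submanifolds; they do not supply a gain-of-one-derivative estimate for Henkin-type operators on strongly pseudoconvex domains with flat data, and neither the paper nor your proposal proves such an estimate. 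The central analytic step is thus left as an unproven expectation, and with the tools actually available (Theorem \ref{th:dibar} as stated) your construction only gives $\Ccal^{k-1}$ convergence.

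The paper sidesteps this difficulty entirely by a different gluing, and this is the key idea you are missing. One covers $b\Omega$ by sets $W_j$ carrying holomorphic vector fields whose time-$t$ flows $\phi_{j,t}$ push $W_j\cap\Omega_t$ into $\Omega$, and sets $f_{m,j}=f\circ\phi_{j,1/m}$ on $U_{m,j}=\Omega_{1/m}\cap W_j$; these are \emph{genuinely holomorphic} there, being compositions of $f|_\Omega$ with holomorphic maps. Gluing with a partition of unity, $g_m=\sum_j\chi_j f_{m,j}$, one finds
\[
\dibar g_m=\sum_j (f_{m,j}-f)\,\dibar\chi_j ,
\]
where $f$ denotes a Whitney extension used only to measure errors (the identity uses $\sum_j\dibar\chi_j=0$): the $\dibar$ falls exclusively on the $\Ccal^\infty$ cutoffs, never on the nonsmooth function $f$. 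Hence $\dibar g_m$ is of class $\Ccal^k$, not merely $\Ccal^{k-1}$, and its $\Ccal^k$ norm is $\omega(1/m)\to 0$ by the modulus of continuity of the top derivatives of $f$. Now Theorem \ref{th:dibar}(ii) with $l=k$ applies exactly as stated, and $f_m=g_m-T_m(\dibar g_m)\in\Ocal(\Omega_{1/m})$ satisfies $\|f_m-f\|_{\Ccal^k(\overline\Omega)}=\omega(1/m)$. In short, the cure for the lost derivative is not sharper kernel analysis but a gluing in which no derivative ever hits $f$. (A minor side point: your closing remark about upgrading to approximants in $\Ocal(X)$ via Oka--Weil would additionally require $\overline\Omega$ to be $\Ocal(X)$-convex, which is not among the hypotheses; the theorem claims only approximation by $\Ocal(\overline\Omega)$.)
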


\label{page:MergelyanWo}

\begin{proof} 
Let $\rho\in\Ccal^2(U)$ be a defining function for $\Omega$ in an open set $U\supset \overline\Omega$, 
so $\Omega=\{\rho<0\}$ and $d\rho\ne 0$ on $b\Omega=\{\rho=0\}$. 
For small $t>0$, set $\Omega_t=\{\rho<t\}\subset U$ and $\overline \Omega_t=\{\rho\le t\}$.
We cover $b\Omega$ by finitely many pairs of open sets $W_j\subset V_j$, $j=1,...l,$
with flows $\phi_{j,t}(z)$ of holomorphic vector fields defined on $V_j$, such that 
\begin{equation}\label{eq:flows}
	\text{$\phi_{j,t}(W_j\cap\Omega_t)\subset\Omega$\ \ for all small $t>0$.}
\end{equation}
Such vector fields are obtained easily in local coordinates, using constant vector fields pointing 
into $\Omega$ with a suitable scaling.  Set $W_0=\Omega$, and let 
$\{\chi_j\}_{j=0}^l$ be a smooth partition of unity with respect to the cover $\{W_j\}_{j=0}^l$.
Choose  $m_0\in \N$ such that $\overline\Omega_{1/m_0}\subset \bigcup_{j=0}^l W_j$
and \eqref{eq:flows} holds for all $0\le t\le 1/m_0$.
Note that the functions $\chi_j$ have bounded $\Ccal^{k+1}(\overline\Omega_{1/m_0})$ norms.
By Whitney's theorem (see Theorem \ref{th:Whitney}) 
we may assume that $f$ is extended to a $\Ccal^k$-smooth function 
on $\overline \Omega_{1/m_0}$. For any integer $m\ge m_0$ we set 
\begin{equation}\label{eq:Uj}
	U_{m,0}=\Omega,\quad U_{m,j}=\Omega_{1/m}\cap W_j\ \ \text{for}\ \ j=1,\ldots,l, 
\end{equation}
\begin{equation}\label{eq:fj}
 	f_{m,0}=f\ \ \text{on}\ U_{m,0}=\Omega,\quad f_{m,j}(z)=f(\phi_{j,1/m}(z)),\ \ z\in U_{m,j},\ j=1,\ldots,l.
\end{equation}
Consider the function 
\[
	g_m = \sum_{j=0}^l \chi_j\, f_{m,j} \in \Ccal^k(\overline \Omega_{1/m}). 
\]
From the definition of the functions $f_{m,j}$ \eqref{eq:fj} it follows that 
\begin{equation}\label{eq:estdiff}
	\|f_{m,j}-f\|_{\Ccal^k(\overline U_{m,j})}= \omega(1/m), \quad j=1,\ldots,l,
\end{equation}
where $\omega(1/m)\rightarrow 0$ as $m\rightarrow\infty$ (here $\omega(1/m)$ is proportional 
to the modulus of continuity of the top derivative of $f$), 
and hence $\|g_m-f\|_{\Ccal^k(\overline\Omega_{1/m})}=\omega(1/m)$.

We now estimate the $\Ccal^k$-norm of $\overline\partial g_m$.  
Since $\sum_{j=0}^l \overline\partial \chi_j=0$, we have that 
\[
	\overline\partial g_m = \sum_{j=0}^l f_{m,j} \, \overline\partial \chi_j
	= \sum_{j=0}^l  (f_{m,j} - f)\, \overline\partial\chi_j,
\]
and it follows from \eqref{eq:estdiff} that 
$\|\overline\partial g_m\|_{\Ccal^k(\overline\Omega_{1/m})}=\omega(1/m)$.

As explained above, there are bounded linear operators 
$T_m:\Ccal^k_{(0,1)}(\overline\Omega_{1/m})\rightarrow \Ccal^k(\overline\Omega_{1/m})$,
with bounds independent of $m\ge m_0$ and satisfying $\overline\partial T_m(\alpha)=\alpha$
for every $\dibar$-closed $(0,1)$-form $\alpha$ on $\overline\Omega_{1/m}$.
Setting $f_m= g_m - T_m(\overline\partial g_m)\in\Ocal(\Omega_{1/m})$ 
we get that $\|f_m-f\|_{\Ccal^k(\overline\Omega_{1/m})}=\omega(1/m)$, and this completes the proof.
\qed\end{proof}

The next result gives $\Ccal^k$-approximation on strongly admissible sets.  

\begin{theorem}\label{th:MergelyanW}
Let $X$ be a complex manifold. Assume that $\Omega\Subset X$ is a strongly pseudoconvex Stein domain
of class $\mathcal C^k$ for $k\in\{2,3,...\}$, and that $S=\overline\Omega \cup M \subset X$ is a strongly admissible set. Given $f\in \Ccal(S)\cap \Acal(\Omega)$ there is a sequence $f_j\in\mathcal O(S)$ such that 
$\lim_{j\rightarrow\infty} \|f_j - f\|_{\Ccal(S)} = 0$.
Furthermore, if $M$ is a totally real manifold of class $\Ccal^k$ and $f\in\Ccal^k(S)$, 
we may choose  $f_j\in\mathcal O(S)$ such that 
$\lim_{j\rightarrow\infty} \|f_j - f\|_{\Ccal^{k}(S)} = 0$.
\end{theorem}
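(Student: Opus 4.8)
The plan is to run a single patching argument that fuses the two local approximation mechanisms developed above: the inward push along holomorphic flows from the proof of Theorem~\ref{th:MergelyanWo}, used near the strongly pseudoconvex boundary $b\Omega$, and the Gaussian convolution of Proposition~\ref{prop:local}, used along the totally real part $M$. I treat both statements simultaneously: for the first I read all local estimates in the supremum norm (using that $\Ccal^0(\overline\Omega)\cap\Ocal(\Omega)=\Acal(\Omega)$ and part (a) of Proposition~\ref{prop:local} for continuous data), and for the second in the $\Ccal^k$ norm with $M$ of class $\Ccal^k$; the scheme is otherwise identical.

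First I would fix, by Whitney's theorem (see Theorem~\ref{th:Whitney}), a $\Ccal^k$ extension $\tilde f$ of $f$ to a neighbourhood of $S$ that is $\dibar$-flat on $S$ in the sense of \eqref{eq:dibarflat}; on $\overline\Omega$ this is the given holomorphic function, and on $M$ it agrees with $f$ to order $k$, so $\dibar\tilde f$ vanishes to order $k-1$ along $S$. Next I would choose a finite open cover of $S$ of three types: the interior piece $W_0=\Omega$, on which the local model is $\tilde f$ itself; boundary pieces $W_1,\dots,W_l$ covering $b\Omega$ carrying holomorphic flows $\phi_{j,t}$ with $\phi_{j,t}(W_j\cap\Omega_t)\subset\Omega$ as in \eqref{eq:flows}, whose local models are the pushed-in functions $\tilde f\circ\phi_{j,1/m}$ (holomorphic where the image lies in $\Omega$, with small $\dibar$ elsewhere because $\dibar\tilde f$ is flat); and coordinate balls $W_{l+1},\dots,W_{l+p}$ covering $M\setminus\overline\Omega$ on which Proposition~\ref{prop:local} furnishes Gaussian approximants $f_{\epsilon,j}$ holomorphic on the ambient chart, $\Ccal^k$-close to $f$ on $M$ by part (a) and uniformly small near the free boundary $bM$ by part (b).

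With a partition of unity $\{\chi_j\}$ subordinate to this cover I would form $g_m=\sum_j\chi_j f_{m,j}$; since $\sum_j\dibar\chi_j=0$,
\[
	\dibar g_m=\sum_j (f_{m,j}-f)\,\dibar\chi_j
\]
is small in the relevant norm, because each local model approximates $f$ on $\supp(\dibar\chi_j)\cap S$ (boundary pieces by the same modulus-of-continuity bound as in the proof of Theorem~\ref{th:MergelyanWo}, totally real pieces by Proposition~\ref{prop:local}(a), with part (b) killing the outer overlaps). Fixing once and for all a strongly pseudoconvex Stein neighbourhood $\Omega_0$ of $S$ together with the $\Ccal^k$-bounded solution operator $T$ of Theorem~\ref{th:dibar} on $\Omega_0$, I would set $f_m:=g_m-T(\dibar g_m)\in\Ocal(\Omega_0)$; as $\dibar g_m$ is automatically $\dibar$-closed, $f_m$ is holomorphic near $S$ and $\Ccal^k$-close to $f$ on $S$, which is the assertion.

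The crux is twofold, and both parts rest on strong admissibility. First one must produce the strongly pseudoconvex Stein neighbourhood $\Omega_0$ of the entire set $S$: near $b\Omega$ one uses the defining function $\rho$, and near $M$ a strictly plurisubharmonic function adapted to the totally real manifold (of the type $\dist(\cdot,M)^2$), combining them into a strictly plurisubharmonic function whose sublevel set is a Stein neighbourhood of $S$ with strongly pseudoconvex boundary. Second, and most delicately, one must control the patching in the transverse attachment locus $\overline M\cap b\Omega$, where the inward flow prescribed along $b\Omega$ must be reconciled with the prescribed totally real data on $M$; here the $\dibar$-flat extension is exactly what forces the boundary and totally real models to agree to the required order, so that $\dibar g_m$ stays small across this locus. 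Verifying these uniform estimates at the attachment is the technical heart of the proof.
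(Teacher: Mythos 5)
Your scheme breaks down at exactly the point you call its crux: the claim that $\dibar g_m\to 0$ in $\Ccal^k_{(0,1)}(\overline\Omega_0)$ on a \emph{fixed} strongly pseudoconvex neighborhood $\Omega_0$ of all of $S$, so that one application of the bounded operator $T$ of Theorem~\ref{th:dibar} finishes the proof. First, your identity $\dibar g_m=\sum_j(f_{m,j}-f)\,\dibar\chi_j$ silently discards the terms $\sum_j\chi_j\,\dibar f_{m,j}$, and these do not tend to zero on $\Omega_0$: the flow models $\tilde f\circ\phi_{j,1/m}$ are holomorphic only on $W_j\cap\Omega_{1/m}$, a set shrinking to $W_j\cap\overline\Omega$, while on the fixed collar $\Omega_0\cap\supp\chi_j\setminus\Omega_{1/m}$ their $\dibar$ is merely $O(\dist(\cdot,S)^{k-1})$ by flatness --- bounded below by a constant determined by the (fixed) size of the cover, not by $m$. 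Second, and worse, the Gaussian approximants of Proposition~\ref{prop:local} are controlled only \emph{on} $M$ (part (a)) and on an ambient neighborhood of $bM_j$ (part (b)); at an ambient point at distance $\delta>0$ from $M$ in a complex-transverse direction the kernel satisfies $|e^{-(w-z)^2/\epsilon^2}|\sim e^{\delta^2/\epsilon^2}$, so $f_{\epsilon,j}$ blows up off $M$ as $\epsilon\to 0$ unless $f$ is real-analytic. Since your cutoffs $\chi_j$ are ambient and their overlaps along $\mathring M$ are fixed open sets containing such points, the terms $(f_{\epsilon,j}-\tilde f)\,\dibar\chi_j$ are not small in $\Ccal^0(\overline\Omega_0)$, let alone in $\Ccal^k$. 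This is precisely why the proof of Theorem~\ref{th:admissible} glues the convolution approximants by a Cartan Theorem~B splitting over a two-set cover $\{A,B\}$ whose overlap lies near $bM_j$ (where part (b) gives smallness), instead of by cutoffs plus an ambient $\dibar$-solution: one may cut off the \emph{data} $f$, but not the \emph{approximants}. Shrinking $\Omega_0$ with $m$ does not rescue the argument, since you would then need solution operators with uniform $\Ccal^k$ bounds on domains degenerating onto the lower-dimensional set $M$ --- the loss-of-derivatives problem (H\"ormander--Wermer vs.\ Range--Siu) that this circle of results is designed to avoid. A smaller gap: in the first statement $f$ is merely continuous, so no $\dibar$-flat extension exists at all, and even your boundary models then have uncontrolled $\dibar$ off $\Omega_{1/m}$.

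The paper's proof is instead a two-stage reduction that sidesteps both difficulties. Stage one runs the flow/partition-of-unity/$\dibar$ scheme of Theorem~\ref{th:MergelyanWo} only on the domains $\Omega_{1/m}$, which are uniformly nice strongly pseudoconvex neighborhoods of $\overline\Omega$ where Theorem~\ref{th:dibar} does give uniform constants; the correction $T_m(\dibar g_m)$, defined only on $\overline\Omega_{1/m}$, is then extended to $\Ccal^k$ functions $h_m$ on a neighborhood of all of $S$ by the Whitney--Glaeser theorem with bounds (Theorem~\ref{th:WG}), so that $h_m\to 0$ in $\Ccal^k$ and $\tilde f_m=g_m-h_m$ is holomorphic on $\Omega_{1/m}$ and $\Ccal^k$-close to $f$ on $S$. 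Stage two applies Theorem~\ref{th:admissible} to each $\tilde f_m\in\Ccal^k(S)\cap\Ocal(\overline\Omega)$, which handles the totally real part by convolution plus Theorem~B patching with no ambient $\dibar$-solving near $M$; a diagonal argument concludes. The extension-of-the-correction-term trick and the reduction to Theorem~\ref{th:admissible} are the ideas missing from your proposal; without them, or some substitute for the Cousin splitting near $M$, the one-shot patching cannot be made to close.
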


\begin{proof}
We follow the proof of Theorem \ref{th:MergelyanWo}, but cover also $M$ with the $W_j$'s.  
By the theorem of Whitney and Glaeser (see Theorem \ref{th:WG} in the Appendix and the remark following it), 
we may extend $T_m(\dibar g_m)$ to $\Ccal^k$ functions $h_m$ on some neighborhood
of $S$, converging to $0$ in the $\Ccal^k$-norm. 
Hence, $\tilde f_m:=g_m - h_m$ is holomorphic on $\Omega_{1/m}$ and $\tilde f_m\rightarrow f$
in $\Ccal^k(\overline\Omega)$ as $m\to\infty$, and the result follows from Theorem \ref{th:admissible}.
\qed\end{proof}

\subsection{Mergelyan approximation in $L^2$-spaces}\label{ss:L2Mergelyan}

In his thesis from 2015, S.\ Gubkin \cite{Gubkin2015} investigated Mergelyan approximation
in $L^2$ spaces of holomorphic functions on pseudoconvex domains in $\C^n$:
\[
	H^2(\Omega)=\Ocal(\Omega)\cap L^2(\Omega).
\]
The following theorem generalizes both his main results \cite[Theorems 4.2.2 and 4.3.3]{Gubkin2015};
in the first one the domain is assumed to have $\Ccal^\infty$-smooth boundary, and in the second one
it is assumed to admit a $\mathcal C^2$ plurisubharmonic defining function.
We only assume that the closure of the domain is a Stein compact.

%
%
\begin{theorem}\label{th:MergelyanWo2}
Assume that $X$ is a Stein manifold and $\Omega\Subset X$ is a relatively compact pseudoconvex domain 
with  $\Ccal^1$  boundary whose closure $\overline\Omega$ is a Stein compact.  Then for any $f\in H^2(\Omega)$
there exists a sequence $f_j\in\mathcal O(\overline\Omega)$ such that 
$\lim_{j\rightarrow\infty} \|f_j - f\|_{L^{2}(\Omega)} = 0$.
\end{theorem}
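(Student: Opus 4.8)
The plan is to adapt the proof of Theorem \ref{th:MergelyanWo} to the $L^2$ setting, replacing the Henkin-type bounded $\dibar$-solution operator (which needs strong pseudoconvexity) by H\"ormander's $L^2$ solution operator \cite{Hormander1965,Hormander1990}, whose hypotheses are available precisely because $\overline\Omega$ is a Stein compact. First I would fix a decreasing sequence of open Stein neighborhoods $G_1\supset G_2\supset\cdots$ of $\overline\Omega$ with $\bigcap_m G_m=\overline\Omega$, which exist by the definition of a Stein compact. Each $G_m$ is pseudoconvex, and since all of them lie in the fixed relatively compact set $G_1$, H\"ormander's theory furnishes a $\dibar$-solution operator $T_m$ on $G_m$ with $\|T_m\alpha\|_{L^2(G_m)}\le C\|\alpha\|_{L^2(G_m)}$ for every $\dibar$-closed $(0,1)$-form $\alpha$, with the constant $C$ independent of $m$ (use a fixed strictly plurisubharmonic weight on $G_1$).

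Next, exactly as in the proof of Theorem \ref{th:MergelyanWo}, I would cover $b\Omega$ by finitely many open sets $W_1,\dots,W_l$ on each of which a constant holomorphic vector field has real flow $\phi_{j,t}(z)=z+t\,a_j$, with $a_j$ the inner normal at the center of $W_j$, pushing a one-sided collar strictly into $\Omega$; only the $\Ccal^1$ smoothness of $b\Omega$ is used here, since it guarantees a continuous inner normal. Put $W_0=\Omega$ and fix once and for all a smooth partition of unity $\{\chi_j\}_{j=0}^l$ subordinate to $\{W_j\}_{j=0}^l$ with $\sum_j\chi_j\equiv 1$ on $\overline{G_{m_0}}$. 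For large $m$, choose $t_m\to 0$ so that $\phi_{j,t_m}$ maps $\overline{G_m}\cap W_j$ into $\Omega$; then $f_{m,j}:=f\circ\phi_{j,t_m}$ is holomorphic on $G_m\cap W_j$ (and $f_{m,0}:=f$), and $g_m:=\sum_{j=0}^l\chi_j f_{m,j}$ is smooth on $G_m$ with $\dibar g_m=\sum_j(\dibar\chi_j)\,f_{m,j}$, a $\dibar$-closed $(0,1)$-form into which no derivative of $f$ enters. Setting $f_m:=g_m-T_m(\dibar g_m)$, which is holomorphic on $G_m$ by ellipticity of $\dibar$ and hence lies in $\Ocal(\overline\Omega)$, it remains to prove $\|g_m-f\|_{L^2(\Omega)}\to0$ and $\|\dibar g_m\|_{L^2(G_m)}\to0$; these give $\|f_m-f\|_{L^2(\Omega)}\le\|g_m-f\|_{L^2(\Omega)}+C\|\dibar g_m\|_{L^2(G_m)}\to0$.

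The main obstacle is exactly these two estimates, because the sets where $\dibar\chi_j\ne0$ generally abut $b\Omega$ while $f$ is only square-integrable, so no interior estimate suffices. Using $\sum_j\dibar\chi_j=0$ I would write, pointwise, $\dibar g_m=\sum_j(\dibar\chi_j)(f_{m,j}-f_{m,j_0})$ for a locally chosen index $j_0$, reducing everything to showing $\|f\circ\phi_{j,t_m}-f\circ\phi_{j',t_m}\|_{L^2(R)}\to0$ (and, for the first estimate, $\|f\circ\phi_{j,t_m}-f\|_{L^2(\Omega\cap W_j)}\to0$) on the fixed compact transition region $R$. The crucial point is an $L^2$-continuity statement that is \emph{uniform up to the boundary}: writing $f\circ\phi_{j',t_m}=(f\circ\sigma_m)\circ\phi_{j,t_m}$ with $\sigma_m=\phi_{j,t_m}^{-1}\circ\phi_{j',t_m}\to\mathrm{id}$ and changing variables (the Jacobians of the $\phi_{j,t_m}$ being uniformly bounded), the matter reduces to $\|f-f\circ\sigma_m\|_{L^2(S_m)}\to0$ for subsets $S_m\subset\Omega$ that may approach $b\Omega$. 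I would establish this by a three-term estimate: approximate $f$ in $L^2(\Omega)$ by a compactly supported continuous $g$, control the middle term $\|g-g\circ\sigma_m\|$ by uniform continuity of $g$ together with $\sigma_m\to\mathrm{id}$, and bound the two outer terms by $\|f-g\|_{L^2(\Omega)}$ using the bounded change of variables. Letting $m\to\infty$ first and then improving the approximation of $f$ by $g$ yields the claim. This uniform boundary behaviour, rather than the soft functional analysis, is where the real work lies.
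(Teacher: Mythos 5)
Your proposal is correct and follows essentially the same route as the paper's proof: inward-pushing flows in boundary charts, a partition of unity giving $g_m=\sum_j\chi_j f_{m,j}$, and a correction by H\"ormander's $L^2$ solution of $\dibar$ on pseudoconvex neighborhoods of $\overline\Omega$ with a uniform constant (this is exactly where the Stein-compact hypothesis enters in the paper as well). The only difference is bookkeeping in the two final estimates: where you telescope $\dibar g_m$ into differences $f_{m,j}-f_{m,j_0}$ and invoke $L^2$-continuity of small translations uniformly up to $b\Omega$ (proved by density of compactly supported continuous functions plus bounded change of variables), the paper instead fixes a compact $K\subset\Omega$ with $\|f\|_{L^2(\Omega\setminus K)}<\delta$, uses uniform convergence $f_{m,j}\to f$ on a slightly larger compact $K'\supset K\cup\supp(\chi_0)$, and bounds each term $\|f_{m,j}\|_{L^2(U_{m,j}\setminus K')}<2\delta$ near the boundary by a change of variables --- two equivalent soft arguments.
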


\begin{proof} 
As in the proof of Theorem \ref{th:MergelyanWo}, we find an open cover $\{W_j\}_{j=0}^l$
of $\overline\Omega_{1/m_0}$ for some $m_0\in \N$ such that \eqref{eq:flows} holds.
(This only requires that $b\Omega$ is of class $\Ccal^1$.) Let $\{\chi_j\}_{j=0}^l$ be a smooth partition of unity 
subordinate to  $\{W_j\}_{j=0}^l$. Given an integer $m\ge m_0$
we define the cover $\{U_{m,j}\}_{j=0}^l$ and the functions $(f_{m,j})_{j=0}^l$ by \eqref{eq:Uj}
and \eqref{eq:fj}, respectively. Consider the function 
\[
	g_m = \sum_{j=0}^l \chi_j\, f_{m,j} \in L^2(\Omega_{1/m}).
\]
Fix $\delta>0$. Since $\|f\|_{L^2(\Omega)}<\infty$, there exists a compact subset $K\subset\Omega$  such that 
\begin{equation}\label{eq:estfK}
	\|f\|_{L^2(\Omega\setminus K)}<\delta. 
\end{equation}
Choose a compact set $K'\subset\Omega$ such that 
\begin{equation}\label{eq:KKprime}
	K\cup \supp (\chi_0) \subset \mathring K'. 
\end{equation}
Note that $g_m\rightarrow f$ in sup-norm on $K'$ as $m\rightarrow\infty$.
Furthermore, \eqref{eq:flows} and \eqref{eq:Uj} imply 
$\phi_{j,1/m}(U_{m,j}{\setminus K'}) \subset \Omega\setminus K$
for all big enough $m$, and hence \eqref{eq:fj} and \eqref{eq:estfK} give
\begin{equation}\label{eq:estfmj}
	\text{$\|f_{m,j}\|_{L^2(U_{m,j}\setminus K')}<2\delta$\ \ \ for all $m$ big enough and all $j=1,\ldots,l$.} 
\end{equation}
(The factor $2$ comes from a change of variable; note that $\phi_{j,t}\to \mathrm{Id}$ 
as $t\to 0$.) Since this holds for every $\delta>0$, we see that $\lim_{m\to\infty}\|g_m-f\|_{L^2(\Omega)}=0$.

Next, we need to estimate $\overline\partial g_m$ on $\Omega_{1/m}$. We have that 
\[
	\overline\partial g_m = \sum_{j=0}^l  f_{m,j}\, \overline\partial \chi_j =  
	\sum_{j=0}^l (f_{m,j}-f) \, \overline\partial \chi_j,
\]
where the second expression holds on $\Omega$.
It follows that $\lim_{m\to\infty} \|\overline\partial g_m\|_{L^2(K')} = 0$.
On $\Omega\setminus K'$ we have in view of \eqref{eq:KKprime} that 
$\overline\partial g_m = \sum_{j=1}^l  f_{m,j}\, \overline\partial \chi_j$, 
and hence \eqref{eq:estfmj} gives 
\[
	\|\overline\partial g_m\|_{L^2(\Omega_{1/m}\setminus K')} <C_0\delta
\]
for some constant $C_0>0$ depending only on the partition of unity $\{\chi_j\}$. 
Since $\delta>0$ was arbitrary, it follows that 
$\lim_{m\to\infty} \|\overline\partial g_m\|_{L^2(\Omega_{1/m})}=0$.

Set $\alpha_m=\overline\partial g_m$, and let $\Omega'$ be a pseudoconvex domain with 
$\overline\Omega\subset\Omega'\subset\Omega_{1/m}$.
By H\"{o}rmander, there exists a constant $C>0$, independent of $m$ and the choice of $\Omega'$, 
such that there exists a solution $h_m$ to the equation $\overline\partial h_m=\alpha_m$ with 
$\|h_m\|_{L^2(\Omega')}\leq C\cdot\|\alpha_m\|_{L^2(\Omega')}$.
Setting $f_m= g_m - h_m$ we get that $\lim_{m\to\infty}  \|f_m-f\|_{L^2(\Omega)}=0$.
\qed\end{proof}

\begin{remark}\label{rem:Hartogstriangle}
A simple example of a pseudoconvex domain on which the $L^2$ Mergelyan property fails 
is the Hartogs triangle $H=\{(z,w)\in \C^2 : |w|<|z|<1\}$. The holomorphic function $f(z,w)=w/z$ on $H$ is bounded 
by one, and it cannot be approximated in any natural sense by holomorphic functions in neighborhoods 
of $\overline H$ since its restriction to horizontal slices $w=const$ has winding number $-1$.
Note that $\overline H$ is not a Stein compact. One can also see that the $L^2$ Mergelyan property
fails on the Diederich-Forn{\ae}ss worm domain \cite{DiederichFornaess1976}.
\qed\end{remark}

We shall consider further topics in $L^2$-approximation theory in Sect.\ \ref{sec:weights}.

%
%

\subsection{Carleman approximation in several variables}\label{ss:CarlemanCn}

Carleman approximation on the totally real affine subspace $M=\mathbb R^n\subset\mathbb C^n$ 
was proved by S.\ Scheinberg \cite{Scheinberg1976} in 1976. 
Such spaces are obviously polynomially convex, and, although less obviously so, 
they satisfy the following condition (compare with Definition \ref{def:BEH}).
For any compact set $C\subset\mathbb C^n$ we set 
\[
	h(C):=\overline{\widehat C\setminus C}.
\]

\begin{definition}\label{def:BEHn}
A closed set $M\subset\C^n$ has the {\em bounded exhaustion hulls property}
if for any polynomially convex compact set $K\subset\mathbb C^n$
there exists $R>0$ such that for any compact set $L\subset M$ we have that
\begin{equation}\label{eq:beh}
	h(K\cup L)\subset\mathbb \B^n(0,R).
\end{equation}
\end{definition}

\label{page:def:BEHn}

Clearly, it suffices to test this condition on any increasing sequence of compact sets $K_j$ 
increasing to $\C^n$. This notion extends in an obvious way to closed sets in an arbitrary complex
manifold $X$, replacing polynomial hulls by $\Ocal(X)$-convex hulls.
For closed sets $M$ in $\mathbb C$, this notion is equivalent to the one in Definition \ref{def:BEH}, 
and to the condition that $\C\P^1\setminus M$ is locally connected at infinity.
(This is precisely the condition under which  Arakelian's Theorem \ref{th:Arakelian} holds.)

To see that $M=\mathbb R^n$ has bounded exhaustion hulls in $\C^n$, we consider compact sets of the form 
\[
	K_r=\bigl\{z\in\mathbb C^n : |x_j|\leq r,\ |y_j|\leq r, \ j=1,...,n\bigr\}.
\]
Let us first look at a point $\tilde z=\tilde x+\imath \tilde y\in\C^n\setminus \R^n$ with $|\tilde x_j|>(\sqrt n +1)r$ 
for some $j$.  Consider the pluriharmonic polynomial 
\[
	f(z)=-\Re ((z-\tilde x)^2) = \sum_{i=1}^n \bigl(y_i^2-(x_i-\tilde x_i)^2\bigr),\qquad z\in\C^n. 
\]
A simple calculation shows that $f(z)<0$ holds for any point $z\in K_r$, and we clearly have 
$f\leq 0$ on $\mathbb R^n$ and $f(\tilde z)=(\tilde y)^2>0$. This shows that 
\[
	h(K_r\cup\mathbb R^n)\subset \bigl\{z\in\mathbb C^n: |x_j|\leq (\sqrt n +1)r, \ \ j=1,...,n\bigr\}.
\]  
Clearly we also have $h(K_r\cup\mathbb R^n)\subset\{z\in\mathbb C^n: |y_j|\leq r, \ j=1,...,n\}$, 
and \eqref{eq:beh} follows.

By using Theorem \ref{th:admissible} it is easy to prove the following result, which 
by the argument just given implies Scheinberg's result in \cite{Scheinberg1976}.
Fix a norm on the jet-space $\Jcal^k(X)$, and denote it by $|\cdot|_{\Ccal^k(x)}$.
Recall that an unbounded closed set $M$ in a Stein manifold $X$ is 
called $\Ocal(X)$-convex if $M$ is exhausted by an increasing sequence of compact $\Ocal(X)$-convex sets.
   
%
%
\begin{theorem}[P.\ E.\ Manne (1993), \cite{Manne1993PhD}]
Let $X$ be a Stein manifold. If $M\subset X$ is a closed totally real submanifold of 
class $\Ccal^k$ that is holomorphically convex and has bounded exhaustion hulls, then
$M$ admits $\Ccal^k$-Carleman approximation by entire functions.
\end{theorem}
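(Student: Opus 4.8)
The plan is to run an exhaustion induction of Carleman type, parallel to the proof of Theorem \ref{th:Carleman} but with Mergelyan's planar theorem replaced by the $\Ccal^k$ approximation result on admissible sets (Theorem \ref{th:admissible}) and with the Oka--Weil theorem (Theorem \ref{th:Oka-Weil}) used to globalize at each step; the bounded exhaustion hulls hypothesis (Definition \ref{def:BEHn}) is exactly the device that keeps this globalization under control. First I would fix $f\in\Ccal^k(M)$ and a continuous $\epsilon\colon M\to(0,\infty)$, and choose a smooth strictly plurisubharmonic exhaustion of $X$ with $\Ocal(X)$-convex regular sublevel sets $A_1\subset A_2\subset\cdots$, with $A_j\subset\mathring A_{j+1}$ and $\bigcup_j A_j=X$; after a small perturbation I may assume $bA_j$ is transverse to $M$, so that each collar $M_j':=M\cap(A_{j+1}\setminus\mathring A_j)$ is a compact totally real manifold with boundary. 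Applying the bounded exhaustion hulls property to the $\Ocal(X)$-convex set $A_j$, the ``new'' part $h(A_j\cup L)$ of the hull lies in a fixed compact set for every compact $L\subset M$; passing to a subsequence of the $A_j$ I may therefore assume $\widehat{A_j\cup L}_{\Ocal(X)}\subset A_{j+1}\cup L$ for every such $L$. In particular $D_j:=\widehat{A_j\cup M_j'}_{\Ocal(X)}$ is a compact $\Ocal(X)$-convex set contained in $A_{j+1}$, and since $M$ is totally real its filling meets $M$ only along $bM_j'\subset M\cap A_{j+1}$, whence $M\cap D_j=M\cap A_{j+1}$. By Lemma \ref{lem:KSteincompact} (see also Remark \ref{rem:admissible}) the set $D_j$ decomposes as an admissible set $D_j=K_j\cup M_j'$, whose Stein compact holomorphic part $K_j$ contains $A_j$ together with the bounded hull-filling, and whose totally real part is the outer collar $M_j'$.

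Next I would construct inductively $F_j\in\Ocal(X)$ so that $\|F_j-f\|_{\Ccal^k(M\cap A_{j+1})}$ is as small as we please and $\|F_j-F_{j-1}\|_{\Ccal^k(A_j)}<2^{-j}\delta_j$, where $\delta_j:=\min\{\epsilon(x):x\in M\cap A_{j+1}\}>0$. Given $F_{j-1}$, I would define a target $v_j$ near $D_j$ by setting $v_j=F_{j-1}$ on $K_j$ (legitimate, since $F_{j-1}$ is holomorphic on all of $X$, in particular on the filling $K_j\setminus A_j$) and $v_j=\chi F_{j-1}+(1-\chi)f$ on the collar $M_j'$, with $\chi$ a $\Ccal^k$ cut-off equal to $1$ near $bA_j\cap M$ and $0$ on the outer part of $M_j'$. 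Because $F_{j-1}$ already approximates $f$ on $M\cap A_j$, the two definitions match across the seam and $v_j\in\Ccal^k(D_j)\cap\Ocal(K_j)$ with $\|v_j-f\|_{\Ccal^k(M\cap A_{j+1})}$ controlled by the previous error. Applying Theorem \ref{th:admissible} to $D_j=K_j\cup M_j'$ yields $h_j\in\Ocal(D_j)$ with $\|h_j-v_j\|_{\Ccal^k(D_j)}$ arbitrarily small.

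The crucial step is then the passage from $h_j$, which is holomorphic only on a neighborhood of $D_j$, to a genuinely entire function, and this is where the $\Ocal(X)$-convexity of $D_j$ pays off: since $D_j$ is $\Ocal(X)$-convex and $h_j\in\Ocal(D_j)$, Oka--Weil (Theorem \ref{th:Oka-Weil}) produces $F_j\in\Ocal(X)$ uniformly close to $h_j$ on $D_j$, and shrinking slightly and invoking the Cauchy estimates upgrades this to $\Ccal^k$-closeness on $D_j\supset M\cap A_{j+1}$ and on $A_j$. As $v_j=F_{j-1}$ on $A_j\subset K_j$, this gives $\|F_j-F_{j-1}\|_{\Ccal^k(A_j)}<2^{-j}\delta_j$, so $F:=\lim_j F_j$ exists in $\Ocal(X)$ and is entire. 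The fine estimate $|F-f|_{\Ccal^k(x)}<\epsilon(x)$ for $x\in M$ then follows by the same telescoping bookkeeping as in Theorem \ref{th:Carleman}: on $M\cap(A_{j+1}\setminus A_j)$ one bounds $|F-f|$ by the error of $F_j$ plus $\sum_{i>j}\|F_i-F_{i-1}\|_{\Ccal^k(A_i)}$, and the geometric choice of tolerances keeps this below $\delta_j\le\epsilon(x)$.

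I would flag the globalization step as the main obstacle, with the bounded exhaustion hulls hypothesis as precisely what overcomes it: without it, the $\Ocal(X)$-convex hull of the frozen compact together with a totally real collar could grow without bound as the collar recedes to infinity, the sets $D_j$ would fail to sit inside the next member of the exhaustion, the filling could swallow totally real region still awaiting correction, and the inductive errors could not be summed in the fine topology. The totally real hypothesis enters twice, making each $D_j$ admissible so that Theorem \ref{th:admissible} applies, and forcing the filling to avoid the interior of $M$ so that freezing $F_{j-1}$ on $K_j$ never clashes with the demand $v_j\approx f$ on $M$. Finally, Scheinberg's result for $M=\R^n\subset\C^n$ is recovered, since $\R^n$ is polynomially convex and has bounded exhaustion hulls as verified just above.
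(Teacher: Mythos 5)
Your overall plan (exhaustion induction, Theorem \ref{th:admissible} on a compact admissible set at each stage, gluing with a cutoff, telescoping fine estimates) is the right family of ideas, but the induction step as written fails, and the failure is structural. You insist that every approximant $F_j$ be \emph{entire}, produced by Oka--Weil (Theorem \ref{th:Oka-Weil}) on the compact set $D_j$, yet you place the gluing seam \emph{outside} the region where the previous approximant is controlled: the induction hypothesis bounds $F_{j-1}-f$ only on $M\cap A_j$, while your cutoff $\chi$ transitions inside the collar $M_j'=M\cap(A_{j+1}\setminus\mathring A_j)$, i.e.\ strictly outside $A_j$. An entire function obtained by approximation on $D_{j-1}\subset A_j$ carries no information about its values on $M\setminus A_j$, so on $\{\chi>0\}\cap M_j'$ the difference $F_{j-1}-f$ can be arbitrarily large; hence $v_j-f=\chi\,(F_{j-1}-f)$ is \emph{not} ``controlled by the previous error'', the claim that $\|F_j-f\|_{\Ccal^k(M\cap A_{j+1})}$ is small cannot be established, and the limit $F$ differs from $f$ by an uncontrolled amount on every transition zone, destroying the Carleman estimate. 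Shrinking the transition zone using continuity of $F_{j-1}$ does not save you: then $\|\chi\|_{\Ccal^k}$ blows up like the inverse $k$-th power of the width, and that width depends on $F_{j-1}$, which was constructed \emph{after} its tolerance was fixed -- the bookkeeping is circular. The paper (following Manne, and exactly as in the proof of Theorem \ref{th:Carleman}) avoids this by never making the intermediate approximants entire: its $f_j$ are hybrid objects, holomorphic only on a neighborhood of the current compact $K_j\cup M_j$, of class $\Ccal^k$ on \emph{all} of $M$, and satisfying the fine estimate $|f_j-f|_{\Ccal^k(x)}<\epsilon(x)/2$ for every $x\in M$; gluing then always takes place where the previous approximant is already controlled, and the entire function appears only as the locally uniform limit. (Your scheme could be repaired while keeping each $F_j$ entire, but only by lagging the frozen holomorphic zone one shell behind the controlled zone -- seam inside $M\cap(A_j\setminus A_{j-1})$ -- with all cutoffs fixed in advance so that the Leibniz amplification constants are known at the time the tolerances are chosen.)

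There is a second gap: you never use the hypothesis that $M$ is holomorphically convex, and the place where it is indispensable is your assertion that ``since $M$ is totally real its filling meets $M$ only along $bM_j'$''. Total reality does not imply this. The totally real torus $\{|z_1|=|z_2|=1\}\subset\C^2$ has bounded exhaustion hulls trivially (it is compact), yet its hull is the closed bidisc, so the closure of the filling swallows all of $M$; in your decomposition the Stein compact part $K_j$ would then contain the entire collar, Theorem \ref{th:admissible} would demand that the data be holomorphic there, and the scheme collapses -- as it must, since by Theorem \ref{th:CarlemanBEH} convexity is a necessary condition. Under the theorem's full hypotheses the correct statement is the one the paper's proof starts from: $\Ocal(X)$-convexity of $M$ together with the bounded exhaustion hulls property (Definition \ref{def:BEHn}) yields an exhaustion for which $A_j\cup (M\cap A_{j+1})$ is itself $\Ocal(X)$-convex, so there is no filling at all, and both the admissible decomposition and the Oka--Weil step apply directly.
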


\begin{proof}
For simplicity of exposition we give the proof in the case $X=\C^n$.
Since $M$ has bounded exhaustion hulls, there exists a normal exhaustion
 $\{K_j\}_{j\in\mathbb N}$ of $\C^n$
by polynomially convex compact sets such that $K_j\cup M$
is polynomially convex for each $j\in\N$.   Choose a sequence $m_j\in\mathbb N$
such that $m_j<m_{j+1}$ and $K_j\subset\mathbb \B^n(0,m_j)$ for each $j$. 
Set $M_j=M\cap\overline{{\mathbb B}^n(0,m_j)}$, and choose a function 
$\chi_j\in\Ccal^{\infty}_0(\mathbb B^n(0,m_{j+1}))$ with $\chi_j\equiv 1$ near 
$\overline {\mathbb B^n(0,m_j)}$.  
To prove the theorem we proceed by induction, making the induction hypothesis 
that we have found $f_j\in\Ccal^k(M)\cap\mathcal O(K_j\cup M_j)$ such that 
\[
	|f_j - f|_{\Ccal^k(x)}<\epsilon(x)/2,\qquad x\in M.
\]
It will be clear from the induction step how to achieve this for $j=1$.  
Theorems \ref{th:admissible} and \ref{th:Oka-Weil} furnish a sequence 
$g_{j,m}\in\mathcal O(K_{j+1}\cup M_{j+2})$ such that 
\[
	\|g_{j,m}-f_j\|_{\Ccal^k(K_j\cup M_{j+2})}\rightarrow 0\ \ \text{as $m\rightarrow\infty$}.
\]
It follows that $f_{j+1}=  g_{j,m} + (1-\chi_{j+1})(f_j - g_{j,m})$ will reproduce the 
induction hypothesis for sufficiently large $m$, and we may furthermore
achieve $\|f_{j+1}-f_j\|_{K_j}<2^{-j}$.   It follows that $f_j$  converges
uniformly on compacts in $X$ to an entire function  approximating $f$ to the desired precision.  
\qed\end{proof}

Prior to Manne's result, H. Alexander \cite{Alexander1979} generalized Carleman's
theorem \cite{Carleman1927} to smooth unbounded curves in $\mathbb C^n$ in 1979.  
By a fundamental work of G.\ Stolzenberg 
\cite{Stolzenberg1966AM}, such a curve is always polynomially convex and has 
bounded exhaustion hulls.   In 2002 P.\ M.\ Gauthier and E.\ Zeron \cite{GauthierZeron2002} 
improved Alexander's result to include  locally rectifiable curves with trivial topology.  

The situation is rather different for higher dimensional totally real manifolds. 
In 2009, E.\ F.\ Wold \cite{Wold2009} gave an example of a $\Ccal^{\infty}$
smooth totally real manifold $M\subset\mathbb C^3$ which is 
polynomially convex, but fails to have bounded exhaustion hulls.  
In 2011, P.\ E.\ Manne, N.\ {\O}vrelid and E.\ F.\ Wold \cite{ManneWoldOvrelid2011} showed 
that a totally real submanifold $M\subset\mathbb C^n$ admits $\Ccal^1$
Carleman approximation only if $M$ is has bounded exhaustion hulls.  
Motivated by the problem of proving that the product of two 
totally real Carleman continua is again a Carleman continuum, 
B.\ Magnusson and E.\ F.\ Wold \cite{MagnussonWold2016} gave in 2016 a very simple proof that 
a closed set admits $\Ccal^0$ Carleman approximation only if it has bounded exhaustion hulls. 
Hence, we have the following characterization of closed totally real 
submanifolds which admit Carleman approximation.

%
%
\begin{theorem}\label{th:CarlemanBEH}
Let $M$ be a closed totally real submanifold of class $\Ccal^k$ in a Stein manifold $X$.   
Then, $M$ admits $\Ccal^k$-Carleman approximation by entire functions if and only if $M$ 
is $\Ocal(X)$-convex and has bounded exhaustion hulls.  
\end{theorem}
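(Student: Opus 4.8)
The plan is to prove the two implications separately, since the equivalence is a synthesis of results that are either already available or reduce to one short self-contained argument: sufficiency is the theorem of P.\ E.\ Manne stated just above, necessity of the bounded exhaustion hulls property is known, and only the necessity of $\Ocal(X)$-convexity requires new work.

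For the implication ``$\Ocal(X)$-convex and bounded exhaustion hulls $\Rightarrow$ Carleman approximation'', I would simply invoke the theorem of Manne above, whose hypotheses are exactly these two conditions. Its proof (written there for $X=\C^n$) builds the approximant by induction over a normal exhaustion $\{K_j\}$ of $X$ by $\Ocal(X)$-convex compacts for which each $K_j\cup M$ is again $\Ocal(X)$-convex; the existence of such an exhaustion is precisely what the bounded exhaustion hulls property (Definition~\ref{def:BEHn}) guarantees, and Theorem~\ref{th:admissible} supplies the approximation on each admissible set arising in the induction. The argument transfers to an arbitrary Stein manifold once one replaces polynomial hulls by $\Ocal(X)$-convex hulls and coordinate balls by sublevel sets of a strictly plurisubharmonic exhaustion, so no new work is needed here.

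For the reverse implication, assume $M$ admits $\Ccal^k$-Carleman approximation. Since the $\Ccal^k$ norm dominates the $\Ccal^0$ norm, $M$ admits $\Ccal^0$-Carleman approximation, so the bounded exhaustion hulls property follows immediately from the theorem of B.\ Magnusson and E.\ F.\ Wold \cite{MagnussonWold2016} (or from \cite{ManneWoldOvrelid2011} in the $\Ccal^1$ case). It remains to show that $M$ is $\Ocal(X)$-convex, for which it suffices to prove $\wh L_{\Ocal(X)}=L$ for every compact $L\subset M$; applying this to an exhaustion $L_1\subset L_2\subset\cdots$ of $M$ then exhibits $M$ as an increasing union of $\Ocal(X)$-convex compacts. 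First I would note that Carleman approximation yields $\Ccal(L)=\overline{\Ocal(X)|_L}$: any $g\in\Ccal(L)$ extends continuously to $M$, is approximated uniformly on $L$ by functions in $\Ccal^k(M)$, and each of these is in turn a uniform limit on $L$ of entire functions by hypothesis. Now fix $p\in\wh L_{\Ocal(X)}$, so that $|f(p)|\le\sup_L|f|$ for all $f\in\Ocal(X)$. Then $f|_L\mapsto f(p)$ is well defined on $\Ocal(X)|_L$ (if $f|_L=g|_L$ then $|f(p)-g(p)|\le\sup_L|f-g|=0$), multiplicative, and of norm at most one, hence extends to a character of the uniform algebra $\overline{\Ocal(X)|_L}=\Ccal(L)$. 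By Gelfand theory every character of $\Ccal(L)$ is evaluation at some $q\in L$, so $f(p)=f(q)$ for all $f\in\Ocal(X)$; as $X$ is Stein, $\Ocal(X)$ separates points and $p=q\in L$. Thus $\wh L_{\Ocal(X)}=L$.

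I expect the genuine obstacle to lie not in this convexity argument, which is elementary once $\overline{\Ocal(X)|_L}$ is recognized as a uniform algebra with spectrum $L$, but in the necessity of the bounded exhaustion hulls property; this is the hard analytic input, and I am importing it from \cite{MagnussonWold2016,ManneWoldOvrelid2011}. The one point to verify carefully is that Manne's sufficiency proof, stated for $\C^n$, carries over to a general Stein manifold, which is routine since Theorem~\ref{th:admissible} is already formulated on arbitrary complex manifolds and the Euclidean structure enters only through the choice of exhaustion.
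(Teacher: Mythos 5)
Your proposal is correct, and for two of its three ingredients it coincides exactly with what the paper does: the paper obtains Theorem \ref{th:CarlemanBEH} as a pure synthesis, quoting Manne's theorem \cite{Manne1993PhD} (stated and proved just above it) for sufficiency and the results of \cite{ManneWoldOvrelid2011} and \cite{MagnussonWold2016} for the necessity of bounded exhaustion hulls, with no further argument given. Where you genuinely go beyond the paper is the necessity of $\Ocal(X)$-convexity, which the paper leaves entirely implicit: your uniform-algebra argument --- Carleman approximation gives $\overline{\Ocal(X)|_L}=\Ccal(L)$ for every compact $L\subset M$, each point $p\in\wh L_{\Ocal(X)}$ induces a unital character of this algebra of norm at most one, characters of $\Ccal(L)$ are point evaluations at points of $L$, and point separation by $\Ocal(X)$ on the Stein manifold $X$ forces $p\in L$ --- is correct and is the standard way to see this, so on this point your write-up is more complete than the paper's. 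Two small matters deserve more care than you give them. First, passing from $\Ccal^k$- to $\Ccal^0$-Carleman approximation is not literally ``the $\Ccal^k$ norm dominates the $\Ccal^0$ norm'': the hypothesis applies only to $\Ccal^k$ functions, so to approximate an arbitrary $f\in\Ccal(M)$ in the fine topology you must first approximate $f$ by a $\Ccal^k$ function \emph{in the fine topology on $M$} (a Whitney-type smoothing step), and only then invoke the hypothesis; this is routine but should be stated, and the same smoothing is implicitly used in your claim that $\Ccal(L)\subset\overline{\Ocal(X)|_L}$. Second, the necessity results you import are stated in \cite{ManneWoldOvrelid2011,MagnussonWold2016} for closed sets in $\C^n$, while the theorem lives on a Stein manifold with the hull notion of Definition \ref{def:BEHn} transferred via $\Ocal(X)$-hulls; the paper glosses over this transfer exactly as you do, so it is not a gap relative to the paper, but a self-contained proof would either rerun the Magnusson--Wold argument with $\Ocal(X)$-hulls or reduce to $\C^N$ by a proper holomorphic embedding of $X$.
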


On the other hand, for any closed totally real submanifold $M$ in a Stein manifold there always exists
some Stein open neighborhood $\Omega$ of $M$ with respect to which 
$M$ admits Carleman approximation, see P. Manne \cite{Manne1993}.

\begin{problem}\label{prob:}
Let $E \subset\mathbb C^n$ be a closed polynomially convex  subset with 
the bounded exhaustion hulls property (see Definition \ref{def:BEHn}).
\begin{enumerate}[\rm (a)]
\item Suppose that $k\in\Z_+$ and $f\in\Ccal^k(\C^n)$ is holomorphic in $\mathring E$ 
and $\overline\partial$-flat to order $k$ along $E$. Is $f$ uniformly approximable on $E$ by entire functions? 
A positive answer in dimension $n=1$ is given by Arakelian's Theorem \ref{th:Arakelian}.
\item Suppose further that any $f$ as in part (a) 
is approximable uniformly on every compact $K\subset E$ by entire functions.
Does it follow that $E$ is an Arakelian set?
\end{enumerate}
\end{problem}

%
%

\section{Approximation of manifold-valued maps}
\label{sec:manifold}

We now apply results of the previous sections to approximation problems of Runge,
Mergelyan and Carleman type for maps to complex manifolds more general than Euclidean spaces.
Such problems arise naturally in applications of complex analysis to geometry, dynamics
and other fields. With the exception of Runge's theorem which leads to Oka theory
and the concept of Oka manifold (see Subsect.\ \ref{ss:Oka}), this area is fairly 
unexplored and offers interesting problems.

The most natural generalization of Runge's theorem to manifold-valued maps pertains to
maps from Stein manifolds (and Stein spaces) to {\em Oka manifolds}; see Theorem \ref{th:Oka}. 
This class of manifolds was  introduced in 2009 F.\ Forstneri\v c \cite{Forstneric2009CR} 
after having proved that all natural
Oka properties that had been considered in the literature, which a given complex manifold $Y$ 
might or might not have, are pairwise equivalent. (See also \cite{Larusson2010NAMS}.) 
The simplest one, which is commonly used as the definition of the 
class of Oka manifolds, is given by Definition \ref{def:Oka} below.
Since a comprehensive account of this subject is available in the monograph \cite{Forstneric2017E} and
the introductory surveys  \cite{Forstneric2013KAWA,ForstnericLarusson2011},
we only give a brief outline in Subsect.\ \ref{ss:Oka}, focusing on the approximation theorem 
in line with the topic of this survey.  

In Subsect.\ \ref{ss:Mergelyan-manifold} we consider the Mergelyan approximation problem 
for maps $K\to Y$ from a Stein compact $K$ in a complex manifold $X$ to another manifold $Y$.
Assuming that the map is of class $\Acal(K,Y)$, 
the main question is whether it is approximable uniformly on $K$ by maps holomorphic in open
neighborhoods of $K$. (The remaining question of approximability by entire maps $X\to Y$
is the subject of Oka theory discussed in Subsect.\ \ref{ss:Oka}.)
If this holds for every $f\in \Acal(K,Y)$, we say that 
the space $\Acal(K,Y)$ enjoys the {\it Mergelyan property}. Thanks to a Stein neighborhood theorem
due to E.\ Poletsky (see Theorem \ref{th:Poletsky3.1}), it is possible to show for many classes
of Stein compacts $K$ that the Mergelyan property for functions on $K$ implies the Mergelyan
property for maps $K\to Y$ to an arbitrary complex manifold $Y$.

In Subsect. \ref{ss:Carleman-manifold} we present some recent results on 
Carleman and Arakelian type approximation of manifold-valued maps.

%
%

\subsection{Runge theorem for maps from Stein spaces to Oka manifolds} \label{ss:Oka}

Oka theory concerns the existence, approximation, and interpolation results  for  holomorphic maps from 
Stein manifolds and, more generally, Stein spaces, to complex manifolds. To avoid
topological obstructions one considers globally defined continuous or smooth maps,
and the main question is whether they can be deformed to holomorphic maps, often with 
additional approximation and interpolation conditions.
Thus, Oka theory may be understood as the theory of homotopy principle in complex analysis,
a point of view emphasised in the monographs \cite{Gromov1986,Forstneric2017E}.

The classical aspect of Oka theory is known as the {\em Oka-Grauert theory}.
It originates in K.\ Oka's paper \cite{Oka1939} from 1939 
where he proved that a holomorphic line bundle $E\to X$ over a Stein manifold $X$ is holomorphically
trivial if it is topologically trivial; the converse is obvious. This is equivalent to the problem 
of constructing a holomorphic section $X\to E$ without zeros, granted a continuous section
without zeros. (Oka only considered the case when $X$ is a domain of holomorphy in $\C^n$ 
since the notion of a 
Stein manifold was introduced only in 1951 \cite{Stein1951}; however, the same proof 
applies to Stein manifolds and, more generally, to Stein spaces.) It follows that 
holomorphic line bundles $E_1\to X$, $E_2\to X$ over a Stein manifold are holomorphically
equivalent if they are topologically equivalent; it suffices to apply Oka's theorem to the line 
bundle $E_1^{-1}\otimes E_2$. In particular, any holomorphic line bundle over an open
Riemann surface $X$ is holomorphically trivial. A cohomological proof of this result is 
obtained by applying the long exact sequence
of cohomology groups to the exponential sheaf sequence $0\to \Z\to \Ocal_X\to \Ocal_X^*\to 0$,
where $\Ocal_X^*$ is the sheaf of nonvanishing holomorphic functions and the map 
$ \Ocal_X\to \Ocal_X^*$ is given by $f\mapsto e^{2\pi \imath f}$ 
(see e.g.\ \cite[Sect.\ 5.2]{Forstneric2017E}). 

In 1958, Oka's theorem was extended by H.\ Grauert \cite{Grauert1958MA} to much more 
general fibre bundles with 
complex Lie group fibres over Stein spaces; see also H.\ Cartan \cite{Cartan1958} for an exposition.
Grauert's results apply in particular to holomorphic  vector bundles
of arbitrary rank over Stein spaces and show that their holomorphic classification agrees with the 
topological classification. The cohomological point of view is still possible by considering nonabelian 
cohomology groups with 
values in a Lie group. Surveys of Oka-Grauert theory can be found in the paper \cite{Leiterer1986}  by 
J.\ Leiterer and in the monograph \cite{Forstneric2017E} by F.\ Forstneri\v c.

The main ingredient in the proof of Grauert's theorem is a parametric version of the 
Oka-Weil approximation theorem for maps from Stein manifolds to complex 
homogeneous manifolds. More precisely, given a compact $\Ocal(X)$-convex set $K$ in a Stein space
$X$ and a continuous map $f\colon X\to Y$ to a complex homogeneous manifold $Y$ such that $f$
is holomorphic in an open neighborhood of $K$, it is possible to deform $f$ by a homotopy
$f_t\colon X\to Y$ $(t\in [0,1])$ to a holomorphic map  $f_1$ such that every map $f_t$ 
in the homotopy is holomorphic in a neighborhood of $K$ and close to the initial map $f=f_0$ on $K$.
Analogous results hold for families of maps $f_p\colon X\to Y$ depending continuously on a parameter
$p$ in a compact Hausdorff space $P$, where the homotopy is fixed for values of $p\in P_0$ 
in a closed subset $P_0$ of $P$ for which the map $f_p\colon X\to Y$ is holomorphic on all
on $X$. In other words, Theorem \ref{th:COW} holds with the target $\C$ replaced by any 
complex homogeneous manifold $Y$, provided that all maps $f_p\colon X\to Y$ $(p\in P)$ in the family
are defined and continuous on all of $X$.
This point of view on Grauert's theorem is explained in \cite[Sects.\ 5.3 and 8.2]{Forstneric2017E}.

After some advances during 1960's, most notably those of O.\ Forster and K.\ J.\ Ramspott
\cite{ForsterRamspott1966IM1,ForsterRamspott1966IM2}, a major extension of the Oka-Grauert 
theory was made by M.\ Gromov \cite{Gromov1989} in 1989.   
He showed in particular that the existence of a dominating holomorphic spray 
on a complex manifold $Y$ implies all forms of the h-principle, also called the {\em Oka principle} in 
this context, for holomorphic maps from any Stein manifold to $Y$. The subject was brought into 
an axiomatic form by F.\ Forstneri{\v c} who introduced the class of {\em Oka manifolds} 
(see \cite{Forstneric2005AIF,Forstneric2006AM,Forstneric2009CR,Forstneric2010PAMQ} and
the monograph \cite{Forstneric2017E}). 

\begin{definition}
\label{def:Oka}
A complex manifold $Y$ is an {\em Oka manifold} if every holomorphic map $K\to Y$ from a neighborhood of
any compact convex set $K\subset \C^n$ for any $n\in \N$ can be approximated uniformly
on $K$ by entire maps $\C^n\to Y$. 
\end{definition}

The following version of the Oka-Weil  for maps from Stein spaces to Oka manifolds
is a special case of \cite[Theorem 5.4.4]{Forstneric2017E}.

%
%

\begin{theorem}[Runge theorem for maps to Oka manifolds] \label{th:Oka}
Assume that $X$ is a Stein space and $Y$ is an Oka manifold. 
Let $\dist$ denote a Riemannian distance function on $Y$. 
Given a compact $\Ocal(X)$-convex subset $K$ of $X$ and a continuous map $f\colon X\to Y$ 
which is holomorphic in a neighborhood of $K$, there exists for every $\epsilon>0$ a homotopy of 
continuous maps $f_t\colon X\to Y$ $(t\in[0,1])$ such that $f_0=f$, for every $t$ the map $f_t$ 
is holomorphic on a neighborhood of $K$ and satisfies $\sup_{x\in K}\dist(f_t(x),f(x))<\epsilon$, 
and the map $f_1$ is holomorphic on $X$. 
\end{theorem}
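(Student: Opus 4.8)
The plan is to prove Theorem \ref{th:Oka} by the standard induction over a strictly plurisubharmonic exhaustion of $X$, reducing the global approximation to a sequence of elementary steps in which the Oka property of $Y$ enters only locally. First I would reduce to the case that $X$ is a Stein manifold (the Stein space case follows by the usual devices of Oka--Cartan theory, e.g.\ by working with a local holomorphic retraction onto $X$ from an ambient smooth Stein manifold). Choose a smooth strictly plurisubharmonic Morse exhaustion function $\rho\colon X\to\R$ with $K\subset\{\rho<0\}$, and pick regular values $0=c_0<c_1<c_2<\cdots\to\infty$ avoiding the discrete set of critical values of $\rho$. The sublevel sets $A_j=\{\rho\le c_j\}$ are then compact and $\Ocal(X)$-convex, with $A_0\supset K$ and $\bigcup_j A_j=X$. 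The goal is to construct a sequence of maps $f_j$, holomorphic on neighborhoods of $A_j$, with $f_0=f$ near $A_0$, with $f_j$ uniformly close to $f_{j-1}$ on $A_{j-1}$, and with each step joined to the previous one by a short homotopy through maps holomorphic near $A_{j-1}$. Imposing fast convergence (say $\sup_{A_{j-1}}\dist(f_j,f_{j-1})<2^{-j}\epsilon_j$ with $\epsilon_j\downarrow 0$) then yields a limit $f_1:=\lim_j f_j\in\Ocal(X,Y)$, and a telescoping concatenation and reparametrization of the partial homotopies produces a single homotopy $f_t$ with the required properties.

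The heart of the argument is the inductive step, which splits into a noncritical and a critical case according to whether the slab $\{c_{j-1}\le\rho\le c_j\}$ contains a critical point of $\rho$. In the noncritical case the inclusion $A_{j-1}\hookrightarrow A_j$ is a homotopy equivalence, and I would write $A_j=A_{j-1}\cup B$ as a Cartan pair with $B$ a small \emph{convex bump} sitting inside a holomorphic coordinate chart in which both $B$ and the overlap $C:=A_{j-1}\cap B$ are geometrically convex. Using the defining property of an Oka manifold (Definition \ref{def:Oka}), approximate the current map on $B$ by a map holomorphic near $\overline B$; then \textbf{glue} this with the map already defined on $A_{j-1}$ over $C$. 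This gluing is carried out by a splitting lemma for holomorphic maps into $Y$: since $Y$ is Oka it admits a dominating holomorphic spray, and two maps that are close on $C$ can be connected through the spray by solving a Cousin-type splitting problem with bounds over the Cartan pair $(A_{j-1},B)$. The interpolation parameter in this construction simultaneously supplies the short homotopy.

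In the critical case, when $\rho$ has a single critical point $p$ of Morse index $k\le n$ in the slab, I would first enlarge $A_{j-1}$ by attaching the stable manifold of $p$, which after a standard isotopy is a totally real $k$-disc $E\subset X$, so that $A_{j-1}\cup E$ is a strongly admissible set (Definition \ref{def:admissible2}) and $A_j$ is a regular neighborhood of it. After extending the current map continuously across $E$, the totally-real approximation of Theorem \ref{th:admissible}, applied in a holomorphic coordinate chart of $Y$ containing the image of the contractible handle, lets me make it holomorphic on a neighborhood of $A_{j-1}\cup E$. At that point the change of topology has been absorbed and the passage to $A_j$ is again noncritical, handled as above. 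Interleaving these moves through all critical levels produces the full sequence $(f_j)$; since $K\subset\{\rho<0\}$ lies well inside $A_0$, every map in every partial homotopy stays holomorphic on a fixed neighborhood of $K$ and drifts there by a controlled amount, so the assembled $f_t$ satisfies $\sup_{x\in K}\dist(f_t(x),f(x))<\epsilon$ for all $t$.

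The main obstacle is the noncritical gluing step, and it is precisely here that the Oka hypothesis on $Y$ is indispensable: approximating on the convex bump is the literal content of Definition \ref{def:Oka}, but merging the new local solution with the old global one over the overlap requires the existence of a dominating spray together with a quantitative splitting over Cartan pairs. This is the step that fails for general targets and is supplied by the equivalence of Oka properties established in \cite{Forstneric2009CR} (see also \cite[Chap.\ 5]{Forstneric2017E}). A secondary, purely technical point is the bookkeeping needed to combine the countably many short homotopies into one continuous map $f_t\colon X\to Y$ that remains holomorphic on a neighborhood of $K$ for every $t\in[0,1]$; this is handled by the geometric control ensuring $\sum_j\sup_{x\in K}\dist(f_j(x),f_{j-1}(x))<\epsilon$ and a standard reparametrization of $[0,1]$.
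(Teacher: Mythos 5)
Your overall architecture --- reduction to Stein manifolds, induction over a strictly plurisubharmonic Morse exhaustion, noncritical steps handled by Cartan pairs with convex bumps where Definition \ref{def:Oka} supplies the local approximation, critical steps handled by attaching a totally real handle and invoking Mergelyan-type approximation, and fast convergence plus telescoping of short homotopies to produce $f_1$ and the homotopy $f_t$ --- is precisely the scheme of the proof of \cite[Theorem 5.4.4]{Forstneric2017E}, which is the source the paper cites for Theorem \ref{th:Oka}; the paper itself gives no proof of this statement. So the route is the right one, but one step of your justification rests on a claim that is not available.

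The problem is the sentence asserting that ``since $Y$ is Oka it admits a dominating holomorphic spray.'' The existence of a dominating spray on $Y$ is Gromov's \emph{ellipticity}; Gromov proved that ellipticity implies the Oka property, and whether the converse holds is a well-known open problem. Nothing in Definition \ref{def:Oka} (which is the approximation property CAP) produces a spray on $Y$, so as written your gluing step rests on an unproven implication. Fortunately, the argument does not need a spray on $Y$ at all: what the splitting over the Cartan pair $(A_{j-1},B)$ actually requires is a \emph{dominating spray of maps} whose core is the map $a$ already constructed near $A_{j-1}$, i.e.\ a holomorphic map $F\colon V\times \B^N\to Y$ on a neighborhood $V$ of $A_{j-1}$ with $F(\cdotp,0)=a$ and with the $t$-differential at $t=0$ mapping $\C^N$ onto $T_{a(x)}Y$ for every $x\in V$. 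Such an $F$ exists for an \emph{arbitrary} complex manifold $Y$: the graph of $a$ over a Stein neighborhood of $A_{j-1}$ is a Stein submanifold of $X\times Y$, hence admits a Stein neighborhood $\Omega$ by Siu's theorem \cite{Siu1976} (compare Theorem \ref{th:Poletsky3.1}); choosing finitely many holomorphic vector fields on $\Omega$ tangent to the fibers $\{x\}\times Y$ and spanning the vertical tangent space along the graph, the composition of their flows gives $F$. With sprays of maps over $A_{j-1}$ and over $B$, the transition map over $C$ is split by the Cartan-type splitting lemma, and your step goes through, with the Oka property of $Y$ entering only where you placed it, namely in the approximation on the convex bump. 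A secondary, smaller point: in the critical case you cannot in general arrange that the image of the attached totally real disc lies in a single coordinate chart of $Y$; the correct tool there is the manifold-valued approximation theorem on admissible sets (Theorem \ref{th:approximation-handlebody}), which needs no such chart hypothesis.
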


A complex manifold $Y$ which satisfies the conclusion of Theorem \ref{th:Oka} for every triple $(X,K,f)$
is said to satisfy  the {\em basic Oka property with approximation} (see \cite[p.\ 258]{Forstneric2017E}).
A more general version of this result (see \cite[Theorem 5.4.4]{Forstneric2017E}) includes the parametric case, 
as well as interpolation (or jet interpolation) on a closed complex subvariety $X_0$ of $X$
provided all maps $f_p\colon X\to Y$ $(p\in P)$ in a given continuous compact family are holomorphic on $X_0$,
or in a neighborhood of $X_0$ when considering jet interpolation. Since a compact convex set in $\C^n$ 
is $\Ocal(\C^n)$-convex, the condition that $Y$ be an Oka manifold is clearly necessary in 
Theorem \ref{th:Oka}.

The class of Oka manifolds contains all complex homogeneous manifolds, but also many nonhomogeneous
ones. For example, if the tangent bundle $TY$ of a complex manifold $Y$ is pointwise generated by 
{\em $\C$-complete} holomorphic vector fields on $Y$ (such a manifold is called {\em flexible}
\cite{ArzhantsevFlennerKalimanKutzschebauchZaidenberg2013DMJ}), 
then $Y$ is an Oka manifold \cite[Proposition 5.6.22]{Forstneric2017E}.
For examples and properties of Oka manifolds, see  \cite[Chaps.\ 5--7]{Forstneric2017E}. 

Recently, two new characterizations of the class of Oka manifolds have been found
by Y.\ Kusakabe. In his first paper \cite{Kusakabe2017IJM}, Kusakabe showed that a complex manifold
$Y$ is Oka if (and only if) for any Stein manifold $X$, the mapping space $\Ocal(X,Y)$
is $\C$-connected. In his second paper \cite{Kusakabe2018X}, he showed that $Y$
is Oka if (and only if) $Y$ satisfies Gromov's Condition $\mathrm{Ell}_1$ \cite{Gromov1989}.
This condition means that for every holomorphic map $f:X\to Y$ there exists a dominating
holomorphic spray $F:X\times \C^N\to Y$ with $F(\cdotp,0)=f$, where the domination
property means that for any fixed $x\in X$ the differential of the map $F(x,\cdotp):\C^N\to Y$
is surjective at $0\in \C^N$. Kusakabe's second result implies that a complex
manifold $Y$ is Oka if and only if every point $y_0\in Y$ has a Zariski open Oka neighbourhood
\cite[Theorem 1.4]{Kusakabe2018X}.

In another recent direction, L.\ Studer proved a homotopy theorem for Oka property 
and extended its validity to Oka pairs of sheaves \cite{Studer2019X}, generalizing
the work of Forster and Ramspott \cite{ForsterRamspott1966IM1}.

Theorem \ref{th:Oka} has a partial analogue in the algebraic category, concerning maps from 
affine algebraic varieties to algebraically subelliptic manifolds. For the definition of the latter class, 
see \cite[Definition 5.6.13(e)]{Forstneric2017E}. The following result is \cite[Theorem 6.15.1]{Forstneric2017E}; 
the original reference is \cite[Theorem 3.1]{Forstneric2006AJM}.

%
%
\begin{theorem}
\label{th:Runge-algebraic}
Assume that $X$ is an affine algebraic variety, $Y$ is an algebraically subelliptic manifold,
and $f_0\colon X\to Y$ is a (regular) algebraic map. Given a compact $\Ocal(X)$-convex subset $K$ of $X$, 
an open set $U \subset X$ containing $K$, and a homotopy $f_t\colon U\to Y$ of holomorphic maps 
$(t\in [0,1])$, there exists for every  $\epsilon >0$ an algebraic map $F\colon X\times \C\to Y$ such that
$F(\cdotp,0)=f_0$ and 
\[
	\sup_{x\in K,\ t\in [0,1]} \dist\left(F(x,t),f_t(x)\right)<\epsilon.
\]
In particular, a holomorphic map $X\to Y$ which is homotopic to an algebraic map 
can be approximated uniformly on compacts in $X$ by algebraic maps $X\to Y$.
\end{theorem}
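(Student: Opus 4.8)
The plan is to linearise the problem by means of a dominating algebraic spray over $f_0$ and then reduce everything to approximation of $\C^N$-valued maps by regular maps. First I would exploit the algebraic subellipticity of $Y$: starting from a finite dominating family of algebraic sprays on $Y$ and composing enough of their pullbacks along the algebraic map $f_0\colon X\to Y$, I obtain an algebraic vector bundle $p\colon E\to X$ and an algebraic map $S\colon E\to Y$ with $S(0_x)=f_0(x)$ for all $x\in X$, such that $v\mapsto S(x,v)$ is a submersion at the zero section and its image covers a neighbourhood of the trace $\{f_t(x): x\in K,\ t\in[0,1]\}$. Since $X$ is affine, $E$ is a direct summand of a trivial bundle, so after composing with an algebraic projection I may assume $E=X\times\C^N$ and regard $S$ as an algebraic map $X\times\C^N\to Y$ that is fibrewise dominating over $f_0$.

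Next I would lift the homotopy through $S$. Because $S(\cdot,0)=f_0$ and $S$ is fibrewise submersive along the zero section, the implicit function theorem together with a subdivision of $[0,1]$ produces, on a neighbourhood $V$ of $K$ with $K\subset V\Subset U$, a homotopy of holomorphic maps $\xi_t\colon V\to\C^N$ with $\xi_0=0$ and $S(x,\xi_t(x))=f_t(x)$ for all $x\in V,\ t\in[0,1]$. This replaces the $Y$-valued homotopy by a $\C^N$-valued one.

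It then remains to approximate the lift by a regular map. I would first approximate the continuous-in-$t$ family $\xi_t$ by a map $\tilde\xi$ that is holomorphic in $(x,t)$ on $V'\times\Lambda$, where $V'$ is a neighbourhood of $K$ and $\Lambda\subset\C$ is a neighbourhood of $[0,1]$, arranging $\tilde\xi(\cdot,0)=0$ (for instance by taking a polynomial-in-$t$ approximation with coefficients holomorphic in $x$). Now $K\times L$ is polynomially convex in the ambient $\C^{n+1}$ for a suitable polynomially convex neighbourhood $L\subset\C$ of $[0,1]$; combining the Oka-Weil theorem (Theorem \ref{th:Oka-Weil}) on the affine variety $X\times\C$ with the surjectivity of the restriction $\Ocal(\C^{n+1})\to\Ocal(X\times\C)$ and polynomial approximation on $K\times L$, I approximate $\tilde\xi$ uniformly on $K\times L$ by a regular map $\Xi_0\colon X\times\C\to\C^N$. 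Setting $\Xi(x,w)=\Xi_0(x,w)-\Xi_0(x,0)$ gives an algebraic map with $\Xi(\cdot,0)=0$ still approximating $\tilde\xi$ on $K\times[0,1]$, and finally $F(x,w):=S(x,\Xi(x,w))$ is the desired map: $F(\cdot,0)=S(\cdot,0)=f_0$ holds exactly, while $F(x,t)=S(x,\Xi(x,t))$ is uniformly close to $S(x,\xi_t(x))=f_t(x)$ on $K\times[0,1]$ by continuity of $S$. The final ``in particular'' assertion follows by applying the theorem with $U=X$ and reading off the map $F(\cdot,1)\colon X\to Y$.

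The step I expect to be the main obstacle is the homotopy lifting: a single spray is fibrewise submersive only near its zero section, so to factor the \emph{entire} homotopy $f_t$---which may travel far from $f_0$---through one algebraic spray over $f_0$ one must ensure that the composed spray dominates over a whole neighbourhood of the homotopy trace over $K$ while remaining algebraic, and simultaneously control the lift so that $\xi_0=0$. Getting the spray large enough (by composing sufficiently many members of the dominating family) without leaving the algebraic category is the technical heart of the argument.
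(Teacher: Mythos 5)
The paper does not actually prove Theorem \ref{th:Runge-algebraic}; it cites \cite[Theorem 3.1]{Forstneric2006AJM} and \cite[Theorem 6.15.1]{Forstneric2017E}, so your proposal has to be measured against that argument. Your overall architecture is the right one and agrees with it: pull back a finite dominating family of algebraic sprays, lift the homotopy to the fibres, approximate the lift by regular maps (via an algebraic embedding of $X\times\C$, Cartan extension, and polynomial approximation on the polynomially convex set $K\times[0,1]$), push back down with the spray, and use the subtraction trick to keep $F(\cdotp,0)=f_0$ exact. However, the step you yourself flag as the main obstacle is a genuine gap, and in the form you state it the argument cannot be repaired. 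Submersivity of $S(x,\cdotp)$ at the zero section together with the property that the image of $S(x,\cdotp)$ covers a neighbourhood of the homotopy trace does \emph{not} permit lifting: path lifting needs submersivity of $S$ along the entire (unknown) lift, and a spray obtained by composing pullbacks is submersive in general only along (partial) zero sections. Once $f_t$ leaves a small neighbourhood of $f_0$, your ``implicit function theorem plus subdivision of $[0,1]$'' has no inductive mechanism to continue, because the point $\xi_t(x)$ at which the next application of the implicit function theorem would take place is no longer one where $S$ is known to be submersive.

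The cited proof closes exactly this gap. One chooses a subdivision $0=t_0<t_1<\cdots<t_m=1$ so that consecutive maps are uniformly close near $K$, and builds the spray with $m$ \emph{repeated blocks}, each block being a composition of the full dominating family; block $j+1$, with the later blocks set to zero, is then a dominating spray based at the point reached by the first $j$ blocks, so it is submersive in the block-$(j+1)$ directions along that partial zero section. The lift is constructed blockwise: block $j+1$ is activated only on $[t_j,t_{j+1}]$, where $f_t$ stays close to $f_{t_j}$, so the implicit function theorem genuinely applies at each stage (equivalently, in the inductive formulation of \cite{Forstneric2006AJM}, one approximates after each short segment and re-pulls the spray back over the \emph{new} algebraic map $g_j\approx f_{t_j}$ rather than over $f_0$). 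In particular the number of spray factors depends on the homotopy through the subdivision, not only on $Y$. A secondary inaccuracy in your first paragraph: the composed pullback is an iterated tower of vector bundles, not a vector bundle over $X$, so Serre's ``direct summand of a trivial bundle'' must be invoked at each stage of the composition, over the affine total space of the preceding stages, before the next spray is pulled back; applied in one shot to the composed object it does not parse. With these two corrections your remaining steps --- the polynomial-in-$t$ approximation of the lift with value $0$ at $t=0$, the regular approximation on $K\times[0,1]$, the subtraction trick, and reading off $F(\cdotp,1)$ for the last assertion --- go through and reproduce the cited proof.
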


Simple examples show that Theorem \ref{th:Runge-algebraic} does not hold in the absolute form, 
i.e., there are examples of holomorphic maps which are not
homotopic to algebraic maps (see \cite[Examples 6.15.7 and 6.15.8]{Forstneric2017E}). 

By \cite[Proposition 6.4.5]{Forstneric2017E}, the class of algebraically subelliptic manifolds  
contains all algebraic manifolds which are Zariski locally affine (such manifolds are said to be
of {\em Class $A_0$}, see \cite[Definition 6.4.4]{Forstneric2017E}), and all  complements of 
closed algebraic subvarieties of codimension at least two in such  manifolds. 
In particular, every complex Grassmanian is algebraically subelliptic, so 
Theorem \ref{th:Runge-algebraic} includes as a special case the result of 
W.\ Kucharz \cite[Theorem 1]{Kucharz1995} from 1995.  
Another paper on this topic is due to  J.\ Bochnak and W.\ Kucharz \cite{BochnakKucharz1996}.

In conclusion, we mention another interesting Runge type approximation theorem of a rather different type,
due to A.\ Gournay \cite{Gournay2012}.  A smooth almost complex manifold
$(M,J)$ is said to satisfy the {\em double tangent property} if through almost every point $p\in M$ 
and almost every $2$-jet of $J$-holomorphic discs at $p$, there exists a $J$-holomorphic map 
$u\colon\C\P^1\to M$ having this jet as its second jet at $0\in \C\P^1$.

%
%
\begin{theorem}[A.\ Gournay (2012), \cite{Gournay2012}] \label{th:Gournay}
Let $(M,J)$ be a compact almost complex manifold satisfying the double tangent property and let $R$ 
be a compact Riemann surface. Then, for every open set $U\subset R$ and every compact $K\subset U$, 
every $J$-holomorphic map $u\colon U\to M$ which continuously extends to $R$ can be approximated uniformly on $K$ by $J$-holomorphic maps from $R$ to $M$. 
\end{theorem}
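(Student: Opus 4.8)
The plan is to read the double tangent property as the almost-complex, nonlinear analogue of the existence of a \emph{dominating spray}, so that $J$-holomorphic rational curves $\CP^1\to M$ play the role that polynomials (or entire maps into an Oka manifold) play in the classical theory. The spheres furnished by the property will serve as the flexible building blocks with which one corrects the given map $u$ off the compact set $K$, exactly as pushing poles to infinity corrects a rational function in Runge's theorem.

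First I would enlarge the domain slightly: choose an open set $V$ with $K\subset V\Subset U$ and piecewise smooth boundary so that $u$ is $J$-holomorphic on a neighborhood of $\overline V$, and reduce the global problem to finitely many Runge-type extension steps that progressively enlarge the subsurface carrying a $J$-holomorphic approximant, filling one component of $R\setminus V$ at a time until it covers all of $R$. In each step, at finitely many generic points $p_i$ in the image of the current approximant, chosen together with good $2$-jets in the full-measure \emph{good} set of the double tangent property, I would build families of $J$-holomorphic spheres $\sigma_i\colon\CP^1\times W_i\to M$ depending on a parameter $w\in W_i\subset\C^{k}$, whose $2$-jets at $0\in\CP^1$ match those of the approximant at $p_i$ and whose parameter-derivatives at $w=0$ span $T_{p_i}M$. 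Gluing these parametrized sphere families to the approximant along long, thin necks yields a fibre-dominating spray $F$ whose central map ($w=0$) is the current approximant; a Newton-type implicit-function iteration then solves the nonlinear equation $\dibar_J F(\cdot,w(\cdot))=0$ for a small section $w$ over the enlarged subsurface, the domination guaranteeing solvability of the linearized $\dibar$-problem with uniform elliptic estimates. Controlling the corrections geometrically keeps the final map $R\to M$ uniformly $\epsilon$-close to $u$ on $K$.

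The hard part will be the nonlinear gluing analysis in the absence of an integrable complex structure. Unlike the holomorphic case there is no global linear $\dibar$-solution operator, so one must glue the parametrized spheres to the approximant by a connect-sum construction with a degenerating neck parameter, control the resulting approximately $J$-holomorphic maps via Gromov compactness, and run the implicit-function argument on weighted Sobolev spaces whose constants remain uniform as the neck degenerates. A secondary obstacle is genericity: since the double tangent property holds only for \emph{almost every} point and $2$-jet, the attachment data must be perturbed into the good set while simultaneously preserving the domination (surjectivity of $\di_w F|_{w=0}$) and the $2$-jet matching with the approximant, which requires a transversality argument for the jet-evaluation map.
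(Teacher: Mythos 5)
The paper does not prove this theorem at all: Theorem \ref{th:Gournay} is stated as a survey item, quoted with a reference to Gournay's original article, so there is no in-paper proof to compare your attempt against. Judged against the cited source, your outline does capture the strategy that Gournay actually uses: the double tangent property supplies parametrized families of pseudo-holomorphic spheres which, grafted onto the given map along degenerating necks, make the linearized $\dibar_J$-operator surjective (your ``domination''), after which a Newton/implicit-function iteration in weighted Sobolev spaces, with constants kept uniform via Gromov-compactness control of the necks, produces a genuine $J$-holomorphic map $R\to M$ close to $u$ on $K$; the continuous extension of $u$ to $R$ is what makes the global correction possible at all. So your proposal is the right approach rather than a different one. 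Be aware, though, that what you have written is an architecture, not a proof: the two issues you flag as ``hard parts'' --- uniform invertibility of the linearization as the neck parameter degenerates, and perturbing the attachment jets into the full-measure good set while preserving both the $2$-jet matching and the surjectivity of the parameter derivative --- are precisely where the bulk of Gournay's paper lives, and neither admits a soft argument; in particular, there is no global linear solution operator for $\dibar_J$ in the almost complex setting, so the entire weight of the proof rests on the quantitative gluing estimates you have deferred.
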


%
%

\subsection{Mergelyan theorem for manifold-valued maps}\label{ss:Mergelyan-manifold}

In this section, we consider the question for which compact sets $K$ in a complex manifold $X$
does the approximability of functions in $\Acal^r(K)$ ($r\in \Z_+$) by 
functions in $\Ocal(K)$ imply the analogous result for maps to an arbitrary complex manifold $Y$.
Such approximation problems arise naturally in many applications.

Recall that $\Acal(K,Y)$ denotes the set of all continuous maps $K\to Y$ which are holomorphic in 
$\mathring K$, and that if $r\in\N$ then $\Acal^r(K,Y)$ is the set of all maps
$f\in \Acal(K,Y)$ which admit a $\Ccal^r$ extension to an open neighborhood of $K$ in $X$. 
We say that the mapping space $\Acal(K,Y)$ has the {\it Mergelyan property} if 
\[
	\Ocalc(K,Y)  = \Acal(K,Y),
\] 
that is, every continuous map $K\to Y$ that is holomorphic in the interior $\mathring K$  is a uniform limit 
of maps that are holomorphic in open neighborhoods of $K$ in $X$. 

%
%
\begin{lemma}\label{lem:Steinnbd}
Assume that $X$ is a complex manifold and $K\subset X$ is a compact set satisfying $\Ocalc(K)  = \Acal(K)$. 
Let $Y$ be a complex manifold, and let $f\in \Acal(K,Y)$.
Then $f\in \Ocalc(K,Y)$ if one of the following conditions hold:
\begin{enumerate}[\rm (a)]
\item The image $f(K)\subset Y$ has a Stein neighborhood in $Y$. 
\item The graph $G_f = \bigl\{(x,f(x)) : x\in K\bigr\}$ has a Stein neighborhood in $X\times Y$.
\end{enumerate} 
\end{lemma}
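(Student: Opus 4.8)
The plan is to reduce the manifold-valued approximation problem to the scalar hypothesis $\Ocalc(K)=\Acal(K)$ by composing with a proper holomorphic embedding of a Stein neighborhood into some $\C^N$, approximating coordinatewise, and then projecting back by a holomorphic retraction. In both cases the mechanism is the same; only the choice of Stein manifold into which one embeds differs.

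Consider first case (a). Let $\Omega_Y\subset Y$ be a Stein neighborhood of the compact set $f(K)$. By the embedding theorem for Stein manifolds there is a proper holomorphic embedding $\iota\colon\Omega_Y\hra\C^N$ onto a closed complex submanifold $A=\iota(\Omega_Y)$, and by the Docquier--Grauert theorem $A$ admits an open neighborhood $U\subset\C^N$ together with a holomorphic retraction $\pi\colon U\to A$. Since $f\in\Acal(K,Y)$ and $\iota$ is holomorphic, the map $\iota\circ f\colon K\to\C^N$ lies in $\Acal(K,\C^N)$, i.e. each of its $N$ components belongs to $\Acal(K)=\Ocalc(K)$. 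Hence I can approximate each component uniformly on $K$ by a function holomorphic on a neighborhood of $K$; intersecting the finitely many neighborhoods yields a map $h\colon V\to\C^N$, holomorphic on a neighborhood $V\supset K$, with $h$ uniformly close to $\iota\circ f$ on $K$.

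The bookkeeping step is to land $h$ inside the retraction domain and control the error afterwards. Since $\iota\circ f(K)$ is a compact subset of the open set $U$ and $h$ is uniformly close to $\iota\circ f$ on $K$, I have $h(K)\subset U$; by continuity of $h$ and compactness of $K$ there is a smaller neighborhood $V'\subset V$ of $K$ with $h(V')\subset U$. Then $\iota^{-1}\circ\pi\circ h\colon V'\to\Omega_Y\subset Y$ is holomorphic on $V'$. Because $\pi|_A=\mathrm{id}$ and both $\pi$ and $\iota^{-1}$ are uniformly continuous on compacta, $\iota^{-1}\circ\pi\circ h$ is uniformly close to $\iota^{-1}\circ\pi\circ(\iota\circ f)=f$ on $K$ with respect to a fixed Riemannian distance on $Y$. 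Letting the scalar approximations improve gives $f\in\Ocalc(K,Y)$.

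Case (b) runs along identical lines applied to the graph. Let $\Omega\subset X\times Y$ be a Stein neighborhood of $G_f$ and let $g\colon K\to\Omega$, $g(x)=(x,f(x))$, which lies in $\Acal(K,\Omega)\subset\Acal(K,X\times Y)$. Embedding $\Omega$ properly into $\C^N$ by $\iota$ and choosing a retraction $\pi$ as before, I approximate the scalar components of $\iota\circ g$ using $\Ocalc(K)=\Acal(K)$, retract by $\pi$, and apply $\iota^{-1}$ to obtain holomorphic maps $K\to\Omega$ converging to $g$; composing with the holomorphic projection $\mathrm{pr}_Y\colon X\times Y\to Y$ produces maps holomorphic in neighborhoods of $K$ and converging to $\mathrm{pr}_Y\circ g=f$, whence $f\in\Ocalc(K,Y)$. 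I expect no deep obstacle here: the only genuine technical content is the elementary compactness-and-uniform-continuity argument keeping the approximants in the retraction domain and bounding the error after composing with $\pi$ and $\iota^{-1}$. The Stein hypothesis in (a) or (b) is precisely what permits this linearization into the already-assumed scalar case.
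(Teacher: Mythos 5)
Your proposal is correct and follows essentially the same route as the paper's proof: embed the Stein neighborhood as a closed submanifold of $\C^N$ (Remmert--Bishop--Narasimhan), invoke the Docquier--Grauert holomorphic retraction, approximate the components using the scalar hypothesis $\Acal(K)=\Ocalc(K)$, and map back, projecting to $Y$ in case (b). The only cosmetic difference is that the paper proves (b) and remarks that (a) is analogous, while you spell out both cases and the compactness/uniform-continuity bookkeeping explicitly.
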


\begin{proof}
We will give a proof of (b); the proof of (a) is essentially the same.  
Assume that $V\subset X\times Y$ is a Stein neighbourhood of $G_f$.  
By the Remmert-Bishop-Narasimhan theorem  (see \cite[Theorem 2.4.1]{Forstneric2017E})
there is a biholomorphic map $\phi\colon V\to \Sigma\subset \C^N$
onto a closed complex submanifold of a Euclidean space.
By the Docquier-Grauert theorem (see \cite[Theorem 3.3.3]{Forstneric2017E})
there is a neighborhood $\Omega\subset \C^N$ of $\Sigma$ and a holomorphic retraction 
$\rho\colon\Omega\to \Sigma$. Assuming that $\Ocalc(K)  = \Acal(K)$, we can approximate the map 
$\phi\circ f \colon K\to \Sigma\subset \C^N$ as closely as desired uniformly on $K$
by a holomorphic map $G\colon U\to \Omega\subset\C^N$ from an open neighborhood
$U\subset X$ of $K$. The map $g = pr_Y \circ \phi^{-1} \circ \rho \circ G : U \to Y$ then 
approximates $f$ on $K$.
\qed\end{proof}

Given a compact set $K$ in a complex manifold $X$ and a complex manifold $Y$, let 
\[
		\Ocalcl(K,Y)
\]
denote the set of all continuous maps $f\colon K\to Y$ which are locally approximable by 
holomorphic maps, in the sense that every point $x \in K$ has an open neighborhood $U\subset X$ 
such that $f|_{K\cap \overline U}\in \Ocalc(K\cap \overline U)$. Clearly, 
\[
	 \Ocal(K,Y) \subset \Ocalc(K,Y) \subset \Ocalcl(K,Y)\subset \Acal(K,Y).
\]
When $Y=\C$, we simply write $\Ocal(K) \subset \Ocalc(K) \subset \Ocalcl(K)\subset \Acal(K)$.
We say that the space $\Acal(K,Y)$ has the {\em local Mergelyan property} if 
\begin{equation}\label{eq:LMP}
	\Ocalcl(K,Y) = \Acal(K,Y).
\end{equation}

The following  theorem was proved by E.\ Poletsky \cite[Theorem 3.1]{Poletsky2013}.

%
%

\begin{theorem}[Poletsky (2013), \cite{Poletsky2013}] \label{th:Poletsky3.1} 
Let $K$ be a Stein compact in a complex manifold $X$, and let $Y$ be a complex manifold.
For every $f\in \Ocalcl(K,Y)$, the graph of $f$ on $K$ is a Stein compact in $X\times Y$. 
In particular, if $\Acal(K,Y)$ has the local Mergelyan property \eqref{eq:LMP}, then 
the graph of every map $f\in \Acal(K,Y)$ is a Stein compact in $X\times Y$.
\end{theorem}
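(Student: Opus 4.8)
The plan is to show that the graph $\Gamma_f = \{(x,f(x)) : x \in K\}$ admits a basis of open Stein neighborhoods in $X\times Y$; this is exactly the assertion that $\Gamma_f$ is a Stein compact, and the ``in particular'' clause is then immediate, since the local Mergelyan property \eqref{eq:LMP} says precisely that $\Acal(K,Y)\subset\Ocalcl(K,Y)$. Since $K$ is a Stein compact, I first fix a Stein neighborhood $\Omega\subset X$ of $K$ together with a smooth strictly plurisubharmonic exhaustion $\tau\colon\Omega\to\R$, and work inside $\Omega\times Y$. Using $f\in\Ocalcl(K,Y)$ and compactness of $K$, I cover $K$ by finitely many open sets $U_1,\dots,U_m\subset X$ such that, writing $K_i=K\cap\overline{U_i}$, each restriction $f|_{K_i}$ is a uniform limit of maps $g_{i,j}$ holomorphic on neighborhoods of $K_i$ with values in $Y$.

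The conceptual heart is the local step, where I make the continuous map $f$ behave, for the purposes of plurisubharmonicity, as if it were holomorphic. After shrinking the $U_i$ so that $g_{i,j}(K_i)$ lies in a coordinate chart of $Y$ with coordinate $y$, the function $(x,y)\mapsto |y-g_{i,j}(x)|^2$ is plurisubharmonic, being the squared modulus of the holomorphic map $y-g_{i,j}(x)$. Since $g_{i,j}\to f$ uniformly on $K_i$ these functions converge uniformly, and a uniform limit of plurisubharmonic functions is plurisubharmonic; hence $\sigma_i(x,y):=|y-f(x)|^2$ is plurisubharmonic on a neighborhood of $\Gamma_{f,K_i}:=\{(x,f(x)):x\in K_i\}$ and vanishes precisely on that graph. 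As an alternative I keep in reserve, the graph of each $g_{i,j}$ is a closed Stein submanifold of $\Omega_i\times Y$ for a Stein neighborhood $\Omega_i\subset U_i$ of $K_i$, so by Siu's theorem on Stein neighborhoods of Stein subvarieties it has a basis of Stein neighborhoods in $\Omega_i\times Y$; letting $g_{i,j}\to f$ then exhibits $\Gamma_{f,K_i}$ as a Stein compact directly. Either way I obtain, near $\Gamma_{f,K_i}$, a nonnegative plurisubharmonic function vanishing exactly on the graph whose small sublevel sets are relatively compact.

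With the local functions in hand, the plan for the global step is to assemble a smooth strictly plurisubharmonic exhaustion on a small neighborhood $N$ of $\Gamma_f$: by the Grauert--Narasimhan solution of the Levi problem, a complex manifold carrying such an exhaustion is Stein, and shrinking $N$ produces the desired neighborhood basis. I would combine $\pi_X^*\tau$, which is strictly plurisubharmonic in the base directions and controls the $X$-factor, with a regularized maximum of suitably normalized smoothings of the $\sigma_i$, corrected by a small multiple of $\pi_X^*\tau$ to recover strictness in the fibre directions using that $\sigma_i$ vanishes to second order along the graph. The normalization must be chosen so that, near the boundary of each local domain, the relevant $\sigma_i$ is dominated by a neighbour already defined there, so that the regularized maximum is globally defined, strictly plurisubharmonic across the overlaps, and tends to $+\infty$ at the boundary of $N$.

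I expect this last assembly to be the main obstacle. The difficulty is not plurisubharmonicity at interior points, which the regularized maximum handles, but the bookkeeping at the overlaps and near the boundary: the local functions live on different sets $U_i\times Y$, and one must arrange their boundary behaviour, via a closed refinement $\overline{U_i'}\subset U_i$ of the cover and additive constants, so that dropping an index where it becomes undefined never discards the current maximum, while still forcing the glued function to exhaust a full neighborhood of $\Gamma_f$. Here the Stein compactness of $K$ is essential: the single global exhaustion $\tau$ of $\Omega$ both confines the construction to $\Omega\times Y$ and supplies the base-direction strict convexity that the purely fibrewise functions $\sigma_i$ lack, while the quadratic vanishing of $\sigma_i$ on the graph is what lets a small multiple of $\pi_X^*\tau$ restore strictness without destroying the requirement that the sublevel sets shrink down to $\Gamma_f$.
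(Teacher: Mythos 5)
Your local step contains a fatal gap. You claim that since $g_{i,j}\to f$ uniformly on $K_i$, the plurisubharmonic functions $(x,y)\mapsto |y-g_{i,j}(x)|^2$ converge uniformly and hence $\sigma_i(x,y):=|y-f(x)|^2$ is plurisubharmonic \emph{on a neighborhood of} $\Gamma_{f,K_i}$. But $f$ is defined only on $K$, so $\sigma_i$ is defined only on $(K_i\times Y)$, which is not an open subset of $X\times Y$ (unless $x$ ranges in $\mathring K$); and the convergence $g_{i,j}\to f$ takes place only on the compact set $K_i$ — uniform approximation on $K_i$ imposes no constraint whatsoever on the $g_{i,j}$ at points off $K$, where they may diverge wildly. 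So there is no open set on which "a uniform limit of plurisubharmonic functions is plurisubharmonic" can be applied, and no plurisubharmonic function vanishing exactly on the graph exists in general (the graph of a merely continuous map is not an analytic set, and for $x\in\mathring K$ such a function would force $f$ to be holomorphic there in a way you have not established). Since your entire global step is a regularized maximum of the $\sigma_i$, it has no input, and the proof collapses. Your reserve alternative has the same defect in disguise: Siu's theorem gives Stein neighborhoods of the graphs of the $g_{i,j}$, but "letting $g_{i,j}\to f$" does not exhibit a Stein neighborhood \emph{basis} of $\Gamma_{f,K_i}$, and even if it did, a finite union of Stein compacts need not be a Stein compact, so the local pieces cannot simply be assembled.

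The repair is exactly where the actual proof (Poletsky's) diverges from yours: one must never pass to the limit function. Keep the holomorphic approximants themselves and work with $\rho_j(x,y)=|f_j(x)-y|^2$, which are genuinely plurisubharmonic on open sets $U_j=V_j\times W_j$; after shrinking the $U_j$ around the graph, continuity of the $f_j$ makes the discrepancies $\|\rho_i-\rho_j\|_{U_i\cap U_j}$ small, because the $f_i,f_j$ are close on $K_i\cap K_j$ and hence, being continuous, on a neighborhood of it. The remaining problem — which your regularized maximum cannot solve even with this corrected local data, since on symmetric overlaps the required additive constants would have to satisfy $c_i<c_k-\epsilon$ and $c_k<c_i-\epsilon$ simultaneously — is handled by a quantitative \emph{fusing lemma} (in the spirit of Demailly's proof of Siu's theorem): it produces a single plurisubharmonic $\rho$ on $\bigcup_j U_j$ with $\|\rho-\rho_j\|_{U_j}$ controlled by the overlap discrepancies and a constant depending on a strongly plurisubharmonic function $\tau$ on a Stein neighborhood of $K$ (this is the real role of the hypothesis that $K$ is a Stein compact, not merely supplying base-direction convexity). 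Taking finer and finer approximations of $f$ then yields strongly plurisubharmonic exhaustion functions on arbitrarily small neighborhoods of $\Gamma_f$, and Grauert's theorem concludes, as in your final step.
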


Poletsky's  proof  uses the technique of {\em fusing plurisubharmonic functions}.
Rough\-ly speaking, we approximate a collection of plurisubharmonic functions
$\rho_j\colon U_j\to\R$ on open sets $U_j\subset X\times Y$ covering the graph of $f$ 
by a plurisubharmonic function $\rho$ on $U=\bigcup_j U_j$, in the sense that the sup norm 
$\|\rho-\rho_j\|_{U_j}$ for each $j$ is estimated in terms of $\max_{i,j}\|\rho_i-\rho_j\|_{U_i\cap U_j}$ and a certain 
positive constant which depends on a strongly plurisubharmonic function
$\tau$ in a Stein open neighborhood of $K$ in $X$. This fusing procedure is  rather
similar to the proof of Y.-T.\ Siu's theorem \cite{Siu1976} given by J.-P.\ Demailly \cite{Demailly1990}
and Col\c{t}oiu \cite{Coltoiu1990}. (Demailly's proof can also be found in \cite[Sect.\ 3.2]{Forstneric2017E}.)
The functions $\rho_j$ alluded to above are of the form $|f_j(x)-y|^2$, where
$(x,y)$ is a local holomorphic coordinate on $U_j=V_j\times W_j$ with $V_j\subset X$ and $W_j\subset Y$,
and $f_j\in \Ocal(U_j,Y)$ is a holomorphic map which approximates $f$ on $U_j\cap K$.
(Such local approximations exist by the hypothesis of the theorem.) 
By this technique, one finds strongly plurisubharmonic exhaustion functions 
on arbitrarily small open neighborhoods of the graph of $f$ in $X\times Y$;
by Grauert's theorem \cite{Grauert1958AM} such neighborhoods are Stein.

In the special case when the set $K$ in Theorem \ref{th:Poletsky3.1} 
is the closure of a relatively compact strongly pseudoconvex Stein domain, 
the existence of a Stein neighborhood basis of the graph of any 
map $f\in \Acal(K,Y)$ was first proved by F.\ Forstneri\v c \cite{Forstneric2007AJM} in 2007. 
His proof uses the method of gluing holomorphic sprays.
                        
Theorem \ref{th:Poletsky3.1} and  Lemma \ref{lem:Steinnbd} give the following corollary.

\begin{corollary} \label{cor:Poletsky1}
Let $K$ be a Stein compact in a complex manifold $X$. If $\Acal(K)=\Ocalc(K)$,
then $\Ocalcl(K,Y)=\Ocalc(K,Y)$ holds for any complex manifold $Y$.
\end{corollary}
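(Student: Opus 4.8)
The plan is to prove the nontrivial inclusion $\Ocalcl(K,Y) \subset \Ocalc(K,Y)$; the reverse inclusion $\Ocalc(K,Y) \subset \Ocalcl(K,Y)$ is already recorded in the chain of inclusions preceding the statement, so nothing needs to be done there. The argument is a direct synthesis of the two results just established, and the corollary is really a packaging of Theorem \ref{th:Poletsky3.1} and Lemma \ref{lem:Steinnbd}.

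First I would fix an arbitrary map $f \in \Ocalcl(K,Y)$, so that $f$ is locally approximable by holomorphic maps in the sense defined just above the statement. Since $K$ is a Stein compact, Theorem \ref{th:Poletsky3.1} applies and shows that the graph $G_f = \{(x,f(x)) : x \in K\}$ is a Stein compact in the product manifold $X \times Y$. By the definition of a Stein compact, $G_f$ then admits a basis of open Stein neighborhoods in $X \times Y$; in particular, condition (b) of Lemma \ref{lem:Steinnbd} is satisfied for $f$. Next I would invoke the standing hypothesis $\Acal(K) = \Ocalc(K)$ together with the inclusion $\Ocalcl(K,Y) \subset \Acal(K,Y)$, which guarantees $f \in \Acal(K,Y)$. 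All hypotheses of Lemma \ref{lem:Steinnbd} are thus in place, and its conclusion yields $f \in \Ocalc(K,Y)$. As $f$ was arbitrary, this proves the claimed equality for every complex manifold $Y$.

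I do not expect any genuine obstacle in this assembly: the entire analytic weight sits in Theorem \ref{th:Poletsky3.1}, whose proof builds strongly plurisubharmonic exhaustion functions on small neighborhoods of $G_f$ by fusing the local models $|f_j(x)-y|^2$, and in Lemma \ref{lem:Steinnbd}, which transports a scalar approximation of $\phi \circ f$ back to $Y$ through a holomorphic retraction onto an embedded copy $\Sigma$ of the Stein neighborhood. The only point worth flagging, and it is immediate, is that one must use case (b) of Lemma \ref{lem:Steinnbd} rather than case (a), since Poletsky's theorem supplies a Stein neighborhood of the \emph{graph} $G_f$ and not, a priori, of the image $f(K) \subset Y$.
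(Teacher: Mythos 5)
Your proposal is correct and follows exactly the paper's own argument: Theorem \ref{th:Poletsky3.1} supplies a Stein neighborhood of the graph $G_f$, and Lemma \ref{lem:Steinnbd}(b), together with the hypothesis $\Acal(K)=\Ocalc(K)$ and the inclusion $\Ocalcl(K,Y)\subset\Acal(K,Y)$, yields $f\in\Ocalc(K,Y)$. Your observation that case (b) rather than case (a) of the lemma is the one supplied by Poletsky's theorem is also the right point to flag.
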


\begin{proof}
Note that $\Ocalc(K,Y)\subset \Ocalcl(K,Y)$. 
Assume now that $f\in \Ocalcl(K,Y)$. By Theorem \ref{th:Poletsky3.1}, the graph of $f$ on $K$ 
admits an open Stein neighborhood in $X\times Y$. 
Assuming that $\Acal(K)=\Ocalc(K)$, Lemma \ref{lem:Steinnbd}(b) shows that $f\in \Ocalc(K,Y)$.
\qed\end{proof}

In light of Theorem \ref{th:Poletsky3.1} and Corollary \ref{cor:Poletsky1}, it is natural to ask 
when does the space $\Acal(K,Y)$ enjoy the local Mergelyan property \eqref{eq:LMP}.
To this end, we introduce the following property of a compact set in a complex manifold.

%
%
\begin{definition}  
\label{def:SLMP}
A compact set $K$ in a complex manifold $X$ enjoys the {\em strong local Mergelyan property}
if for every point $x\in K$ and  neighborhood $x\in U\subset X$ there is
a neighborhood $x\in V\subset U$ such that $\Acal(K\cap \overline V)=\Ocalc(K\cap \overline V)$.
\end{definition}

\label{page:rem:SLMP}
%
%
\begin{remark}\label{rem:SLMP}
Clearly, the strong local Mergelyan property of $K$ implies the local Mergelyan property 
$\Acal(K)=\Ocalcl(K)$ of the algebra $\Acal(K)$. However, the former property is ostensibly 
stronger since it asks for approximability of functions defined on small neighborhoods of points 
in $K$, and not only of functions in $\Acal(K)$. If $K$ has empty interior,
we have $\Acal(K)=\Ccal(K)$ and the two properties are equivalent by the Tietze 
extension theorem for continuous functions. 
 Theorem \ref{th:localization-converse} due to A.\ Boivin and B.\ Jiang  \cite[Theorem 1]{BoivinJiang2004}
shows that, for a compact set $K$ in a Riemann surface,
the Mergelyan property $\Acal(K)=\Ocalc(K)$ implies the strong local Mergelyan
property of $K$. We do not know whether the same holds for compact sets in 
higher dimensional manifolds. It is obvious that every compact set 
with boundary of class $\Ccal^1$ in any complex manifold has the strong local Mergelyan property.
Note also that the strong local Mergelyan property for functions implies the same property
for maps to an arbitrary complex manifold $Y$, for the simple reason that locally any map
has range in a local chart of $Y$ which is biholomorphic to an open subset of a Euclidean space.
This is the main use of this property in the present paper.
\qed\end{remark}
 
%
%
\begin{problem}\label{prob:SLMP}  Let $K$ be a compact set in a complex manifold $X$.
\begin{enumerate} 
\item Does $\Acal(K)=\Ocalcl(K)$ imply the strong local Mergelyan property of $K$?
\item Does $\Acal(K)=\Ocalc(K)$ imply the strong local Mergelyan property of $K$?
\end{enumerate}
\end{problem}

We have the following corollary to Theorem \ref{th:Poletsky3.1}.

%
%
\begin{corollary}\label{cor:SLMP}
Let $K$ be a compact set in a complex manifold $X$.
\begin{enumerate}[\rm (a)]
\item If $K$ has the strong local Mergelyan property (see Definition \ref{def:SLMP}), 
then $\Acal(K,Y)=\Ocalcl(K,Y)$ holds for every complex manifold $Y$.
\item   If $K$ is a Stein compact with the strong local Mergelyan property and $\Acal(K)=\Ocalc(K)$, 
then $\Acal(K,Y)=\Ocalc(K,Y)$ holds for every complex manifold $Y$. 
\item   If $K$ is a Stein compact with $\Ccal^1$ boundary such that $\Acal(K)=\Ocalc(K)$, 
then $\Acal(K,Y)=\Ocalc(K,Y)$ holds for every complex manifold $Y$. 
\end{enumerate}
\end{corollary}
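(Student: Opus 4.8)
The plan is to prove the three parts in order, deriving (b) from (a) and (c) from (b), so that the only genuinely new work lies in part (a). In every part the inclusions $\Ocalc(K,Y)\subset\Ocalcl(K,Y)\subset\Acal(K,Y)$ are automatic, so it suffices to establish the reverse inclusion for an arbitrary $f\in\Acal(K,Y)$.

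For part (a) I would argue locally, exactly along the lines indicated in Remark \ref{rem:SLMP}. Fix $f\in\Acal(K,Y)$ and a point $x\in K$, and choose a holomorphic chart $\psi\colon W\to\psi(W)\subset\C^N$ on $Y$ with $f(x)\in W$. By continuity of $f$ there is a neighborhood $U\subset X$ of $x$ with $f(K\cap\overline U)\subset W$, and the strong local Mergelyan property (Definition \ref{def:SLMP}) lets me shrink to a neighborhood $V\subset U$ with $\Acal(K\cap\overline V)=\Ocalc(K\cap\overline V)$. The components of $\psi\circ f|_{K\cap\overline V}$ then lie in $\Acal(K\cap\overline V)=\Ocalc(K\cap\overline V)$, so I can approximate $\psi\circ f$ uniformly on $K\cap\overline V$ by a map $G$ holomorphic on some neighborhood $V'$ of $K\cap\overline V$ and valued in $\C^N$. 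The one step requiring care, and the main technical obstacle of the whole argument, is to push $G$ back into $Y$: since $\psi\circ f$ sends the compact set $K\cap\overline V$ into the open set $\psi(W)$, a sufficiently close approximation keeps $G(K\cap\overline V)$ inside $\psi(W)$, and after shrinking $V'$ one obtains $G(V')\subset\psi(W)$; then $\psi^{-1}\circ G$ is holomorphic near $K\cap\overline V$ and approximates $f$ there. This yields $f|_{K\cap\overline V}\in\Ocalc(K\cap\overline V,Y)$ for each $x$, i.e.\ $f\in\Ocalcl(K,Y)$, proving (a).

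Part (b) then merely assembles (a) with the tools already in hand. Given $f\in\Acal(K,Y)$, part (a) places $f$ in $\Ocalcl(K,Y)$; since $K$ is a Stein compact with $\Acal(K)=\Ocalc(K)$, Corollary \ref{cor:Poletsky1} gives $\Ocalcl(K,Y)=\Ocalc(K,Y)$, whence $f\in\Ocalc(K,Y)$. Equivalently, one may invoke Theorem \ref{th:Poletsky3.1} to produce a Stein neighborhood of the graph of $f$ and then apply Lemma \ref{lem:Steinnbd}(b); this is precisely the content of Corollary \ref{cor:Poletsky1}. Either way $\Acal(K,Y)=\Ocalc(K,Y)$.

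Finally, for part (c) I would simply invoke the observation recorded in Remark \ref{rem:SLMP} that every compact set with boundary of class $\Ccal^1$ automatically enjoys the strong local Mergelyan property, choosing at each point a neighborhood $V$ so small that $K\cap\overline V$ is a sufficiently regular local piece. With this, $K$ satisfies all the hypotheses of part (b), and (c) follows at once. The corollary is thus essentially a bookkeeping exercise layered on top of Poletsky's Theorem \ref{th:Poletsky3.1} and Corollary \ref{cor:Poletsky1}, the chart push-back in part (a) being the only step that demands genuine attention.
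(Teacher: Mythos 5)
Your proposal is correct and follows essentially the same route as the paper: part (a) by pushing $f$ into a chart of $Y$ and invoking the strong local Mergelyan property (your chart push-back argument is exactly the content the paper delegates to Remark \ref{rem:SLMP}), part (b) by combining (a) with Corollary \ref{cor:Poletsky1}, and part (c) by noting that $\Ccal^1$ boundary gives the strong local Mergelyan property.
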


\begin{proof}
(a) Let $f\in \Acal(K,Y)$. Every point $x\in K$ has an open neighborhood $U_x\subset X$ such that 
$f(K\cap \overline U_x)$ is contained in a coordinate chart $W \subset Y$ biholomorphic to an open 
subset of $\C^n$, $n=\dim Y$. Since $K$ is assumed to have the strong local Mergelyan property, there exists a 
compact relative neighborhood $K_x\subset K\cap U$ of the point $x$ in $K$ such that 
$f|_{K_x}\in \Ocalc(K_x,W)$. (See Remark \ref{rem:SLMP}.) 
This means that $f\in \Ocalcl(K,Y)$, thereby proving (a).
In case (b), Corollary \ref{cor:Poletsky1} implies $\Ocalcl(K,Y)=\Ocalc(K,Y)$,
and together with part (a) we get $\Acal(K,Y)=\Ocalc(K,Y)$.
In case (c),  the  set $K$ clearly has the strong local Mergelyan property,
so the conclusion follows from (b).
\qed\end{proof}

The following case concerning compact sets in Riemann surfaces may be of particular interest
(see \cite[Theorem 1.4]{Forstneric2019MMJ}).

%
%
\begin{corollary} \label{cor:KinRS}
If $K$ is a compact set in a Riemann surface $X$ such that $\Acal(K)=\Ocalc(K)$, 
then $\Acal(K,Y)=\Ocalc(K,Y)$ holds for any complex manifold $Y$.
This holds in particular if $X\setminus K$ has no relatively compact connected components.
\end{corollary}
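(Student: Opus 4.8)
The plan is to obtain the first assertion as an immediate instance of Corollary~\ref{cor:SLMP}(b). That result transfers the scalar Mergelyan property to maps into an arbitrary $Y$, provided three hypotheses hold: $K$ is a Stein compact, $K$ enjoys the strong local Mergelyan property of Definition~\ref{def:SLMP}, and $\Acal(K)=\Ocalc(K)$. The last is the hypothesis of the corollary, so the work is to check the first two.

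First I would verify that $K$ is a Stein compact. A connected open (non-compact) Riemann surface is Stein, and any proper open subset of a connected Riemann surface has only non-compact components, since a compact component would be clopen and hence the whole (compact) component. Thus an open subset of $X$ is Stein unless one of its components is an entire compact connected component of $X$. As $K$ is compact it meets only finitely many components of $X$; if it contains none of them entirely, then for each compact component $C$ of $X$ meeting $K$ one may pick $p\in C\setminus K$ and arrange a shrinking family of open neighborhoods $U\supset K$ with $p\notin U$, so that every component of $U$ is non-compact and $U$ is Stein. Hence $K$ has a basis of open Stein neighborhoods. The only excluded case is that $K$ contains an entire compact component $X_0$ of $X$; there $\Acal$ and $\Ocalc$ (for functions and for $Y$-valued maps alike) both collapse to the global holomorphic objects on $X_0$, so the desired equality is trivial on $X_0$. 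We may therefore assume $K$ is a Stein compact.

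The crux is the strong local Mergelyan property, and here is where one-dimensionality is decisive. By the Boivin--Jiang converse localization theorem (Theorem~\ref{th:localization-converse}), the hypothesis $\Acal(K)=\Ocalc(K)$ propagates to every closed parametric disc: $\Acal(K\cap D)=\Ocalc(K\cap D)$ for each such $D\subset X$. Given $x\in K$ and a neighborhood $U\ni x$, choosing a small closed parametric disc $D\subset U$ centred at $x$ and setting $V=\mathring D$ produces exactly the local neighborhoods required in Definition~\ref{def:SLMP}; this is the reduction recorded in Remark~\ref{rem:SLMP}. With all three hypotheses in hand, Corollary~\ref{cor:SLMP}(b) gives $\Acal(K,Y)=\Ocalc(K,Y)$ for every complex manifold $Y$.

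For the \emph{in particular} clause, suppose $X\setminus K$ has no relatively compact connected components, i.e.\ $K$ has no holes. Then the Bishop--Mergelyan theorem (Theorem~\ref{th:Mergelyan2}) approximates every $f\in\Acal(K)$ uniformly on $K$ by functions holomorphic near $K$, giving $\Acal(K)\subset\Ocalc(K)$; since the reverse inclusion always holds (see \eqref{eq:spaces}), we get $\Acal(K)=\Ocalc(K)$ and fall back on the case already treated. The main obstacle is conceptual rather than computational: the entire argument rests on the self-localization of the Mergelyan property furnished by Theorem~\ref{th:localization-converse}, a feature special to Riemann surfaces whose higher-dimensional counterpart is precisely the open Problem~\ref{prob:SLMP}.
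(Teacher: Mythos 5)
Your proposal is correct and follows essentially the same route as the paper's proof: Stein compactness of $K$ via Behnke--Stein, the Boivin--Jiang converse localization theorem (Theorem~\ref{th:localization-converse}) to upgrade $\Acal(K)=\Ocalc(K)$ to the strong local Mergelyan property, and then Corollary~\ref{cor:SLMP}(b), with the \emph{in particular} clause obtained from the Bishop--Mergelyan theorem (Theorem~\ref{th:Mergelyan2}). The only differences are minor: you treat the edge case where $K$ contains a compact component of $X$ (which the paper's blanket claim that every compact set in a Riemann surface is a Stein compact glosses over) more carefully, and for the special case of a set without holes the paper additionally gives a simpler self-contained argument --- verifying the strong local Mergelyan property directly by applying Theorem~\ref{th:Mergelyan2} to $K$ intersected with small closed parametric discs, thereby avoiding Theorem~\ref{th:localization-converse} altogether --- whereas you simply reduce that case to the general statement; both are valid.
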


\begin{proof}
Note that any compact set in a Riemann surface is a Stein compact (since every open Riemann surface
is Stein according to H.\ Behnke and K.\ Stein \cite{BehnkeStein1949}). 
According to Theorem \ref{th:localization-converse}, the hypothesis $\Acal(K)=\Ocalc(K)$ implies that 
$K$ has the strong local Mergelyan property, so the result follows from Corollary \ref{cor:SLMP}.

In the special case when $X\setminus K$ has no relatively compact connected components,
we can give a simple proof as follows. By Theorem \ref{th:Mergelyan2}, every function $f\in \Acal(K)$ 
is a uniform limit on
$K$ of functions in $\Ocal(X)$, hence $\Acal(K)=\Ocalc(K)$. Fix a point $x\in K$ and let $U\subset X$
be a coordinate neighborhood of $x$ with a biholomorphic map
$\phi\colon U\to \D\subset\C$. Pick a number $0<r<1$.
The compact set $K'=K\cap \phi^{-1}(r\overline{\mathbb D})$ 
does not have any holes in $U$. (Indeed, any such 
would also be a hole of $K$ in $X$, contradicting the hypothesis.) 
By Theorem \ref{th:Mergelyan2} it follows that $\Acal(K')=\Ocalc(K')$. 
This shows that $K$ enjoys the strong local Mergelyan property,
and hence the conclusion follows from Corollary \ref{cor:SLMP}.
\qed\end{proof}

The following consequence of Corollary \ref{cor:KinRS} and of the Oka principle (see Theorem \ref{th:Oka})
has been observed recently in \cite[Theorem 1.2]{Forstneric2019MMJ}.

%
%
\begin{corollary}[Mergelyan theorem for maps from Riemann surfaces to Oka manifolds] 
\label{cor:MergeljanOka}
If $K$ is a compact set without holes in an open Riemann surface $X$ and $Y$ is an Oka manifold,
then every continuous map $f\colon X\to Y$ which is holomorphic in $\mathring K$
can be approximated uniformly on $K$ by holomorphic maps $X\to Y$ homotopic to $f$.
\end{corollary}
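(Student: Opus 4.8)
The plan is to combine the Mergelyan theorem for manifold-valued maps (Corollary \ref{cor:KinRS}) with the Oka principle (Theorem \ref{th:Oka}), bridging the two by a gluing step that turns a neighborhood approximation into a globally defined continuous map. First I would observe that, since $K$ has no holes, $X\setminus K$ has no relatively compact connected components, so Corollary \ref{cor:KinRS} applies and yields $\Acal(K,Y)=\Ocalc(K,Y)$. As $f$ is continuous on $X$ and holomorphic on $\mathring K$, its restriction $f|_K$ lies in $\Acal(K,Y)$, hence in $\Ocalc(K,Y)$. Thus, given $\epsilon>0$, there is a map $g$ holomorphic on an open neighborhood $U\subset X$ of $K$ with $\sup_{x\in K}\dist(g(x),f(x))<\epsilon/2$.

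The map $g$ is defined only near $K$, so the next step is to promote it to a globally defined continuous map on $X$ that remains holomorphic near $K$ and stays homotopic to $f$. By continuity of $f$, after shrinking $U$ we may assume $g$ is uniformly close to $f$ on all of $U$. Fixing a Riemannian metric on $Y$ and using that sufficiently nearby points of $Y$ are joined by a unique short geodesic, I would choose a cutoff function $\chi\colon X\to[0,1]$ equal to $1$ on a neighborhood of $K$ and supported in $U$, and set $\tilde f(x)=\gamma_x(\chi(x))$, where $\gamma_x\colon[0,1]\to Y$ is the short geodesic from $f(x)$ to $g(x)$ (taken constant, $\equiv f(x)$, off $U$). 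Then $\tilde f\colon X\to Y$ is continuous, equals $g$ near $K$ (hence is holomorphic there), agrees with $f$ off $U$, is uniformly $\epsilon/2$-close to $f$ on $K$, and is homotopic to $f$ through the maps $x\mapsto\gamma_x(s\chi(x))$, $s\in[0,1]$.

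Finally I would invoke Theorem \ref{th:Oka}. An open Riemann surface is Stein, and since $K$ has no holes, Corollary \ref{cor:AB}(b) gives $\wh K_{\Ocal(X)}=K$, so $K$ is $\Ocal(X)$-convex. Applying Theorem \ref{th:Oka} to the continuous map $\tilde f$, which is holomorphic in a neighborhood of $K$, produces a homotopy $f_t\colon X\to Y$ $(t\in[0,1])$ with $f_0=\tilde f$, every $f_t$ holomorphic near $K$, $\sup_{x\in K}\dist(f_t(x),\tilde f(x))<\epsilon/2$, and $f_1$ holomorphic on all of $X$. Then $f_1$ is the desired map: by the triangle inequality it is $\epsilon$-close to $f$ on $K$, and it is homotopic to $f$ since it is homotopic to $\tilde f$ through $f_t$, which is itself homotopic to $f$.

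The only nontrivial point is the gluing in the second paragraph; everything else is a direct citation of the two quoted results. The main obstacle, such as it is, is ensuring that the interpolation between $g$ and $f$ yields a globally well-defined continuous map into $Y$ without introducing spurious topology. This is handled by the uniform closeness of $g$ and $f$ together with the short-geodesic (equivalently, local-chart) interpolation, which confines the construction to a tubular neighborhood of the graph and keeps the resulting map canonically homotopic to $f$.
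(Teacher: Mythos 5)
Your proof is correct and follows essentially the same route as the paper, which derives this corollary precisely by combining Corollary \ref{cor:KinRS} (Mergelyan for maps from Riemann surfaces) with the Oka principle of Theorem \ref{th:Oka}. The geodesic-cutoff gluing you spell out — passing from the neighborhood approximation $g$ to a global continuous map $\tilde f$, holomorphic near $K$ and homotopic to $f$ — is exactly the bridging step the paper leaves implicit, and you handle it correctly (including $\Ocal(X)$-convexity of $K$ via Corollary \ref{cor:AB}(b)).
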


It was shown by J.\ Winkelmann \cite{Winkelmann1998} in 1998 that Mergelyan's theorem 
also holds for maps from compact sets in $\mathbb C$ to the domain 
$\C^2\setminus \R^2$; this result is not covered by Corollary \ref{cor:MergeljanOka}.
His proof can be adapted to give the analogous result for maps from any open Riemann surface
to $\C^2\setminus \R^2$.

%
%
\begin{remark}\label{rem:Poletsky}
The following claim was stated by E.\ Poletsky \cite[Corollary 4.4]{Poletsky2013}:

\smallskip
\noindent (*)  {\em If  $K$ is a Stein compact in a complex manifold $X$ 
and $\Acal(K)$ has the Mergelyan property, then $\Acal(K,Y)$ has 
the Mergelyan property for any complex manifold $Y$.}
\smallskip

The proof  in \cite{Poletsky2013} tacitly assumes that under the assumptions of the corollary
the space $\Acal(K,Y)$ has the local Mergelyan property, but no explanation for this is given. 
Corollaries \ref{cor:SLMP} and \ref{cor:KinRS} above provide several sufficient conditions
for this to hold. We do not know whether (*) is true for every Stein compact  in 
a complex manifold of dimension $>1$; compare with Remark \ref{rem:SLMP} on p.\ \pageref{page:rem:SLMP}.
\qed\end{remark}

%
%
\begin{corollary}\label{cor:admissibleXY}
If $S=K\cup M$ is a strongly admissible set in a complex manifold $X$ (see Definition \ref{def:admissible2}),
then $\Acal(S,Y)=\Ocalc(S,Y)$ holds for any complex manifold $Y$. Furthermore, for each $r\in\N$,
every map $f\in \Acal^r(S,Y)$ is a $\Ccal^r(S,Y)$ limit of maps $U\to Y$ holomorphic in open 
neighborhoods $U\subset X$ of $S$.
\end{corollary}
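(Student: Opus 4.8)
The plan is to deduce both assertions from the scalar Mergelyan theorem on strongly admissible sets (Theorem~\ref{th:MergelyanW}) combined with the Stein neighborhood machinery of Theorem~\ref{th:Poletsky3.1} and Lemma~\ref{lem:Steinnbd}, packaged through Corollary~\ref{cor:SLMP}. For the first ($\Ccal^0$) assertion I would invoke Corollary~\ref{cor:SLMP}(b), whose two hypotheses I isolate as follows. Since $S=K\cup M$ is a Stein compact (Definition~\ref{def:admissible2}) and $M$ is totally real, hence of empty interior, one has $\mathring S=\Omega$; thus every $f\in\Acal(S)$ restricts to an element of $\Ccal(S)\cap\Acal(\Omega)$, and Theorem~\ref{th:MergelyanW} with values in $\C$ gives $\Acal(S)=\Ocalc(S)$. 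It then remains to verify that $S$ enjoys the strong local Mergelyan property (Definition~\ref{def:SLMP}); granting this, Corollary~\ref{cor:SLMP}(b) yields $\Acal(S,Y)=\Ocalc(S,Y)$ for every complex manifold $Y$.

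To establish the strong local Mergelyan property, write $K=\overline\Omega$ with $\Omega$ strongly pseudoconvex of class $\Ccal^k$ ($k\ge 2$) and $M=S\setminus K$ totally real. Fix $x\in S$ and a neighborhood $U\ni x$; I must produce $V\subset U$ with $\Acal(S\cap\overline V)=\Ocalc(S\cap\overline V)$. If $x\in\Omega$, take a small coordinate ball $V\Subset\Omega$, so that $S\cap\overline V=\overline V$ and the scaling $z\mapsto x+r(z-x)$ with $r\uparrow 1$ proves $\Acal(\overline V)=\Ocalc(\overline V)$. If $x\in M\setminus\overline\Omega$, choose $V$ disjoint from $\overline\Omega$; then $S\cap\overline V=M\cap\overline V$ is a compact piece of a totally real submanifold of empty interior, and Theorem~\ref{th:admissible} (with $K=\varnothing$), together with density of $\Ccal^k$ functions in $\Ccal$, gives $\Acal(M\cap\overline V)=\Ccal(M\cap\overline V)=\Ocalc(M\cap\overline V)$.

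The remaining case $x\in b\Omega$ is the principal difficulty. Any genuine neighborhood $V\ni x$ meets both $\Omega$ and its complement, so $S\cap\overline V=(\overline\Omega\cap\overline V)\cup(M\cap\overline V)$, and $\overline\Omega\cap\overline V$ unavoidably carries a corner along $b\Omega\cap bV$; this loss of boundary smoothness is the main obstacle, since Theorem~\ref{th:dibar}, which underlies Theorem~\ref{th:MergelyanW}, is stated for smoothly bounded domains. I would resolve it by taking $V=\B^n(x,\epsilon)$ in a holomorphic chart and exploiting the fact that the construction in the proof of Theorem~\ref{th:MergelyanWo} is purely local: cover the piecewise-smooth boundary of $\Omega\cap V$ by finitely many sets on each of which a constant (hence holomorphic) vector field points simultaneously into $\Omega$ and into $V$. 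Such a common inward direction exists along the edge because the strongly pseudoconvex $b\Omega$ and the strictly convex sphere $bV$ meet transversally, so the two inward half-spaces intersect in an open cone; pushing the boundary pieces inward by the corresponding flows and correcting by a bounded solution of $\dibar$ on the pseudoconvex, hence Stein, domain $\Omega\cap V$ (bounded $\dibar$-solving operators on transverse intersections of strongly pseudoconvex domains being classical, e.g.\ via Range--Siu type kernels) reproduces the scheme of Theorem~\ref{th:MergelyanWo} on $S\cap\overline V$, which is itself strongly admissible apart from this transverse corner. The technical heart of the argument is precisely this bounded $\dibar$-estimate near the edge; for the purely pseudoconvex part the conclusion is alternatively guaranteed by Forstneri\v c's Stein neighborhood theorem \cite{Forstneric2007AJM} via Lemma~\ref{lem:Steinnbd}. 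This completes the verification of the strong local Mergelyan property and hence the $\Ccal^0$ statement.

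For the $\Ccal^r$ assertion I would run the retraction argument of Lemma~\ref{lem:Steinnbd} in the $\Ccal^r$ norm. By the $\Ccal^0$ case (local approximability) and Theorem~\ref{th:Poletsky3.1}, the graph $G_f$ of any $f\in\Acal^r(S,Y)$ is a Stein compact; embed a Stein neighborhood of $G_f$ as a closed submanifold $\Sigma\subset\C^N$ by a biholomorphism $\phi$, and let $\rho$ be a holomorphic retraction onto $\Sigma$ from a neighborhood $\Omega_N\subset\C^N$. Then $\phi\circ f\in\Acal^r(S,\C^N)$, and since $S$ is strongly admissible, the $\Ccal^k$ part of Theorem~\ref{th:MergelyanW}, applied componentwise (with $b\Omega$ and $M$ of class at least $\Ccal^{\max(r,2)}$, as the statement presupposes), furnishes holomorphic maps $G$ on neighborhoods of $S$ converging to $\phi\circ f$ in $\Ccal^r(S)$. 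For $G$ close enough one has $G(S)\subset\Omega_N$, so $g:=\mathrm{pr}_Y\circ\phi^{-1}\circ\rho\circ G$ is holomorphic near $S$ and converges to $f$ in $\Ccal^r(S,Y)$, because post-composition with the fixed holomorphic maps $\rho$, $\phi^{-1}$, $\mathrm{pr}_Y$ is continuous in the $\Ccal^r$ norm. This yields the second assertion.
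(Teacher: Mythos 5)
Your proposal is correct and follows essentially the same route as the paper: use Theorem \ref{th:MergelyanW} to obtain $\Acal(S)=\Ocalc(S)$ and, on localized sets $S\cap\overline V$, the strong local Mergelyan property, then conclude via Corollary \ref{cor:SLMP} (i.e., Poletsky's Theorem \ref{th:Poletsky3.1} combined with the retraction argument of Lemma \ref{lem:Steinnbd}), running the retraction in the $\Ccal^r$ norm for the second assertion. The only divergence is one of detail: the corner of $\overline\Omega\cap\overline V$ at points of $b\Omega$, which you single out as the principal difficulty and resolve with Range--Siu type estimates on piecewise smooth strictly pseudoconvex intersections, is dispatched in the paper's proof by the bare assertion that $S_0=S\cap\overline U_0$ is again strongly admissible (so that Theorem \ref{th:MergelyanW} applies to it directly) --- your extra work makes explicit a point the paper leaves to the reader.
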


\begin{proof}
It is clear from the definition of a strongly admissible set that for every point $x\in S$ and 
neighborhood $x\in U\subset X$ 
there is a smaller neighborhood $U_0\Subset U$ of $x$ such that the set $S_0=\overline U_0\cap S$ is also
strongly admissible. By Theorem \ref{th:MergelyanW} we have that $\Acal(S)=\Ocalc(S)$, and also 
$\Acal(S_0)=\Ocalc(S_0)$ for any $S_0$ as above. This means that $S$ has the strong
local Mergelyan property. The conclusion now follows from Corollary \ref{cor:SLMP}.
A similar argument applies to maps of class $\Acal^r(S,Y)$ for any $r\in \N$.
\qed\end{proof}

In the special case when the strongly admissible set $K=S$ is the closure of a relatively compact
strongly pseudoconvex domain, Corollary \ref{cor:admissibleXY} was proved by
F.\ Forstneri\v c \cite{Forstneric2007AJM} in 2007. His proof is different from those
above which rely on Poletsky's Theorem \ref{th:Poletsky3.1}. 
Instead it uses the method of gluing sprays, which is essentially a nonlinear version of the $\dibar$-problem. 
In the same paper, Forstneri\v c showed that many natural mapping spaces $K\to Y$ 
carry the structure of a Banach, Hilbert of Fr{\'e}chet manifold
(see \cite[Theorem 1.1]{Forstneric2007AJM} and also \cite[Theorem 8.13.1]{Forstneric2017E}). 
The following special case of the cited result is relevant to the present discussion.

%
%
\begin{theorem}
Let $K$ be a compact strongly pseudoconvex domain with $\Ccal^2$ boundary in a 
Stein manifold $X$. Then, for every $r\in\Z_+$ and any complex manifold $Y$ the space
$\Acal^r(K,Y)$ carries the structure of an infinite dimensional Banach manifold.
\end{theorem}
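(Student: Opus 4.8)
The plan is to produce, around each $f_0\in\Acal^r(K,Y)$, an explicit chart modeled on a fixed Banach space of sections, and then to verify that the resulting transition maps are smooth. Write $K=\overline D$ with $D\Subset X$ strongly pseudoconvex. The model space at $f_0$ should be the tangent space, namely
\[
	E_{f_0}=\Acal^r(K,f_0^*TY),
\]
the space of sections of the pullback bundle $f_0^*TY\to K$ that are continuous on $K$, holomorphic on $\mathring K=D$ for the holomorphic structure inherited from $f_0|_D$, and of class $\Ccal^r$ up to $bK$. Since $f_0^*TY$ is a complex vector bundle of finite positive rank over the compact set $K$, the space $\Ccal^r(K,f_0^*TY)$ is a Banach space, and $E_{f_0}$ is the subspace cut out by the interior holomorphy condition, which is closed because a $\Ccal^0$-limit of holomorphic sections is holomorphic. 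Hence $E_{f_0}$ is an infinite dimensional Banach space (it contains $f_0^*$ of a local holomorphic frame multiplied by the infinite dimensional algebra $\Acal^r(K)$).

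To pass from sections to maps I need a holomorphic exponential-type map attached to $f_0$. First I invoke the Stein neighborhood theorem for graphs over strongly pseudoconvex domains (Theorem \ref{th:Poletsky3.1}, established in this case already in \cite{Forstneric2007AJM}): the graph $G_{f_0}=\{(x,f_0(x)):x\in K\}$ admits a basis of open Stein neighborhoods $\Omega\subset X\times Y$. Fix such an $\Omega$ and regard it as a Stein manifold carrying the holomorphic submersion $\pi_X\colon\Omega\to X$. The key construction is a fiber-preserving holomorphic spray for this submersion: a holomorphic map $s$ defined on a neighborhood of the zero section of the vertical bundle $V\Omega=\ker d\pi_X$, with $\pi_X\circ s=\pi_X$, with $s$ equal to the identity on the zero section, and with vertical differential along the zero section equal to the identity. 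Such $s$ exists because $\Omega$ is Stein: embed $\Omega$ properly into the Stein manifold $X\times\C^N$ by $z\mapsto(\pi_X(z),\iota(z))$, where $\iota\colon\Omega\to\C^N$ is a proper holomorphic embedding, and take a holomorphic retraction $\rho$ onto the closed image that preserves the $X$-coordinate (a relative form of the Docquier--Grauert theorem, cf.\ \cite[Theorem 3.3.3]{Forstneric2017E}, over the Stein base $X$). Then, for vertical $v$, setting $s(z,v)=\rho\bigl(\pi_X(z),\,\iota(z)+d\iota_z(v)\bigr)$ gives the desired spray. Composing with $d\pi_Y$ identifies $V\Omega$ along $G_{f_0}$ with $f_0^*TY$.

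With $s$ in hand I define the chart $\Phi_{f_0}$ on a small ball $\{\xi\in E_{f_0}:\|\xi\|_{\Ccal^r}<\epsilon\}$ by
\[
	\Phi_{f_0}(\xi)=\pi_Y\circ g_\xi,\qquad g_\xi(x)=s\bigl((x,f_0(x)),\xi(x)\bigr),\quad x\in K.
\]
Because $s$ is fiber-preserving and holomorphic and $\xi$ is holomorphic on $D$, each $g_\xi$ is an $\Acal^r$-section of $\pi_X$ over $K$, so $\Phi_{f_0}(\xi)\in\Acal^r(K,Y)$ and $\Phi_{f_0}(0)=f_0$. Since the vertical differential of $s$ is the identity on the zero section, the fiberwise map $v\mapsto s((x,f_0(x)),v)$ is a biholomorphism near $0$, so for $\epsilon$ small $\Phi_{f_0}$ is injective with continuous inverse given fiberwise by the inverse spray applied to the graph of a nearby map; thus $\Phi_{f_0}$ is a homeomorphism onto an $\Acal^r$-neighborhood of $f_0$. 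Letting $f_0$ vary produces an atlas.

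It remains to check that the transition maps $\Phi_{f_1}^{-1}\circ\Phi_{f_0}$ are $\Ccal^\infty$ between open sets of the model spaces. On overlaps these are given fiberwise by a composition operator $\sigma\mapsto\Psi\circ\sigma$, where $\Psi$ is a fixed holomorphic map built from the two sprays and the inverse spray on tubular neighborhoods. The smoothness of such Nemytskii operators on $\Ccal^r$ sections is the classical $\Omega$-lemma of Palais and Eells; the extra point is that, $\Psi$ being holomorphic, the operator carries $E_{f_0}$ into $E_{f_1}$, so the $\Ccal^r$ smoothness descends to the closed $\Acal^r$-subspaces. I expect this to be the main obstacle: it is precisely the step where the holomorphy of the maps in the interior $D$ must be shown to be preserved at every order of differentiation, so that the classical manifold-of-maps machinery survives the passage from $\Ccal^r(K,Y)$ to its holomorphic-in-the-interior subset $\Acal^r(K,Y)$. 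Granting this, $\{(\Phi_{f_0},E_{f_0})\}$ is a smooth atlas and $\Acal^r(K,Y)$ is an infinite dimensional Banach manifold.
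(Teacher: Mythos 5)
First, note that the paper does not prove this statement at all: it records it as a special case of \cite[Theorem 1.1]{Forstneric2007AJM} (see also \cite[Theorem 8.13.1]{Forstneric2017E}), so there is no in-paper argument to compare with. Your proposal is essentially a reconstruction of the proof in that cited source: Stein neighborhoods of graphs of maps in $\Acal^r(K,Y)$ (which for strongly pseudoconvex $K$ the survey attributes to \cite{Forstneric2007AJM}, cf.\ Theorem \ref{th:Poletsky3.1}), a fiber-preserving holomorphic spray obtained by embedding the Stein neighborhood over $X$ and taking a fiberwise Docquier--Grauert retraction (this relative retraction is standard: split the vertical tangent sequence over the Stein submanifold and use the affine structure of the $\C^N$-fibers), charts modeled on the Banach spaces $E_{f_0}=\Acal^r(K,f_0^*TY)$, and smoothness of the transition maps viewed as composition operators. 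All of these ingredients are correct and are exactly the ones used in the original proof.

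The one place where your justification is inadequate as written is the step you yourself single out as the main obstacle. The fiber map $\Psi$ defining a transition operator is \emph{not} holomorphic in all variables: it depends on the base point $x\in K$ through $f_0(x)$ and $f_1(x)$, hence is only of class $\Ccal^r$ in $x$ up to $bK$ (holomorphic in the interior), while it is genuinely holomorphic in the fiber variable $v$. Consequently the classical Palais--Eells $\Omega$-lemma, which requires the composing map to be of class $\Ccal^{r+s}$ \emph{jointly} in order to conclude $\Ccal^s$ smoothness of the induced operator on $\Ccal^r$ sections, does not literally apply; with only joint $\Ccal^r$ regularity it yields continuity, not differentiability. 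The remedy is to exploit the fiber holomorphy: by the Cauchy integral representation in $v$ (differentiating under the integral sign in the base directions), every fiber derivative $\partial_v^k\Psi$ retains the joint $\Ccal^r$ regularity, and since the Fr\'echet derivatives of a composition operator involve only fiber derivatives of $\Psi$, the Taylor-remainder argument combined with the Banach algebra property of $\Ccal^r(K)$ shows that the transition maps are not merely $\Ccal^\infty$ but holomorphic between open subsets of $E_{f_0}$ and $E_{f_1}$. Equivalently, one can reorganize so that only genuinely holomorphic maps are post-composed: writing $j_1=(\mathrm{id},f_1)$, the transition map takes the form $\xi\mapsto\Theta\circ(j_1,\xi)$, where $\Theta$ is holomorphic on an open subset of a fiber product and the fixed $\Acal^r$ map $j_1$ enters only as a constant. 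With this correction your argument is complete, and it yields the stronger conclusion, as in \cite{Forstneric2007AJM}, that $\Acal^r(K,Y)$ is a \emph{complex} Banach manifold.
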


Further  and more precise approximation results for maps from compact strongly pseudoconvex
domains to Oka manifolds were obtained by B.\ Drinovec Drnov\v sek and F.\ Forstneri\v c
in \cite{DrinovecForstneric2008FM}.

The proof of Theorem \ref{th:admissible} in Subsect.\ \ref{ss:submanifolds} 
is easily generalized to give the following approximation result for sections of holomorphic 
submersions over admissible sets in complex spaces.  This plays a major role in the constructions
in Oka theory (in particular, in the proof of \cite[Theorem 5.4.4]{Forstneric2017E}).

%
%
%
%
\begin{theorem}\label{th:approximation-handlebody}
Assume that $X$ and $Z$ are complex spaces, $\pi\colon Z\to X$ is a holomorphic submersion, 
and $X'$ is a closed complex subvariety of $X$ containing its singular locus $X_{\rm sing}$.  
Let $S=K\cup M$ be an admissible set in $X$ (see Definition \ref{def:admissible2}), where $M\subset X\setminus X'$ is 
a compact totally real submanifold of class $\Ccal^k$ for some $k\in\N$. Given an open set $U\subset X$
containing $K$ and a section $f\colon U\cup M\to Z|_{U\cup M}$ such that $f|_U$ is holomorphic and $f|_M \in \Ccal^k(M)$,
there exist for every $s\in\N$ a sequence of open sets $V_j\supset S$ in $X$ and holomorphic sections 
$f_j\colon V_j\to Z|_{V_j}$ $(j\in\N)$ such that $f_j$ agrees with $f$ to order $s$ along  
$X'\cap V_j$ for each $j\in \N$, and $\lim_{j\to\infty} f_j|_S=f|_S$ in the $\Ccal^k(S)$-topology. 
\end{theorem}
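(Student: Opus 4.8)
The plan is to run the argument of Theorem~\ref{th:admissible} essentially verbatim, making only two changes: replacing $\C$-valued functions by sections of $\pi$ through local fibre coordinates, and threading the order-$s$ jet condition along $X'$ through every gluing. The decisive structural fact is that, since $\pi\colon Z\to X$ is a holomorphic submersion of fibre dimension $m=\dim_\C Z-\dim_\C X$, the image (graph) of any holomorphic section over an open set $\Omega\subset X$ has a neighbourhood in $Z$ that is biholomorphic to $\Omega\times\C^m$, with $\pi$ the projection and the graph given by $w=0$ in the fibre coordinate $w\in\C^m$. In such coordinates a nearby section corresponds to a small holomorphic map $\Omega\to\C^m$, and the \emph{addition} of a small $\C^m$-valued holomorphic function to a section produces another section staying inside this tubular neighbourhood. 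This restores the additive structure on which the proof of Theorem~\ref{th:admissible} relies, and it is valid even where $X$ is singular, since the fibre direction is smooth.

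First I would dispose of the totally real part. As $M\subset X\setminus X'$ is compact, $X'$ avoids a neighbourhood of $M$, so no interpolation is required over $M$. I would cover $M\setminus K$ by finitely many coordinate charts $V_1,\dots,V_r$, small enough that $\pi$ is trivial over each and that Proposition~\ref{prop:local} applies to each fibre component. Because sections of $\pi$ do not form a vector space, the partition-of-unity reduction of Theorem~\ref{th:admissible} is unavailable; instead I would proceed by a finite induction over the charts. The inductive hypothesis at stage $i$ is that one has a section $g_i$, holomorphic on a Stein neighbourhood $\Omega_i$ of $K\cup M_1\cup\dots\cup M_i$, $\Ccal^k$-close to $f$ on the remaining part of $M$, and agreeing with $f$ to order $s$ along $X'$; the case $i=0$ is $g_0=f$, holomorphic near $K$.

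The gluing step $i\to i+1$ is the heart of the matter and the analogue of the Cartan-Theorem-B step in Theorem~\ref{th:admissible}. Over the chart $V_{i+1}$, in fibre coordinates centred on the graph of $g_i$, I would apply Proposition~\ref{prop:local} componentwise to the $\C^m$-valued representation of $g_i|_{M_{i+1}}$, obtaining a holomorphic section $h$ over $V_{i+1}$ that approximates $g_i$ on $M_{i+1}$ and, by condition~(b) of that proposition, is close to $g_i$ near $bM_{i+1}$. On the overlap $\Omega_i\cap V_{i+1}$ both sections are $\Ccal^0$-close to $f$, hence both graphs lie in a fixed tubular neighbourhood of the graph of $f$; in the common fibre coordinates their difference $h-g_i$ is a small holomorphic $\C^m$-valued function. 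Exactly as in Theorem~\ref{th:admissible}, Cartan's Theorem~B and the open mapping theorem let me split $h-g_i=\gamma_A-\gamma_B$ on the overlap with $\gamma_A\in\Ocal(\Omega_i)^m$ and $\gamma_B\in\Ocal(V_{i+1})^m$ both small. Since the splitting data is supported away from $X'$, I may perform this split within the ideal sheaf $\Ical_{X'}^{\,s+1}$, so that $\gamma_A$ vanishes to order $s$ along $X'$. Correcting $g_i$ by $\gamma_A$ over $\Omega_i$ and $h$ by $\gamma_B$ over $V_{i+1}$ yields matching holomorphic sections, and—the corrections being small—their common value is a genuine section $g_{i+1}$ of $\pi$ over $\Omega_{i+1}:=\Omega_i\cup V_{i+1}$.

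Because every chart-gluing takes place over $M\setminus K$, away from $X'$, and every Cousin correction vanishes to order $s$ along $X'$, the $s$-jet of $g_i$ along $X'$ coincides with that of $f$ at each stage; over $K$ the section is never moved more than by these order-$s$-vanishing corrections. After $r$ steps $g_r$ is holomorphic on a neighbourhood of all of $S$, agrees with $f$ to order $s$ along $X'$, and is $\Ccal^k(S)$-close to $f$; letting the approximation parameters tend to $0$ and intersecting the neighbourhoods produces the required sequence $V_j\supset S$ and $f_j$. The main obstacle is precisely this gluing for sections rather than functions: one must keep all intermediate sections uniformly close to $f$ so that their graphs remain in one tubular neighbourhood where fibre coordinates (and hence addition) are defined, while simultaneously controlling the Cartan-B corrections to be small \emph{and} to vanish to order $s$ along $X'$, so that the glued objects are again honest holomorphic sections satisfying the interpolation condition.
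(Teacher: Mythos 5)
Your inductive scheme (finite induction over charts covering $M\setminus K$, with a Cousin-type correction at each step) is a legitimate strategy, but the gluing step --- which you rightly identify as the heart of the matter --- contains a genuine gap. The splitting $h-g_i=\gamma_A-\gamma_B$ is carried out in the product coordinates of the chart $V_{i+1}$, the only place where the difference of the two sections is defined. But the operation ``correct $g_i$ by $\gamma_A$ over $\Omega_i$'' requires a \emph{different} fibre-coordinate system, namely a fibrewise trivialization of a neighbourhood of the graph of $g_i$ over all of $\Omega_i$. On the overlap $\Omega_i\cap V_{i+1}$ these two coordinate systems are related by a fibre-preserving biholomorphism which is in general nonlinear (and whose linear part along the graph need not be the identity). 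Consequently the section obtained by shifting $g_i$ by $\gamma_A$ in the tubular coordinates and the section obtained by shifting $h$ by $\gamma_B$ in the chart coordinates do \emph{not} agree on the overlap; they agree only up to an error governed by the nonlinearity of the coordinate change, which is small but nonzero. Smallness of the corrections does not make this error vanish, so your ``common value $g_{i+1}$'' is not well defined. This is exactly the well-known obstruction that makes gluing of \emph{sections} (or manifold-valued maps) genuinely harder than gluing of functions: there is no common additive structure over the union, since neither fibre-coordinate system extends across it. One cannot escape it by a one-shot linear splitting; one needs either a Newton-type iteration with uniform bounds on the splitting operators (Grauert's method), or the splitting lemma for fibre-preserving transition maps close to the identity, in which the transition is factored compositionally, $\gamma=\beta\circ\alpha^{-1}$, rather than additively (the method of gluing holomorphic sprays; see \cite{Forstneric2007AJM} and \cite{Forstneric2017E}). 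Relatedly, your ``decisive structural fact'' is false as stated: the graph of a holomorphic section over an \emph{arbitrary} open set $\Omega\subset X$ need not admit a fibred product neighbourhood $\Omega\times\C^m$. Such a tubular neighbourhood requires the graph to be Stein (a Docquier--Grauert type theorem), it then yields a neighbourhood of the zero section of the vertical normal bundle --- which need not be trivial --- and further care is needed where $X$ is singular. (A smaller bookkeeping issue: after the correction, $g_{i+1}$ must also be defined and $\Ccal^k$-close to $f$ on the strata $M_{i+2},\dots,M_r$ outside $\Omega_i\cup V_{i+1}$, which requires an additional cut-off gluing of the smooth data that you do not mention.)

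The paper takes a different route precisely in order to avoid gluing sections. One first shows that the graph of $f$ over all of $S$ admits a \emph{Stein neighbourhood} $\Theta$ in $Z$ (\cite[Lemma 3.8.3]{Forstneric2017E}); this is the step the paper flags as the necessary starting point of any such proof. One then reduces to the already established function case (Theorem \ref{th:admissible}, resp.\ Manne's theorem) by working inside $\Theta$: the Steinness of $\Theta$ supplies the global linear structure (via a holomorphic embedding into a Euclidean space together with a retraction, or equivalently via globally generated vertical vector fields) in which componentwise approximation makes sense, and the approximating maps are then converted back into honest holomorphic sections near $S$. The jet interpolation along $X'$ is imposed at the end by a standard Oka--Cartan argument, which also sidesteps the singularities of $X$, since they lie in $X'$ where $f$ is not moved. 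If you want to keep your chart-by-chart induction, the correct repair is to replace your additive correction by the compositional splitting of the fibrewise transition map between $g_i$ and $h$ (gluing of sprays), or to treat your one-shot correction as the first step of an iteration that removes the resulting quadratically small mismatch recursively; either way, the missing tool must be supplied, and it is the crux of the proof rather than a routine detail.
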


A version of this result, with some loss of derivatives on the totally real submanifold $M$ 
(due to the use of H{\"o}rmander's $L^2$ method) and without the interpolation condition, 
is \cite[Theorem 3.1]{Forstneric2005AIF}.  (A proof also appears in \cite[Theorem 3.8.1]{Forstneric2017E}.)
The case when $Z=X\times \C$ (i.e., for functions) and without loss of derivatives was proved earlier by 
P.\ Manne \cite{Manne1993PhD} by using the convolution method (see Proposition \ref{prop:local} in 
Subsect.\ \ref{ss:submanifolds}). The general case 
is obtained from the special case for functions by following \cite[proof of Theorem 3.8.1]{Forstneric2017E},
noting also that the interpolation condition on the subvariety $X'$ is easily 
achieved by a standard application of the Oka-Cartan  theory. As always in results of this type, 
one begins by showing that the graph of the section admits a Stein neighborhood in $Z$;
see \cite[Lemma 3.8.3]{Forstneric2017E}.

Another case of interest is when $K$ is a compact set with empty interior, so 
$\Acal(K)=\Ccal(K)$. The following result is due to E.\ L.\ Stout \cite{Stout2011}.

%
%
\begin{theorem} \label{th:Stout1}
If $K$ is a compact set in a complex space $X$ such that $\Ccal(K)=\Ocalc(K)$ (hence $\mathring K=\varnothing$),
then $\Ccal(K,Y)=\Ocalc(K,Y)$ holds for any complex manifold $Y$.
\end{theorem}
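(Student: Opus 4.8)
The plan is to reduce everything to the scalar Mergelyan hypothesis $\Ccal(K)=\Ocalc(K)$ already in hand and then feed it into Poletsky's Stein-neighbourhood theorem. The standing hypothesis forces $\mathring K=\varnothing$, whence $\Ccal(K)=\Acal(K)$ and, trivially, $\Ccal(K,Y)=\Acal(K,Y)$ for every complex manifold $Y$ (any continuous map is vacuously holomorphic on the empty interior). So the assumption reads $\Acal(K)=\Ocalc(K)$ and the desired conclusion reads $\Acal(K,Y)=\Ocalc(K,Y)$. Since $\Ocalc(K)\subseteq\Ocalcl(K)\subseteq\Acal(K)$, the hypothesis also yields the local Mergelyan property $\Ocalcl(K)=\Acal(K)$, and because $\mathring K=\varnothing$ this is equivalent (via Tietze extension, as in Remark \ref{rem:SLMP}) to the strong local Mergelyan property of $K$ in the sense of Definition \ref{def:SLMP}.

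Next I would establish local approximability of the given map. Fix $f\in\Ccal(K,Y)=\Acal(K,Y)$ and a point $x\in K$. Choosing a holomorphic chart $W\subset Y$ about $f(x)$ biholomorphic to an open subset of $\C^{n}$ ($n=\dim Y$) and a small neighbourhood $U\ni x$ with $f(K\cap\overline U)\subset W$, the map $f|_{K\cap\overline U}$ is recorded by $n$ continuous scalar functions. Extending each by Tietze's theorem to $\Ccal(K)$ and approximating it on all of $K$ by functions holomorphic near $K$ (possible by the hypothesis), I obtain approximations holomorphic near $K\cap\overline U$; composing with the chart produces holomorphic maps into $Y$ close to $f$ on $K\cap\overline U$. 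Hence $f\in\Ocalcl(K,Y)$. (Equivalently, this is Corollary \ref{cor:SLMP}(a) applied to the strong local Mergelyan property just verified.)

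With $f\in\Ocalcl(K,Y)$ secured, the endgame is to upgrade local to global approximation. The tool is Poletsky's Theorem \ref{th:Poletsky3.1}: once $K$ is known to be a Stein compact, it guarantees that the graph $G_f\subset X\times Y$ admits a Stein neighbourhood; Lemma \ref{lem:Steinnbd}(b), which applies because $\Acal(K)=\Ocalc(K)$, then embeds such a neighbourhood into $\C^{N}$, approximates the resulting $\C^{N}$-valued map using the scalar hypothesis, and retracts back to yield $f\in\Ocalc(K,Y)$. (This is precisely Corollary \ref{cor:Poletsky1}, resp.\ Corollary \ref{cor:SLMP}(b).) Thus the theorem follows \emph{provided} one knows that $K$ is a Stein compact.

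The main obstacle is therefore to deduce from $\Ccal(K)=\Ocalc(K)$ that $K$ possesses a fundamental system of Stein neighbourhoods. I would argue this through the uniform algebra $A=\Ocalc(K)=\Ccal(K)$, whose maximal ideal space is $K$ itself, combined with a fusing-of-plurisubharmonic-functions argument of Siu--Demailly--Poletsky type: cover $K$ by finitely many charts embedding into $\C^{N_i}$, use the hypothesis to replace the local embedding coordinates by functions holomorphic near $K$ approximating them, and fuse the associated local strongly plurisubharmonic functions into a strongly plurisubharmonic exhaustion of an arbitrarily small neighbourhood of $K$, whose sublevel sets are Stein by Grauert's criterion. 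The delicate point is that uniform smallness on the thin set $K$ must be converted — via the maximum principle on overlaps and the spectrum condition — into genuine plurisubharmonic control \emph{off} $K$, since sup-norm closeness alone does not preserve strict plurisubharmonicity; this is exactly the mechanism underlying Poletsky's theorem. Finally, since $X$ is only a reduced complex space, I would note that Theorem \ref{th:Poletsky3.1} and Lemma \ref{lem:Steinnbd}, though stated for manifolds, use only local holomorphic embeddings, plurisubharmonic fusing, and the Grauert Stein criterion, all available on reduced complex spaces, which removes the apparent manifold hypothesis.
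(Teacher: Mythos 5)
Your reduction steps are fine as far as they go: the observation that $\mathring K=\varnothing$ gives $\Ccal(K,Y)=\Acal(K,Y)$, the passage (via Tietze, as in Remark \ref{rem:SLMP}) from the hypothesis to the strong local Mergelyan property of $K$, and the conclusion $f\in\Ocalcl(K,Y)$ are all correct. But the endgame has a genuine gap, and it is exactly where you flag it yourself: your route through Theorem \ref{th:Poletsky3.1} and Lemma \ref{lem:Steinnbd}(b) (equivalently Corollary \ref{cor:SLMP}(b)) is only available if $K$ is a Stein compact, and you do not prove this. The hypothesis $\Ccal(K)=\Ocalc(K)$ is not known to imply Stein compactness, and the paper emphasizes precisely this point: unlike the preceding results, Theorem \ref{th:Stout1} is stated for compact sets that need \emph{not} be Stein compacts, so any proof routed through Poletsky's theorem forfeits the actual content of the statement. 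Moreover, the mechanism you propose for filling the gap is circular: Poletsky's fusing of plurisubharmonic functions takes as \emph{input} a strongly plurisubharmonic function on a Stein neighborhood of $K$; it produces Stein neighborhoods of the graph \emph{given} that $K$ is a Stein compact, and cannot manufacture the ambient Stein structure from sup-norm approximation data on a thin set. So the sketched "fusing" argument does not close the gap, and as written the proof is incomplete.

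The paper's actual proof avoids the issue entirely by placing the Stein structure on the \emph{target} rather than the source. One chooses a smooth (Whitney) embedding $\phi\colon Y\hookrightarrow\R^m$ and observes that the graph $Z=\{(y,\phi(y)):y\in Y\}\subset Y\times\R^m\subset Y\times\C^m$ is a totally real submanifold of $Y\times\C^m$, hence admits an open Stein neighborhood $\Omega$. The continuous map $F(x)=(f(x),\phi(f(x)))$ sends $K$ into $\Omega$, so Lemma \ref{lem:Steinnbd}(a) — which needs only the scalar hypothesis $\Ccal(K)=\Ocalc(K)$, applied coordinatewise after embedding $\Omega$ in some $\C^N$ and retracting — gives holomorphic maps $G\colon U\to\Omega$ on neighborhoods $U$ of $K$ approximating $F$; composing with the holomorphic projection $\pi\colon Y\times\C^m\to Y$ yields holomorphic maps $\pi\circ G$ approximating $f$. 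Nothing about $K$ beyond the hypothesis is used. If you want to salvage your strategy, you would have to either prove that $\Ccal(K)=\Ocalc(K)$ forces $K$ to be a Stein compact (an assertion that is, at best, an open problem) or restrict the theorem to Stein compacts, which is a strictly weaker result.
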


Unlike in the previous results, the set $K$ in Theorem \ref{th:Stout1} need not be a Stein compact.
Special cases of Stout's theorem were obtained earlier by D.\ Chakrabarti (2007, 2008) \cite{Chakrabarti2007,Chakrabarti2008} who also obtained uniform approximation of continuous maps on arcs by 
pseudoholomorphic curves in almost complex manifolds. 

\begin{proof}
Choose a smooth embedding $\phi\colon Y\hra \R^m$ for some
$m\in\N$. Considering $\R^m$ as the real subspace of $\C^m$, the graph 
$Z = \{(y,\phi(y)) : y\in Y\} \subset Y\times \R^m\subset Y\times \C^m$ is a totally real submanifold of 
$Y\times \C^m$, so it has an open Stein neighborhood $\Omega$ in $Y\times \C^m$.
Let $\pi\colon Y\times \C^m\to Y$ denote the projection onto the first factor.
Given a continuous map $f\colon K\to Y$, the hypothesis of the theorem together with 
Lemma \ref{lem:Steinnbd} imply that the continuous map $K\ni x\mapsto F(x)=(f(x),\phi(f(x))) \in \Omega$
can be approximated by holomorphic maps $G \colon U\to \Omega$ in open
neighborhoods $U\subset X$ of $K$. The map $g=\pi\circ G\colon U\to Y$ then 
approximates $f$ on $K$.
\qed\end{proof}

%
%

\subsection{Carleman and Arakelian theorems for manifold-valued maps}\label{ss:Carleman-manifold}

In Sect.\ \ref{sec:unbounded} and Subsect.\ \ref{ss:CarlemanCn} we have considered Carleman and Arakelian
type approximation in one and several variables, respectively. In this section, we present some applications
and extensions of these results to manifold-valued maps. 

The following result has been proved recently by B.\ Chenoweth.

%
%
\begin{theorem}[Chenoweth (2019), \cite{Chenoweth2019PAMS}] \label{th:Chenoweth}
Let $X$ be a Stein manifold and $Y$ be an Oka manifold. If $K \subset X$ is a compact 
$\Ocal(X)$-convex subset and $M\subset X$ is a closed totally real submanifold of class $\Ccal^r$ 
$(r\in\N)$ with bounded exhaustion hulls (see Definition \ref{def:BEHn}) such that $K \cup M$ is $\Ocal(X)$-convex,
then for any $k\in \{0,1,\ldots,r\}$ the set $K \cup M$ admits $\Ccal^k$-Carleman approximation 
of maps $f \in \Ccal^k(X,Y)$ which are holomorphic on a neighborhood of $K$.
\end{theorem}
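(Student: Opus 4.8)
The plan is to run the exhaustion scheme from the proof of Manne's Theorem~\ref{th:CarlemanBEH} presented above, replacing its two scalar ingredients by their manifold-valued counterparts available in this survey: the Runge--Mergelyan approximation on admissible sets (Theorem~\ref{th:approximation-handlebody}, applied to the trivial submersion $X\times Y\to X$) to advance in the totally real direction, and the Oka property of $Y$ (Theorem~\ref{th:Oka}) to advance in the holomorphic direction. For concreteness I would first treat $X=\C^n$ and indicate at the end the changes needed for a general Stein manifold.

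First I would fix the geometry. Since $M$ has bounded exhaustion hulls (Definition~\ref{def:BEHn}) and $K\cup M$ is $\Ocal(X)$-convex, I would pick a normal exhaustion $K_0:=K\subset K_1\subset K_2\subset\cdots$ of $X$ by $\Ocal(X)$-convex compacts with $K_j\subset\B^n(0,m_j)$, $m_j\uparrow\infty$, and, writing $M_j=M\cap\overline{\B^n(0,m_j)}$, arranged (using the bounded exhaustion hulls to make the truncation index large enough) so that each $K_j\cup M_{j+2}$ is again $\Ocal(X)$-convex, hence a Stein compact, and each $S_j=K_j\cup M_j$ is an admissible set. Starting from a global $\Ccal^k$ representative $f_0$ of $f$, the goal is to build global $\Ccal^k$ maps $f_j\colon X\to Y$, holomorphic near $S_j$, with $\|f_{j}-f_{j-1}\|_{\Ccal^k}<2^{-j}\epsilon$ on $K\cup M$, so that the fine-topology limit $\tilde f=\lim_j f_j$ is holomorphic on all of $X$ and satisfies the required $\Ccal^k$-Carleman estimate on $K\cup M$.

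The inductive step $f_j\rightsquigarrow f_{j+1}$ I would carry out in two sub-steps, growing $M$ and $K$ separately. On the admissible set $K_j\cup M_{j+2}$ — where $f_j$ is holomorphic near $K_j$ and of class $\Ccal^k$ on the totally real part $M_{j+2}$ — Theorem~\ref{th:approximation-handlebody} produces a map $g$ holomorphic on a neighborhood of $K_j\cup M_{j+2}$ and \emph{$\Ccal^k$-close} to $f_j$ there; because the approximation is in the $\Ccal^k$-norm I may glue $g$ back to $f_j$ through a cut-off supported near $M_{j+2}$ without losing $\Ccal^k$ control, obtaining a global $\Ccal^k$ map still holomorphic near $K_j\cup M_{j+2}$. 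Then, $K_j\cup M_{j+2}$ being $\Ocal(X)$-convex, the Oka property of $Y$ (Theorem~\ref{th:Oka}) approximates this map uniformly on $K_j\cup M_{j+2}$ by an entire map $G\colon X\to Y$.

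Assembling $f_{j+1}$ from $G$ (needed to make the map holomorphic across the new open region $\mathring K_{j+1}\setminus K_j$) and the previous map (retained near $M$) is the crux, and two difficulties appear: the target $Y$ has no linear structure, so Manne's cut-off gluing is unavailable; and, more seriously, the Oka step yields only $\Ccal^0$-smallness of $G-g$ along $M_{j+2}$, which — a holomorphic map being generally \emph{not} small near a totally real set merely because it is small on it — cannot be promoted to $\Ccal^k$ there. I would resolve both by gluing \emph{holomorphically} over a Cartan pair $(A,B)$ with $A$ covering $\mathring K_{j+1}\setminus K_j$ away from $M$, with $B$ a neighborhood of $K_j\cup M$, and with the overlap $A\cap B$ a compact subset of the full-dimensional block $\mathring K_j$ \emph{disjoint from} $M$: there both $G$ and $g$ are holomorphic and, by interior (Cauchy) estimates, $\Ccal^k$-close, so the method of gluing holomorphic sprays from Oka theory (cf.\ \cite{Forstneric2007AJM} and \cite[Chap.\ 5]{Forstneric2017E}) splices them into a single map holomorphic on $A\cup B\supset K_{j+1}\cup M_{j+1}$. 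Since no gluing ever crosses $M$, the map stays equal to $g$ near $M$ and is altered on $K\cup M$ only inside $A\cap B\subset\mathring K_j$, where $\Ccal^k$-smallness is available. Telescoping the estimates then gives a limit $\tilde f\in\Ocal(X,Y)$ with $\dist_{\Ccal^k}\big(\tilde f(x),f(x)\big)<\epsilon(x)$ for all $x\in K\cup M$; the case of a general Stein manifold $X$ follows by replacing Euclidean balls with sublevel sets of a strictly plurisubharmonic exhaustion and polynomial hulls with $\Ocal(X)$-convex hulls throughout.
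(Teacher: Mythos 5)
Your overall strategy --- an exhaustion induction intertwining the Mergelyan theorem on admissible sets (Theorem \ref{th:approximation-handlebody}) with the Oka principle (Theorem \ref{th:Oka}) --- is exactly the scheme the paper indicates for this theorem (the paper itself only sketches it, citing Chenoweth's article for details), and your geometric preparation via bounded exhaustion hulls, as well as your identification of the $\Ccal^0$-versus-$\Ccal^k$ difficulty in the Oka step, are correct. However, your resolution of that difficulty --- a Cartan pair $(A,B)$ with overlap $A\cap B$ a compact subset of $\mathring K_j$ disjoint from $M$ --- cannot be realized. If $A\cup B$ is to contain $K_{j+1}\cup M_{j+1}$ while $A\cap B\subset\mathring K_j$, then the compact set $K_{j+1}\setminus\mathring K_j$ is covered by the two \emph{disjoint} relatively closed sets $A\cap(K_{j+1}\setminus\mathring K_j)$ and $B\cap(K_{j+1}\setminus\mathring K_j)$, and both are nonempty whenever $M$ meets $K_{j+1}\setminus K_j$: the second contains $M\cap(K_{j+1}\setminus\mathring K_j)$ (which lies in $B$ but not in $A$, since $A$ avoids $M$), the first contains points of $K_{j+1}\setminus K_j$ far from $M$. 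In the model case $X=\C^n$, $M=\R^n$, $K_j$ closed balls, the shell $K_{j+1}\setminus\mathring K_j$ is connected --- a contradiction. Hence any Cartan pair whose union covers $K_{j+1}\cup M_{j+1}$ must have its overlap run along the boundary of a tube around $M\cap(K_{j+1}\setminus K_j)$, at a fixed positive distance from $M$ and outside $K_j$; and precisely there you have no control, because the Oka step makes $\dist(G,g)$ small only on the thin set $K_j\cup M_{j+2}$, and (as you yourself observe) smallness on a totally real set does not propagate to any fixed neighborhood of it. So the gluing has no region on which to operate, and the induction step collapses.

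The missing idea that repairs the step is to apply the Oka principle not on $K_j\cup M_{j+2}$ but on a compact $\Ocal(X)$-convex \emph{neighborhood} $L$ of it, chosen inside the open set where the Mergelyan output $g$ is holomorphic. Such $L$ exists (e.g.\ an analytic polyhedron), because $K_j\cup M_{j+2}$ is $\Ocal(X)$-convex --- which is exactly what your bounded-exhaustion-hulls arrangement provides, and is the real reason that hypothesis enters. Then $G$ is uniformly close to $g$ on $L$, hence, by interior (Cauchy) estimates, $\Ccal^k$-close to $g$ on the compact subset $K_j\cup M_{j+2}\subset\mathring L$, and uniformly close on a \emph{fixed} tube around $M_{j+2}$. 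With this, no Cartan-pair gluing is needed at all: set $f_{j+1}=G$ near $K_{j+1}\cup M_{j+1}$ and transition back to $f_j$ inside a tube around $M_{j+2}\setminus M_{j+1}$, using a smooth tubular-neighborhood retraction of $Y$ (embed $Y$ smoothly in some $\R^N$ and retract), taking care to fix the cut-off and the tube radius \emph{before} prescribing how close the approximations must be. The transition zone is disjoint from $K_{j+1}\cup M_{j+1}$, so holomorphy there is preserved, and the $\Ccal^k$ error along $M$ is controlled by the now-available $\Ccal^k$-closeness; your telescoping argument then closes the induction as you describe.
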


This  is proved by inductively applying Mergelyan's theorem for admissible sets
(see Theorem \ref{th:approximation-handlebody}), together with the Oka principle
for maps from Stein manifolds to Oka manifolds (see \cite[Theorem 5.4.4]{Forstneric2017E}
which is a more precise version of Theorem \ref{th:Oka} above). 
These two methods are intertwined at every step of the induction procedure. 
In view of Theorem \ref{th:CarlemanBEH} characterizing 
totally real submanifolds admitting Carleman approximation, the conditions 
in the theorem are optimal. 

Carleman type approximation theorems have also been proved for some special classes of
maps such as embeddings and automorphisms. Typically, proofs of such results combine methods 
of approximation theory with those from the Anders{\'e}n-Lempert theory concerning 
holomorphic automorphisms of complex Euclidean spaces and, more generally, 
of Stein manifolds with the density property.
Space limitation do not allow us to present this theory here; instead, we refer the reader to
the recent survey in \cite[Chapter 4]{Forstneric2017E}.

We have already seen that Arakelian type approximation on closed sets 
with unbounded interior is considerably more difficult than Carleman approximation. 
In fact, we are not aware of a single result of this type on subsets of  $\C^n$ for $n>1$.
However, the following extension of the classical one variable Arakelian's theorem (see Theorem \ref{th:Arakelian})
was proved by F.\ Forstneri{\v c} \cite{Forstneric2019MMJ} in 2019.

%
%
%
\begin{theorem} \label{th:Arakelian1}
If $E$ is an Arakelian set in a domain $X\subset \C$ and $Y$ is a compact complex homogeneous manifold,
then every continuous map $X\to Y$ which is holomorphic in $\mathring E$ can be approximated 
uniformly on $E$ by holomorphic maps $X\to Y$.
\end{theorem}

The scheme of proof in \cite{Forstneric2019MMJ} follows the
proof of Theorem \ref{th:Arakelian}, but with improvements from Oka theory which are needed in 
the nonlinear setting.
The proof does not apply to general Oka manifolds, not even to noncompact homogeneous manifolds.
Note that the approximation problems of Arakelian type for maps to noncompact
manifolds may crucially depend on the choice of the metrics on both spaces.

%
%

\section{Weighted Approximation in $L^2$ spaces}
 \label{sec:weights}

All approximation results considered so far were in one of the $\Ccal^k$ topologies on the respective sets.
We now present some results of a rather different kind, concerning approximation and density in weighted
$L^2$ spaces of holomorphic functions.

Let $\Omega$ be a domain in $\C^n$, and let $\phi$ be a plurisubharmonic function on $\Omega$. 
We denote by $L^2(\Omega, e^{-\phi})$ the space of measurable functions which are square integrable with 
respect to the measure $e^{-\phi}d\lambda$, where $d\lambda$ is the Lebesgue measure:
\[
	\|f\|^2_{\phi}:= \int_\Omega |f|^2 e^{-\phi}d\lambda <\infty.
\]
By $H^2(\Omega, e^{-\phi})$ we denote the space of holomorphic functions on 
$\Omega$ with finite $\phi$-norm:
\[
	H^2(\Omega, e^{-\phi}) = \bigl\{ f\in \Ocal(\Omega): \|f\|_{\phi}  <\infty\bigr\}.
\]
Note that if $\phi_1\leq \phi_2,$ then $H^2(\Omega, e^{-\phi_1})\subset H^2(\Omega, e^{-\phi_2})$ 
and the inclusion map is continuous,  in fact, norm decreasing. 

Let $z=(z_1,\ldots,z_n)$ be coordinates on $\C^n$ and $|z|^2=\sum_{i=1}^n |z_i|^2$.
Let $\phi_1\leq \phi_2\leq \cdots $ and $\phi$ be plurisubharmonic functions on $\C^n$ with
$\phi_j\rightarrow \phi$ pointwise as $j\to\infty$.  Set 
\[
	\psi_j=\phi_j+\log (1+|z|^2), \qquad \psi=\phi+\log (1+|z|^2).	
\]
Assume in addition that $\int_K e^{-\phi_1}d\lambda <\infty$ for every compact set $K\subset \C^n$.
The following theorem was proved by B.\ A.\ Taylor  in 1971; see \cite[Theorem 1.1]{Taylor1971}. 

\begin{theorem} 
(Assumptions as above.) For every $f\in H^2(\C^n, e^{-\phi})$ 
there is a sequence $f_j\in H^2(\C^n, e^{-\psi_j})$ such that $\|f_j-f\|_{\psi}\rightarrow 0$ as $j\to\infty$.
\end{theorem}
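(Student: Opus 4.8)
The plan is to reduce the statement to H\"ormander's $L^2$ solvability theorem for $\dibar$, exploiting the strict positivity of the complex Hessian of $\log(1+|z|^2)$ --- precisely the mechanism already used in the $L^2$ proof of the Oka--Weil theorem (see \eqref{eq:L2}). Writing $\omega=i\sum_{k=1}^n dz_k\wedge d\bar z_k$, a direct computation shows that the smallest eigenvalue of the Fubini--Study Hessian is $(1+|z|^2)^{-2}$, so that
\[
	i\di\dibar\log(1+|z|^2)\ \ge\ \frac{1}{(1+|z|^2)^2}\,\omega .
\]
Since every $\phi_j$ is plurisubharmonic, $i\di\dibar\psi_j\ge (1+|z|^2)^{-2}\omega$ as currents. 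Thus H\"ormander's theorem (\cite[Theorem 4.4.2]{Hormander1990}) on the pseudoconvex manifold $\C^n$, with weight $\psi_j$, solves $\dibar u=\alpha$ for any $\dibar$-closed $(0,1)$-form $\alpha$ with the estimate
\[
	\int_{\C^n}|u|^2 e^{-\psi_j}\,d\lambda\ \le\ \int_{\C^n}|\alpha|^2(1+|z|^2)^2 e^{-\psi_j}\,d\lambda
	\ =\ \int_{\C^n}|\alpha|^2(1+|z|^2)\,e^{-\phi_j}\,d\lambda ,
\]
the last equality using $(1+|z|^2)^2 e^{-\psi_j}=(1+|z|^2)e^{-\phi_j}$.

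Fix $f\in H^2(\C^n,e^{-\phi})$ and $\epsilon>0$. For $R\ge 1$ let $\chi_R(z)=\chi(z/R)$, where $\chi\equiv 1$ on $\overline{\B^n}$, $\supp\chi\subset\B^n(0,2)$ and $|\dibar\chi_R|\le C/R$. Then $\alpha_R:=\dibar(\chi_R f)=f\,\dibar\chi_R$ is $\dibar$-closed and supported in the annulus $A_R=\{R\le|z|\le 2R\}$; since $f$ is continuous and $\int_K e^{-\phi_1}\,d\lambda<\infty$ for every compact $K$, both $\chi_R f\in L^2(e^{-\psi_j})$ and the right-hand side above (with $\alpha=\alpha_R$) are finite. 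Solving $\dibar u_{R,j}=\alpha_R$ with weight $\psi_j$, the function $f_{R,j}:=\chi_R f-u_{R,j}$ is holomorphic and lies in $H^2(\C^n,e^{-\psi_j})$. Using $\psi_j\le\psi$ and then the estimate,
\[
	\|u_{R,j}\|_\psi^2\ \le\ \int_{\C^n}|u_{R,j}|^2 e^{-\psi_j}\,d\lambda
	\ \le\ \frac{C^2}{R^2}\int_{A_R}|f|^2(1+|z|^2)\,e^{-\phi_j}\,d\lambda ,
\]
while $\|\chi_R f-f\|_\psi^2\le\int_{|z|\ge R}|f|^2 e^{-\psi}\,d\lambda\to 0$ as $R\to\infty$, because $f\in H^2(e^{-\psi})$.

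The main obstacle is that $R$ and $j$ cannot be taken to infinity independently: $f$ is square-integrable only against $e^{-\phi}$, not against the smaller weights $e^{-\phi_j}$, so the tail $\int_{|z|\ge R}|f|^2 e^{-\phi_j}$ appearing above may be large or infinite. The remedy is to fix $R$ first and then send $j\to\infty$ over the now \emph{compact} annulus $A_R$. I would choose $R$ so large that $\|\chi_R f-f\|_\psi<\epsilon/2$ and $5C^2\int_{|z|\ge R}|f|^2 e^{-\phi}\,d\lambda<(\epsilon/2)^2$, both tails of convergent integrals. With $R$ fixed, $\phi_j\uparrow\phi$ gives $e^{-\phi_j}\downarrow e^{-\phi}$, and the integrand is dominated at $j=1$ (continuity of $f$ and $\int_{A_R}e^{-\phi_1}<\infty$); hence by dominated convergence
\[
	\frac{C^2}{R^2}\int_{A_R}|f|^2(1+|z|^2)e^{-\phi_j}\,d\lambda
	\ \longrightarrow\ \frac{C^2}{R^2}\int_{A_R}|f|^2(1+|z|^2)e^{-\phi}\,d\lambda
	\ \le\ 5C^2\!\!\int_{|z|\ge R}\!\!|f|^2 e^{-\phi}\,d\lambda<\Big(\tfrac{\epsilon}{2}\Big)^2 ,
\]
using $\frac{1+|z|^2}{R^2}\le 5$ on $A_R$. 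So there is $j(\epsilon)$ with $\|u_{R,j}\|_\psi<\epsilon/2$, hence $\|f_{R,j}-f\|_\psi<\epsilon$, for all $j\ge j(\epsilon)$. Taking $\epsilon=1/m$ yields $h_m\in H^2(e^{-\psi_{j_m}})$ with $j_m\uparrow\infty$ and $\|h_m-f\|_\psi<1/m$; since $\psi_j\le\psi_{j+1}$ gives $H^2(e^{-\psi_j})\subset H^2(e^{-\psi_{j+1}})$, setting $f_j:=h_m$ for $j_m\le j<j_{m+1}$ produces a sequence $f_j\in H^2(\C^n,e^{-\psi_j})$ with $\|f_j-f\|_\psi\to 0$, as required.
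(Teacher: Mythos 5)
Your argument is correct, but there is in fact nothing in the paper to compare it with: the survey states this result as Theorem 1.1 of Taylor's 1971 paper and gives no proof at all. What you have produced is essentially a reconstruction of Taylor's original $L^2$ method --- truncate $f$ by the cutoff $\chi_R$, solve $\dibar u_{R,j}=f\,\dibar\chi_R$ with a weighted H\"ormander estimate, and subtract --- and both delicate points are handled correctly. First, you invoke the sharpened estimate $\int|u|^2e^{-\psi_j}\,d\lambda\le\int|\alpha|^2(1+|z|^2)^2e^{-\psi_j}\,d\lambda$, resting on $i\di\dibar\log(1+|z|^2)\ge(1+|z|^2)^{-2}\omega$; this is exactly what produces the single logarithmic correction in $\psi_j=\phi_j+\log(1+|z|^2)$. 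Be aware, though, that the result you cite, \cite[Theorem 4.4.2]{Hormander1990}, is the weaker universal statement $\int|u|^2(1+|z|^2)^{-2}e^{-\varphi}\,d\lambda\le\int|\alpha|^2e^{-\varphi}\,d\lambda$ for arbitrary plurisubharmonic $\varphi$; using it instead would only place the corrected functions in $H^2\bigl(\C^n,e^{-\phi_j-2\log(1+|z|^2)}\bigr)$, which is weaker than the claim. The eigenvalue form you actually need is proved in \cite{Hormander1990} for smooth strictly plurisubharmonic weights, so for the possibly non-smooth $\psi_j$ you should add one line: regularize $\phi_j*\rho_\epsilon\downarrow\phi_j$ by convolution on $\C^n$, solve with the smooth weights $\phi_j*\rho_\epsilon+\log(1+|z|^2)$, and pass to a weak limit, using monotone convergence to recover the stated bound. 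This repair is routine and does not disturb the structure. Second --- and this is the crux --- you correctly resist letting $R$ and $j$ tend to infinity together: $R$ is fixed first using two tails of convergent integrals, and only then $j\to\infty$ by dominated convergence on the compact annulus $A_R$, where the standing hypothesis $\int_K e^{-\phi_1}\,d\lambda<\infty$ furnishes the integrable majorant $|f|^2(1+|z|^2)e^{-\phi_1}$; this is precisely where the hypotheses of the theorem enter.
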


This result was improved in a recent paper by J.\ E.\ Forn{\ae}ss and J.\ Wu \cite{FornaessWu2017JGA}.

\begin{theorem}
Let $\phi_1\leq \phi_2\leq \cdots$ and $\phi$ be plurisubharmonic functions on $\C^n$ such that
$\phi_j\rightarrow \phi$ pointwise. For any $\epsilon >0,$ let $\tilde{\phi}_j=\phi_j+\epsilon \log (1+|z|^2)$
and $\tilde{\phi}=\phi+\epsilon \log(1+|z|^2)$. Then
$\bigcup_{j=1}^\infty H^2(\C^n,e^{-\tilde{\phi}_j})$ is dense in $H^2(\C^n,e^{-\tilde{\phi}})$.
\end{theorem}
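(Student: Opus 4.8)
The plan is to follow the truncation‑and‑correction scheme underlying Taylor's theorem, but to arrange the $\dibar$‑solution so as to absorb the weaker hypothesis. First note that since $\phi_j\le\phi$ we have $\tilde{\phi}_j\le\tilde{\phi}$, hence $e^{-\tilde{\phi}_j}\ge e^{-\tilde{\phi}}$ and the spaces are nested, $H^2(\C^n,e^{-\tilde{\phi}_j})\subset H^2(\C^n,e^{-\tilde{\phi}_{j+1}})\subset H^2(\C^n,e^{-\tilde{\phi}})$; so the union already lies in the target space and only density must be shown. Thus it suffices, given $f\in H^2(\C^n,e^{-\tilde{\phi}})$ and $\delta>0$, to produce some index $j$ and a function $F\in H^2(\C^n,e^{-\tilde{\phi}_j})$ with $\|F-f\|_{\tilde{\phi}}<\delta$.

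Next I would cut off and correct. Choose a smooth $\chi$ equal to $1$ on a large ball and vanishing near infinity, so that $\chi f$ has compact support (hence lies in every $H^2(\C^n,e^{-\tilde{\phi}_j})$, using that $e^{-\phi_j}$ is locally integrable) while $\dibar(\chi f)=f\,\dibar\chi$ is a $\dibar$‑closed $(0,1)$‑form supported in the transition region. The crucial point is that the added term $\epsilon\log(1+|z|^2)$ makes $\tilde{\phi}_j$ strictly plurisubharmonic: writing $\omega=i\di\dibar\log(1+|z|^2)$ for the Fubini--Study form, one has $i\di\dibar\tilde{\phi}_j\ge\epsilon\,\omega>0$. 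Hörmander's $L^2$ estimate for this weight then yields a solution $u$ of $\dibar u=f\,\dibar\chi$ with $\int_{\C^n}|u|^2e^{-\tilde{\phi}_j}\,d\lambda\le \tfrac1\epsilon\int_{\C^n}|f|^2\,|\dibar\chi|^2_{\omega}\,e^{-\tilde{\phi}_j}\,d\lambda$, and I set $F=\chi f-u$, which is holomorphic and, being the difference of two functions in $H^2(\C^n,e^{-\tilde{\phi}_j})$, lies in that space.

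It remains to estimate $\|F-f\|_{\tilde{\phi}}\le\|(\chi-1)f\|_{\tilde{\phi}}+\|u\|_{\tilde{\phi}}$. The first term is the tail $\int_{\{\chi\ne1\}}|f|^2e^{-\tilde{\phi}}\,d\lambda$, which tends to $0$ as the ball grows because $f\in H^2(\C^n,e^{-\tilde{\phi}})$. For the second I would use $\|u\|_{\tilde{\phi}}\le\|u\|_{\tilde{\phi}_j}$ together with the Hörmander bound, and here the geometry of $\omega$ must be exploited: its only weak direction at infinity is the complex radial one, and a radial cutoff points exactly that way. The remedy is to take $\chi=\eta\bigl(1/(1+|z|^2)\bigr)$ with a logarithmic profile $\eta$ supported on $w\in[a,b]$, for which a direct computation gives $|\dibar\chi|^2_{\omega}=|\eta'|^2\,|z|^2(1+|z|^2)^{-2}=|z|^2\log^{-2}(b/a)$ on the transition, turning the correction bound into $\tfrac{1}{\epsilon\log^2(b/a)}\int_{A}|z|^2|f|^2e^{-\tilde{\phi}_j}\,d\lambda$ over a fixed compact annulus $A$. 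Letting $j\to\infty$ on this fixed $A$ replaces $e^{-\phi_j}$ by $e^{-\phi}$ by monotone convergence.

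The hard part will be this last step. In Taylor's setting the stronger hypothesis $f\in H^2(\C^n,e^{-\phi})$ makes $\int|z|^2|f|^2e^{-\tilde{\phi}}\,d\lambda$ finite, so the factor $\log^{-2}(b/a)$ drives the correction to zero; under the present weaker hypothesis this integral need not converge, and one must instead place the logarithmic transition where the \emph{finite} measure $|f|^2e^{-\tilde{\phi}}\,d\lambda$ carries little mass and balance the widening annulus against the $\log^{-2}$ gain, i.e. run a capacity/energy argument that uses only finiteness of the total mass together with the radial density $|\dibar w|^2_\omega=|z|^2(1+|z|^2)^{-2}$. Making this balance work—so that the radial degeneracy of $\omega$ is defeated with only $\epsilon$ of extra weight rather than a full unit—is exactly where the improvement over Taylor's theorem lies, and I expect it to be the main obstacle; once the correction is shown to be $o(1)$ along a suitable sequence of cutoffs and indices $j$, the three estimates combine to give $\|F-f\|_{\tilde{\phi}}<\delta$.
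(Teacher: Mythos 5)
First, a caveat on the comparison: the survey states this theorem without proof, citing Forn{\ae}ss--Wu \cite{FornaessWu2017JGA}, whose very title ties the result to Demailly's strong openness conjecture (proved by Guan and Zhou); that is exactly the ingredient missing from your attempt. Your skeleton --- nesting of the spaces, cut-off, $\dibar$-correction using the curvature $\epsilon\omega$ supplied by $\epsilon\log(1+|z|^2)$ --- is the natural Taylor-type scheme, but two steps are genuinely broken. The first is the pair of finiteness claims: that $\chi f\in H^2(\C^n,e^{-\tilde\phi_j})$ ``using that $e^{-\phi_j}$ is locally integrable'', and that one may pass from $e^{-\tilde\phi_j}$ to $e^{-\tilde\phi}$ on the transition region ``by monotone convergence''. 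A plurisubharmonic weight need not satisfy $e^{-\phi_j}\in L^1_{\mathrm{loc}}$ (consider $\phi_j$ with a pole like $2n\log|z-a|$); local integrability of $e^{-\phi_1}$ is precisely the extra hypothesis in Taylor's theorem that the present theorem is designed to remove. Moreover, convergence of the decreasing integrals $\int_A|f|^2e^{-\tilde\phi_j}\,d\lambda$ to $\int_A|f|^2e^{-\tilde\phi}\,d\lambda$ is dominated convergence, which requires finiteness at some stage --- the very point at issue, since $e^{-\tilde\phi_j}\ge e^{-\tilde\phi}$ means that $f\in H^2(e^{-\tilde\phi})$ gives no bound. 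This gap cannot be patched by soft measure theory: every $F\in H^2(\C^n,e^{-\tilde\phi_j})$ lies in the multiplier ideal $\Ical(\phi_j)_x$ at each point $x$; these ideals increase with $j$, stabilize by Noetherianity of $\Ocal_{\C^n,x}$, and are closed under local uniform limits, so the truth of the theorem itself forces $|f|^2e^{-\phi_j}$ to be integrable near each point for all large $j$. That assertion is the increasing-sequence form of the Guan--Zhou strong openness theorem, so any correct proof must contain or quote a result of that depth; in your write-up it is assumed implicitly and with an incorrect justification.

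The second gap is the key estimate, which you leave open, and here the situation is worse than ``the main obstacle'': the remedy you outline cannot work. You propose a capacity/energy argument ``that uses only finiteness of the total mass'' of $\mu=|f|^2e^{-\tilde\phi}\,d\lambda$, with radial cut-offs. For $\chi=\eta(w)$, $w=(1+|z|^2)^{-1}$, your own computation gives data $\tfrac1\epsilon\int|\eta'(w)|^2\,w(1-w)\,d\mu_j$ (note $|\dibar w|^2_\omega=w(1-w)$), so even after replacing $\mu_j$ by $\mu$, the quantity you must make small dominates the one-dimensional capacity $\inf_\eta\int|\eta'(w)|^2\,w\,d\tilde\mu(w)$, where $\tilde\mu$ is the radial distribution of $\mu$ and $\eta$ ranges over profiles with $\eta=1$ for $w\ge b$ and compact support in $\{w>0\}$. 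Finiteness of $\mu$ is compatible with tails $\mu\{|z|\ge R\}\sim 1/R$, i.e.\ $d\tilde\mu\sim w^{-1/2}\,dw$ near $0$, and then Cauchy--Schwarz gives $\int|\eta'|^2 w\,d\tilde\mu\ge\bigl(\int_0^b w^{-1/2}\,dw\bigr)^{-1}=\tfrac12\,b^{-1/2}$ for \emph{every} admissible profile; this lower bound blows up as $b\to0$, while the tail term $\|(1-\chi)f\|_{\tilde\phi}$ forces $b\to0$. (Such tails already occur for suitable $f$ in the Fock space $\phi_j=\phi=|z|^2$, where the theorem is trivial but your bound diverges.) Hence no placement of the annulus, no logarithmic profile, and no diagonal choice of $j$ can make the correction term $o(1)$ from finiteness of the mass alone; the radial degeneracy of $\omega$ is a genuine obstruction to this scheme, and the argument has to exploit the weights in an essentially different way --- which is where the actual content of \cite{FornaessWu2017JGA} lies.
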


\begin{question}
Let $\phi_1\leq \phi_2\leq \cdots$ and $\phi$ be plurisubharmonic functions on $\Omega\subset \C^n$ such that
$\phi_j\rightarrow \phi$. Is $\bigcup_{j=1}^\infty H^2(\Omega, e^{-\phi_j})$ dense in $H^2(\Omega, e^{-\phi})$?
\qed\end{question}

Recently, J.\ E.\ Forn{\ae}ss and J.\ Wu \cite{FornaessWu2017X} solved this problem in the case of $\Omega=\C$.

\begin{theorem}
If $\phi_1\leq \phi_2\leq \cdots$ and $\phi$ are subharmonic functions on $\C$ such that
$\phi_j\rightarrow \phi $ a.e.\ as $j\to\infty$, then $\bigcup_{j=1}^\infty H^2(\C, e^{-\phi_j})$ is 
dense in $H^2(\C,e^{-\phi})$.
\end{theorem}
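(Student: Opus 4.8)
The plan is to prove density by a direct construction. Since $\phi_j\nearrow\phi$, we have $e^{-\phi_j}\ge e^{-\phi}$, so the inclusions $H^2(\C,e^{-\phi_j})\subset H^2(\C,e^{-\phi})$ are norm-decreasing and $V:=\bigcup_j H^2(\C,e^{-\phi_j})$ is a genuine subspace of $\Hcal:=H^2(\C,e^{-\phi})$, with $\|h\|_\phi^2=\int_\C|h|^2e^{-\phi}\,d\lambda$. Fix $f\in\Hcal$ and $\epsilon>0$. First I would truncate: for a cut-off $\chi_R\in\Ccal^\infty_0(\C)$ with $\chi_R\equiv 1$ on $\D(0,R)$, $\supp\chi_R\subset\D(0,2R)$ and $|\di\chi_R/\di\bar\zeta|\le C/R$, the tail bound $\|(1-\chi_R)f\|_\phi^2\le\int_{|z|\ge R}|f|^2e^{-\phi}\,d\lambda\to0$ reduces everything to approximating $\chi_R f$. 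As $\chi_R f$ is not holomorphic, I would correct it using the Cauchy--Green operator \eqref{eq:TK}: set
\[
	w(z)=\frac{1}{\pi}\int_{A_R}\frac{f(\zeta)\,\frac{\di\chi_R}{\di\bar\zeta}(\zeta)}{z-\zeta}\,d\lambda(\zeta),\qquad A_R=\{R\le|z|\le 2R\},
\]
so that $\dibar w=f\,\dibar\chi_R=\dibar(\chi_R f)$ on $\C$ and $F_R:=\chi_R f-w$ is entire.

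The dimension-one miracle, and the point that lets one dispense with the auxiliary weight $\log(1+|z|^2)$ present in Taylor's theorem and in the $\C^n$ results above, is the following. Because $f$ is holomorphic, the datum has vanishing total integral by Stokes,
\[
	\int_{\C} f\,\frac{\di\chi_R}{\di\bar\zeta}\,d\lambda=\int_{\C}\frac{\di(\chi_R f)}{\di\bar\zeta}\,d\lambda=0,
\]
so expanding $1/(z-\zeta)$ for $|z|$ large shows that the Cauchy transform $w$ decays like $|z|^{-2}$, rather than the generic $|z|^{-1}$. Thus $|w|^2=O(|z|^{-4})$ is square-integrable at infinity against $e^{-\phi_j}$ with no growth weight needed, which is exactly what allows the correction $F_R$ to land in $H^2(\C,e^{-\phi_j})$. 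For the quantitative smallness I would control $w$ by a weighted $L^2$ estimate (H\"ormander's, together with Mergelyan's bound \eqref{eq:Mest} for the kernel), which with $|\dibar\chi_R|\le C/R$ and $\mathrm{Area}(A_R)\sim R^2$ yields an inequality of the shape
\[
	\|w\|_\phi^2\le\int_{\C}|w|^2e^{-\phi_j}\,d\lambda\le C_R\int_{A_R}|f|^2e^{-\phi_j}\,d\lambda ,
\]
where the first step uses $\phi_j\le\phi$. Granting such a bound with $C_R$ controlled and $\int_{A_R}|f|^2e^{-\phi_j}$ small, one gets $\|F_R-f\|_\phi\le\|(1-\chi_R)f\|_\phi+\|w\|_\phi\to 0$ with $F_R\in V$, finishing the proof.

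The hard part, and the step I expect to be the main obstacle, is precisely the control of the integrals $\int_{A_R}|f|^2e^{-\phi_j}$ and $\int_{\D(0,2R)}|f|^2e^{-\phi_j}$ against the \emph{rough} weights $\phi_j$. The weights are merely subharmonic and may carry $-\infty$ singularities at least as strong as those of $\phi$, so although $|f|^2e^{-\phi_j}\searrow|f|^2e^{-\phi}$ with integrable limit, no individual $\int_{\D(0,2R)}|f|^2e^{-\phi_j}$ need be finite; moreover $\int_{A_R}|f|^2e^{-\phi_j}$ is only known to dominate, not be dominated by, the small tail $\int_{A_R}|f|^2e^{-\phi}$. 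To overcome this I would reduce, by a preliminary density argument, to the subclass of $f\in\Hcal$ that are locally square-integrable against $e^{-\phi_1}$; for such $f$ one has $\int_{\D(0,2R)}|f|^2e^{-\phi_j}\le\int_{\D(0,2R)}|f|^2e^{-\phi_1}<\infty$ for \emph{every} $j$, so the monotone (decreasing) convergence theorem applies on the annulus and $\int_{A_R}|f|^2e^{-\phi_j}\to\int_{A_R}|f|^2e^{-\phi}$ as $j\to\infty$, which is small for $R$ large; choosing $j=j(R)$ large then delivers all the estimates at once. Establishing this reduction is where the potential theory of the $\phi_j$ enters: the points of $\C$ at which $e^{-\phi_j}$ fails to be locally integrable are the discrete set where the Riesz mass (Lelong number) of $\phi_1$ is $\ge 1$, and one forces $f$ to vanish there to the requisite order by multiplying by a Weierstrass product close to $1$ off small neighborhoods, using the a.e.\ convergence $\phi_j\to\phi$ together with Egorov's theorem to show the perturbation is small in $\phi$-norm.

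An alternative, and perhaps cleaner, organization is via duality: one shows that any $g\in\Hcal$ with $\int_\C h\,\bar g\,e^{-\phi}\,d\lambda=0$ for all $h\in V$ must vanish. The case $g\in H^2(\C,e^{-\phi_{j_0}})$ for some $j_0$ is immediate (test against $h=g$), and the orthogonality says that $\bar g\,e^{-\phi}$ annihilates the increasing family $H^2(\C,e^{-\phi_j})$; letting the weights relax from $\phi_j$ up to $\phi$ should force $g\equiv0$. Either way the technical core is identical: solving $\dibar$ against the increasing family of possibly singular subharmonic weights without the loss of a growth factor, which is exactly what the vanishing-moment decay of the Cauchy transform makes possible in one variable.
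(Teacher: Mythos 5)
Your construction collapses at its central step: the entire function $F_R$ you produce is identically zero. Indeed, $\chi_R f$ is a compactly supported $\Ccal^1$ function, so the Cauchy--Green formula \eqref{eq:Pompeiu} applied to $u=\chi_R f$ gives
\[
	\chi_R(z)f(z) \;=\; \frac{1}{\pi}\int_\C \frac{\dibar(\chi_R f)(\zeta)}{z-\zeta}\, d\lambda(\zeta) \;=\; w(z),
\]
that is, your correction $w=T(f\,\dibar\chi_R)$ equals $\chi_R f$ exactly, and $F_R=\chi_R f-w\equiv 0$. The same conclusion follows from your own decay observation: $F_R$ is entire and is the difference of two functions tending to $0$ at infinity, hence $F_R\equiv 0$ by Liouville. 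So the ``vanishing-moment miracle'' is not what rescues the argument; it is precisely what forces the collapse, since the Cauchy transform is the \emph{unique} solution of $\dibar w=f\,\dibar\chi_R$ vanishing at infinity, and that solution is $\chi_R f$ itself. Consequently the key estimate $\|w\|_\phi^2\le C_R\int_{A_R}|f|^2e^{-\phi_j}\,d\lambda$ is false: one has $\|w\|_\phi=\|\chi_R f\|_\phi$, which tends to $\|f\|_\phi$, not to $0$, as $R\to\infty$. Neither Mergelyan's bound \eqref{eq:Mest} nor any property of the operator \eqref{eq:TK} can repair this, because the defect lies not in the estimates but in the choice of solution operator.

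What the scheme actually requires is a solution of $\dibar w=f\,\dibar\chi_R$ that is genuinely small in the weighted norm, i.e.\ a H\"ormander-type minimal solution with respect to $e^{-\phi_j}$; then $F_R=\chi_R f-w$ would be a nonzero entire function close to $f$. But H\"ormander's estimate needs strict positivity of the Laplacian of the weight, which a general subharmonic $\phi_j$ does not provide --- this is exactly why Taylor's theorem and the $\C^n$ results quoted just before this statement must insert the auxiliary factor $\log(1+|z|^2)$, and why the one-variable theorem you are proving (due to Forn{\ae}ss and Wu, which the paper states with a citation but no proof) is genuinely delicate. You do correctly isolate the second real difficulty --- possible local non-integrability of $|f|^2e^{-\phi_j}$ at points where the Riesz masses of the $\phi_j$ are large, forcing a preliminary reduction of $f$ --- but your treatment of it (Weierstrass products plus Egorov) is only a sketch, and in any case the proof breaks down before that point is reached. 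The closing duality paragraph (``letting the weights relax from $\phi_j$ up to $\phi$ should force $g\equiv 0$'') is a restatement of the problem, not an argument.
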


This problem has a rich history in dimension one. Here one considers more general weights $w$ 
which are positive measurable functions on a domain $\Omega\subset \C$, and one defines for $1\leq p<\infty$ 
the weighted $L^p$-space of holomorphic functions:
\[
	H^p(\Omega,w)=\left \{ f\in \Ocal(\Omega):  \int_{\Omega} |f|^p w d \lambda<\infty \right\}. 
\]
The so called {\em completeness problem} is whether polynomials in $H^p(\Omega,w)$ are dense. 
There are two lines of investigation. 
One is about finding sufficient conditions on the domain and the weight in order for the polynomials to be dense in 
the weighted Hilbert space. Another one is to look at specific types of domains and ask
the same question for the weight function. These questions go back to T.\ Carleman \cite{Carleman1923}
who proved in 1923 that if $\Omega$ is a Jordan domain and $w \equiv 1$, then holomorphic polynomials
are dense in $H^2(\Omega)=L^2(\Omega)\cap \Ocal(\Omega)$.
Carleman's result was  extended by O.\ J.\ Farrell and A.\ I.\ Marku\v{s}evi\v{c} to Carath\'{e}odory domains 
(see \cite{Farrell1934,Mergelyan1962}).  
It is well known that this property need not hold for non-Carath\'{e}odory regions. 
The book by D.\ Gaier \cite{Gaier1987} (see in particular Chapter 1, Section 3) contains further 
results about $L^2$ polynomials approximation on some simply connected domains in the plane.  
For weight functions other than the identity,  L.\ I.\ Hedberg proved in 1965 \cite{Hedberg1965} that 
polynomials are dense when $\Omega$ is a Carath\'{e}odory domain, the weight function is continuous,
and it satisfies some technical condition near the boundary. For certain non-Carath\'{e}odory domains, 
the weighted polynomial approximation is usually considered under the assumption that 
the weight $w$ is essentially bounded and satisfies some additional conditions. For a more 
complete description of the history of this problem and many related references, 
see the survey by J.\ E.\ Brennan \cite{Brennan1977}. 

By using H\"ormander's $L^2$ estimate for the $\dibar$-operator,  B.\ A.\ Taylor \cite{Taylor1971} proved 
the following result which can be seen as a major breakthrough for general weighted approximation.
(See also D.\ Wohlgelernter \cite{Wohlgelernter1975}.)

\begin{theorem}[B.\ A.\ Taylor (1971), Theorem 2 in \cite{Taylor1971}] \label{th:Taylor}
If $\phi$ is a convex function on $\C^n$ such that the space $H^2(\C^n, e^{-\phi})$
contains all polynomials, then polynomials are dense in $H^2(\Omega, e^{-\phi})$. 
\end{theorem}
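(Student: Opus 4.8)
The plan is to exploit the one structural hypothesis that distinguishes this statement from the density results preceding it, namely the \emph{convexity} of $\phi$, and to use it through the homothety $z\mapsto rz$ for $0<r<1$. Given $f\in H^2(\C^n,e^{-\phi})$, write $f_r(z)=f(rz)$. I would establish two lemmas: (A) that $f_r\to f$ in the $\phi$-norm as $r\to 1^-$, and (B) that for each fixed $r<1$ the dilate $f_r$ lies in the $\phi$-closure of the holomorphic polynomials. Granting both, the triangle inequality finishes the proof: given $\epsilon>0$, first pick $r$ close to $1$ with $\|f-f_r\|_\phi<\epsilon/2$, then a Taylor partial sum of $f_r$ within $\epsilon/2$. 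Throughout I may assume $\phi\ge 0$: a finite convex function on $\R^{2n}$ whose weight satisfies $\int_{\C^n}e^{-\phi}\,d\lambda<\infty$ (which holds here, since the constants lie in $H^2(\C^n,e^{-\phi})$) is bounded below, and subtracting $\inf\phi$ merely rescales every norm by a fixed constant.

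For Lemma A, the change of variables $w=rz$ gives
\[
\|f_r\|_\phi^2=r^{-2n}\int_{\C^n}|f(w)|^2\,e^{-\phi(w/r)}\,d\lambda(w),
\]
and convexity applied to $w=r\cdot(w/r)+(1-r)\cdot 0$ yields $\phi(w)\le r\,\phi(w/r)+(1-r)\phi(0)$, hence $\phi(w/r)\ge\bigl(\phi(w)-(1-r)\phi(0)\bigr)/r$. Since $\phi\ge 0$ we have $\phi(w)/r\ge\phi(w)$, so
\[
\|f_r\|_\phi^2\le r^{-2n}e^{(1-r)\phi(0)/r}\,\|f\|_\phi^2,
\]
a bound whose prefactor tends to $1$ as $r\to1^-$. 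Thus $\{f_r\}$ is norm-bounded with $\limsup_{r\to1}\|f_r\|_\phi\le\|f\|_\phi$; combined with the pointwise convergence $f_r\to f$ (which yields weak convergence in the Hilbert space, tested against a dense family such as bounded compactly supported functions, via dominated convergence) and weak lower semicontinuity of the norm, this gives $\|f_r\|_\phi\to\|f\|_\phi$, and therefore $f_r\to f$ strongly.

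For Lemma B, expand $f=\sum_\alpha a_\alpha z^\alpha$, so $f_r=\sum_\alpha a_\alpha r^{|\alpha|}z^\alpha$. By hypothesis each monomial $z^\alpha$ lies in $H^2(\C^n,e^{-\phi})$, so it suffices to prove the series converges absolutely in the $\phi$-norm, that is $\sum_\alpha|a_\alpha|\,r^{|\alpha|}\,\|z^\alpha\|_\phi<\infty$; the partial sums are then the approximating polynomials. The decisive estimate is
\[
\limsup_{|\alpha|\to\infty}\bigl(|a_\alpha|\,\|z^\alpha\|_\phi\bigr)^{1/|\alpha|}\le 1,
\]
after which the geometric factor $r^{|\alpha|}$ lets the root test conclude. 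This couples a Cauchy bound $|a_\alpha|\le M(R)/R^\alpha$, where the growth of $M(R)=\sup_{|z_i|=R_i}|f|$ is controlled by a mean-value/Bernstein inequality through $\sup\phi$ over the relevant polytorus, with the growth of $\|z^\alpha\|_\phi$; both are governed by the Legendre transform of $\phi$, and convexity is exactly what makes these two transforms dual with no loss, rendering the product subexponential.

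The main obstacle is precisely this step-B estimate. When $\phi$ is radial the monomials are orthogonal and $\|z^\alpha\|_\phi$ is explicit, matching the Cauchy decay of $|a_\alpha|$ term by term (as in the Fock case $\phi=|z|^2$, where $|a_\alpha|\,\|z^\alpha\|_\phi=O(1)$); but for a general convex weight the monomials are \emph{not} orthogonal, so $\|f\|_\phi^2\ne\sum_\alpha|a_\alpha|^2\|z^\alpha\|_\phi^2$ and one cannot argue coefficient by coefficient. The real work is to extract the uniform bound on $|a_\alpha|\,\|z^\alpha\|_\phi$ directly from convex duality — the interplay between the growth of entire functions in $H^2(\C^n,e^{-\phi})$ and the support function attached to $\phi$ — and it is here that convexity, rather than mere plurisubharmonicity, is indispensable.
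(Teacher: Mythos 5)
Your Lemma A is correct, and it is the right (and standard) first move: the dilation inequality is exactly where convexity enters, and the weak-convergence/Radon--Riesz upgrade to strong convergence is sound. (Note that the paper gives no proof of this theorem to compare against; it only records that Taylor's argument runs through H\"ormander's $L^2$ estimates for the $\dibar$-operator.) The fatal problem is Lemma B: the ``decisive estimate'' $\limsup_{|\alpha|\to\infty}\bigl(|a_\alpha|\,\|z^\alpha\|_\phi\bigr)^{1/|\alpha|}\le 1$ is not merely unproven, it is \emph{false} for anisotropic convex weights, so no convex-duality argument can deliver it. Take $n=1$ and $\phi(z)=x^2+y^4$, $z=x+\imath y$, which is convex and admits all polynomials in $H^2(\C,e^{-\phi})$. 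The monomial norms are dictated by the direction of slowest growth of $\phi$ (the real axis): $\|z^k\|_\phi^2\ge \int_{|y|\le 1}\int_\R x^{2k}e^{-x^2-y^4}\,dx\,dy\ge c\,k^k e^{-k}$. The coefficient functionals are instead dictated by the direction of fastest growth (the imaginary axis): the functions $g_\tau(z)=\cos(\tau z^2)$ satisfy $|g_\tau|^2\le e^{4\tau|xy|}$, hence $\|g_\tau\|_\phi^2\le C e^{4\tau^4}$ (complete the square in $x$, then in $y^2$), while the coefficient of $z^{4m}$ in $g_\tau$ equals $\pm\,\tau^{2m}/(2m)!$. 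Choosing $\tau=(m/4)^{1/4}$ and using Stirling, the norm of the functional $g\mapsto a_{4m}(g)$ on $H^2(\C,e^{-\phi})$ is at least $C_1^k k^{-3k/8}$, where $k=4m$. Multiplying the two lower bounds gives, for $k=4m$, $m\in\N$,
\[
\sup\bigl\{|a_k(g)|:\|g\|_\phi\le 1\bigr\}\cdot\|z^k\|_\phi\;\ge\; C_2^k\, k^{k/8},
\]
which is superexponential in $k$.

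This kills the scheme itself, not just your proof of it. If $\sum_k|a_k(f)|\,r^k\|z^k\|_\phi$ were finite for every $f\in H^2(\C,e^{-\phi})$ --- which is what your root-test reduction requires --- then the continuous seminorms $\Lambda_N(f)=\sum_{k\le N}|a_k(f)|\,r^k\|z^k\|_\phi$ would be pointwise bounded in $N$, hence uniformly bounded on the unit ball by the Baire category (uniform boundedness) argument, forcing $\sup_k r^k\sup\{|a_k(g)|:\|g\|_\phi\le1\}\,\|z^k\|_\phi<\infty$; this contradicts the displayed bound for every $r>0$. So for each fixed $r\in(0,1)$ there exists $f$ in the space whose dilate $f_r$ has a Taylor series that does \emph{not} converge absolutely in the $\phi$-norm, and a further Baire argument produces a single $f$ with $\limsup_k(|a_k(f)|\,\|z^k\|_\phi)^{1/k}=\infty$. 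The obstruction is precisely the anisotropy you acknowledge and hope duality will fix: Cauchy/sup-norm bounds see $\sup_{|z|=R}\phi$ (here of size $R^4$), the monomial $L^2$ mass sees the directions where $\phi$ is smallest (here of size $R^2$), and no Legendre transform reconciles the two. Polynomials \emph{are} dense for this weight --- Taylor's theorem is true --- but the approximating polynomials cannot be taken to be partial sums of an absolutely convergent expansion of $f_r$. This is exactly why the known proofs keep your Lemma A and then abandon power series, producing the approximants instead by cutting off and correcting with solutions of $\dibar$-equations via H\"ormander's weighted $L^2$ estimates, as the paper indicates.
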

 
In 1976 N.\ Sibony \cite{Sibony1976} generalized Taylor's result as follows.
Given a domain $\Omega\subset \C^n$, we denote by $d_\Omega(z)$ the Euclidean distance 
of a point $z\in \Omega$ to $\C^n\setminus \Omega$. Write $\delta_0(z)=(1+|z|^2)^{-1/2}$ and
\[
	 \delta_\Omega(z) = \min \{ d_\Omega(z), \delta_0(z)\},\quad z\in\Omega.
\]

\begin{theorem} [N.\ Sibony (1976), \cite{Sibony1976}]
If $\Omega$ is an open convex domain in $\mathbb{C}^n$ and $\phi$ is a convex function on $\Omega$
satisfying
\[
	\sup_{z\in \Omega} e^{-\phi(z)} \delta^{-k}_\Omega(z) < +\infty, \qquad k\in\N,
\]
then polynomials are dense in $H^p(\Omega, e^{-\phi})$ for all $1\le p\le +\infty$.
\end{theorem}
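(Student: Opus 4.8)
The plan is to reduce, by a dilation argument exploiting the convexity of both $\Omega$ and $\phi$, to the approximation of functions holomorphic in a neighborhood of $\overline\Omega$, and then to produce polynomial approximants by combining H\"ormander's $L^2$ estimates with the rapid decay of the weight, following the scheme of Taylor's Theorem \ref{th:Taylor}.

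First I would normalize. After a translation assume $0\in\Omega$. The hypothesis $\sup_{z\in\Omega} e^{-\phi(z)}\delta_\Omega(z)^{-k}<\infty$ for every $k$ forces $e^{-\phi(z)}\to 0$, hence $\phi(z)\to+\infty$, as $z\to b\Omega$; since $\phi$ is continuous on $\Omega$ it is bounded below, so after adding a constant (which changes neither $H^p(\Omega,e^{-\phi})$ nor the admissibility of the weight) I may assume $\phi\ge 0$. The same decay, with $k>2n$ and $\delta_\Omega\le\delta_0=(1+|z|^2)^{-1/2}$, gives $e^{-\phi}\in L^1(\Omega)$, so that uniform approximation on compacta automatically yields $L^p$-approximation against the weight.

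The core reduction is a dilation. For $r\in(0,1)$ set $f_r(z)=f(rz)$; as $\Omega$ is convex with $0\in\Omega$, $f_r$ is holomorphic on $\tfrac1r\Omega\supset\overline\Omega$. Convexity of $\phi$ gives $\phi(w)\le r\,\phi(w/r)+(1-r)\phi(0)$, whence, using $\phi\ge0$ and $r<1$, one finds $e^{-\phi(w/r)}\le e^{\frac{1-r}{r}\phi(0)}e^{-\phi(w)}$. Substituting $w=rz$ then bounds $\|f_r\|_\phi$ by $r^{-2n}e^{\frac{1-r}{r}\phi(0)}\|f\|_\phi$, and the same estimate applied over tails $\Omega\setminus K$ gives uniform integrability of the family $\{|f_r|^p e^{-\phi}\}$. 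Combined with $f_r\to f$ locally uniformly, this yields $f_r\to f$ in $H^p(\Omega,e^{-\phi})$ for $1\le p<\infty$ (the cases $p=1,\infty$ follow from the same tail bound with sup-norms). Thus it suffices to polynomially approximate a function $g$ holomorphic on a convex neighborhood of $\overline\Omega$. When $\Omega$ is bounded, $\overline\Omega$ is compact convex, hence polynomially convex, so the Oka--Weil theorem (Theorem \ref{th:Oka-Weil}) gives uniform polynomial approximation of $g$ on $\overline\Omega$, and $e^{-\phi}\in L^1(\Omega)$ closes this case.

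The substance, and the main obstacle, is the unbounded convex case, where the approximants must be controlled \emph{simultaneously} near the finite part of $b\Omega$ and at infinity: since $g$ is defined only near $\overline\Omega$, one cannot simply multiply by a radial cut-off and solve $\dibar$ on all of $\C^n$, and it is exactly the joint convexity of $\Omega$ and $\phi$ together with the ``faster than any power of $\delta_\Omega$'' decay that make the two ends compatible. Here the factor $\log(1+|z|^2)$ implicit in $\delta_0$ supplies a strictly plurisubharmonic term, so H\"ormander's $L^2$ estimates solve $\dibar u=g\,\dibar\chi$ against the weight $e^{-\phi-\log(1+|z|^2)}$ with norm control, correcting suitable cut-offs of $g$ to holomorphic functions of globally controlled weighted norm, which are then expanded in Taylor polynomials whose tails are dominated using the convexity of $\phi$ precisely as in Taylor's Theorem \ref{th:Taylor}. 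A clean way to organize the passage to all exponents is by duality: for $1<p<\infty$ a functional on $H^p(\Omega,e^{-\phi})$ annihilating all polynomials is represented by some $\nu\in L^{p'}$, vanishing on monomials says all moments of $\nu e^{-\phi}$ vanish, and convexity identifies the Fourier--Borel transform of this rapidly decaying density as an entire function of exponential type controlled by the support function of $\Omega$, which is therefore identically zero; this forces the functional to annihilate every $g$ holomorphic near $\overline\Omega$, and Hahn--Banach concludes. The transition from the $L^2$ theory, where H\"ormander operates, to general $p\in[1,\infty]$ is absorbed into the weight by the slack $\delta_\Omega^{k}$ through the sub-mean-value inequality for $|g|^p$.
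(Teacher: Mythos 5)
The paper contains no proof of this statement; it is quoted from Sibony's 1976 paper with a citation only, so your proposal can only be judged on its own merits, and it has genuine gaps. The part that works is the dilation reduction for $1\le p<\infty$: since $0\in\Omega$ and $\Omega$ is convex we indeed have $r\overline\Omega\subset\Omega$, your convexity inequality $\phi(rz)\le r\phi(z)+(1-r)\phi(0)$ is applied correctly, and the uniform tail estimate gives $f_r\to f$ in the $L^p$ norm. But the parenthetical claim that "$p=1,\infty$ follow from the same tail bound with sup-norms" is false for $p=\infty$, and so is the theorem in the naive sup-norm reading. Take $\Omega=\D$, $\phi(z)=1/(1-|z|)$ (convex, and $e^{-\phi}\le C_k\delta_\Omega^k$ for every $k$), and $f(z)=e^{1/(1-z)}$. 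Since $\Re\frac{1}{1-z}=\frac{1-x}{|1-z|^2}\le\frac{1}{1-|z|}$ on $\D$, one has $\sup_\D |f|e^{-\phi}\le 1$, while $|f|e^{-\phi}\equiv1$ on $[0,1)$; every polynomial $P$ is bounded on $\D$, so $\sup_\D|f-P|\,e^{-\phi}\ge\lim_{x\to1^-}\bigl(1-|P(x)|\,e^{-1/(1-x)}\bigr)=1$. The same computation shows $\sup_\D|f_r-f|\,e^{-\phi}\ge1$ for every $r<1$, so dilation itself fails in sup-norm. Any correct $p=\infty$ statement must be formulated on the subspace where $|f|e^{-\phi}$ vanishes at $b\Omega$, which your argument never addresses.

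More seriously, the heart of the theorem --- the unbounded convex case for $p<\infty$ (the bounded case being immediate from Theorem \ref{th:Oka-Weil}) --- is not established by either mechanism you sketch. For the H\"ormander route: to end with polynomials you must produce \emph{entire} approximants lying in $H^2(\C^n,e^{-\Phi})$ for some convex weight $\Phi$ defined on all of $\C^n$, of superlogarithmic growth, and with $\Phi\le\phi+C$ on $\Omega$, so that Theorem \ref{th:Taylor} applies and its conclusion transfers to $H^p(\Omega,e^{-\phi})$. But $\phi$ lives only on $\Omega$ and blows up at $b\Omega$; its convex envelope over $\C^n$ is $+\infty$ at \emph{every} point off $\overline\Omega$ (separate the point from $\overline\Omega$ by an affine functional and add large multiples of it to any affine minorant of $\phi$), so this global weight must be constructed by hand, and one must then choose cut-offs making the $\dibar$-data small in the corresponding global norm --- this is precisely the substance of Sibony's proof and it is absent from your sketch; solving $\dibar u=g\,\dibar\chi$ merely on a pseudoconvex neighborhood of $\overline\Omega$, with a weight defined only there, produces functions holomorphic just on that neighborhood, which is no closer to polynomials. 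The duality route is broken outright: the hypothesis gives superpolynomial, not exponential, decay of $e^{-\phi}$, so the Fourier--Borel transform of $\nu e^{-\phi}d\lambda$ is not an entire function; the exponentials $e^{\langle z,\zeta\rangle}$ belong to $H^p(\Omega,e^{-\phi})$ only for $\zeta$ in the dual domain $\{\zeta:\sup_{z\in\Omega}\Re\langle z,\zeta\rangle<\infty\}$, which has empty interior whenever $\Omega$ is unbounded (a line for a strip, a ray for a half-space), and in any case "of exponential type controlled by the support function, hence identically zero" is a non sequitur. Finally, the proposed $L^2\to L^p$ passage "absorbed into the weight by the slack $\delta_\Omega^k$" fails quantitatively: the oscillation of a convex $\phi$ on a ball of radius $\sim\delta_\Omega(z)$ need not be polynomially bounded in $1/\delta_\Omega(z)$ (already $\phi\sim1/d_\Omega$ near $b\Omega$ has oscillation $\sim 1/d_\Omega\gg k\log(1/\delta_\Omega)$), so sub-mean-value estimates do not convert between exponents at the claimed cost.
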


In the same paper, Sibony  also proved the analogous result for homogeneous plurisubharmonic weights.

\begin{theorem} [N.\ Sibony (1976), \cite{Sibony1976}]
Let $\phi$ be a plurisubharmonic function on $\C^n$ which is complex homogeneous of order $\rho >0$, 
that is, $\phi(uz) = |u|^\rho \phi(z)$ for all $u\in \mathbb{C}$ and $z\in \mathbb{C}^n$. Then,
polynomials are dense in $H^2(\Omega, e^{-\phi})$.
\end{theorem}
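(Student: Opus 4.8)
The plan is to exploit the invariance of the weight under the circle action $z\mapsto e^{i\theta}z$, which turns the homogeneous (Taylor) expansion of a holomorphic function into an \emph{orthogonal} decomposition of $H^2(\C^n,e^{-\phi})$; throughout I read the domain as $\Omega=\C^n$, since $\phi$ is given on $\C^n$. First I would observe that, because $\phi$ is complex homogeneous of order $\rho$, for every $\theta\in\R$ one has $\phi(e^{i\theta}z)=|e^{i\theta}|^\rho\phi(z)=\phi(z)$; together with the rotation invariance of Lebesgue measure, a change of variables shows that the operators $U_\theta$ defined by $U_\theta g(z)=g(e^{i\theta}z)$ act as \emph{unitaries} on $H^2(\C^n,e^{-\phi})$. (I would also record the convenient, though inessential, fact that homogeneity of positive order forces $\phi\ge 0$: restricting $\phi$ to a complex line shows $u\mapsto |u|^\rho\phi(z)$ is subharmonic, and since $\Delta|u|^\rho=\rho^2|u|^{\rho-2}>0$ for $u\ne 0$, one gets $\phi(z)\ge 0$.)

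Given $g\in H^2(\C^n,e^{-\phi})$, I would form the Fourier coefficients of $\theta\mapsto g(e^{i\theta}z)$,
\[
	g_k(z)=\frac{1}{2\pi}\int_0^{2\pi} g(e^{i\theta}z)\,e^{-ik\theta}\,d\theta,\qquad k\in\Z_+.
\]
Expanding $g$ into its homogeneous parts $g=\sum_{k\ge 0}Q_k$ (the degree-$k$ Taylor terms, convergent absolutely and uniformly on compacts) and using $Q_k(e^{i\theta}z)=e^{ik\theta}Q_k(z)$, one sees that $g_k=Q_k$ is exactly the degree-$k$ homogeneous polynomial in the expansion of $g$. Minkowski's integral inequality gives $\|g_k\|_\phi\le\|g\|_\phi<\infty$, so each $g_k\in H^2(\C^n,e^{-\phi})$, and the $g_k$ are mutually $\phi$-orthogonal: since $U_\theta g_j=e^{ij\theta}g_j$ and $U_\theta$ is unitary, $\langle g_j,g_k\rangle_\phi=e^{i(j-k)\theta}\langle g_j,g_k\rangle_\phi$ for all $\theta$, hence $\langle g_j,g_k\rangle_\phi=0$ when $j\ne k$. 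A Parseval computation then yields $\sum_k\|g_k\|_\phi^2=\|g\|_\phi^2$: integrating the fibrewise identity $\frac{1}{2\pi}\int_0^{2\pi}|g(e^{i\theta}z)|^2\,d\theta=\sum_k|g_k(z)|^2$ against $e^{-\phi}\,d\lambda$ and applying Tonelli, the left-hand side becomes $\frac{1}{2\pi}\int_0^{2\pi}\|U_\theta g\|_\phi^2\,d\theta=\|g\|_\phi^2$.

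The conclusion will then follow by showing that the partial sums $S_N=\sum_{k=0}^N g_k$, which are \emph{polynomials}, converge to $g$ in $\phi$-norm. By orthogonality and the summability $\sum_k\|g_k\|_\phi^2<\infty$, the sequence $S_N$ is Cauchy in $H^2(\C^n,e^{-\phi})$ and converges to some $h$ in $\phi$-norm. The only delicate point — and the place where plurisubharmonicity enters — is to identify $h$ with $g$: convergence in $\phi$-norm forces $L^2$-convergence on every compact $K$, because $\phi$ is plurisubharmonic and hence locally bounded above, so that $e^{-\phi}\ge e^{-\max_K\phi}>0$ on $K$; thus a subsequence of $S_N$ converges to $h$ almost everywhere. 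On the other hand $S_N\to g$ everywhere by the compact convergence of the homogeneous expansion, whence $h=g$ a.e., and both being holomorphic, $h=g$. Therefore $\|g-S_N\|_\phi\to 0$ with $S_N$ polynomials, proving density. In this scheme the homogeneity of $\phi$ is used essentially, via rotation invariance, to obtain the orthogonal degree decomposition, while plurisubharmonicity is used only through local boundedness above.
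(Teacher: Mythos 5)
Your proof is correct. Note that the paper offers no proof of this statement at all: it is quoted as a result of Sibony with a citation to \cite{Sibony1976}, so there is nothing to compare against line by line; your rotation-averaging argument is the natural one for homogeneous weights. The key steps all check out: invariance of $e^{-\phi}\,d\lambda$ under $z\mapsto e^{\imath\theta}z$ makes each $U_\theta$ unitary; the Fourier coefficients $g_k$ coincide with the homogeneous Taylor components of $g$, hence are polynomials; Minkowski's integral inequality puts them in $H^2(\C^n,e^{-\phi})$; unitarity gives their mutual orthogonality; and Tonelli plus fibrewise Parseval gives $\sum_k\|g_k\|_\phi^2=\|g\|_\phi^2$. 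One streamlining is available at the end: since $g_k=P_kg$ where $P_k=\frac{1}{2\pi}\int_0^{2\pi}e^{-\imath k\theta}\,U_\theta\,d\theta$ is an orthogonal projection (self-adjoint and idempotent), you get $\langle g,g_k\rangle_\phi=\|g_k\|_\phi^2$, hence
\[
	\|g-S_N\|_\phi^2=\|g\|_\phi^2-\sum_{k=0}^N\|g_k\|_\phi^2\longrightarrow 0
\]
directly by your Parseval identity; this bypasses both the completeness of the weighted space and the a.e.\ identification of the limit. Your identification step is nevertheless valid as written, since upper semicontinuity of the plurisubharmonic $\phi$ does give $e^{-\phi}\ge e^{-\sup_K\phi}>0$ on each compact $K$. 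A final point in your favor: the argument correctly sidesteps the question of whether \emph{every} polynomial lies in $H^2(\C^n,e^{-\phi})$ (which may fail to be an issue only because the space can be small); the approximating polynomials $S_N$ are built from $g$ itself and are shown to belong to the space, which is exactly what density requires.
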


It is well known that every convex function is plurisubharmonic, but the converse is not true. 
In view of Theorem \ref{th:Taylor} it is therefore natural to ask the following question.
Let $\phi$ be a plurisubharmonic function on a Runge domain $\Omega\subset \C^n$.
Suppose that the restrictions of polynomials to $\Omega$ belong to $H^2(\Omega, e^{-\phi})$. 
Does it follow that polynomials are dense in $H^2(\Omega, e^{-\phi})$?
Recently, S.\ Biard, J.\ E.\ Forn{\ae}ss and J.\ Wu   \cite{BiardFornaessWu2018} found a counterexample
in the plane.

\begin{theorem}
There is a subharmonic function $\phi$ on $\C$ such that all polynomials
belong to $H^2(\C, e^{-\phi})$, but polynomials are not dense in $H^2(\C, e^{-\phi})$.
\end{theorem}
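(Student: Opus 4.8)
The plan is to treat the two requirements separately and to build $\phi$ to meet them. A polynomial lies in $H^2(\C,e^{-\phi})$ exactly when $\int_{\C}|z|^{2n}e^{-\phi}\,d\lambda<\infty$, and this is guaranteed once $\phi$ grows superlogarithmically at infinity, say $\phi(z)\ge(\log|z|)^2$ for large $|z|$ (with $e^{-\phi}$ locally integrable); so the first requirement only pins down the growth of $\phi$ and leaves enormous freedom. For the second requirement I would invoke Hilbert space duality: since $H^2(\C,e^{-\phi})$ is a closed subspace of $L^2(e^{-\phi}d\lambda)$, the polynomials fail to be dense precisely when there is some $g\in H^2(\C,e^{-\phi})$, $g\not\equiv 0$, orthogonal to every monomial, that is,
\[
	\int_{\C} z^{n}\,\overline{g(z)}\,e^{-\phi(z)}\,d\lambda(z)=0,\qquad n=0,1,2,\ldots.
\]
Thus the whole problem reduces to producing a subharmonic weight of superlogarithmic growth together with a nonzero entire $g$ whose twisted moments against $e^{-\phi}\,d\lambda$ all vanish.

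Two remarks narrow the search and explain its delicacy. If $\phi$ were radial, then writing $g=\sum_m a_m z^m$ and integrating in the angular variable shows the $n$-th moment equals $2\pi\bar a_n\int_0^\infty r^{2n+1}e^{-\phi(r)}\,dr$, forcing $a_n=0$; hence radial weights always make polynomials dense, and the weight we seek must be genuinely non-radial. Similarly, Taylor's Theorem~\ref{th:Taylor} shows a convex $\phi$ would force density, so $\phi$ must be subharmonic yet far from convex.

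To realize the vanishing moments I would pass through $\dibar$ and integration by parts. Suppose one can write $\overline{g}\,e^{-\phi}=\dibar u$ for a function $u$ on $\C$ decaying faster than every power of $|z|^{-1}$ at infinity. Then, by the complex Green (Stokes) formula, for each $n$,
\[
	\int_{\C} z^{n}\,\dibar u\,d\lambda=\frac{1}{2\imath}\oint_{|z|=R} z^{n}u\,dz\;\xrightarrow[R\to\infty]{}\;0,
\]
because $z^{n}$ is holomorphic and $z^{n}u\to 0$, so all moments vanish automatically. The task therefore becomes: choose $\phi$ subharmonic with superlogarithmic growth and $g\in\Ocal(\C)\cap H^2(\C,e^{-\phi})$, $g\not\equiv 0$, for which the inhomogeneous Cauchy--Riemann equation $\dibar u=\overline{g}\,e^{-\phi}$ admits a super-polynomially decaying solution $u$. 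Since in one variable any nonnegative measure is the Riesz measure of a subharmonic function, one has wide latitude to prescribe $\phi$ with a deliberately non-radial Laplacian.

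I expect the main obstacle to be precisely the simultaneous fulfilment of these competing demands. The required decay of $u$ cannot be read off the naive Cauchy transform of $\overline g\,e^{-\phi}$; it must be produced by solving $\dibar u=\overline g\,e^{-\phi}$ with H\"ormander-type weighted $L^2$ estimates against an auxiliary subharmonic weight tuned to $\phi$, and forcing decay compels $\phi$ to oscillate --- to have deep ``channels'' where $e^{-\phi}$ is large, arranged along an unbounded non-radial pattern between regions of rapid growth. Reconciling these oscillations with the constraint $\Delta\phi\ge 0$ and with the growth demanded in the first step is the analytic heart of the construction, and the quantitative checks that the resulting $g$ is nonzero, that $g\in H^2(\C,e^{-\phi})$, and that $u$ genuinely decays are where the explicit choices must be made.
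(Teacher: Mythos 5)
Your proposal is a plan of attack, not a proof: each step you actually carry out is either elementary or a restatement of the problem, and the existence assertion itself --- which is the entire content of the theorem --- is left unproven. Concretely: (i) the growth condition $\phi(z)\ge(\log|z|)^2$ ensuring that all polynomials lie in $H^2(\C,e^{-\phi})$ is immediate; (ii) the duality criterion (non-density $\Leftrightarrow$ existence of $0\ne g\in H^2(\C,e^{-\phi})$ orthogonal to all monomials) is standard Hilbert-space theory; (iii) the observations that the weight must be non-radial, and non-convex by Taylor's Theorem \ref{th:Taylor}, are correct but only narrow the search; and (iv) your $\dibar$/Stokes reformulation is essentially \emph{equivalent} to the vanishing-moment condition it is meant to produce. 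Indeed, the Cauchy transform $u(z)=\frac{1}{\pi}\int_\C \frac{\overline{g(\zeta)}e^{-\phi(\zeta)}}{z-\zeta}\,d\lambda(\zeta)$ always solves $\dibar u=\overline{g}\,e^{-\phi}$, and its expansion at infinity shows that a solution decaying faster than every power of $|z|^{-1}$ exists if and only if all moments $\int_\C \zeta^n\,\overline{g}\,e^{-\phi}\,d\lambda$ vanish; so step (iv) does not transform the problem, it repeats it. You acknowledge this yourself when you write that the quantitative choices of $\phi$, $g$ and $u$ are ``where the explicit choices must be made'' --- but those choices are precisely the theorem. No candidate weight is constructed, no candidate annihilator $g$ (or any other certificate of non-density) is exhibited, and no estimate is proved.

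For context, the survey states this result without proof, citing S.\ Biard, J.\ E.\ Forn{\ae}ss and J.\ Wu \cite{BiardFornaessWu2018}, where the theorem is established by an explicit construction of a (necessarily non-radial, non-convex) subharmonic weight together with quantitative estimates showing that a specific entire function of finite norm remains at positive distance from every polynomial. The delicate point there is exactly the one your outline postpones: reconciling subharmonicity and superlogarithmic growth of $\phi$ with a geometry at infinity (thin regions where $e^{-\phi}$ is comparatively large, separated from regions of rapid decay) that an entire function of sufficiently wild growth can exploit but a polynomial --- whose modulus is rigidly tied to $\deg p\cdot\log|z|$ --- cannot. Your intuition about where such an example must live is sound and consistent with the positive results quoted in this section (which show that boundedness or convexity rules it out), but as written the proposal proves nothing beyond reductions that are either standard or circular, so it cannot be accepted as a proof of the theorem.
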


They also proved the following positive result under additional conditions.

\begin{theorem}
Let $\phi$ be plurisubharmonic on a neighborhood of $\overline{\Omega}\subset \C^n$, and
suppose that $\overline{\Omega}$ is bounded, uniformly H-convex and polynomially convex. 
If $H^2(\Omega, e^{-\phi})$ contains all polynomials, then polynomials are dense in
$H^2(\Omega, e^{-\phi})$.
\end{theorem}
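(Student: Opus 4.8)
The plan is to factor the desired approximation through functions holomorphic on a full neighborhood of $\overline\Omega$, and to treat the two resulting arrows by different tools: the passage from such functions to polynomials by Oka--Weil, and the passage from a general element of $H^2(\Omega,e^{-\phi})$ to a function holomorphic near $\overline\Omega$ by a geometric inward ``push-in'' powered by the uniform H-convexity.

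First I would record two elementary facts. The hypothesis that $H^2(\Omega,e^{-\phi})$ contains the constants gives $\int_\Omega e^{-\phi}\,d\lambda<\infty$, and $\phi$, being plurisubharmonic on a neighborhood of the compact $\overline\Omega$, is bounded above there. Consequently, for functions holomorphic on a neighborhood of $\overline\Omega$, uniform approximation on $\overline\Omega$ already forces approximation in the $\phi$-norm, since $\int_\Omega|g_n-g|^2 e^{-\phi}\,d\lambda\le \|g_n-g\|_{\overline\Omega}^2\int_\Omega e^{-\phi}\,d\lambda$. This reduces the ``second arrow'' to a purely uniform statement: every $g\in\Ocal(\overline\Omega)$ is a uniform limit on $\overline\Omega$ of holomorphic polynomials, which is exactly the Oka--Weil theorem (Theorem \ref{th:Oka-Weil}) since $\overline\Omega$ is polynomially convex. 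I would also note that on the bounded set $\Omega$ the weights $e^{-\phi}$ and $e^{-\phi-\lambda|z|^2}$ define equivalent norms for any $\lambda\in\R$, so one may add a strictly plurisubharmonic term to $\phi$ without changing $H^2(\Omega,e^{-\phi})$ up to equivalence; this is convenient when invoking Hörmander's estimates.

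The core step is therefore: given $f\in H^2(\Omega,e^{-\phi})$ and $\epsilon>0$, produce $g$ holomorphic on a neighborhood of $\overline\Omega$ with $\|g-f\|_\phi<\epsilon$. Here is where I would use the uniform H-convexity: it should provide, for small $t>0$, holomorphic maps $\Psi_t$ on a fixed neighborhood of $\overline\Omega$ with $\Psi_t(\overline\Omega)\Subset\Omega$ and $\Psi_t\to\mathrm{id}$ uniformly as $t\to 0$, with uniformly bounded derivatives. For a convex domain these are the affine contractions $z\mapsto(1-t)z+tz_0$ toward an interior point, and the role of uniform H-convexity is to furnish such inward push-ins with estimates uniform along $\partial\Omega$. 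Setting $g_t:=f\circ\Psi_t$, each $g_t$ is holomorphic on a neighborhood of $\overline\Omega$, since $\Psi_t$ carries that neighborhood into $\Omega$ where $f$ is holomorphic. It then remains to show $g_t\to f$ in the $\phi$-norm: by the change of variables $w=\Psi_t(z)$ the weighted mass $\int_\Omega|f\circ\Psi_t|^2 e^{-\phi}\,d\lambda$ stays comparable to $\|f\|_\phi^2$, with constants controlled by uniform bounds on $\det\Psi_t'$ and on the distortion of $e^{-\phi}$; granting this domination, the pointwise convergence $f\circ\Psi_t\to f$ on $\Omega$ yields $\|g_t-f\|_\phi\to 0$ by an $L^2$-continuity argument.

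The main obstacle is twofold: constructing the inward family $\Psi_t$ from uniform H-convexity alone, and controlling the weighted norm of $g_t-f$ up to the boundary, where $e^{-\phi}$ may degenerate. If no single global holomorphic contraction is available, I would localize: cover $\partial\Omega$ by finitely many charts in which the uniform geometry yields holomorphic vector fields whose time-$t$ flows $\phi_{j,t}$ push the boundary strictly inward (as in the proof of Theorem \ref{th:MergelyanWo}), form the local functions $f\circ\phi_{j,t}$, and glue them across overlaps by solving a $\bar\partial$-problem with Hörmander's $L^2$-estimates, using the freedom noted above to add a strictly plurisubharmonic term to $\phi$ so that solvability holds with bounds uniform in $t$. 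In either route the decisive requirement is that all error estimates be uniform near $\partial\Omega$, and it is precisely the word ``uniformly'' in the hypothesis that supplies this uniformity; without it the weighted norm of the correction could fail to tend to zero as one approaches the boundary.
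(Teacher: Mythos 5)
The paper itself does not prove this statement: it is quoted from Biard--Forn{\ae}ss--Wu \cite{BiardFornaessWu2018} without proof, so your proposal has to be judged on its own merits. Your outer reduction is correct and is surely part of any proof: since $1\in H^2(\Omega,e^{-\phi})$ gives $\int_\Omega e^{-\phi}\,d\lambda<\infty$, uniform approximation on the polynomially convex compact $\overline\Omega$ implies approximation in $\|\cdot\|_\phi$, so by the Oka--Weil theorem (Theorem \ref{th:Oka-Weil}) everything reduces to approximating a given $f\in H^2(\Omega,e^{-\phi})$ in the weighted norm by functions holomorphic on neighborhoods of $\overline\Omega$.

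The genuine gap is in your core step: uniform H-convexity does not produce inward holomorphic contractions $\Psi_t$, neither globally nor locally, and no argument can extract them from the stated hypotheses. Uniform H-convexity is an \emph{outer} condition: it supplies pseudoconvex neighborhoods $D_j\supset\overline\Omega$ shrinking at a controlled rate, and it encodes no boundary regularity at all. Indeed, for $n=1$ every open subset of $\C$ is pseudoconvex, so taking $D_j=\{z:\dist(z,\overline\Omega)<\varepsilon_j\}$ shows that the closure of \emph{every} bounded planar domain is uniformly H-convex; the hypothesis is vacuous in one variable, hence cannot yield push-ins. Your fallback via local flows ``as in the proof of Theorem \ref{th:MergelyanWo}'' is likewise unavailable: that construction uses constant vector fields pointing into $\Omega$, which requires at least $\Ccal^1$ boundary, and here $\partial\Omega$ may be arbitrarily wild. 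Concretely, let $\Omega=\D\setminus[0,1)$ be the slit disc and $\phi\equiv 0$; then $\overline\Omega=\overline\D$ is bounded, polynomially convex and uniformly H-convex, and all polynomials lie in $H^2(\Omega)$. Yet no map $\Psi$ (holomorphic or even continuous) from a neighborhood of $\overline\D$ into $\Omega$ can satisfy $\sup_{\overline\D}|\Psi-\mathrm{id}|<1/4$: its image of the circle $\{|z|=1/2\}$ would be a closed curve avoiding $[0,1)$, winding once around $0$ (straight-line homotopy to the circle avoids $0$), hence also once around $3/4$ (the segment $[0,3/4]$ avoids the curve), although $3/4$ lies in the unbounded component of the complement of the curve --- a contradiction. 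Worse, the very conclusion of your core step fails for this $\Omega$: the branch of $\sqrt z$ on $\Omega$ lies in $H^2(\Omega)$, but any $L^2(\Omega)=L^2(\D)$-limit of functions holomorphic near $\overline\D$ belongs to the Bergman space of $\D$, which is closed in $L^2(\D)$, and would extend $\sqrt z$ holomorphically across the slit. This shows both that the statement as transcribed needs an implicit fatness hypothesis (such as $\Omega=\mathrm{int}\,\overline\Omega$, as in the cited work) and that, even granting such a hypothesis, the role of uniform H-convexity in the actual proof is entirely different from the one you assign it: following H\"ormander--Wermer and \v{C}irka, it is what permits solving $\dibar$-problems on the outer domains $D_j$ with constants controlled by $\varepsilon_j$ (so that cutoff errors at scale $\varepsilon_j$ can be corrected), not what pushes $\overline\Omega$ into $\Omega$.

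A secondary, fixable issue: even where push-ins exist (convex or star-shaped $\Omega$), your deduction of $\|f\circ\Psi_t-f\|_\phi\to 0$ from ``comparable mass plus pointwise convergence'' is not automatic, because $\phi\circ\Psi_t$ and $\phi$ need not be comparable for a merely plurisubharmonic weight (the $-\infty$ locus of $\phi$ moves with $t$), so a $t$-uniform integrable dominant for $|f\circ\Psi_t|^2e^{-\phi}$ requires a genuine argument. One can sidestep norm convergence by establishing only a uniform bound on $\|f\circ\Psi_t\|_\phi$, extracting a weak limit (which must be $f$ by locally uniform convergence), and invoking Mazur's lemma to place $f$ in the norm closure; but the uniform bound itself already needs the distortion estimate you left implicit.
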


Recall that a compact set $K\subset \C^{n}$ is said to be {\em uniformly $H$-convex} 
if there exist a sequence $\varepsilon_{j}>0$ converging to $0$, a constant $c>1$, and a 
sequence of pseudoconvex domains $D_{j}\subset \C^n$ such that $K\subset D_{j}$ and 
\[
	\varepsilon_{j}\leq {\rm dist}(K,\C^{n}\setminus D_{j})\leq c\varepsilon_{j},
	\quad j=1,2,\ldots.
\] 
This terminology is due to E.\ M.\ \v Cirka \cite{Chirka1969} who showed that uniform H-convexity 
implies a Mergelyan-like approximation property for holomorphic functions;
however, the condition was used in  $L^2$ approximation results  already by 
L.\ H\"ormander and J.\ Wermer  \cite{HormanderWermer1968} in 1968
(see Remark \ref{rem:HWadmissible}). 
A related notion is that of a {\em strong Stein neighborhood basis} 
(which holds in particular for strongly hyperconvex domains);
we refer to the paper by S.\ {\c S}ahuto{\u g}lu \cite{Sahutoglu2012}.
It seems an open problem whether any of these conditions for the closure 
$K=\overline D$ of a smoothly bounded pseudoconvex domain $D\Subset \C^n$
implies the Mergelyan property for the algebra $\Acal(K)$.

%
%

\section{Appendix: Whitney's Extension Theorem}\label{app:Whitney}

Given a closed set $K$ in a smooth manifold $X$, the notation $f\in\Ccal^m(K)$ means that $f$ is the restriction 
to $K$ of a function in $\Ccal^m(X)$.

%
%
\begin{theorem}[Whitney (1934), \cite{Whitney1934TAMS}]\label{th:Whitney}
Let $\Omega\subset\mathbb R^n$ be a domain, and assume that there exists a constant $c\ge 1$ such that any 
two points $x,y\in\Omega$ can be joined by a curve in $\Omega$ of length less than $c|x-y|$.  
If $f\in\Ccal^m(\Omega)$ is such that all its partial derivatives of order $m$ extend continuously 
to $\overline\Omega$, then $f\in\Ccal^m(\overline\Omega)$.
\end{theorem}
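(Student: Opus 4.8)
The plan is to produce a Whitney jet of class $\Ccal^m$ on the closed set $\overline\Omega$ whose restriction to $\Omega$ is the genuine $m$-jet of $f$, and then to invoke the classical Whitney extension theorem for jets to obtain $F\in\Ccal^m(\R^n)$ with $F|_\Omega=f$; by the convention of this Appendix this is precisely the assertion $f\in\Ccal^m(\overline\Omega)$. The only hypotheses beyond interior smoothness are continuity of the top-order derivatives up to $b\Omega$ and the quasiconvexity of $\Omega$ (any two points joined by a path of length $<c|x-y|$), so the task is to turn these into uniform control of the whole jet up to the boundary. Write $f_\alpha=\di^\alpha f$ on $\Omega$ for $|\alpha|\le m$; by hypothesis $f_\alpha$ extends continuously to $\overline\Omega$ when $|\alpha|=m$.

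First I would show, by downward induction on $|\alpha|$, that every $f_\alpha$ $(|\alpha|\le m)$ extends continuously to $\overline\Omega$. The base case $|\alpha|=m$ is the hypothesis. For the step, assume $f_{\alpha+e_i}$ $(i=1,\dots,n)$ are continuous on $\overline\Omega$, hence bounded on bounded pieces, with bound $M$. For $x,y\in\Omega$ close, pick a rectifiable path $\gamma\colon[0,1]\to\Omega$ from $x$ to $y$ of length $L<c|x-y|$; integrating $\nabla f_\alpha=(f_{\alpha+e_i})_i$ along $\gamma$ gives
\[
	|f_\alpha(y)-f_\alpha(x)| = \Big|\int_0^1\sum_i f_{\alpha+e_i}(\gamma(t))\,\gamma_i'(t)\,dt\Big| \le \sqrt n\,M\,L \le \sqrt n\,c\,M\,|x-y|.
\]
Thus $f_\alpha$ is locally Lipschitz, hence locally uniformly continuous on $\Omega$, and so extends continuously to $\overline\Omega$. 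This is exactly where the length condition enters: straight segments $[x,y]$ need not lie in $\Omega$, but a path of controlled length does.

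Next I would verify the Whitney compatibility (Taylor remainder) conditions for the field $(f_\alpha)$. Set $R_\alpha(x,y)=f_\alpha(y)-\sum_{|\beta|\le m-|\alpha|}\tfrac{(y-x)^\beta}{\beta!}\,f_{\alpha+\beta}(x)$; one must show $R_\alpha(x,y)=o(|x-y|^{m-|\alpha|})$ uniformly on compacta. Two elementary identities drive the induction: $R_\alpha(x,x)=0$, and — since on $\Omega$ the $f_\alpha$ are honest derivatives — $\di_{y_i}R_\alpha(x,y)=R_{\alpha+e_i}(x,y)$. Run a downward induction on $|\alpha|$. For $|\alpha|=m$ one has $R_\alpha(x,y)=f_\alpha(y)-f_\alpha(x)$, bounded by the common modulus of continuity $\omega$ of the order-$m$ derivatives. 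For the step, integrate along the same path $\gamma$ and use $R_\alpha(x,x)=0$:
\[
	R_\alpha(x,y)=\int_0^1\sum_i R_{\alpha+e_i}(x,\gamma(t))\,\gamma_i'(t)\,dt.
\]
Since $|\gamma(t)-x|\le L<c|x-y|$ and, by the inductive hypothesis, $|R_{\alpha+e_i}(x,z)|\le \eta(|z-x|)\,|z-x|^{m-|\alpha|-1}$ for a nondecreasing $\eta$ with $\eta(s)\to0$, this yields $|R_\alpha(x,y)|\le \sqrt n\,c^{\,m-|\alpha|}\,\eta(c|x-y|)\,|x-y|^{m-|\alpha|}=o(|x-y|^{m-|\alpha|})$. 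These estimates are for $x,y\in\Omega$, but each $R_\alpha$ is continuous on $\overline\Omega\times\overline\Omega$ (a polynomial in $y-x$ with coefficients $f_{\alpha+\beta}$ continuous up to the boundary), so by density the bounds pass to $\overline\Omega$.

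Having checked the Whitney conditions on the closed set $\overline\Omega$, the classical Whitney extension theorem for $\Ccal^m$ jets delivers $F\in\Ccal^m(\R^n)$ with $\di^\alpha F|_{\overline\Omega}=f_\alpha$, in particular $F|_\Omega=f$, which is the claim. The main obstacle throughout is purely geometric: Taylor's formula and the fundamental theorem of calculus want to be applied along straight segments, which may leave $\Omega$ near a concave or cuspidal boundary; replacing segments by the connecting paths of length $<c|x-y|$, and controlling the resulting remainders solely through the modulus of continuity of the top derivatives, is exactly what quasiconvexity buys us — and it is precisely this that fails for the irregular domains on which the conclusion is false.
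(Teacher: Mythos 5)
Your proof is correct. Note that the paper itself gives no proof of Theorem \ref{th:Whitney}: it states the result with a citation to Whitney's 1934 paper and then passes to the stronger jet-extension theorem (Theorem \ref{th:WG}), whose proof is deferred to Malgrange's monograph. Your argument supplies precisely this missing link, reducing Theorem \ref{th:Whitney} to Theorem \ref{th:WG}: you upgrade the hypotheses (interior $\Ccal^m$ smoothness, continuity of the top-order derivatives up to the boundary, quasiconvexity) to a Whitney field of class $\Ccal^m$ on $\overline\Omega$ and then quote the jet theorem — which is in substance the classical derivation. Both halves of the reduction are sound: the downward induction giving continuous extension of the lower-order derivatives by integrating along paths of length $<c|x-y|$, and the downward induction on the remainders $R_\alpha$ driven by the identities $R_\alpha(x,x)=0$ and $\di_{y_i}R_\alpha=R_{\alpha+e_i}$, with the quasiconvexity constant entering only through the harmless factor $c^{\,m-|\alpha|}$. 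Two small points deserve attention in a polished write-up. First, Theorem \ref{th:WG} as stated in the paper concerns compact sets, whereas $\overline\Omega$ may be unbounded; you should either invoke Whitney's theorem for general closed sets (where the condition $R_\alpha(x,y)=o(|x-y|^{m-|\alpha|})$ is required uniformly on compacta — exactly what your estimates produce) or note that the bounded case suffices after localization of the continuity statements. Second, the uniformity bookkeeping in the inductions should be made explicit: the modulus $\eta$ applied to the pairs $(x,\gamma(t))$ must be the one attached to a slightly enlarged bounded subset of $\overline\Omega$, since the connecting path can leave the original compactum by a distance up to $c|x-y|$; this is routine but is the place where a careless formulation could appear circular.
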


In fact, a much stronger extension theorem was proved by Whitney.  To state it, we need to introduce some notation
and terminology.  

Let $K\subset\mathbb R^n$ be a compact set, and fix $m\in\mathbb N$. A collection $f=(f_\alpha)$ of functions 
$f_\alpha\in\Ccal (K)$, where $ \alpha=(\alpha_1,\ldots,\alpha_n)\in\Z^n_+$ is a multiindex
with $|\alpha|=\alpha_1+\cdots +\alpha_n\le m$, is called an $m$-jet on $K$.  
Let $\Jcal^m(K)$ denote the vector space of $m$-jets on $K$.   Set 
\[
	\|f\|_{m,K} = \max_{|\alpha|\le m} \, \sup_{x\in K}|f_\alpha(x)|.
\]
An $m$-jet $f=(f_\alpha)\in \Jcal^m(K)$ is said to be a {\em Whitney function of class $\Ccal^m$} on $K$ if 
\[
	f_\alpha(x)=\sum_{|\beta|\leq m-|\alpha|}\frac{f_{\alpha+\beta}(y)}{\beta !}(x-y)^\beta + o(|x-y|^{m-|\alpha|})
\]
holds for all $\alpha \in\Z^n_+$ with $|\alpha|\le m$ and all $x,y\in K$. 
We denote by $\Jcal^m_{\Wcal}(K)$ the space of all
Whitney functions of class $\Ccal^m$ on $K$.

%
%
\begin{theorem}[Whitney \cite{Whitney1934TAMS}, Glaeser \cite{Glaeser1958}] 
\label{th:WG}
Let $K$ be a compact set in $\R^n$. Given $f\in\Jcal^m(K)$, 
there exists $\tilde f\in\Ccal^m(\mathbb R^n)$ such that $\Jcal^m(\tilde f)|_{K}=f$ 
if and only if $f$ is a Whitney function of class $\Ccal^m$, that is, $f\in\Jcal^m_{\Wcal}(K)$.  
Furthermore, there exists a linear extension operator 
$\Lambda:\Jcal^m_{\Wcal}(K)\rightarrow \Ccal^m(\R^n)$ such that $\Jcal^m \Lambda(f)|_K=f$
for each $f\in \Jcal^m_{\Wcal}(K)$, and for every compact set $L\subset \R^n$ with $K\subset L$
there is a constant $C>0$ depending only on $K,L,m,n$ such that 
\begin{equation}\label{eq:bound}
	\|\Lambda(f)\|_{m,L} \le C \|f\|_{m,K}, \qquad f\in \Jcal^m_{\Wcal}(K).
\end{equation}
\end{theorem}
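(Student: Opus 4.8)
The plan is to prove necessity directly from Taylor's theorem and to establish sufficiency by the classical Whitney extension construction, from which the linearity of the operator $\Lambda$ and the estimate \eqref{eq:bound} will be read off. Necessity is immediate: if $f=\Jcal^m(\tilde f)|_K$ with $\tilde f\in\Ccal^m(\R^n)$, so that $f_\alpha=(\di^\alpha\tilde f)|_K$, then Taylor's formula with remainder applied to $\di^\alpha\tilde f$ along the segment from $y$ to $x$ expresses $f_\alpha(x)$ as the asserted polynomial in the $f_{\alpha+\beta}(y)$ plus a remainder; uniform continuity of the top-order derivatives $\di^\gamma\tilde f$ ($|\gamma|=m$) on a compact neighborhood of $K$ renders this remainder $o(|x-y|^{m-|\alpha|})$, so $f\in\Jcal^m_\Wcal(K)$.

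For sufficiency I would discard the trivial case $K=\R^n$ and work on the open complement $U=\R^n\setminus K$. The main device is a Whitney decomposition of $U$ into a countable family of closed dyadic cubes $\{Q_i\}$ with pairwise disjoint interiors, whose diameters satisfy $\diam(Q_i)\le\dist(Q_i,K)\le 4\diam(Q_i)$, together with a subordinate smooth partition of unity $\{\varphi_i\}$ adapted to slightly dilated cubes $Q_i^*$, enjoying the standard derivative bounds $|\di^\alpha\varphi_i|\le C_\alpha\diam(Q_i)^{-|\alpha|}$ and having bounded overlap. For each $i$ I would select a point $p_i\in K$ nearest to $Q_i$ and form the degree-$m$ Taylor polynomial $P_i(x)=\sum_{|\alpha|\le m}\frac{f_\alpha(p_i)}{\alpha!}(x-p_i)^\alpha$ furnished by the jet $f$. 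The candidate extension is then
\[
    \tilde f(x) = \begin{cases} f_0(x), & x\in K, \\ \sum_i \varphi_i(x)P_i(x), & x\in U, \end{cases}
\]
which is manifestly smooth on $U$ (a locally finite sum of smooth functions) and visibly linear in $f$.

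The heart of the argument, and the step I expect to be the main obstacle, is to show that $\tilde f\in\Ccal^m(\R^n)$ with $\di^\alpha\tilde f|_K=f_\alpha$ for all $|\alpha|\le m$; the difficulty is concentrated at points of $K$ that are limits of $U$, where one must prove both that the prescribed derivatives are attained and that the top-order derivatives extend continuously across $K$. Here the Whitney compatibility condition is used decisively. Fixing $x_0\in K$ and $x\in U$ near $x_0$, and using $\sum_i\varphi_i\equiv 1$ on $U$ (hence $\sum_i\di^\beta\varphi_i\equiv 0$ for $\beta\ne 0$), one rewrites $\di^\alpha\tilde f(x)-\sum_{|\beta|\le m-|\alpha|}\frac{f_{\alpha+\beta}(x_0)}{\beta!}(x-x_0)^\beta$ as a sum, over those cubes $Q_i$ meeting a neighborhood of $x$, of terms involving the derivatives $\di^\gamma(P_i-P_{x_0})$, where $P_{x_0}$ is the Taylor polynomial centered at $x_0$. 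Because each nearby $p_i$ satisfies $|p_i-x_0|\lesssim|x-x_0|$, the Whitney condition bounds these polynomial differences and their derivatives by $o(|x-x_0|^{m-|\alpha|})$ once combined with the derivative bounds on the $\varphi_i$ and the bounded-overlap property. This yields a first-order Taylor expansion of $\di^\alpha\tilde f$ at every $x_0\in K$, which simultaneously gives $\di^\alpha\tilde f(x_0)=f_\alpha(x_0)$, differentiability, and continuity of all derivatives up to order $m$.

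Finally, the linear extension operator is $\Lambda(f)=\tilde f$; its linearity is evident since each $P_i$ depends linearly on the values $f_\alpha(p_i)$ and the partition $\{\varphi_i\}$ is fixed independently of $f$. Tracking the constants through the estimates above---all of which are of the form (derivative bound on $\varphi_i$) times (Whitney-controlled polynomial difference)---produces for each compact $L\supset K$ a constant $C=C(K,L,m,n)$ with $\|\Lambda(f)\|_{m,L}\le C\|f\|_{m,K}$, which is exactly \eqref{eq:bound}.
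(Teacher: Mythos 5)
The paper offers no proof of this theorem at all: it is stated in the Appendix with a reference to Malgrange's monograph, so the only comparison possible is with the classical argument found there. Your outline of that argument is essentially correct as far as the equivalence and the linearity of $\Lambda$ are concerned: necessity via Taylor's formula, sufficiency via the Whitney decomposition of $\R^n\setminus K$ into dyadic cubes, the cube-adapted partition of unity, the Taylor polynomials $P_i$ at nearest points $p_i\in K$, and the cancellation trick $\sum_i\di^\beta\varphi_i\equiv 0$ combined with the Whitney condition to control $\di^\gamma(P_i-P_{x_0})$ near $K$ --- this is precisely the standard proof.

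The genuine gap is your final paragraph, where you assert that ``tracking the constants'' yields \eqref{eq:bound}. It cannot: the quantities your key estimate produces are of the form (derivative bound on $\varphi_i$, which blows up like $\diam(Q_i)^{-|\beta|}$) times (differences $\di^\gamma(P_i-P_{x_0})$, which are controlled by the Whitney remainders $R_\alpha(p_i,x_0)=f_\alpha(p_i)-\sum_{|\beta|\le m-|\alpha|}f_{\alpha+\beta}(x_0)(p_i-x_0)^\beta/\beta!$). The hypothesis $f\in\Jcal^m_{\Wcal}(K)$ only says these remainders are $o(|p_i-x_0|^{m-|\alpha|})$; this is a purely qualitative statement, and nothing bounds the implied modulus by $\|f\|_{m,K}$, which is a sup norm of the jet components and carries no information about the remainders. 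What the construction actually proves (and what Malgrange states) is continuity of $\Lambda$ with respect to the full Whitney norm $\|f\|_{m,K}+\sup\bigl\{|R_\alpha(x,y)|\, |x-y|^{|\alpha|-m} : x\ne y\in K,\ |\alpha|\le m\bigr\}$.

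Moreover, no repair of your argument is possible, because \eqref{eq:bound} with the norm $\|f\|_{m,K}=\max_{|\alpha|\le m}\sup_K|f_\alpha|$ is false for general compact $K$. Take $n=1$, $m=1$, $K=\{0\}\cup\bigcup_{N\ge1}\{1/N,\,1/N+4^{-N}\}$, and let $f^{(N)}$ be the $1$-jet vanishing identically except that $f^{(N)}_0(1/N+4^{-N})=2^{-N}$; this is the jet of a $\Ccal^\infty$ bump function of width less than $4^{-N}/2$, hence lies in $\Jcal^1_{\Wcal}(K)$, and $\|f^{(N)}\|_{1,K}=2^{-N}$. By the mean value theorem, any $\Ccal^1$ extension of $f^{(N)}$ has a derivative equal to $2^{-N}/4^{-N}=2^N$ at some point of $(1/N,\,1/N+4^{-N})$, so with $L=[-1,2]$ the ratio $\|\Lambda(f^{(N)})\|_{1,L}/\|f^{(N)}\|_{1,K}\ge 4^N$ is unbounded for any extension operator whatsoever. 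The sup-norm bound \eqref{eq:bound} does become true when $K$ satisfies the quasiconvexity hypothesis of Theorem \ref{th:Whitney} (for instance $K=\overline\Omega$ with $\Omega$ a bounded domain with $\Ccal^m$ boundary, which is the only case used in Theorems \ref{th:MergelyanWo} and \ref{th:MergelyanW}), since then Taylor's formula along short paths in $K$ gives $|R_\alpha(x,y)|\le C\|f\|_{m,K}|x-y|^{m-|\alpha|}$, so the Whitney seminorm is dominated by $\|f\|_{m,K}$. Your proof becomes correct if you either add such a geometric hypothesis on $K$ or replace the right-hand side of \eqref{eq:bound} by the Whitney norm.
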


A proof of Whitney's theorem, including the extensions and 
simplifications due to Glaeser \cite{Glaeser1958}, can be found in the monograph by 
Malgrange \cite[Theorem 3.2 and Complement 3.5]{Malgrange1966}.

\begin{remark}
An inspection of the proof in \cite{Malgrange1966} 
shows that, if the set $K$ in Theorem \ref{th:WG} is the closure of a domain 
$\Omega\Subset\mathbb R^n$ with $\Ccal^m$-smooth boundary, then there are extension 
operators for all domains sufficiently close to $\Omega$ with the same bound in \eqref{eq:bound}. 
Furthermore, if $\Omega_j$ is a sequence of domains 
such that $\Omega_j\rightarrow\Omega$ in $\Ccal^m$ topology as $j\rightarrow\infty$, 
we may fix a domain $\widetilde\Omega$ containing $\overline\Omega$ and smooth maps 
$\phi_j:\widetilde\Omega\rightarrow\mathbb R^n$ such that $\phi_j(\Omega_j)=\Omega$ and 
$\phi_j\rightarrow\mathrm{Id}$ in the $\Ccal^m$-norm on $\widetilde\Omega$.
\qed \end{remark}


\subsection*{Acknowledgements}
J.\ E.\ Forn{\ae}ss and E.\ F.\ Wold are supported by the Norwegian Research Council grant number 240569.
F.\ Forstneri{\v c} is supported by the research program P1-0291 and grants J1-7256 
and J1-9104 from ARRS, Republic of Slovenia. 

We wish to thank the colleagues who gave us their remarks on a preliminary version of the paper or who
answered our questions: Antonio Alarc\'on, Severine Biard, Nihat Gogus, Ilya Kossovskiy, 
Finnur L\'arusson, Norman Levenberg, Petr Paramonov, Evgeny Poletsky, S{\"o}nmez {\c S}ahuto{\u g}lu, 
Edgar Lee Stout, Joan Verdera, Jujie Wu, and Mikhail Zaidenberg.





\end{document}